\documentclass[leqno]{amsart}
\usepackage{latexsym}
\usepackage{amsmath}
\usepackage{amssymb}
\usepackage{amsfonts}
\usepackage{amsxtra}
\usepackage[all, 2cell, knot]{xy} \UseAllTwocells \SilentMatrices
\usepackage{xspace}
\usepackage{amsmath}
\usepackage{amstext}
\usepackage{amsfonts}
\usepackage[mathscr]{euscript}
\usepackage{amscd}

\newcommand{\bbD}{\mathbb{D}}
\newcommand{\bbE}{\mathbb{E}}
\newcommand{\bbX}{\mathbb{X}}
\newcommand{\bbY}{\mathbb{Y}}
\newcommand{\bbComp}{\mathbb{C}\mbox{\rm omp}}
\newcommand{\bbSpan}{\mathbb{S}\mbox{\rm pan}}
\newcommand{\Comp}{\mbox{\rm Comp}}

\newcommand{\calA}{\mathcal{A}}
\newcommand{\calB}{\mathcal{B}}
\newcommand{\calC}{\mathcal{C}}
\newcommand{\calH}{\mathcal{H}}
\newcommand{\calJ}{\mathcal{J}}
\newcommand{\calU}{\mathcal{U}}
\newcommand{\calV}{\mathcal{V}}

\newcommand{\bfC}{\mathbf{C}}
\newcommand{\bfD}{\mathbf{D}}
\newcommand{\bfE}{\mathbf{E}}

\newcommand{\Hom}{\mbox{\rm Hom}}
\newcommand{\Cyl}{\mbox{\rm Cyl}}
\newcommand{\Ps}{\mbox{\rm Ps}}
\newcommand{\St}{\mathit{St}}
\newcommand{\Bic}{\mbox{\bf Bic}}
\newcommand{\smBic}{\mbox{\bf\scriptsize Bic}}
\newcommand{\smDbl}{\mbox{\bf\scriptsize Dbl}}
\newcommand{\Bicat}{\mbox{\sf Bicat}}
\newcommand{\NHom}{\Bicat_{\mbox{\scriptsize\rm icon}}}
\newcommand{\Dbl}{\mbox{\bf Dbl}}
\newcommand{\W}{\{W\}}
\newcommand{\CW}{\bfC\W}
\newcommand{\Tm}{\mbox{\sf GTa}_{2}}
\newcommand{\Ta}{\mbox{\sf Ta}_{2}}
\newcommand{\GWGDbl}{\mbox{\sf GWGDbl}}
\newcommand{\DblCat}{\mbox{\sf DblCat}}
\newcommand{\PsTalg}{\mbox{\sf Ps-}T\mbox{\sf -alg}}
\newcommand{\ps}{\mbox{\scriptsize\rm ps}}

\newcommand{\WGDbl}{\mbox{\sf WGDbl}}
\newcommand{\smWGDbl}{\mbox{\sf\scriptsize WGDbl}}

\newcommand{\Cat}{\mbox{\sf Cat}}
\newcommand{\Set}{\mbox{\sf Set}}
\newcommand{\smBicat}{\mbox{\sf\scriptsize Bicat}}

\newcommand{\tiund}[1]{{\times}_{#1}}
\newcommand{\ovl}[1]{\overline{#1}}
\newcommand{\ovll}[1]{\overset{=}{#1}}
\newcommand{\pt}{\partial}
\newcommand{\dop}{\Delta^{\mbox{\scriptsize\rm op}}}
\newcommand{\Rar}{\Rightarrow}
\newcommand{\bsim}{/\!\!\sim}
\newcommand{\pro}[3]{#1\tiund{#2}\overset{#3}{\cdots}\tiund{#2}#1}
\newcommand{\tens}[2]{#1\,\tiund{#2}\,#1}
\newcommand{\JC}{\calJ_{\bfC}}

\newcommand{\supar}[1]{\overset{#1}{-\!\!-\!\!\!\rightarrow}}
\newcommand{\suparle}[1]{\overset{#1}{\leftarrow\!\!\!-\!\!-}}
\newcommand{\rw}{\rightarrow}

\newcommand{\id}{\mathrm{id}}

\newcommand{\vep}{\varepsilon}

\newcommand{\pr}{\mathrm{pr}}
\newcommand{\tp}{\mathsf{Top}}

\newtheorem{thm}{Theorem}[section]
\newtheorem{cor}[thm]{Corollary}
\newtheorem{corollary}[thm]{Corollary}
\newtheorem{theorem}[thm]{Theorem}
\newtheorem{prop}[thm]{Proposition}
\newtheorem{proposition}[thm]{Proposition}

\newtheorem{lma}[thm]{Lemma}
\newtheorem{lemma}[thm]{Lemma}

\newtheorem*{thma}{Theorem A}
\newtheorem*{thmb}{Theorem B}
\newtheorem*{thmc}{Theorem C}

\theoremstyle{definition} 
\newtheorem{dfn}[thm]{Definition}
\newtheorem{definition}[thm]{Definition}
\newtheorem{eg}[thm]{Example}

\newtheorem{rmks}[thm]{Remarks}
\newtheorem{rmk}[thm]{Remark}
\newtheorem{remark}[thm]{Remark}
\newtheorem{notation}[thm]{Notation}

\begin{document}

\title {Bicategories and Weakly Globular Double Categories}

\author{Simona Paoli}
\address{Department of Mathematics, 
University of Leicester,
LE17RH, UK}
\email{sp424@le.ac.uk}

\author{Dorette Pronk}
\address{Department of Mathematics and Statistics, Dalhousie University, Halifax, NS, B3H 4R2, Canada}
\email{pronk@mathstat.dal.ca}

\keywords{weak higher categories, Tamsamani weak 2-categories, pseudo-functors,
rigidification, companions, conjoints, 
double categories, bicategories of fractions}

\begin{abstract}
This paper introduces the notion of weakly globular double categories, a particular class of strict double categories,
 as a way to model weak 2-categories; it explores its use in defining a double category of fractions, and shows 
that the sub-2-category of groupoidal weakly globular double categories forms an algebraic model of homotopy 2-types.
\end{abstract}

\maketitle



\section{Introduction}\label{int}

In this paper we introduce the notion of weakly globular double categories, 
a particular class of strict double categories,
as a way to model weak 2-categories. We show that weakly globular double categories 
provide an algebraic model of homotopy 2-types and explore their use in defining a double category of fractions. 

\subsection{Weak Globularity}
Several different models of weak $n$-categories exist in the literature 
(for an overview, see \cite{Lei1} and \cite{Lei2})  and,
despite their many differences, they share the common feature that the totality
of cells in each dimension has a discrete structure; that is, it is a set. We
refer to this property as the globularity condition.
Recently, Blanc and the first author showed in \cite{bpa} and \cite{bp} 
that, in  modeling homotopy $n$-types, a new type of higher
groupoidal structure arises naturally: this differs from the classical ones
in that the weakening does not occur in relaxing the associativity and unit laws of
the composition of cells, but in relaxing the globularity condition. That is,
in requiring that the cells in each dimension have a structure which is not
discrete, but suitably equivalent to a discrete one. These structures, called
weakly globular $n$-fold groupoids in \cite{bpa}, have been shown to satisfy many interesting homotopical
and categorical properties, and to admit a comparison functor with a classical
model of weak $n$-groupoids, due to Tamsamani \cite{tam}.
Likewise, in the path-connected case, weakly globular cat$^{n-1}$-groups were shown 
by the first author \cite{Pao} to model path-connected $n$-types.

Starting in dimension 2, these results motivate us to search for a more general notion of  ``weakly
globular double category'' and to show that this is suitably equivalent to a
classical notion of weak $2$-category, such as the one by Tamsamani.
This low dimensional case presents a special interest in this context because 
it links with the vast existing literature on double categories, see for instance, 
\cite{Ehr}, \cite{BS},  \cite{Gr}, \cite{Shul} and \cite{FPP},
and with the notion of bicategory of fractions due the second author \cite{P1995}, \cite{P-thesis}, 
as developed in the second part of this paper.

\subsection{Rigidification}
The process of associating to a Tamsamani weak $2$-category an equivalent weakly globular
double category should be thought of as {\em rigidification}, since the
composition and unit laws, which in the Tamsamani model hold only up to
isomorphism, are equalities in the new structure. This type of rigidification,
is, however, quite different from the classical strictification of
bicategories \cite{ben}. In the latter, in fact,
the globularity condition is preserved: a bicategory becomes a strict $2$-category 
with (the same) discrete set of objects. Instead, in passing from a Tamsamani weak
$2$-category to a weakly globular double category we relax the globularity
condition: the set of objects of the Tamsamani weak 2-category is replaced after 
the rigidification by a non-discrete structure consisting of a posetal groupoid whose 
set of connected components is the original set of objects.

A Tamsamani weak 2-category is a functor $X_*\colon\dop\rightarrow\Cat$
such that $X_0$ is discrete and the Segal maps $\eta_k\colon X_k\rightarrow\pro{X_1}{X_0}{k}$
are equivalences of categories.
In \cite{lp}, Lack and the first author described a biequivalence of 2-categories between
the 2-category of bicategories, normal homomorphisms and icons
(identity component pseudo-natural transformations) and the 2-category of Tamsamani weak 2-categories,
with pseudo-natural transformations. The Tamsamani weak 2-categories
were obtained by taking an appropriate $\Cat$-valued nerve (called 2-nerve) of bicategories.

In order to be able to view such a functor as the horizontal nerve of a double category,
we need the Segal maps to be isomorphisms of categories rather than mere equivalences.
So we first functorially associate to a Tamsamani weak 2-category $X_*$ 
a 2-equivalent pseudo-functor $(SX)_*$ with the property that $SX_0=X_0$ and $SX_1=X_1$, but 
$SX_k=\pro{X_1}{X_0}{k}$; this places us in the position to apply a strictification result for pseudo-functors
due to Power \cite{pow} to obtain a strict functor $(RX)_*\colon \dop\rightarrow\Cat$. By working out the details
of Power's construction for this type of functors we find that  $(RX)_0$ is
no longer discrete, but merely equivalent to a discrete category; that is, $(RX)_0\simeq X_0$. Further, 
$(RX)_k$ is not only equivalent to
$\pro{(RX)_1}{(RX)_0}{k}$, but in fact isomorphic to $\pro{(RX)_1}{X_0}{k}$.
So we obtain the horizontal nerve of a strict double category $\bbX$ with a posetal groupoidal category
$\bbX_0$ of vertical arrows such that  the Segal maps induce equivalences of categories, 
$\pro{\bbX_1}{\bbX_0}{n}\simeq\pro{\bbX_1}{\bbX_0^d}{n}$ for $n\ge 2$.
This condition is satisfied in particular when either $d_0\colon\bbX_1\rightarrow\bbX_0$ 
or $d_1\colon\bbX_1\rightarrow\bbX_0$ is an isofibration.
In general, this property implies that for each pair of a horizontal and vertical arrow with a common codomain,
$$
\xymatrix@R=1.5em@C=2em{
&B_1\ar[d]|\bullet^v\\
A_0\ar[r]_f&B_0
}
$$
there is a completion to a double cell of the form
$$
\xymatrix@R=1.8em{A_2\ar@{}[ddr]|\alpha\ar[dd]|\bullet_u\ar[r]^g & B_2\ar[d]|\bullet^w
\\
&B_1\ar[d]|\bullet^v\\
A_0\ar[r]_f&B_0
}
$$
where $\alpha$ is vertically invertible.
We call such double categories weakly globular, and prove in Theorem \ref{s4.pro4} 
below that there is a pseudo-inverse to this construction giving 
us a biequivalence of 2-categories:

\begin{thma}
 There is a biequivalence of 2-categories:
\begin{equation*}
    (\WGDbl)_{\ps}\simeq(\Ta)_{\ps}\;.
\end{equation*}
\end{thma}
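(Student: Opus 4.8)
\textbf{The rigidification $2$-functor.} The plan is to obtain the biequivalence from the rigidification construction described above, together with an explicit \emph{discretization} in the opposite direction, and then to show the two constructions are pseudo-inverse. Write $R\colon(\Ta)_{\ps}\to(\WGDbl)_{\ps}$ for the composite that first replaces a Tamsamani weak $2$-category $X_*$ by the pseudo-functor $(SX)_*$ with $SX_0=X_0$, $SX_1=X_1$, $SX_k=\pro{X_1}{X_0}{k}$ (transporting the simplicial operators and coherence data along chosen pseudo-inverses of the Segal maps $\eta_k$), and then applies Power's strictification of pseudo-functors \cite{pow} to produce a strict functor $(RX)_*\colon\dop\to\Cat$. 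Because the $\eta_k$ are equivalences of categories, the comparison $(SX)_*\to X_*$ and the counit $(RX)_*\to(SX)_*$ of the strictification are pointwise equivalences, hence equivalences in the $2$-category of pseudo-functors $\dop\to\Cat$ and pseudo-natural transformations. The first thing to establish is that $R$ really takes values in $\WGDbl$: unwinding Power's construction for pseudo-functors of this particular shape, one must check that the Segal maps of $(RX)_*$ are \emph{isomorphisms} (so that $(RX)_*$ is the horizontal nerve of a strict double category $\bbX$), that $\bbX_0=(RX)_0$ is a posetal groupoid with $\pi_0\bbX_0\cong X_0$, and that $\pro{\bbX_1}{\bbX_0}{n}\cong\pro{\bbX_1}{X_0}{n}$ for $n\ge2$, so that $\bbX$ is weakly globular. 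I expect this explicit computation — already indicated in the Introduction — to be the main obstacle. Functoriality of $R$ on pseudo-natural transformations and modifications then follows from that of the two steps it is composed from.

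\textbf{The discretization $2$-functor.} In the other direction, for a weakly globular double category $\bbX$ choose a section $i\colon\bbX_0^d\to\bbX_0$ of the quotient $\bbX_0\to\bbX_0^d$; since $\bbX_0$ is a posetal groupoid, $i$ is fully faithful and essentially surjective and $\bbX_0^d$ is discrete. Pulling the internal-category (double category) structure of $\bbX$ back along $i$ yields a strict double category $D\bbX$ with discrete category of objects, that is, a strict $2$-category, which in particular is a Tamsamani weak $2$-category; the chosen sections make $D$ into a pseudo-functor $(\WGDbl)_{\ps}\to(\Ta)_{\ps}$. The canonical comparison $D\bbX\to\bbX$ is $i$ on objects and is an equivalence in $(\WGDbl)_{\ps}$: on hom-categories it is an equivalence precisely because the completion property recorded in the statement above (the vertically invertible double cell $\alpha$, i.e.\ the companion structure forced by weak globularity) lets one transport horizontal arrows along vertical isomorphisms, identifying the hom-categories of $\bbX$ at arbitrary objects with those at the chosen representatives.

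\textbf{Conclusion.} It remains to assemble the pseudo-natural equivalences $DR\simeq\id_{(\Ta)_{\ps}}$ and $RD\simeq\id_{(\WGDbl)_{\ps}}$. For the second, given a weakly globular $\bbX$ one has $\bbX\simeq D\bbX$ in $(\WGDbl)_{\ps}$ by the previous paragraph, together with $R(D\bbX)\simeq S(D\bbX)\simeq D\bbX$ from the equivalences attached to the Segal replacement and Power's strictification; for the first, one combines the pointwise equivalences $RX\simeq SX\simeq X$ with the identification $\pi_0(RX)_0\cong X_0$ to see that $D(RX)$ is pointwise equivalent to $X$. Checking that these pointwise data are natural in the respective variable, and that they are equivalences for the correct notion of weak equivalence in each of $(\Ta)_{\ps}$ and $(\WGDbl)_{\ps}$, completes the proof; equivalently, once the local structure is under control one may instead verify directly that $R$ is essentially surjective on objects (every weakly globular $\bbX$ being equivalent to $R(D\bbX)$) and a local equivalence (the relevant hom-categories are categories of pseudo-natural transformations, which are invariant under the pointwise equivalences $RX\simeq X$), hence a biequivalence.
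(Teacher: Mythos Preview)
Your overall architecture matches the paper's: a rigidification $R=Q$ built from $S$ followed by Power's strictification, a discretization $D$ in the other direction, and then an argument that these are mutually pseudo-inverse (the paper phrases this as ``$D$ is locally an equivalence and biessentially surjective'', using the levelwise equivalences $D\bbX\to N_h\bbX$ and $N_hQX\to X$). The description of $R$ and the identification of $RX$ as the horizontal nerve of a weakly globular double category are right and correspond to Theorem~\ref{s4.the1} and Corollary~\ref{s4.cor1}.

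The gap is in your construction of $D$. You form $D\bbX$ by pulling back the internal-category structure of $\bbX$ along a section $i\colon\bbX_0^d\to\bbX_0$ and claim this yields a strict $2$-category equivalent to $\bbX$. The pullback is well defined, but the equivalence claim needs that the inclusion $i^*\bbX_1\hookrightarrow\bbX_1$ be essentially surjective: every horizontal arrow $f\colon A\to B$ should be isomorphic, via a vertically invertible double cell, to one between the chosen representatives $i(\bar A)\to i(\bar B)$. That is precisely the condition that $(d_0,d_1)\colon\bbX_1\to\bbX_0\times\bbX_0$ be an isofibration, and weak globularity does \emph{not} guarantee this. The defining condition $\pro{\bbX_1}{\bbX_0}{n}\simeq\pro{\bbX_1}{\bbX_0^d}{n}$ only holds for $n\ge2$ and yields the weaker lifting of Lemma~\ref{squarecompln}, where the lifted arrow may acquire a new endpoint $A''$ or $B''$ that is merely in the same component, not the chosen representative. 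So the ``completion property'' you invoke does not let you transport an arbitrary horizontal arrow to one between chosen objects.

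The paper's $D$ avoids this by \emph{not} restricting $\bbX_1$: one sets $(D\bbX)_0=\bbX_0^d$, $(D\bbX)_1=\bbX_1$, $(D\bbX)_k=\pro{\bbX_1}{\bbX_0}{k}$, and redefines the face maps as $d_i=\gamma\partial_i$ (Proposition~\ref{s4.pro2}). The price is that $D\bbX$ is only a Tamsamani weak $2$-category, not a strict $2$-category --- the Segal maps become equivalences rather than isomorphisms, exactly by the weak globularity condition. With this $D$, the pseudo-natural levelwise equivalence $\eta_\bbX\colon D\bbX\to N_h\bbX$ is immediate (it is $\gamma'$ in degree $0$ and the identity elsewhere), and the rest of your concluding paragraph goes through essentially as written.
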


Imposing some invertibility conditions, this theorem specializes to a new model of $2$-types, 
called groupoidal weakly globular double categories, which generalizes the weakly 
globular double groupoids of \cite{bp}. 

The arrows between weakly globular double categories that correspond
to normal homomorphisms of bicategories are pseudo-functors  that correspond
to pseudo-natural transformations of simplicial functors, so they are weaker 
than what one might expect for double categories in the sense that horizontal domains and codomains are
only preserved up vertical isomorphisms. We conclude that we can strictify a bicategory to a double category
and in some sense move the weakness of the bicategory into the vertical part of the double category structure.
This is reflected in the morphisms we obtain. With these morphisms the correct notion of 2-cell is
a version of vertical transformation that has also been appropriately weakened to correspond to
modifications between pseudo-natural transformations of simplicial functors into $\Cat$.
So we obtain a biequivalence of 2-categories
$$\xymatrix@C=5em{
(\WGDbl)_{\ps}\ar@/^2ex/[r]^-{\smBic}\ar@{}[r]|-{\simeq_{\mathrm{bi}}}&\NHom\ar@/^2ex/[l]^-{\smDbl}}$$
but these functors do not form a biadjunction.

Another way in which we can make precise how the weak aspect of bicategories has moved to the vertical arrows
in the corresponding double categories is by studying  quasi units and internal equivalences in bicategories. 
Quasi units are
endomorphisms that are isomorphic to identity arrows. We show that quasi units in a bicategory $\calB$
are closely related  to companion pairs in $\Dbl(\calB)$ and companion pairs in a weakly globular double category
$\bbD$ correspond precisely
to quasi units in its fundamental bicategory $\Bic(\bbD)$. 
An internal equivalence in a bicategory is an arrow with a pseudo- inverse arrow in the sense that both 
composites are isomorphic to the respective identity arrows. 
We introduce a notion of {\em pre-companion} in a double category, such that,
similarly to the case of companions and quasi-units, pre-companions in double categories 
correspond to internal equivalences in bicategories.

\subsection{Double categories and universal properties}
As for general double categories, weakly globular double categories can be considered as a 2-category in more than 
one way. When one uses strict functors, one has the choice between horizontal and vertical transformations.
Since we used pseudo-functors above, which are weak in the horizontal direction and preserve
the horizontal domains and codomains only up to a vertical isomorphism, only the vertical 
transformations made sense.
However, if we use strict functors, we can also use horizontal transformations 
to form a 2-category $\WGDbl_h$ of weakly globular double categories.
The universal properties of an object with respect to one structure are distinct from those 
with respect to the other structure,
and it has been shown in the literature that it is important to consider both together.
Grandis and Par\'e noted in \cite{GP} for instance that when one takes the double category of topological groups
with group homomorphisms in the horizontal direction and continuous functions 
in the vertical direction, with commutative squares
as double cells, one needs the property of both being a horizontal and a vertical product in order to obtain 
the product of topological groups.

When we apply this to the relationship between weakly globular double categories and bicategories,
we see that universal properties of bicategories correspond to vertical universal properties for the 
corresponding weakly globular double categories, and these properties determine the weakly globular
double category up to a 2-equivalence in the vertical direction. However, this 2-equivalence class may intersect 
with various 2-equivalence classes in the horizontal direction. 
And some of them may carry  a (horizontal) universal property of their own,
so that both properties together determine the object up to both horizontal and vertical 2-equivalences.
We illustrate this by considering a double categorical 
variant of the bicategory of fractions of a category.

\subsection{Weakly globular double categories of fractions}
For a category $\bfC$ with a class of arrows $W$ satisfying certain axioms,
Gabriel and Zisman introduced the category of fractions ${\bfC}[W^{-1}]$
which can be viewed as a homotopy category of $\bfC$ with respect to the class $W$.
The construction is not only useful in homotopy theory, but anywhere
where one has a class of morphisms which one wants to invert.

For instance, when constructing the category of smooth manifolds, one
may start by considering just the atlases for the manifolds and take smooth maps 
between the associated groupoids (which may be viewed as a smooth version of an equivalence relation
on the disjoint union of the atlas charts).
In this case we obtain the usual category of manifolds by taking the category of fractions with respect
to the class of atlas refinements. A related example is that of \'etendues and geometric morphisms.
Etendues can be represented by \'etale groupoids and to obtain the correct notion of morphisms
one needs to take the category of fractions of the category of groupoids with respect to the
internal weak equivalences of groupoids. This gives us then the category of \'etendues with
isomorphism classes of geometric morphisms as arrows.

To obtain the correct 2-categorical structure, the construction of a category of fractions
needs to be refined to a bicategory of fractions. This was done by the first author in \cite{P1995},
with applications to \'etendues and stacks. A further application of this is given in \cite{MP}, to provide a new notion
of mapping between orbifolds. The resulting maps are the ones that work well for studying orbifold homotopy
theory, and they were called good maps in \cite{Adem} for instance.

In the current paper we study a weakly globular double categorical equivalent of this construction.
We could of course simply take $\Dbl(\bfC(W^{-1}))$.
This weakly globular double category inherits the following universal property from 
$\bfC(W^{-1})$. For any weakly globular double category $\bbD$, there
is an equivalence of categories
$$
\Hom_{\smWGDbl_{\ps},W}(\Dbl(\bfC),\bbD)\simeq\Hom_{\smWGDbl_{\ps}}(\Dbl(\bfC(W^{-1})),\bbD),
$$
where $\Hom_{\smWGDbl_{\ps}}$ is the category of pseudo-functors and vertical transformations.
Furthermore, $\Hom_{\smWGDbl_{\ps},W}(\Dbl(\bfC),\bbD)$ is the subcategory of pseudo-functors 
that send the horizontal arrows in $\Dbl(\bfC)$
that are related to $W$ (as defined in Section \ref{fracns} below) 
to pre-companions, and vertical transformations that respect 
this pre-companion structure as spelled out in Definition \ref{W-trafo}.
This equivalence is obtained by composition with the inclusion 
$\Dbl(\bfC)\rightarrow \Dbl(\bfC(W^{-1}))$ and this is a 
non-strict pseudo-functor.

This universal property is shared by all weakly globular double categories that are (vertically) 2-equivalent to $\Dbl(\bfC(W^{-1}))$,
and $\Dbl(\bfC(W^{-1}))$ does not have an obvious nice universal property 
with respect to strict functors and horizontal transformations.
So we will present a different weakly globular double category $\bfC\{W\}$, 
which is (vertically) 2-equivalent to $\Dbl(\bfC(W^{-1}))$,
but also has an interesting universal property with respect to strict functors 
and horizontal transformations.

The weakly globular double category $\CW$ has the property that 
the composition of pseudo functors 
$\xymatrix@1@C=4em{H\bfC\ar[r]^-\sim &\Dbl(\bfC)\ar[r]^-{\mathbf{\Dbl}(U)}&\Dbl(\bfC(W^{-1})\ar[r]^-\sim& \CW}$ is a strict functor, which we will denote by $\JC$. The universal property of $\Dbl(\bfC(W^{-1})$ 
given above translates into the following (vertical) universal property for $\CW$.

\begin{thmb}
Composition with 
the functor $\JC\colon H\bfC\rightarrow\CW$  gives rise to an equivalence of categories,
$$
\Hom_{\smWGDbl_{\ps}}(\CW,\bbD)\simeq\Hom_{\smWGDbl_{\ps},W}(H\bfC,\bbD).
$$
\end{thmb}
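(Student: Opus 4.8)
The plan is to deduce Theorem~B from the universal property of $\Dbl(\bfC(W^{-1}))$ recalled just above, by transporting that property along the two vertical $2$-equivalences occurring in the chosen factorization of $\JC$. Write $e\colon H\bfC\xrightarrow{\ \sim\ }\Dbl(\bfC)$, $f=\Dbl(U)\colon\Dbl(\bfC)\to\Dbl(\bfC(W^{-1}))$ (with $U\colon\bfC\to\bfC(W^{-1})$ the universal arrow) and $g\colon\Dbl(\bfC(W^{-1}))\xrightarrow{\ \sim\ }\CW$, so that $\JC=g\circ f\circ e$ as a strict functor. Then, for a fixed weakly globular double category $\bbD$, precomposition with $\JC$ is the composite
\begin{align*}
\Hom_{\smWGDbl_{\ps}}(\CW,\bbD)&\xrightarrow{\,-\circ g\,}\Hom_{\smWGDbl_{\ps}}(\Dbl(\bfC(W^{-1})),\bbD)\\
&\xrightarrow{\,-\circ f\,}\Hom_{\smWGDbl_{\ps},W}(\Dbl(\bfC),\bbD)\xrightarrow{\,-\circ e\,}\Hom_{\smWGDbl_{\ps},W}(H\bfC,\bbD),
\end{align*}
and I would show that each of the three arrows is an equivalence of categories.

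For the first arrow the key point is that the representable $\Hom_{\smWGDbl_{\ps}}(-,\bbD)$ is (pseudo)functorial on $(\WGDbl)_{\ps}$ and therefore sends the internal equivalence $g$ to an equivalence of categories; concretely, a chosen pseudo-inverse of $g$ together with the invertible vertical transformations exhibiting $g$ as an equivalence induces a pseudo-inverse of $-\circ g$. For the middle arrow, the statement that $-\circ f$ is an equivalence onto the subcategory $\Hom_{\smWGDbl_{\ps},W}(\Dbl(\bfC),\bbD)$ of pseudo-functors sending the $W$-related horizontal arrows to pre-companions, and of $W$-respecting vertical transformations, is precisely the universal property of $\Dbl(\bfC(W^{-1}))$ recalled before the theorem; that property is in turn inherited, through the $2$-functor $\Dbl$ and the analysis of pre-companions in Section~\ref{fracns}, from the universal property of the (bicategory of) fractions $\bfC(W^{-1})$ established in~\cite{P1995}.

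The third arrow needs slightly more care. Since $e$ is again a vertical $2$-equivalence, $-\circ e\colon\Hom_{\smWGDbl_{\ps}}(\Dbl(\bfC),\bbD)\to\Hom_{\smWGDbl_{\ps}}(H\bfC,\bbD)$ is an equivalence by the same argument as for $g$, so it suffices to check that it restricts to an equivalence between the $W$-subcategories. For this I would verify: (i) $e$ carries the horizontal arrows of $H\bfC$ related to $W$ onto a set of representatives of the horizontal arrows of $\Dbl(\bfC)$ related to $W$; (ii) since the pseudo-functors in play preserve pre-companions (up to the relevant vertical isomorphisms), a pseudo-functor $\Dbl(\bfC)\to\bbD$ sends the $W$-related arrows to pre-companions if and only if its restriction along $e$ does; and (iii) a vertical transformation satisfies the compatibility condition of Definition~\ref{W-trafo} if and only if its restriction along $e$ does. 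Composing the three equivalences then yields the equivalence of categories asserted in Theorem~B, and the composite is precomposition with $\JC$ because $\JC=g\circ f\circ e$.

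The main obstacle is the bookkeeping in this last step: pinning down how the ``$W$-related'' horizontal arrows and the $W$-respecting vertical transformations match up across the equivalence $e$, and checking that the pre-companion conditions and the extra data of Definition~\ref{W-trafo} are genuinely stable under the (non-strict) pseudo-functors appearing in the factorization of $\JC$. A secondary, purely formal, point is to make sure that precomposition with the strict functor $\JC$ is literally the strict composite of the three induced functors on Hom-categories, and not merely isomorphic to it; treating composition of pseudo-functors in $(\WGDbl)_{\ps}$ as strictly associative and unital makes this identification exact, and in any case replacing it by a coherent isomorphism would not affect the conclusion.
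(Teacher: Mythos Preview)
Your proposal is correct and follows essentially the same route as the paper. The paper likewise derives Theorem~B from the bicategorical universal property of $\bfC(W^{-1})$ by applying $\Dbl$, then transporting along the vertical equivalences $\Dbl(\bfC(W^{-1}))\simeq\CW$ and $H\bfC\simeq\Dbl(\bfC)$; your three-step factorization $\JC=g\circ f\circ e$ and the check that the $W$-subcategories match under $e$ simply make explicit what the paper leaves as a terse remark following its statement of the $\tilde{U}$-theorem.
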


The new universal property of $\CW$, in the horizontal strict direction, is expressed in terms of companions and conjoints.
We write $\Hom_{\smWGDbl_{h},W}(H\bfC,\bbD)$ for the category of $W$-friendly functors 
and $W$-friendly horizontal transformations. 
$W$-friendly functors are functors that up to an invertible  horizontal
transformation send the arrows of $W$ to arrows that have a vertical companion and conjoint,
and the $W$-friendly transformations are those that respect this additional structure.
For further details, see Section \ref{fracns}, where we establish the following result.

\begin{thmc}
 Composition with $\calJ_{\bfC}\colon H\bfC\rightarrow \CW$ induces an equivalence of
 categories, $$\Hom_{\smWGDbl_h,W}(H\bfC,\bbD)\simeq\Hom_{\smWGDbl_h}(\CW,\bbD).$$
\end{thmc}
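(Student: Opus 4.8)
The plan is to prove that precomposition with $\JC$,
$$
\JC^{*}\colon \Hom_{\smWGDbl_h}(\CW,\bbD)\rw\Hom_{\smWGDbl_h,W}(H\bfC,\bbD),
$$
is well defined, fully faithful and essentially surjective. The structural input, supplied by the construction of $\CW$ in Section \ref{fracns}, is that $\CW$ has the same objects as $\bfC$, that $\JC$ sends each $w\in W$ to a horizontal arrow of $\CW$ equipped with a \emph{chosen} vertical companion and a \emph{chosen} vertical conjoint, that every horizontal arrow of $\CW$ is built from arrows of the form $\JC(f)$ together with these chosen companions and conjoints, and that the only relations among such composites are those imposed by the calculus of fractions for $W$ and by the double-categorical coherence of companions and conjoints; in particular $\CW$ agrees, up to the vertical $2$-equivalence used to construct it, with $\Dbl(\bfC(W^{-1}))$.

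Well-definedness is immediate, since a companion--conjoint situation is a finite list of equalities between double cells and hence is preserved by every strict double functor and by every horizontal transformation: $F\circ\JC$ is $W$-friendly for each strict $F\colon\CW\rw\bbD$, and $\JC^{*}$ carries horizontal transformations to $W$-friendly ones. Faithfulness and fullness on $2$-cells then follow from the presentation above: a horizontal transformation between strict functors out of $\CW$ is determined by its components at objects and at the horizontal generators together with its naturality cells at vertical arrows, and since all objects and horizontal generators lie in the image of $\JC$ while the components at the chosen companions and conjoints are forced by the defining equalities, such a transformation is determined by its restriction along $\JC$; conversely, a $W$-friendly transformation out of $H\bfC$ extends over the generators in a way compatible with all presenting relations, because these relations hold for the chosen companion/conjoint data in $\bbD$.

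Essential surjectivity is the heart of the matter, and I expect it to be the main obstacle. Let $\Phi\colon H\bfC\rw\bbD$ be $W$-friendly; replacing it by the functor isomorphic to it under the witnessing invertible horizontal transformation, we may assume $\Phi$ sends every $w\in W$ to a horizontal arrow carrying a vertical companion and conjoint, hence --- by the correspondences between (pre-)companions, conjoints and internal equivalences recalled in the introduction --- to an internal equivalence of the fundamental bicategory $\Bic(\bbD)$. The normal homomorphism $\bfC\rw\Bic(\bbD)$ induced by $\Phi$ on fundamental bicategories (using $\Bic(H\bfC)=\bfC$) thus inverts $W$, so by the universal property of the bicategory of fractions \cite{P1995,P-thesis} it factors, uniquely up to a unique invertible $2$-cell, through a homomorphism $\bfC(W^{-1})\rw\Bic(\bbD)$; transporting this along the biequivalence $\smBic\simeq_{\bi}\smDbl$ between $(\WGDbl)_{\ps}$ and $\NHom$ and along the vertical $2$-equivalence $\CW\simeq\Dbl(\bfC(W^{-1}))$ produces a pseudo-functor $\CW\rw\bbD$, which one must then rectify to a strict functor $\overline{\Phi}$ and connect to $\Phi$ by a $W$-friendly horizontal isomorphism $\overline{\Phi}\circ\JC\cong\Phi$. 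More directly, one builds $\overline{\Phi}$ straight from the presentation of $\CW$: let it be $\Phi$ on objects and on the generators $\JC(f)$, send the chosen companion and conjoint of $\JC(w)$ to the chosen companion and conjoint of $\Phi(w)$, and check that this assignment respects every presenting relation. This last verification is where the real work lies: the relations encode both the fraction calculus for $W$ --- composability, the Ore-type square completion, and weak cancellation --- and the coherence of the companion/conjoint structure, and one has to confirm that the chosen data in $\bbD$ satisfies them, using that in a weakly globular double category a horizontal arrow determines its companion and its conjoint up to a unique compatible vertical isomorphism and that these isomorphisms assemble coherently. Controlling this bookkeeping, and confirming that the resulting $\overline{\Phi}$ is genuinely strict rather than merely pseudo, is the crux; naturality of the equivalence in $\bbD$ then follows as in the treatment of Theorem B.
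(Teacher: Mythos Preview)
Your proposal rests on several structural claims about $\CW$ and about $W$-friendly functors that are incorrect, and these errors are fatal to the argument.

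First, the objects of $\CW$ are \emph{not} the objects of $\bfC$; they are the arrows of $W$ (see Section~\ref{construction}). Consequently your direct construction of $\overline{\Phi}$, which begins ``let it be $\Phi$ on objects'', does not type-check. Second, and more seriously, $\JC$ does \emph{not} send $w\in W$ to a horizontal arrow with a companion: $\JC(w)$ is the arrow $(1_A)\xrightarrow{w}(1_B)$, and Lemma~\ref{compchar} shows that the horizontal arrows with companions in $\CW$ are exactly those of the form $(wu)\xrightarrow{u}(w)$; the paper makes this point explicitly just before defining $W$-friendly structures. So the picture ``$\JC$ freely adds a companion and conjoint to each $w$'' is wrong, and with it your generators-and-relations heuristic for fullness and faithfulness. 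Third, a $W$-friendly structure is not a horizontal isomorphism to a functor that gives each $\Phi(w)$ a companion; it is a functor $\Gamma\colon V\nabla W\to\bbComp(\bbD)$ together with an invertible transformation $\gamma$ comparing $h_-\circ v\Gamma$ with $hG\circ D_0$. The data $\Gamma$ assigns to each $w$ an object $\Gamma(w)$ of $\bbD$ (generally different from $G(d_0w)$) and to each morphism of $\nabla W$ a companion pair; this is exactly what the paper uses to define the lift.

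Your route through the universal property of $\bfC(W^{-1})$ cannot produce the required \emph{strict} functor: that universal property yields a homomorphism into $\Bic(\bbD)$, transport along $\Dbl$ and the vertical equivalence $\CW\simeq\Dbl(\bfC(W^{-1}))$ yields only a pseudo-functor, and ``rectifying'' that to a strict functor with a horizontal (not vertical) isomorphism to the original is precisely the content of the theorem, not something one can invoke. The paper instead builds the lift $\tilde{G}$ directly from $(G,\Gamma,\gamma)$: on objects $\tilde{G}(w)=\Gamma(w)$; on vertical arrows it uses the vertical parts of the companion pairs supplied by $\Gamma$; on horizontal arrows it conjugates $G(f)$ by the components of $\gamma$; and on double cells it uses the key Lemma~\ref{dblcellfac}, which factors every cell of $\CW$ as a pasting of companion binding cells and their vertical inverses, so that $\tilde{G}$ is forced on cells by the binding cells supplied by $\Gamma$. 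Full faithfulness is then checked by an explicit back-and-forth between horizontal transformations $b\colon G\Rightarrow L$ and pairs $(b\JC,\beta)$, again using the decomposition into binding cells. None of this structure is visible from the bicategory-of-fractions side.
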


The connection between companions, conjoints and adjoints was explored in \cite{DPP-spans2}
where a double categorical version of the span-construction was introduced.
The double category $\bbSpan(C)$ freely adds horizontal companions and conjoints (which then become  adjoints) to 
the arrows of a category $\bfC$ (when viewed as a vertical double category).
However,  the resulting double categories in that paper are not weakly globular.
The beauty of the current construction is that it works within a 2-category that models 
weak 2-categories, but has additional structure available.

Another advantage of the double category $\CW$ over the bicategory $\bfC(W^{-1})$
is that it has small hom-sets, in the horizontal and vertical directions, and in the cells.
So we avoid the size-issues that we normally face when dealing with categories or bicategories of fractions
due to the presence of spans in the homs.

\subsection{Outline of the paper} The organization of this paper is as follows: 
in Section \ref{specialcase} we spell out the results by
Power \cite{pow} as they apply to a special class of pseudo-functors from $\dop$ to $\Cat$. 
In Section \ref{rigidification}
we particularize this to a class of pseudo-functors constructed out of Tamsamani weak 2-categories: 
this yields a rigidification result for the latter. In Section \ref{wglobular2} 
we introduce weakly globular double categories and provide
the necessary background and notation for double categories. We also provide a pseudo-inverse 
 $D\colon(\Ta)_{\ps}\rightarrow\NHom$ called dicretization, for the rigidification functor.
Section \ref{bicat} provides the necessary background on the relationship between bicategories and
Tamasamani weak 2-categories from \cite{lp}. We also provide an explicit description of
the fundamental bicategory $\Bic(\bbD)$ of a weakly globular double category $\bbD$, as well as of the associated
double category $\Dbl(\calB)$, as a double category of marked paths in a bicategory $\calB$.

In Section \ref{ccqu} we review the double categorical notion of companions and conjoints and show how they are related to
quasi units in bicategories under the functors $\Dbl$ and $\Bic$. This section also introduces the
category $\Comp(\bbD)$ and the double category $\bbComp(\bbD)$  of companions in a double category $\bbD$.
In Section \ref{CW} we introduce the weakly globular category of fractions $\CW$ and in Section \ref{fracns}
we describe its universal properties.

Finally, in Section \ref{grpdl} we introduce the subcategory of groupoidal weakly globular double categories
and show that they form an algebraic model of 2-types.

\medskip

\textbf{Acknowledgements.} The first author is supported by a Marie Curie
International Reintegration Grant no. 256341. She would also like to thank the
Department of Mathematics and Statistics of Dalhousie University for their
hospitality during a visit in June 2012. The second author is supported by an NSERC Discovery grant.
She would also like to thank the Department of Mathematics of the University of Leicester for their 
hospitality during a visit in August 2012.

\bigskip

\section{A special case of the strictification of pseudo-functors}\label{specialcase}

In this section we first recall a classical result due to Power \cite{pow} on
strictification of pseudo-functors. We then apply this technique to a
particular class of pseudo-functors from $\dop$ to $\Cat$.
\subsection{Strictification of pseudo-functors}\label{strpseudo}

As explained in \cite[4.2]{pow}, the functor $2$-category $[\dop,\Cat]$ is
$2$-monadic over $[\;|\dop|, \Cat]$, where $|\dop|$ is the set of objects of
$\dop$.

Let $U:[\dop,\Cat]\rw [\,|\dop|, \Cat]$ be the forgetful functor,
$(UX)_{n}=X_{n}$ for all $[n]\in\dop, X\in [\dop,\Cat]$. Then its left adjoint
$F$ is given on objects by
\begin{equation*}
(FH)_{n}=\coprod_{[m]\in |\dop|} \dop([m],[n])\times H_{m},
\end{equation*}
for $H\in [\,|\dop|, \Cat], [n]\in |\dop|$. If $T$ is the monad corresponding to
the adjunction $F\dashv U$ then
\begin{equation*}
(TH)_{n}=\coprod_{[m]\in |\dop|} \dop([m],[n])\times H_{m},
\end{equation*}
for $H\in [\,|\dop|, \Cat]$, and $[n]\in |\dop|$.

A pseudo-$T$-algebra is given by $H\in [\,|\dop|, \Cat]$, functors
$$h_{n}:\prod_{[m]\in |\dop|} \dop([m],[n])\times H_{m}\rw H_{n}$$ and additional data
as described in \cite[4.2]{pow}. This amounts precisely to a pseudo-functor
from $\dop$ to $\Cat$ and the $2$-category $\PsTalg$ of pseudo-$T$-algebras corresponds
to the $2$-category $\Ps [\dop,\Cat]$ of pseudo-functors, pseudo-natural
transformations and modifications.

The general strictification result proved in \cite[3.4]{pow}, when applied to
this case, yields that every pseudo-functor from $\dop$ to Cat is equivalent,
in $\Ps [\dop,\Cat]$, to a $2$-functor.

The construction given in \cite{pow} is as follows. Given a pseudo-$T$-algebra
as above, factor $h:TH \rw H$ as $TH \supar{r}L\supar{g}H$ with $r_{n}$
bijective on objects and $g_{n}$ fully faithful for each $[n]\in \dop$. Then it
is shown in \cite{pow}  that it is possible to give a strict $T$-algebra
structure $TL\rw L$ such that $(g,Tg)$ is an equivalence of pseudo-$T$-algebras.

\begin{remark}\rm\label{rem1}
Since $(g,Tg)$ is an equivalence of pseudo-$T$-algebras, $g_{n}$ is an
equivalence of categories for every $[n]\in \dop$. In fact, by definition there
is a map $(g', Tg')$ with invertible $2$-cells $\alpha: (g,
Tg)(g',Tg')\Longrightarrow \mathrm{id}$ and $\beta: (g',
Tg')(g,Tg)\Longrightarrow \mathrm{id}$. A $2$-cell in $\PsTalg$ amounts to a
$2$-cell in $[\,|\dop|, \Cat]$ satisfying the condition of \cite[2.6]{pow}.
Since the $2$-cells in $[\,|\dop|, \Cat]$  are modifications, 
this implies that for each $[n]\in \dop$ there are natural transformations
$\mathrm{id}\cong g_{n}g'_{n}$, and $g'_{n}g_{n}\cong \mathrm{id}$; that is,
$g_{n}$ is an equivalence of categories for each $n$.
\end{remark}
\subsection{A special case of strictification}\label{specialstrict}

We now apply the technique of Section \ref{strpseudo} to a class of
pseudo-functors which, as we will see in Section \ref{rigtam2}, arises from
Tamsamani weak 2-categories.

\begin{lemma}\label{special.2}
    Let $H \in \Ps[\dop,\Cat]$ be a pseudo-functor such that $H_{0}$ is a
    discrete category and, for each $n\geq 2$, $H_{n}\cong\pro{H_1}{H_{0}}{n}$.
    Let $U:\Ps[\dop,\Cat]\rw[|\dop|,\Cat]$ be the forgetful functor,
    $(UH)_{n}=H_{n}$ for all $n\geq 0$, and let $T$ be the monad on $[|\dop|,\Cat]$ as in Section
    \ref{strpseudo}. Then
    \begin{itemize}
      \item [a)] The pseudo-$T$-algebra corresponding to $H$ has structure map
      $h:TUH\rw UH$ given as follows: for each $k\geq 0$,
\begin{equation*}
    (TUH)_{k}=\underset{[n]\in\Delta}{\coprod}\Delta([k],[n])
      \times
      H_{n}=\underset{[n]\in\Delta}{\coprod}\;\;\underset{\Delta([k],[n])}{\coprod}H_n.
\end{equation*}
      For $n\geq 0$ and $f\in\Delta([k],[n])$, let
      $i_n:\underset{\Delta([k],[n])}{\coprod}H_n\rw(TUH)_k$ and $j_f:H_n\rw\underset{\Delta([k],[n])}{\coprod}
      H_n$ be the coproduct injections. Then $h_k i_n j_f=H(f)$.
      \medskip
      \item [b)] There are functors $\pt'_0, \pt'_1:(TUH)_1\rightrightarrows(TUH)_0$ making
      the following diagram commute:
      \begin{equation}\label{special.eq1}
        \xymatrix{
        (TUH)_1  \ar[r]^{h_1}\ar@<1ex>[d]^{\pt'_1}\ar@<-1ex>[d]_{\pt'_0} & H_1\ar@<1ex>[d]^{\pt_1}\ar@<-1ex>[d]_{\pt_0}
         \\
        (TUH)_0  \ar[r]^{h_0} & H_0
        }
      \end{equation}
      that is, $\pt_ih_1=h_0\pt'_i,$ for $i=0,1$.
      \item [c)] For each $k\geq 2$, $(TUH)_k\cong\pro{(TUH)_1}{(TUH)_0}{k}$.
      \medskip
      \item [d)] For each $k\geq 2$, the morphism $h_k:(TUH)_k\rw H_k$ is
      $h_k=(h_1,\ldots,h_1)$.
    \end{itemize}
\end{lemma}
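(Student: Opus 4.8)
The plan is to unwind the explicit formula for the monad $T$ from Section~\ref{strpseudo} and check the four assertions by direct inspection, using the hypothesis $H_0$ discrete and $H_n\cong\pro{H_1}{H_0}{n}$ for $n\ge 2$ at the points where structure on $TUH$ must be reconstructed from structure on $H$.

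For part (a), I would recall that the pseudo-$T$-algebra structure map $h\colon TUH\to UH$ associated to a pseudo-functor $H$ is, by the correspondence in \cite[4.2]{pow}, assembled from the action of $H$ on morphisms of $\dop$: on the summand indexed by $[n]$ and $f\in\dop([k],[n])$, the component of $h_k$ is precisely $H(f)\colon H_n\to H_k$. So the identity $h_k i_n j_f = H(f)$ is really just the definition of $h$ transported through the explicit description of $(TUH)_k=\coprod_{[n]}\coprod_{\dop([k],[n])}H_n$; the only thing to say is that the coproduct injections $i_n$, $j_f$ name exactly the summands on which $h_k$ restricts to $H(f)$.

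For parts (b) and (c), the key observation is that the functor $\dop([-],[n])\times H_n$ sends the two coface maps $[0]\rightrightarrows[1]$ to maps $\dop([1],[n])\to\dop([0],[n])$, and taking the coproduct over $[n]$ and over the hom-sets yields functors $\pt'_0,\pt'_1\colon(TUH)_1\rightrightarrows(TUH)_0$; commutativity of \eqref{special.eq1} is then the statement that $h$ is natural in $[k]$, i.e. a morphism in $[|\dop|,\Cat]$ is not enough but the simplicial-identity compatibility $\pt_i\circ H(f)=H(\pt_i^*f)\circ(\text{appropriate face})$ holds, which is automatic from functoriality of $H$ on $\dop$. For (c), I would compute $(TUH)_k=\coprod_{[n]}\dop([k],[n])\times H_n$ and compare it with $\pro{(TUH)_1}{(TUH)_0}{k}=\coprod\,\dop([1],[n_1])\times\cdots$ fibred over the $(TUH)_0$ copies; the matching of index sets reduces to the combinatorial fact that a map $[k]\to[n]$ in $\dop$ is determined by its restrictions to the $k$ "edges" $[1]\hookrightarrow[k]$ together with agreement on the shared vertices $[0]$ — this is the Segal/nerve decomposition of $\dop$ itself — and on the category factors one uses $H_n\cong\pro{H_1}{H_0}{n}$ to split $H_n$ accordingly. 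This is where the hypotheses on $H$ are genuinely used, and I expect the bookkeeping of the pullback indexing to be the main obstacle: one must check that the iterated fibre product on the right is formed over exactly the maps $\pt'_0,\pt'_1$ of part (b), and that the isomorphism is compatible with these, not merely an abstract bijection of underlying sets.

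Finally, part (d) follows by combining (a) and (c): under the identification $(TUH)_k\cong\pro{(TUH)_1}{(TUH)_0}{k}$ and $H_k\cong\pro{H_1}{H_0}{k}$, the map $h_k$ acts componentwise, and on each of the $k$ factors it is, by part (a) applied with $[k]$ replaced by $[1]$ and $f$ the corresponding edge inclusion, just $h_1$; so $h_k=(h_1,\dots,h_1)$. The only care needed is to verify that the chosen isomorphisms in (c) for $k$ and the ones assembling $H_k$ are compatible so that this componentwise description is literally correct and not merely correct up to a further isomorphism — again a naturality check, routine once the indexing in (c) is pinned down.
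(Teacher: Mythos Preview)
Your outline follows the same overall shape as the paper's argument, but there are two genuine gaps worth flagging.

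For part (b), you write that commutativity of \eqref{special.eq1} ``is automatic from functoriality of $H$ on $\dop$.'' But $H$ is only a \emph{pseudo}-functor, so a priori you only get an invertible 2-cell $H(f\delta_i)\cong H(\delta_i)\circ H(f)$, not an equality. The paper's argument defines $\pt'_i$ by $\pt'_i\, i_n j_f = i_n j_{f\delta_i}$ and then observes that the required equality $h_0\pt'_i = \pt_i h_1$ amounts exactly to $H(f\delta_i)=H(\delta_i)H(f)$; this holds on the nose \emph{because $H_0$ is discrete}, so the comparison 2-cell, living in $H_0$, must be an identity. You need to invoke the discreteness hypothesis here --- it is not a decoration.

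For part (c), you say that ``on the category factors one uses $H_n\cong\pro{H_1}{H_0}{n}$ to split $H_n$ accordingly'' and that ``this is where the hypotheses on $H$ are genuinely used.'' This is a misreading. The isomorphism $(TUH)_k\cong\pro{(TUH)_1}{(TUH)_0}{k}$ holds \emph{without} any hypothesis on $H$: since $\pt'_i$ acts as the identity on the $H_n$-factor of each summand, matching faces in the iterated pullback forces all $k$ factors to lie in the \emph{same} summand $[n]$ and to share the same element of $H_n$. The computation then reduces purely to the combinatorial Segal decomposition $\Delta([k],[n])\cong\pro{\Delta([1],[n])}{\Delta([0],[n])}{k}$, with the $H_n$-factor contributing only the diagonal pullback $\pro{H_n}{H_n}{k}=H_n$. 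Your suggestion of ``$\coprod\,\dop([1],[n_1])\times\cdots$'' with varying $n_i$ would take you in the wrong direction. The hypothesis $H_n\cong\pro{H_1}{H_0}{n}$ is needed only in part (d), where it identifies the \emph{target} $H_k$ so that $h_k$ can be read off componentwise as $(h_1,\dots,h_1)$.
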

\begin{proof}
\begin{itemize}
  \item [a)] From the general correspondence between pseudo-$T$-algebras and
  pseudo-functors, the pseudo-$T$-algebra corresponding to $H$ has structure
  map $h:TUH\rw UH$ as stated. Recalling that, if $X$ is a set and $\calC$ is a
  category, $X\times \calC\cong\underset{X}{\coprod}\calC$, we have
  \begin{equation*}
    (TUH)_k=\underset{[n]\in\dop}{\coprod}\dop([n],[k])\times
    H_n=\underset{[n]\in\Delta}{\coprod}\Delta([k],[n])\times
    H_n\cong\underset{[n]\in\Delta}{\coprod}\;\;\underset{\Delta([k],[n])}{\coprod}H_n.
  \end{equation*}
  \item [b)] Let $\delta_i:[0]\rw[1]$, $\delta_i(0)=0$, $\delta_i(1)=i$, for $i=0,1$. For
  $f\in\Delta([1],[n])$ let $j_f:H_n\rw\underset{\Delta([1],[n])}{\coprod}H_n$ and 
  $i_n\colon\underset{\Delta([1],[n])}{\coprod}H_n
  \rw \underset{[n]\in\Delta}{\coprod}\;\;\underset{\Delta([1],[n])}{\coprod}H_n$ be the
  coproduct injections. Let $\pt'_i:(TUH)_1\rw(TUH)_0$ be the functors determined by
  \begin{equation}\label{special.eq2}
    \pt'_i i_n j_f=i_n j_{f \delta_i}
  \end{equation}
  From a), we have
  $$h_0\pt'_i i_n j_f=h_0 i_n j_{f \delta_i}=H(f \delta_i)\quad \text{and}\quad \pt_i h_1 i_n j_f=H(\delta_i)
  H(f)\,.$$
  Since $H\in \Ps[\dop,\Cat]$ and $H_0$ is discrete, it follows that $H(f\delta_i)=H(\delta_i)H(f)$, 
  so that, from above, $h_0\pt'_i i_n j_f=\pt_i h_1 i_n j_f$
  for each $[n]\in\Delta$, $f\in\Delta([1],[n])$. We conclude that
  $h_0\pt'_i=\pt_ih_1$.
  \medskip
  \item [c)] For each $k\geq 2$, $[k]$ is the colimit in $\Delta$ of the
  diagram
  \begin{equation*}
    \xymatrix{
    [1] && [1] && && [1]\\
    &[0]\ar[ul]^{0} \ar[ur]_1 && [0]\ar[ul]^0 & \cdots & [0]\ar[ur]_1
    }
  \end{equation*}
    that is,
    $[k]=[1]\underset{[0]}{\coprod}\overset{k}{\cdots}\underset{[0]}{\coprod}[1]$.
In fact, it is easy to check by direct computation that there is a pushout
    in $\Delta$:
    \begin{equation*}
        \xymatrix{
        [0]\ar[r]^0\ar[d]_1 & [k-1]\ar[d]^p\\
        [1]\ar[r]_q &[k]
        }
    \end{equation*}
    where $q(i)=i$, for $i=0,1$, and $p(t)=t+1$, for $t=0,\ldots,k-1$.
In particular, $[2]=[1]\underset{[0]}{\coprod}[1]$. Inductively,
    if $[k-1]=[1]\underset{[0]}{\coprod}\overset{k-1}{\ldots}\underset{[0]}{\coprod}[1]$ then,
    from the above pushout,
    $[k]=[k-1]\underset{[0]}{\coprod}[1]=[1]\underset{[0]}{\coprod}\overset{k}{\ldots}\underset{[0]}{\coprod}[1]$,
    as claimed. It follows that there is a bijection, for $k\geq 2$
    \begin{equation*}
        \Delta([k],[n])\cong\pro{\Delta([1],[n])}{\Delta([0],[n])}{k}.
    \end{equation*}
From the proof of b), the
    functors $\pt_i:(TUH)_1\rw(TUH)_0$ are determined by the functors 
$(\ovl{\delta}_i,\id):\Delta([1],[n])\times H_n\rw\Delta([0],[n])\times H_n$ 
where $\ovl{\delta}_i(f)=f{\delta}_i$ for $f\in\Delta([1],[n])$. Hence, from above, we
    obtain
    \begin{equation*}
        \begin{split}
            &\quad \pro{(TUH)_1}{(TUH)_0}{k}= \\
             & =\pro{(\underset{[n]\in\Delta}{\coprod}\Delta([1],[n])\times
             H_n)}{(\underset{[n]\in\Delta}{\coprod}\Delta([0],[n])\times
             H_n)}{k}\\
             & \cong \underset{[n]\in\Delta}{\coprod}\pro{(\Delta([1],[n])\times
             H_n)}{(\Delta([0],[n])\times
             H_n)}{k}\\
             &=\underset{[n]\in\Delta}{\coprod}(\pro{\Delta([1],[n])}{\Delta([0],[n])}{k})\times(\pro{H_n}{H_n}{k})\\
             &=\underset{[n]\in\Delta}\coprod\Delta([k],[n])\times H_n=(TUH)_k.
         \end{split}
    \end{equation*}

    \bigskip
    \item [d)] From a), $h_k i_n j_f=H(f)$ for $f\in\Delta([k],[n])$, $n>0$. Let $f$
    correspond to $(\delta_1,\ldots,\delta_k)$ in the isomorphism
    $\Delta([k],[n])\cong\pro{\Delta([1,[n])}{\Delta([0,[n])}{k}$. Then
    $j_f=(j_{\delta_1},\ldots,j_{\delta_k})$. Since $H_k\cong\pro{H_1}{H_0}{k}$, $H(f)$
    corresponds to $(H(\delta_1),\ldots,H(\delta_k))$ with $p_i H(f)=H(\delta_i)$. Then, for
    all $f\in\Delta([k],[n])$, $n>0$ we have
    \begin{equation*}
        \begin{split}
            & h_k i_n j_f=H(f)=(H(\delta_1),\ldots,H(\delta_k))=(h_1 i_n j_{\delta_1},\ldots,h_1 i_n j_{\delta_k})= \\
            & =(h_1,\ldots,h_1) i_n(j_{\delta_1},\ldots,j_{\delta_k})=(h_1,\ldots,h_1)
            i_n j_f.
        \end{split}
    \end{equation*}
    We conclude that $h_k=(h_1,\ldots,h_1)$
\end{itemize}
\end{proof}

\begin{proposition}\label{special.3}
    Let $H\in \Ps[\dop,\Cat]$ be such that
    $H_n\cong\pro{H_1}{H_0}{n}$ for each $n\geq 2$ and $H_0$ is discrete. Let $L\in[\dop,\Cat]$ be
    the strictification of $H$ as in Section \ref{strpseudo}. Then

    \begin{itemize}
      \item [a)] There is a morphism $g:L\rw H$ in  $\Ps[\dop,\Cat]$ such that,
      for each $k\geq 0$, $g_k$ is an equivalence of categories.
      \medskip
      \item [b)] $L_k\cong\pro{L_1}{L_0}{k}$ for all $k\geq 2$.
      \medskip
      \item [c)] The functor $g_0:L_0\rw H_0$ induces equivalences of
      categories\\ $\pro{L_1}{L_0}{k}\simeq\pro{L_1}{H_0}{k}$ for all $k\geq 2$.
    \end{itemize}
\end{proposition}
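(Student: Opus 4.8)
The plan is to exploit the explicit description of Power's strictification recalled in Section~\ref{strpseudo} together with the combinatorial analysis of $TUH$ carried out in Lemma~\ref{special.2}. Recall that $L$ is obtained by factoring the structure map $h\colon TUH\rw UH$ as $TUH\supar{r}L\supar{g}H$ with $r_k$ bijective on objects and $g_k$ fully faithful for each $[k]\in\dop$. Part a) is then immediate: since $(g,Tg)$ is an equivalence of pseudo-$T$-algebras, Remark~\ref{rem1} gives that $g_k$ is an equivalence of categories for every $k$, and the composite $r,g$ being a factorization of the pseudo-$T$-algebra structure map is exactly what makes $g$ a morphism in $\Ps[\dop,\Cat]$. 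So the content is in b) and c).

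For b) I would argue as follows. By Lemma~\ref{special.2}(c), $(TUH)_k\cong\pro{(TUH)_1}{(TUH)_0}{k}$ for $k\ge 2$, and by part (b) of that lemma the face functors $\pt'_0,\pt'_1\colon(TUH)_1\rightrightarrows(TUH)_0$ are compatible with the $\pt_i$ on $H$ via $h$. Now $r_k$ is bijective on objects, so $L_k$ has the same objects as $(TUH)_k$, hence the same objects as $\pro{(TUH)_1}{(TUH)_0}{k}$, which by bijectivity of $r_1$ and $r_0$ on objects are the objects of $\pro{L_1}{L_0}{k}$ (the pullback is computed on underlying object-sets, and $r$ being a map of simplicial objects means it commutes with the face maps that define the pullback). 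For the morphisms, one uses that $L_k$ sits inside $H_k\cong\pro{H_1}{H_0}{k}$ fully faithfully via $g_k$, and that $g_k=(g_1,\dots,g_1)$ — this last point follows from Lemma~\ref{special.2}(d), which says $h_k=(h_1,\dots,h_1)$, combined with the functoriality of the factorization $r,g$ in $k$ (Power's construction is natural, so the factorization at level $k$ is induced by the ones at the levels $1$ and $0$ glued along the pushout $[k]=[1]\amalg_{[0]}\cdots\amalg_{[0]}[1]$). Thus a morphism of $L_k$ is a tuple of morphisms of $L_1$ agreeing on $L_0$ under $\pt_0,\pt_1$, i.e.\ a morphism of $\pro{L_1}{L_0}{k}$, giving the isomorphism $L_k\cong\pro{L_1}{L_0}{k}$.

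For c), the key observation is that $g_0\colon L_0\rw H_0$ is an equivalence of categories (part a) with $H_0$ discrete, and that $\pt_0\colon H_1\rw H_0$ (equivalently $\pt_1$) together with the pullback description makes the comparison $\pro{L_1}{L_0}{k}\rw\pro{L_1}{H_0}{k}$ into the map induced by $g_0$ on the base of an iterated pullback along functors into $L_0$. One then needs that pulling back along an equivalence of categories whose target is equivalent to a discrete category (hence where one of the legs can be replaced, after the equivalence, by a functor to a set) preserves the relevant pullbacks up to equivalence; concretely, I would check that the face functor $L_1\rw L_0$ becomes, after composing with the equivalence $g_0$ into the discrete $H_0$, an isofibration, or else argue directly that since $L_0\simeq H_0$ is equivalent to a discrete category the strict pullback $\pro{L_1}{L_0}{k}$ is equivalent to the pseudo-pullback and hence to $\pro{L_1}{H_0}{k}$. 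This is the step I expect to be the main obstacle: one must be careful that strict pullbacks over $L_0$ are homotopically meaningful, which is where the posetal/discrete-up-to-equivalence nature of $L_0$ (coming from $H_0$ being discrete and $g_0$ an equivalence) is essential, and one should verify that the factorization of Power's construction is well enough behaved that $L_1\rw L_0$, or at least one of $\pt_0,\pt_1$, is an isofibration so that the strict pullbacks compute the correct thing.
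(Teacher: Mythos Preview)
Your treatment of a) matches the paper. For b) you are on the right track, but the justification that $g_k=(g_1,\dots,g_1)$ via ``functoriality of the factorization'' is vague and borders on circular: that identity is essentially what you are trying to prove. What does the work is the \emph{uniqueness} of (bijective-on-objects, fully-faithful) factorizations. The paper's argument is simply: since $h_k=(h_1,\dots,h_1)$ by Lemma~\ref{special.2}(d), and since $(r_1,\dots,r_1)$ is bijective on objects (because $r_0,r_1$ are) while $(g_1,\dots,g_1)$ is fully faithful (because $g_0,g_1$ are), the composite through $\pro{L_1}{L_0}{k}$ is \emph{a} (bo, ff) factorization of $h_k$; by orthogonality it is isomorphic to \emph{the} factorization through $L_k$. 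This replaces your separate bookkeeping on objects and morphisms by a one-line appeal to the factorization system.

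For c) you are making the argument much harder than necessary, and the isofibration/pseudo-pullback issue you flag as ``the main obstacle'' is a real obstacle on your route (there is no obvious reason either $d_i\colon L_1\rw L_0$ should be an isofibration). The paper sidesteps all of it with a two-line chain of equivalences: since $g_1\colon L_1\rw H_1$ is an equivalence and $H_0$ is discrete (so the iterated pullback over $H_0$ is a coproduct and hence preserves equivalences), one has $\pro{L_1}{H_0}{k}\simeq\pro{H_1}{H_0}{k}\cong H_k$; on the other hand, by a) and b), $\pro{L_1}{L_0}{k}\cong L_k\simeq H_k$. Combining these gives the equivalence. If you want the specific map induced by $g_0$ to be the equivalence, note that its composite with $(g_1,\dots,g_1)\colon\pro{L_1}{H_0}{k}\rw\pro{H_1}{H_0}{k}$ is $g_k$, and apply 2-out-of-3.
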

\begin{proof}
\begin{itemize}
  \item [a)] This follows directly from \cite{pow} (see Remark \ref{rem1}).
  \medskip
  \item [b)] Let $h:TUH\rw UH$ be as in Lemma \ref{special.2}. As recalled in section \ref{strpseudo}, 
  factorize $h=gr$
  so that, for each $i\geq 0$, $h_{i}$ factorizes as  $(TUH)_i\supar{r_i} L_i\supar{g_i} H_i$ with $r_i$
  bijective on objects and $g_i$ fully faithful. As explained in
  \cite{pow}, the $g_i$ are in fact equivalences.

  Since the bijective on objects and fully faithful functors form a
  factorization system in $\Cat$, the commutativity of (\ref{special.eq1})
  implies that there are functors $d_0,d_1:L_1\rw L_0$ such that the following
  commutes:
  \begin{equation*}
    \xymatrix{
    (TUH)_1 \ar[r]^(.6){r_1}\ar@<-1ex>[d]_{\pt'_0}\ar@<1ex>[d]^{\pt'_1} & L_1
    \ar[r]^(.5){g_1}\ar@<-1ex>[d]_{d_0}\ar@<1ex>[d]^{d_1} & H_1
    \ar@<-1ex>[d]_{\pt_0}\ar@<1ex>[d]^{\pt_1}\\
    (TUH)_0 \ar[r]_(.6){r_0} & L_0 \ar[r]_(.5){g_0} & H_0
    }
  \end{equation*}
    that is, $\pt_i r_1=r_0\pt'_i$, $\pt_ig_1=g_0d_i$ for $i=0,1$. By Lemma
    \ref{special.2} d) , $h_k$ factorizes as
    \begin{equation}\label{factorization}
    \begin{split}
        & (TUH)_k=\pro{(TUH)_1}{(TUH)_0}{k}\supar{(r_1,\ldots,r_1)} \pro{L_1}{L_0}{k} \\
        & \supar{(g_1,\ldots,g_1)}\pro{H_1}{H_0}{k}\cong H_{k}.
    \end{split}
    \end{equation}
    Since $r_0,r_1$ are bijective on objects, so is $(r_1,\ldots,r_1)$. Since $g_0,g_1$ are fully faithful, so is
    $(g_1,\ldots,g_1)$. Hence (\ref{factorization}) is the factorization of $h_k$ and
    we conclude that $L_k\cong\pro{L_1}{L_0}{k}$.
  \item [c)] Since $H_1\simeq L_1$ and $H_0$ is discrete,
  $H_k\cong\pro{H_1}{H_0}{k}\simeq\pro{L_1}{H_0}{k}$. On the other hand, $H_k\simeq L_k\cong
  \pro{L_1}{L_0}{k}$. In conclusion,
  $\pro{L_1}{H_0}{k}\\ \simeq\pro{L_1}{L_0}{k}$.
\end{itemize}
\end{proof}

\begin{remark}\label{rem2add}\rm
    Recall that the factorization of any functor $F:\bfC\rw\bfD$ as the
    composite $\bfC\supar{S}\bfE\supar{T}\bfD$ with $S$ bijective on objects
    and $T$ fully faithful is done as follows. Consider the pullbacks of sets
\begin{equation*}
  \xymatrix@R=2em@C=4em{
     \bfE_1 \ar[r]^-{(\tilde{d}_0,\tilde{d}_1)} \ar_{T_1}[d] &  \bfC_0\times\bfC_0 \ar^{F_0\times F_0}[d]\\
    \bfD_1 \ar_-{(d_0,d_1)}[r] & \bfD_0\times\bfD_0
  }
\end{equation*}
   where $d_0,d_1$ are the source and target map in the category $\bfD$. It is
   easy to see that there is a category $\bfE$ with objects $\bfE_0=\bfC_0$ and
   $\bfE_1$ and source and target maps $\tilde{d}_0$ and $\tilde{d}_1$ as in the pullback diagram above.
 Further, there are functors $S:\bfC\rw\bfE$, $T:\bfE\rw\bfD$ with $S_0=\id$,
   $T_0=F_0$, and $S_1$ determined by $F_1:\bfC_1\rw\bfD_1$ and
   $(d'_0,d'_1):\bfC_1\rw\bfC_0\times\bfC_0$. Hence, in the notation of
   Proposition \ref{special.3}, we have
\begin{equation*}
    \begin{split}
       L_{00} &= (T U H)_{00}=\underset{[n]\in\Delta}{\coprod}\Delta([0],[n])\times H_{n0} \\
       L_{10}  & = (T U H)_{10}=\underset{[n]\in\Delta}{\coprod}\Delta([1],[n])\times H_{n0}
     \end{split}
\end{equation*}
   while $L_{01}$ and $L_{11}$ are given by the following pullbacks:
\begin{equation*}
  \xymatrix@R=2.1em@C=4em{
     L_{11} \ar[r]\ar_{g_{11}}[d] &  (T U H)_{10}\times (T U H)_{10} \ar^{h_{10}\times h_{10}}[d]\\
    H_{11} \ar_{(d_0,d_1)}[r] & H_{10}\times H_{10}
  }
\end{equation*}
\begin{equation*}
  \xymatrix@R=2.1em@C=4em{
     L_{01} \ar[r]\ar_{g_{01}}[d] &  (T U H)_{00}\times (T U H)_{00} \ar^{h_{00}\times h_{00}}[d]\\
    H_{00} \ar_{(\id,\id)}[r] & H_{00}\times H_{00}
  }
\end{equation*}
\end{remark}

\section{Rigidification of Tamsamani weak 2-categories}\label {rigidification}

In this section we associate to a Tamsamani weak 2-category a pseudo-functor in
$\Ps[\dop,\Cat]$ satisfying the hypotheses of Proposition \ref{special.3}.
Hence the strictification result of Proposition \ref{special.3} gives rise to a
functor in $[\dop,\Cat]$ which is suitably equivalent to the original Tamsamani weak 2-category.

As we will see in Section \ref{wglobular2}, the resulting strict functor is in fact the horizontal
nerve of a special type of double category, which we call weakly globular.

We begin with some background on Tamsamani weak 2-categories (Section
\ref{tam2}) as well as on a general categorical technique known as 'transport
of structure along an adjunction' (Section \ref{transp}). The latter is used in
Section \ref{rigtam2} to associate to a Tamsamani weak 2-category a
pseudo-functor satisfying the hypotheses of Proposition \ref{special.3}.

\subsection{Tamsamani weak 2-categories}\label{tam2}

We first recall the notion of Segal map. Let $\bfC$ be a category with
pullbacks and let $X\in[\dop,\bfC]$. For each $k\geq 2$ we denote by
\begin{equation*}
    \pro{X_1}{X_0}{k}
\end{equation*}
the limit of the diagram
\begin{equation*}
    X_1\supar{\pt_1}X_0\suparle{\pt_0}\cdots\supar{\pt_1}X_0\suparle{\pt_0}X_1\;.
\end{equation*}
For each $1\leq j\leq k$ let $\nu_j:X_k\rw X_1$ be induced by the map  $[1]\rw[k]$ in
$\Delta$ sending $0$ to $j-1$ and $1$ to $j$. Then the following
diagram commutes:
\begin{equation*}
    \xy
    0;/r.80pc/:
    (0,0)*+{X_k}="1";
    (-7,-5)*+{X_1}="2";
    (-2,-5)*+{X_1}="3";
    (9,-5)*+{X_1}="4";
    (-10,-9)*+{X_0}="5";
    (-5,-9)*+{X_0}="6";
    (0,-9)*+{X_0}="7";
    (6,-9)*+{X_0}="8";
    (12,-9)*+{X_0}="9";
    (3,-5)*+{\cdots}="10";
    (3,-9)*+{\cdots}="11";
    {\ar_{\nu_1}"1";"2"};
    {\ar^{\nu_2}"1";"3"};
    {\ar^{\nu_k}"1";"4"};
    {\ar_{\pt_1}"2";"5"};
    {\ar^{\pt_0}"2";"6"};
    {\ar^{\pt_1}"3";"6"};
    {\ar^{\pt_0}"3";"7"};
    {\ar_{\pt_1}"4";"8"};
    {\ar^{\pt_0}"4";"9"};
    \endxy
\end{equation*}
By definition of limit, there is a unique map
\begin{equation*}
    \eta_k:X_k\rw\pro{X_1}{X_0}{k}
\end{equation*}
such that $\pr_j\,\eta_k=\nu_j$, where $\pr_j$ is the $j^{th}$ projection.

The maps $\eta_k$ are called Segal maps and they play an important role in
Tamsamani's model of weak 2-categories. They can also be used to characterize nerves of internal
categories: a simplicial  object in $\calC$ is the nerve of an internal category
in $\calC$ if and only if the Segal maps are isomorphisms for all $k\geq 2$.

\begin{definition}\rm{[8]\;}\label{tam1}
The category $\Ta$ of {\em Tamsamani weak $2$-categories} is the full subcategory
of $[\dop,\Cat]$ whose objects $X$ are such that $X_0$ is discrete and, for all
$k\geq 2$, the Segal map $\eta_k:X_k\rw \pro {X_1}{X_0}{k}$ is a categorical
equivalence.
\end{definition}

Let $\pi_0:\Cat\rw\Set$ associate to a category the set of isomorphism classes
of its objects. The functor  $\pi_0$ induces a functor
${\pi}_0^*:[\dop,\Cat]\rw[\dop,\Set]$, $(\pi_0^* X)_n=\pi_0 X_n$. If $X\in\Ta$,
then ${\pi}_0^* X$ is the nerve of a category. In fact, since $\pi_0$ sends
categorical equivalences to isomorphisms and preserves fiber products over
discrete objects, for all $k\geq 2$ we have

\begin{equation*}
    \pi_0X_{k}\cong\pi_0(\pro{X_{1}}{X_{0}}{k})\cong\pro{\pi_0 X_{1}}{\pi_0
    X_{0}}{k}.
    \end{equation*}
\medskip

We write $\Pi_0 X$ for the category whose nerve is ${\pi}_0^* X$. This defines a
functor
\begin{equation*}
    \Pi_0: \Ta \rw \Cat\,.
\end{equation*}
Given $X\in\Ta$ and $a,b\in X_0$ let $X_{(a,b)}$ be the full subcategory of
$X_1$ whose objects $z$ are such that $d_0 z=a$ and $d_1 z=b$. By considering
the functor $(d_0, d_1):X_1 \rw X_0 \times X_0$, since $X_0$ is discrete, we
obtain a coproduct decomposition $X_1=\coprod_{a,b\in X_0} X_{(a,b)}$.

\begin{definition}\label{tam.2}\rm A morphisms $F:X\rw Y$ in $\Ta$ is a
{\em 2-equivalence} if, for all $a, b \in X_0$, $F_{(a,b)}: X_{(a,b)} \rw Y_{(Fa,
Fb)}$ and $\Pi_0 F$ are categorical equivalences.
\end{definition}

\begin{remark}\label{s2.rem2}\rm
    Notice that if a morphism in $\Ta$ is a levelwise equivalence of
    categories, it is in particular a 2-equivalence.
\end{remark}
\subsection{Transport of structure along an adjunction}\label{transp}

We now recall a general categorical property, known as transport of structure
along an adjunction, with one of its applications.

\begin{theorem}\rm{\cite[Theorem 6.1]{lk}}\em\label{s2.the1}
    Given an equivalence $\;\eta,\;\vep : f \dashv f^* : A\rw B$ in the complete and locally small
    2-category $\calA$, and an algebra $(A,a)$ for the monad $T=(T,i,m)$ on $\calA$, the
    equivalence enriches to an equivalence
\begin{equation*}
  \eta,\vep:(f,\ovll{f})\vdash (f^*,\ovll{f^*}):(A,a)\rw(B,b,\hat{b},\ovl{b})
\end{equation*}
in $\PsTalg$, where $\hat{b}=\eta$, $\;\ovl{b}=f^* a\cdot T\vep \cdot
Ta\cdot T^2 f$, $\;\ovll{f}=\vep^{-1} a\cdot Tf$, $\;\ovll{f^*}=f^* a\cdot
T\vep$.
\end{theorem}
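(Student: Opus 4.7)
The plan is to transport the strict algebra structure from $A$ across the adjoint equivalence $f \dashv f^*$, producing a genuinely pseudo structure on $B$ whose failure of strictness is measured precisely by how far $\eta$ and $\epsilon$ are from being identities. First I would write down the only reasonable candidate for the structure 1-cell: set $b := f^* \cdot a \cdot Tf : TB \to B$. The two coherence 2-cells are then forced upon us by the monad axioms together with the adjunction data. For the unit coherence $\hat b : 1_B \Rightarrow b \cdot i_B$, compute
\begin{equation*}
b \cdot i_B \;=\; f^* \cdot a \cdot Tf \cdot i_B \;=\; f^* \cdot a \cdot i_A \cdot f \;=\; f^* \cdot f
\end{equation*}
using naturality of $i$ and the unit axiom $a \cdot i_A = 1_A$ for the strict algebra $(A,a)$; hence the adjunction unit $\eta : 1_B \Rightarrow f^* f$ supplies $\hat b$. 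For the multiplication coherence $\ovl b : b \cdot Tb \Rightarrow b \cdot m_B$, unfold
\begin{equation*}
b \cdot Tb \;=\; f^* a \cdot Tf \cdot Tf^* \cdot Ta \cdot T^2 f, \qquad b \cdot m_B \;=\; f^* a \cdot m_A \cdot T^2 f \;=\; f^* a \cdot a \cdot Ta \cdot T^2 f,
\end{equation*}
so that whiskering the counit $\epsilon : f f^* \Rightarrow 1_A$ gives the required 2-cell: this is exactly the expression $\ovl b = f^* a \cdot T\epsilon \cdot Ta \cdot T^2 f$ displayed in the statement.

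Next I would verify the two pseudo-$T$-algebra coherence axioms (the unit and pentagon laws relating $\hat b$, $\ovl b$, $i$, $m$). Both reduce, after expanding, to equations built from (i) the triangle identities $\epsilon f \cdot f\eta = 1_f$ and $f^* \epsilon \cdot \eta f^* = 1_{f^*}$, (ii) the naturality of $i$ and $m$ against $f$ and $f^*$, and (iii) the strict algebra axioms for $(A,a)$. These are the steps I would do by 2-categorical diagram chases; no step requires anything clever, only a careful bookkeeping of whiskerings. I would then define the pseudomorphism structure 2-cells as in the statement:
\begin{equation*}
\ovll{f} \;=\; (\epsilon^{-1} a) \cdot Tf \colon a \cdot Tf \Rightarrow f \cdot b, \qquad \ovll{f^*} \;=\; (f^* a) \cdot T\epsilon \colon b \cdot Tf^* \Rightarrow f^* \cdot a,
\end{equation*}
with the second direction being automatic, and the first requiring the invertibility of $\epsilon$, which is available since the adjunction is an equivalence. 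Compatibility of $\ovll{f}$ and $\ovll{f^*}$ with $\hat b$, $\ovl b$ and with the strict algebra data for $(A,a)$ is a further round of the same diagram chase.

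Finally, to obtain an equivalence in $\PsTalg$ I would check that $\eta$ and $\epsilon$ themselves satisfy the algebra-2-cell condition spelled out in Kelly's treatment, namely that they are compatible with $\ovll{f}$ and $\ovll{f^*}$ under whiskering with $a$, $b$, and $T$. Once verified, the triangle identities inherited from $f \dashv f^*$ automatically lift, so $(f, \ovll{f}) \dashv (f^*, \ovll{f^*})$ with unit $\eta$ and counit $\epsilon$ is an adjoint equivalence in $\PsTalg$, as required.

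The main obstacle will be the coherence verification for $\ovl b$: one must show that two different composite 2-cells, obtained by bracketing $b \cdot Tb \cdot T^2 b \Rightarrow b \cdot m_B \cdot T^2 b \Rightarrow b \cdot m_B \cdot Tm_B$ in two ways, agree. Every such diagram ultimately collapses by the triangle identities and the strict associativity $a \cdot m_A = a \cdot Ta$, but threading $T\epsilon$ and $T^2 \epsilon$ through the naturality squares of $m$ and of $a$ is where errors most easily creep in. Apart from this, the argument is essentially formal and proceeds purely in the 2-categorical language, requiring no specifically simplicial input.
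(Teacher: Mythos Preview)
The paper does not prove this theorem: it is stated as a citation of \cite[Theorem 6.1]{lk} and used as a black box, so there is no ``paper's own proof'' to compare against. Your sketch is nonetheless the standard argument and is essentially how Kelly and Lack proceed.

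One small slip: in your displayed computation of $b\cdot m_B$ you write $f^* a \cdot a \cdot Ta \cdot T^2 f$, which has one $a$ too many. From $b\cdot m_B = f^* a \cdot Tf \cdot m_B = f^* a \cdot m_A \cdot T^2 f$ and the strict algebra law $a\cdot m_A = a\cdot Ta$ you get $f^* \cdot a \cdot Ta \cdot T^2 f$, which is what is needed to see that $T\vep$ supplies the comparison with $b\cdot Tb = f^* a \cdot T(ff^*)\cdot Ta \cdot T^2 f$. Apart from this typo the outline is sound; as you note, the only genuine work is the pentagon for $\ovl b$, and that does reduce to the triangle identities plus naturality of $m$.
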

Let $\eta',\vep':f'\vdash f'^{*}:A'\rw B'$ be another equivalence in $\calA$ and
let $(B',b',\hat{b'},\ovl{b'})$ be the corresponding pseudo-$T$-algebra as in
Theorem \ref{s2.the1}. Suppose $g:(A,a)\rw(A',a')$ is a morphism in $\calA$ and
$\gamma$ is an invertible 2-cell in $\calA$
\begin{equation*}
  \xy
    0;/r.10pc/:
    (-20,20)*+{B}="1";
    (-20,-20)*+{B'}="2";
    (20,20)*+{A}="3";
    (20,-20)*+{A'}="4";
    {\ar_{f^*}"3";"1"};
    {\ar_{h}"1";"2"};
    {\ar^{f'^*}"4";"2"};
    {\ar^{g}"3";"4"};
    {\ar@{=>}^{\gamma}(0,3);(0,-3)};
\endxy
\end{equation*}
Let $\ovl{\gamma}$ be the invertible 2-cell given by the following pasting:
\begin{equation*}
    \xy
    0;/r.15pc/:
    (-40,40)*+{TB}="1";
    (40,40)*+{TB'}="2";
    (-40,-40)*+{B}="3";
    (40,-40)*+{B'}="4";
    (-20,20)*+{TA}="5";
    (20,20)*+{TA'}="6";
    (-20,-20)*+{A}="7";
    (20,-20)*+{A'}="8";
    {\ar^{Th}"1";"2"};
    {\ar_{b}"1";"3"};
    {\ar^{b'}"2";"4"};
    {\ar_{h}"3";"4"};
    {\ar_{Tg}"5";"6"};
    {\ar^{}"5";"7"};
    {\ar_{}"6";"8"};
    {\ar^{g}"7";"8"};
    {\ar_{Tf^*}"5";"1"};
    {\ar^{f^*}"7";"3"};
    {\ar^{Tf'^*}"6";"2"};
    {\ar^{f'^*}"8";"4"};
    {\ar@{=>}^{(T\gamma)^{-1}}(0,33);(0,27)};
    {\ar@{=>}^{\gamma}(0,-27);(0,-33)};
    {\ar@{=>}^{\ovll{f'^*}}(30,3);(30,-3)};
    {\ar@{=>}^{\ovll{f^*}}(-30,3);(-30,-3)};
\endxy
\end{equation*}
Then it is not difficult to show that
$(h,\ovl{\gamma}):(B,b,\hat{b},\ovl{b})\rw(B',b',\hat{b'},\ovl{b'})$ is a
pseudo-$T$-algebra morphism.

The following fact is essentially known and, as sketched in the proof below, it
is an instance of Theorem \ref{s2.the1}

\begin{lemma}\label{s4.lem1}
    Let $\calC$ be a small 2-category, $F,F':\calC\rw\Cat$ be 2-functors, $\alpha:F\rw F'$
    a 2-natural transformation. Suppose that, for all objects $C$ of $\calC$, the
    following conditions hold:
\begin{itemize}
  \item [i)] $G(C),\;G'(C)$ are objects of $\Cat$ and there are adjoint equivalences of
  categories $\mu_C\vdash\eta_C$, $\mu'_C\vdash\eta'_C$,
\begin{equation*}
  \mu_C:G(C)\;\rightleftarrows\;F(C):\eta_C\qquad\qquad
  \mu'_C:G'(C)\;\rightleftarrows\;F'(C):\eta'_C,
\end{equation*}
  \item [ii)] there are functors $\beta_C:G(C)\rw G'(C),$
  \item [iii)] there is an invertible 2-cell
\begin{equation*}
  \gamma_C:\beta_C\eta_C\Rar\eta'_C\alpha_C.
\end{equation*}
\end{itemize}
Then
\begin{itemize}
  \item [a)] There exists a pseudo-functor $G:\calC\rw\Cat$ given on objects by $G(C)$,
  and pseudo-natural transformations $\eta:F\rw G$, $\mu:G\rw F$ with
  $\eta(C)=\eta_C$, $\mu(C)=\mu_C$; these are part of an adjoint equivalence
  $\mu\vdash\eta$ in the 2-category $\Ps[\calC,\Cat]$.
  \item [b)] There is a pseudo-natural transformation $\beta:G\rw G'$ with
  $\beta(C)=\beta_C$ and an invertible 2-cell in $\Ps[\calC,\Cat]$,
  $\gamma:\beta\eta\Rar\eta\alpha$ with $\gamma(C)=\gamma_C$.
\end{itemize}
\end{lemma}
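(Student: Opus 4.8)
The plan is to deduce Lemma \ref{s4.lem1} from Theorem \ref{s2.the1} by working in the $2$-category $\calA = [\calC, \Cat]$ of $2$-functors, $2$-natural transformations and modifications, which is complete and locally small since $\calC$ is small and $\Cat$ is complete and locally small. The crucial observation is that the data in hypotheses (i)--(iii) are \emph{pointwise}, but they assemble into honest structure in $\calA$: the adjoint equivalences $\mu_C \vdash \eta_C$ need not be natural in $C$ (the $G(C)$ carry no a priori functoriality), so the first real task is to manufacture from them a genuine object of $\calA$ together with an equivalence to $F$.

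\smallskip

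First I would recall the precise mechanism. Power's strictification (Section \ref{strpseudo}) shows $\Ps[\calC,\Cat]$ is $2$-monadic over $[|\calC|,\Cat]$ via a monad $T$, and $F,F'$, being $2$-functors, are in particular strict $T$-algebras $(F,a)$, $(F',a')$. In $[|\calC|,\Cat]$ the collection $(G(C))_{C}$ is just an object, call it $G$, and the pointwise adjoint equivalences $\mu_C \vdash \eta_C$ assemble (with no coherence needed, since morphisms in $[|\calC|,\Cat]$ are families of functors and $2$-cells are families of natural transformations) into an adjoint equivalence $\mu \vdash \eta : G \rightleftarrows UF$ in $[|\calC|,\Cat]$, where $U$ is the forgetful $2$-functor. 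Now Theorem \ref{s2.the1}, applied with $\calA = \Ps[\calC,\Cat]$ regarded through the monadicity --- more precisely applied in $[|\calC|,\Cat]$ to the $T$-algebra $(F,a)$ and the equivalence $\eta,\vep : \mu \vdash \mu^* : F \to G$ --- enriches this to an equivalence of pseudo-$T$-algebras $(\mu,\ovll{\mu}) \vdash (\eta,\ovll{\eta}) : (F,a) \to (G,b,\hat b,\ovl b)$, with the explicit formulas $\hat b = \eta$, $\ovl b = \mu^* a\cdot T\vep\cdot Ta\cdot T^2\mu$, etc. Translating back along the correspondence between pseudo-$T$-algebras and pseudo-functors gives the pseudo-functor $G:\calC\to\Cat$ of part (a), with $\eta:F\to G$ and $\mu:G\to F$ pseudo-natural transformations forming an adjoint equivalence $\mu\vdash\eta$ in $\Ps[\calC,\Cat] = \PsTalg$, and by construction $\eta(C)=\eta_C$, $\mu(C)=\mu_C$, $G(C)$ the given category.

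\smallskip

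For part (b), I would invoke the paragraph immediately following Theorem \ref{s2.the1}: applying the same transport-of-structure construction to the second equivalence, one obtains the pseudo-$T$-algebra $(G',b',\hat{b'},\ovl{b'})$ corresponding to a pseudo-functor $G':\calC\to\Cat$ (pointwise $G'(C)$), equipped with $\eta':F'\to G'$, $\mu':G'\to F'$. The families $(\beta_C)$ and $(\gamma_C)$ constitute a morphism $g = \beta : UF \to UF'$ pulled back to $G\to G'$ and an invertible $2$-cell $\gamma$ in $[|\calC|,\Cat]$ fitting the square with $\alpha:F\to F'$ (playing the role of $g$) and the two $\mu^*$'s --- precisely the hypothesis of that paragraph. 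Its conclusion yields that $(\beta,\ovl\gamma):(G,b,\hat b,\ovl b)\to(G',b',\hat{b'},\ovl{b'})$ is a pseudo-$T$-algebra morphism, i.e.\ a pseudo-natural transformation $\beta:G\to G'$ with $\beta(C)=\beta_C$, together with the invertible modification $\gamma:\beta\eta\Rar\eta'\alpha$, $\gamma(C)=\gamma_C$.

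\smallskip

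\textbf{The main obstacle} I anticipate is purely bookkeeping rather than conceptual: one must check that Theorem \ref{s2.the1} is genuinely applicable, i.e.\ that the equivalence $\mu\vdash\eta$ really lives in the ambient $2$-category over which $T$ is a monad (this is where $2$-monadicity of $\Ps[\calC,\Cat]$ over $[|\calC|,\Cat]$ does the work, so the equivalence need only be checked componentwise, which is (i)), and then that the resulting pseudo-$T$-algebra $(G,b,\hat b,\ovl b)$ unwinds to a pseudo-functor whose value on $C$ is the original $G(C)$ and whose structure $2$-cells are built from the $\mu_C,\eta_C$ and the $F$-structure --- so that $\eta(C)=\eta_C$ and $\mu(C)=\mu_C$ on the nose. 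All of this is routine given Power's setup and the cited transport-of-structure result, so the proof is essentially an assembly argument; I would present it as such, citing Theorem \ref{s2.the1} and the remark following it, rather than re-deriving the coherence $2$-cells.
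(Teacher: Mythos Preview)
Your proposal is correct and follows essentially the same route as the paper: assemble the pointwise adjoint equivalences into an adjoint equivalence in $[|\calC|,\Cat]$, invoke Power's $2$-monadicity of $[\calC,\Cat]$ over $[|\calC|,\Cat]$, apply Theorem \ref{s2.the1} to obtain the pseudo-$T$-algebra (hence pseudo-functor) structure on $G$, and then use the paragraph after Theorem \ref{s2.the1} for part (b). One small wording point: your opening sentence names $\calA=[\calC,\Cat]$, but Theorem \ref{s2.the1} is applied with $\calA=[|\calC|,\Cat]$ (the base of the $2$-monad), exactly as you say in your second paragraph; just align the first sentence with that.
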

\begin{proof}
Recall \cite{pow} that the functor 2-category $[\calC,\Cat]$ is 2-monadic over
$[|\calC|,\Cat]$, where $|C|$ is the set of objects in $\calC$. Let $T$ be the
2-monad; then the pseudo-$T$-algebras are precisely the pseudo-functors from
$\calC$ to $\Cat$. Let
\begin{equation*}
  \calU:\PsTalg\equiv \Ps[\calC,\Cat]\rw[|\calC|,\Cat]
\end{equation*}
be the forgetful functor.

Then the adjoint equivalences $\mu_C\vdash\eta_C$ amount precisely to an
adjoint equivalence in $[|\calC|,\Cat]$, $\;\mu_0\vdash\eta_0$,
$\;\mu_0:G_0\;\;\rightleftarrows\;\;\calU F:\eta_0$ where $\;G_0(C)=G(C)$ for
all $C\in |\calC|$. By Theorem \ref{s2.the1}, this equivalence enriches to an
adjoint equivalence $\mu\vdash\eta$ in $\Ps[\calC,\Cat]$
\begin{equation*}
  \mu:G\;\rightleftarrows\; F:\eta
\end{equation*}
between $F$ and a pseudo-functor $G$; it is $\calU G=G_0$, $\;\calU\eta=\eta_0$,
$\;\calU\mu=\mu_0$; hence on objects $G$ is given by $G(C)$, and
$\eta(C)=\calU\eta(C)=\eta_C$, $\;\mu(C)=\calU\mu(C)=\mu_C$.

Let $\nu_C:\id_{G(C)}\Rar\eta_C\mu_C$ and $\vep_C:\mu_C\eta_C\Rar\id_{F(C)}$ be
the unit and counit of the adjunction $\mu_C\vdash\eta_C$. From Theorem
\ref{s2.the1}, given a morphism $f:C\rw D$ in $\calC$, it is
\begin{equation*}
  G(f)=\eta_D F(f)\mu_C
\end{equation*}
and we have natural isomorphisms:
\begin{align*}
   & \eta_f : G(f)\eta_C=\eta_D F(f)\mu_C\eta_C\overset{\eta_D F(f)\vep_C}{=\!=\!=\!=\!\Rar} \eta_D F(f)\\
   & \mu_f : F(f)\mu_C\overset{\nu_{F(f)}\mu_C}{=\!=\!=\!\Rar}\mu_D\eta_D F(f)\mu_C=\mu_D
   G(f).
\end{align*}
Also, the natural isomorphism
\begin{equation*}
  \beta_f: G'(f)\beta_C\Rar\beta_D G(f)
\end{equation*}
is the result of the following pasting
\begin{equation*}
    \xy
    0;/r.15pc/:
    (-40,40)*+{G(C)}="1";
    (40,40)*+{G'(C)}="2";
    (-40,-40)*+{G(D)}="3";
    (40,-40)*+{G'(D)}="4";
    (-20,20)*+{F(C)}="5";
    (20,20)*+{F'(C)}="6";
    (-20,-20)*+{F(D)}="7";
    (20,-20)*+{F'(D)}="8";
    {\ar^{\beta_C}"1";"2"};
    {\ar_{G(f)}"1";"3"};
    {\ar^{G'(f)}"2";"4"};
    {\ar_{\beta_D}"3";"4"};
    {\ar^{\alpha_C}"5";"6"};
    {\ar^{F(f)}"5";"7"};
    {\ar_{F'(f)}"6";"8"};
    {\ar_{\alpha'_D}"7";"8"};
    {\ar^{}"5";"1"};
    {\ar^{}"7";"3"};
    {\ar^{}"6";"2"};
    {\ar^{}"8";"4"};
    {\ar@{=>}^{\gamma_C}(0,33);(0,27)};
    {\ar@{=>}^{\gamma_D^{-1}}(0,-27);(0,-33)};
    {\ar@{=>}^{\eta_f'}(30,3);(30,-3)};
    {\ar@{=>}^{\eta_f}(-30,3);(-30,-3)};
\endxy
\end{equation*}
\end{proof}
\subsection{Rigidifying Tamsamani weak 2-categories}\label{rigtam2}

We now use the results of the previous sections to associate to a Tamsamani
weak 2-category a pseudo-functor satisfying the hypotheses of Proposition
\ref{special.3} and then obtain a rigidification result.

\begin{proposition}\label{s4.pro1}
    There is a functor
\begin{equation*}
  S:\Ta\rw \Ps[\Delta^{op},\Cat].
\end{equation*}
which associates to an object $ X$ of $\Ta$ a pseudo-functor $S
X\in \Ps[\dop,\Cat]$ with
\begin{equation*}
  (S X)_n=
  \begin{cases}
     X_1\tiund{ X_0}\overset{n}{\cdots}\tiund{ X_0} X_1 & n\geq 2, \\
     X_1 & n=1, \\
     X_0 & n=0.
  \end{cases}
\end{equation*}
To a morphism $F: X\rw X'$, $S$ associates a pseudo-natural transformation
$\beta(F):S X \rw S X'$ with
\begin{equation*}
  \beta(F)_n=
  \begin{cases}
    (F_1,\ldots,F_1) & n\geq 2, \\
    F_1 & n=1, \\
    F_0 & n=0.
  \end{cases}
\end{equation*}
Further, there is a pseudo-natural transformation $\alpha:S X \rw  X$ which is a
levelwise categorical equivalence.
\end{proposition}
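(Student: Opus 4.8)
The plan is to construct $SX$ via the transport-of-structure machinery of Section \ref{transp}, applied levelwise to the adjoint equivalences coming from the Segal maps. The key observation is that, for $X\in\Ta$, the Segal map $\eta_k\colon X_k\to\pro{X_1}{X_0}{k}$ is a categorical equivalence for each $k\ge 2$; since categorical equivalences admit (adjoint) inverses, we may choose, for each $k\ge 2$, an adjoint equivalence with a chosen pseudo-inverse. Setting $G_0(k)=\pro{X_1}{X_0}{k}$ for $k\ge 2$, $G_0(1)=X_1$, $G_0(0)=X_0$, and $\mu_k,\eta_k$ the chosen adjoint equivalence data (identities for $k=0,1$), gives an adjoint equivalence $\mu_0\dashv\eta_0\colon G_0\rightleftarrows \calU X$ in $[|\dop|,\Cat]$, where here we regard a Tamsamani weak $2$-category as a pseudo-functor (in fact strict functor) $\dop\to\Cat$, hence as a pseudo-$T$-algebra for the $2$-monad $T$ of Section \ref{strpseudo}. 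Theorem \ref{s2.the1} then enriches this to an adjoint equivalence in $\Ps[\dop,\Cat]$ between $X$ and a pseudo-functor which I take to be $SX$; by construction $\calU(SX)=G_0$, so $(SX)_n$ has exactly the stated values, and the pseudo-natural transformation $\alpha\colon SX\to X$ (take $\alpha=\mu$, or $\eta$, depending on orientation) is the enrichment of $\mu_0$, hence a levelwise categorical equivalence since each $\mu_k$ is.

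Next I would address functoriality in $X$, i.e.\ the construction of $\beta(F)$. Given $F\colon X\to X'$ in $\Ta$, I would apply the second half of Section \ref{transp}: $F$ is a morphism of pseudo-$T$-algebras, and for each $k\ge 2$ the naturality square relating $\eta_k$, $\eta'_k$, $F_k$ and $(F_1,\dots,F_1)$ commutes up to a canonical invertible $2$-cell $\gamma_k$ (built from the adjunction units/counits, exactly as the $\beta_f$ pasting in the proof of Lemma \ref{s4.lem1}); for $k=0,1$ the square commutes strictly. The displayed recipe for $\ovl{\gamma}$ there then produces a pseudo-$T$-algebra morphism $\beta(F)\colon SX\to SX'$ with $\calU\beta(F)$ given levelwise by $(F_1,\dots,F_1)$, $F_1$, $F_0$ as claimed. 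To see that $S$ is a \emph{functor} (preserves composites and identities on the nose) one checks that the chosen $2$-cells $\gamma_k$ compose correctly; since the $\gamma_k$ are determined by the fixed adjoint-equivalence data, this is a routine verification — the pasting that defines $\ovl{\gamma}$ is functorial in $\gamma$.

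Cleanest is to package all of this through Lemma \ref{s4.lem1} rather than invoking Theorem \ref{s2.the1} by hand: take $\calC=\dop$ (or a small $2$-category with only identity $2$-cells), $F=$ the given Tamsamani object viewed as a $2$-functor, $G(k)=\pro{X_1}{X_0}{k}$ for $k\ge2$ etc., $\mu_C\dashv\eta_C$ the chosen Segal adjoint equivalences, and for a morphism $F\colon X\to X'$ use Lemma \ref{s4.lem1}(b) with $\beta_C=(F_1,\dots,F_1)$ and the evident $\gamma_C$. Part (a) of that lemma gives the pseudo-functor $SX$ together with $\mu,\eta$, and the statement that $\mu$ (hence the desired $\alpha$) is a levelwise equivalence is immediate; part (b) gives $\beta(F)$ and the compatibility $2$-cell. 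I would then note that $\alpha$ is not merely a levelwise equivalence but in fact, by Remark \ref{s2.rem2}, a $2$-equivalence in $\Ta$ — though for this statement only the levelwise claim is asserted.

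The main obstacle is bookkeeping rather than conceptual: one must fix the adjoint-equivalence data for the Segal maps once and for all (using the axiom of choice over the objects of $\dop$) so that the resulting $\gamma_k$'s are canonical, and then verify that with these choices $S$ strictly preserves composition of morphisms in $\Ta$. The subtle point is that $SX$ is genuinely only a pseudo-functor (the $X_0$-discreteness is what was used in Lemma \ref{special.2}(b) to get strict compatibility in low degrees, but in degrees $\ge 2$ the structure constraints of $SX$ are the nontrivial ones transported from $X$), so one cannot shortcut by pretending $SX$ is strict; the coherence of $\beta(F)$ with these constraints is exactly what the pasting diagram in the proof of Lemma \ref{s4.lem1} encodes, and checking $\beta(F'F)=\beta(F')\beta(F)$ amounts to a pasting-diagram identity that I would assert follows from the naturality of the whiskering operations involved.
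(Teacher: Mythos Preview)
Your approach is the same as the paper's: invoke Lemma \ref{s4.lem1} with the Segal adjoint equivalences to produce $SX$, $\alpha$, and $\beta(F)$. The one place you overcomplicate matters is the claim that for $k\ge 2$ the square
\[
\xymatrix{
X_k \ar[r]^{F_k}\ar[d]_{\eta_k} & X'_k \ar[d]^{\eta'_k}\\
(SX)_k \ar[r]_{\beta_k} & (SX')_k
}
\]
commutes only up to a nontrivial invertible $2$-cell $\gamma_k$. In fact this square commutes \emph{strictly}: the Segal map $\eta_k$ is built from the face maps $\nu_j\colon X_k\to X_1$, and since $F$ is a natural transformation one has $F_1\nu_j=\nu'_jF_k$, whence $(F_1,\dots,F_1)\eta_k=\eta'_kF_k$. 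So the $\gamma_k$ in Lemma \ref{s4.lem1}(b) may all be taken to be identities. This is exactly what the paper does, and it renders the functoriality check $\beta(F'F)=\beta(F')\beta(F)$ a one-line levelwise verification $(F'_1F_1,\dots,F'_1F_1)=(F'_1,\dots,F'_1)(F_1,\dots,F_1)$ rather than the pasting-diagram bookkeeping you flag as the main obstacle.
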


\begin{proof}
We apply Lemma \ref{s4.lem1} to the case where $\calC=\dop$, considered as a
2-category with identity 2-cells; let $ X:\dop\rw\Cat$ be an object of $\Ta$.
By definition, for each $n\geq 2$ there is an equivalence of categories $
X_n\simeq X_1\tiund{ X_0}\overset{n}{\cdots}\tiund{ X_0} X_1$. We can always
choose this equivalence to be an adjoint equivalence; thus let $\eta_n: X_n\rw
X_1\tiund{ X_0}\overset{n}{\cdots}\tiund{ X_0} X_1$ be the Segal map and
$\mu_n$ its left adjoint. By Lemma \ref{s4.lem1}, we deduce that there is a
pseudo-functor $S X\in \Ps[\dop,\Cat]$ with
\begin{equation*}
  (S X)_n=
  \begin{cases}
     X_1\tiund{ X_0}\overset{n}{\cdots}\tiund{ X_0} X_1 & n\geq 2, \\
     X_1 & n=1, \\
     X_0 & n=0.
  \end{cases}
\end{equation*}
Suppose $F: X\rw X'$ is a morphism in $\Ta$ and let $\beta_n:(S X)_n \rw (S
X')_n$ be
\begin{equation*}
  \beta_n=
  \begin{cases}
    (F_1,\ldots,F_1) & n\geq 2, \\
    F_1 & n=1, \\
    F_0 & n=0.
  \end{cases}
\end{equation*}
It is immediate to check from the definition of Segal map that the following
diagram commutes for all $n\geq 0$,
\begin{equation*}
  \xymatrix@+20pt{
     X_n \ar[r]^{F_n}\ar_{\eta_n}[d] &  X'_n \ar^{\eta'_n}[d]\\
    S X_n \ar_{\beta_n}[r] & S X'_n.
  }
\end{equation*}
Thus the condition in the hypothesis of Lemma \ref{s4.lem1} is satisfied, with
$\gamma_n$ the identity 2-cell. It follows from Lemma \ref{s4.lem1} that there
is a pseudo-natural transformation $\beta:S X\rw S X'$ with
$\beta(F)_n=\beta_n$.

Suppose that $ X\supar{F} X'\supar{F'} X''$ is a pair of composable morphisms
in $\Ta$; then, for each $n\geq 2$,
\begin{equation*}
  \beta(F'F)_n=((F'F)_1,\ldots,(F'F)_1)=(F'_1,\ldots,F'_1)(F_1,\ldots,F_1)=
  \beta(F')_n\beta(F)_n.
\end{equation*}
Therefore, $\beta(F'F)=\beta(F')\beta(F)$.

Finally, the existence of a pseudo-natural transformation $\alpha:S X \rw  X$
follows immediately by Lemma \ref{s4.lem1}, taking $\alpha_{i}=\mathrm{id}$ for
$i=0,1$ and $\alpha_{n}=\mu_n$ for $n\geq2$.
\end{proof}

In the next theorem we apply Propositions \ref{s4.pro1} and \ref{special.3} to
obtain a rigidification functor for Tamsamani weak $2$-categories.

\begin{theorem}\label{s4.the1}
There is a functor
\begin{equation*}
    R:\Ta\rw[\dop,\Cat],
\end{equation*}
such that, for every $ X\in \Ta$,
\begin{itemize}
  \item [a)] $(R X)_n\cong\pro{(R X)_1}{(R X)_0}{}$ for all $n\geq 2$.
  \item [b)] There is a pseudo-natural transformation
  $r_X:R X\rw X$ which is a levelwise equivalence of categories.
  \item [c)] The map $(r_X)_0:(RX)_0\rw X_0$ induces equivalences of categories, for all $n\geq
  2$,
  \begin{equation*}
    \pro{(R X)_1}{(R X)_0}{n}\simeq\pro{(R X)_1}{ X_0}{n}\rlap{\,.}
  \end{equation*}
\end{itemize}
\end{theorem}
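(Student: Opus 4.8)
The plan is to define $R$ as the composite of the functor $S\colon\Ta\rw\Ps[\dop,\Cat]$ of Proposition \ref{s4.pro1} with Power's strictification of Section \ref{strpseudo}. For $X\in\Ta$ the pseudo-functor $SX$ has $(SX)_0=X_0$, which is discrete since $X\in\Ta$, and $(SX)_n=\pro{X_1}{X_0}{n}=\pro{(SX)_1}{(SX)_0}{n}$ for $n\ge 2$; hence $SX$ satisfies the hypotheses of Proposition \ref{special.3}, and I would set $RX$ to be the strictification $L$ of $SX$ produced there, which lies in $[\dop,\Cat]$.

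Parts (a) and (c) are then immediate. Part (a) is Proposition \ref{special.3}(b) applied to $H=SX$. For part (b), Proposition \ref{special.3}(a) supplies a pseudo-natural transformation $g_X\colon RX\rw SX$ which is a levelwise equivalence of categories, while Proposition \ref{s4.pro1} supplies a pseudo-natural transformation $\alpha_X\colon SX\rw X$ which is also a levelwise equivalence; I would take $r_X:=\alpha_X\circ g_X$, a composite of pseudo-natural transformations, hence pseudo-natural, and a composite of levelwise equivalences, hence a levelwise equivalence. For part (c), recall from the proof of Proposition \ref{s4.pro1} that $\alpha_X$ has identity components in degrees $0$ and $1$, so $(r_X)_0=(g_X)_0=g_0$; since $(SX)_0=X_0$, the equivalences $\pro{(RX)_1}{(RX)_0}{n}\simeq\pro{(RX)_1}{X_0}{n}$ are exactly those delivered by Proposition \ref{special.3}(c).

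What remains is to organize all of this into a functor. On a morphism $F\colon X\rw X'$ of $\Ta$, Proposition \ref{s4.pro1} gives a pseudo-natural transformation $\beta(F)\colon SX\rw SX'$ with $\beta(F'F)=\beta(F')\beta(F)$ and $\beta(\id)=\id$; passing to strictifications gives a transformation $RF\colon RX\rw RX'$ with the same functoriality. I expect this last step to be the one needing care: one must argue that Power's construction extends to morphisms, i.e.\ that the bijective-on-objects, fully faithful factorizations defining the strictifications can be chosen compatibly, so that a morphism of pseudo-$T$-algebras induces a morphism between the factored $T$-algebras commuting with the comparison maps $g$. This reduces to the functoriality of that orthogonal factorization system in $\Cat$, applied levelwise, together with the bookkeeping of the accompanying coherence $2$-cells. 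Finally I would note that $\beta(F)$ is genuinely only pseudo-natural — its coherence cells are assembled from the adjoint equivalences $\mu_n\dashv\eta_n$ of Proposition \ref{s4.pro1} — so $RF$ is a pseudo-natural, not strict, transformation of the strict functors $RX,RX'$; this is the appropriate notion of morphism in $[\dop,\Cat]$ for the functor $R$, and it is the one that later corresponds, via the horizontal nerve, to the pseudo-functors between weakly globular double categories.
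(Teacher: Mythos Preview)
Your argument for parts (a), (b), (c) is essentially identical to the paper's: define $R$ as the composite of $S$ with Power's strictification $\St\colon\Ps[\dop,\Cat]\rw[\dop,\Cat]$, then read off the three properties from Proposition~\ref{special.3} applied to $H=SX$, composing with the levelwise equivalence $\alpha_X\colon SX\rw X$ of Proposition~\ref{s4.pro1} for (b). The paper does exactly this, citing Power for the functoriality of $\St$ rather than unpacking it.

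There is one genuine misconception in your final paragraph. You conclude that $RF$ is only a pseudo-natural transformation between the strict functors $RX$ and $RX'$, and that this is ``the appropriate notion of morphism in $[\dop,\Cat]$''. This is wrong on both counts. The whole point of Power's result is that $\St$ is (the underlying functor of) a left $2$-adjoint to the inclusion of strict $T$-algebras into pseudo-$T$-algebras; as such it carries pseudo-morphisms of pseudo-algebras to \emph{strict} morphisms of strict algebras. So $RF=\St(\beta(F))$ is a genuine natural transformation, and $R$ lands in $[\dop,\Cat]$ with its usual meaning. Your instinct that the levelwise orthogonal factorization alone does not obviously strictify a pseudo-commutative square is sound; the strictness comes instead from the $2$-adjunction, and the paper simply imports this from \cite{pow} rather than reproving it. This matters downstream: in Corollary~\ref{s4.cor1} the functor $Q=PR$ lands in $\WGDbl_v$, the $2$-category with \emph{strict} double functors, not $\WGDbl_{\ps}$.
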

\begin{proof}
\begin{itemize}
  \item [a)] Let $\St:\Ps[\dop,\Cat]\rw[\dop,\Cat]$ be the strictification
  functor as in \cite{pow}, and let $S:\Ta\rw \Ps[\dop,\Cat]$ be as in
  Proposition \ref{s4.pro1}. Let $R$ be the composite
  \begin{equation*}
    \Ta\supar{S}\Ps[\dop,\Cat]\supar{\St}[\dop,\Cat].
  \end{equation*}
  Since, by Proposition \ref{s4.pro1} $(SX)_n=\pro{(SX)_1}{(SX)_0}{n}$ and $(SX)_0=X_0$
  is discrete, by Proposition \ref{special.3} we deduce, for all $n\geq 2$,
  \begin{eqnarray*}
   (R X)_n&=&(\mathit{St}\,S X)_n\\&\cong& \pro{(\mathit{St}\,S X)_1}{(\mathit{St}\,SX)_0}{n}\\
      &=&\pro{(R X)_1}{(R X)_0}{n}.
  \end{eqnarray*}

  \item [b)] By Proposition \ref{special.3}, there is a pseudo-morphism
  $r'_X:R X=\St\,S X\rw S X$ which is a levelwise categorical equivalence,
  and by Proposition \ref{s4.pro1} there is a pseudo-morphism $\alpha:SX\rw X$
  which is a levelwise categorical equivalence. Hence $r_X=\alpha r'_X:RX\rw X$
  satisfies b).
  \smallskip
  \item [c)]By Proposition \ref{special.3} c), since
  $(S X)_0= X_0$, we have
  \begin{eqnarray*}
        \pro{(R X)_1}{(R X)_0}{n}&=& \pro{(\St\,S X)_1}{(\St\,S X)_0}{n}\\
        &\simeq &\pro{(\St\,S X)_1}{(S X)_0}{n}\\
&=& \pro{(R X)_1}{ X_0}{n}.
  \end{eqnarray*}
\end{itemize}
\end{proof}

\section{Weakly globular double categories}\label{wglobular2}

In this section we show that the functors from $\dop$ to $\Cat$ 
in the image of $R$ can be viewed as a particular kind of double category.
We call these double categories weakly globular.
We will start with some background and notation for double categories. Then we will introduce 
the weakly globular double categories and some of their basic properties.
Finally we prove a discretization result: there is a functor $D$ from weakly globular 
double categories to $\Ta$ which is pseudo-inverse to $R$, giving us a biequivalence of 2-categories,
$$
\WGDbl_{\ps}\simeq_{\mbox{\scriptsize bi}}(\Ta)_{\ps}.
$$

\subsection{Double categories}
A double category $\bbX$ is an internal category in $\Cat$, i.e. 
a diagram of the form 
\begin{equation}\label{dblcat}
\bbX=\left(\xymatrix{\bbX_1\times_{\bbX_0}\bbX_1\ar[r]|-m
	&\bbX_1\ar@<.7ex>[r]^{d_0}\ar@<-.7ex>[r]_{d_1}&\bbX_0\ar[l]|s}\right).
\end{equation}
The elements of $\bbX_{00}$, i.e., the objects of the category $\bbX_0$, are the {\em objects} of the double category.
The elements of $\bbX_{01}$, i.e., the arrows of the category $\bbX_0$, are the {\em vertical arrows} of the double category.
Their domains, codomains, identities, and composition are as in $\bbX_0$. For objects $A,B\in \bbX_{00}$, 
we write $$\Hom_{\bbX,v}(A,B)=\Hom_{\bbX_0}(A,B).$$
We denote a vertical identity arrow by $1_A$ and write $\cdot$ for vertical composition.
The elements of $\bbX_{10}$ are the {\em horizontal arrows} of the double category, and their domain, codomain, identities
and composition are determined by  $d_0$, $d_1$, $s$, and $m$ in (\ref{dblcat}).
For objects $A,B\in\bbX_{00}$, we write 
$$\Hom_{\bbX,h}(A,B)=\{f\in\bbX_{10}| \,d_0(f)=A\mbox{ and }d_1(f)=B\}.$$
In order to make a notational distinction between horizontal and vertical arrows, we denote the vertical arrows
by $\xymatrix@1{\ar[r]|\bullet&}$ and the horizontal arrows by $\xymatrix@1{\ar[r]&}$.
The elements of $\bbX_{11}$ are the double cells of the double category.
An element $\alpha\in \bbX_{11}$ has a vertical domain and codomain in $\bbX_{10}$ (since $\bbX_1$ 
is a category), which are horizontal arrows, say $h$ and $k$ respectively.  
The cell $\alpha$ also has a horizontal domain, $d_0(\alpha)$, and a horizontal codomain, $d_1(\alpha)$.
The arrows $d_0(\alpha)$ and $d_1(\alpha)$ are vertical arrows. 
Furthermore, the horizontal and vertical domains and codomains of these arrows
match up in such a way that all this data fits together in a diagram
$$
\xymatrix{
\ar[r]^{h} \ar[d]|\bullet_{d_0(\alpha)} \ar@{}[dr]|\alpha
	& \ar[d]|\bullet^{d_1(\alpha)}
\\
\ar[r]_{k}& \rlap{\quad.}
}
$$
These double cells can be composed vertically by composition in $\bbX_1$  (again written as $\cdot$) and horizontally
by using $m$, and written as $\alpha_1\circ\alpha_2=m(\alpha_1,\alpha_2)$.
The identities in $\bbX_1$ give us vertical identity cells, denoted by
$$
\xymatrix{
A\ar[d]|\bullet_{1_A}\ar@{}[dr]|{1_f}\ar[r]^f &B\ar[d]|\bullet^{1_B}\ar@{}[drr]|{\mbox{or}} 
  && A\ar@{=}[d] \ar@{}[dr]|{1_f}\ar[r]^f &B\ar@{=}[d]
\\
A\ar[r]_f &B && A\ar[r]_f &B\rlap{\,.} 
}
$$
The image of $s$ gives us horizontal identity cells, denoted by
$$
\xymatrix{
A\ar[d]|\bullet_v\ar@{}[dr]|{\mbox{\scriptsize id}_{v}} \ar[r]^{\mbox{\scriptsize Id}_A} 
  &A\ar[d]|\bullet^v \ar@{}[drr]|{\mbox{or}} 
&& A\ar[d]|\bullet_v\ar@{}[dr]|{\mbox{\scriptsize id}_{v}} \ar@{=}[r]
  &A\ar[d]|\bullet^v 
\\
B\ar[r]_{\mbox{\scriptsize Id}_B} & B && B\ar@{=}[r] &B\rlap{\,,}
}$$
where $\mbox{Id}_A=s(A)$ and $\mbox{id}_v=s(v)$.
Composition of squares satisfies horizontal and vertical associativity laws and the middle four interchange law.
Further, $\mbox{id}_{1_A}=1_{\mbox{\scriptsize Id}_A}$ and we will denote this cell by $\iota_A$,
$$
\xymatrix{
A\ar[d]|\bullet_{1_A}\ar[r]^{\mbox{\scriptsize Id}_A} \ar@{}[dr]|{\iota_A}& A\ar[d]|\bullet^{1_A}
\\
A\ar[r]_{\mbox{\scriptsize Id}_A} & A\rlap{\,.}
}$$

For any double category $\bbX$, the {\em horizontal  nerve} $N_h\bbX$ is defined to be the functor
$N_h\bbX\colon \dop\rightarrow\Cat$ such that $(N_h\bbX)_0=\bbX_0$, $(N_h\bbX)_1=\bbX_1$
and $(N_h\bbX)_k=\bbX_1\times_{\bbX_0}\stackrel{k}{\cdots}\times_{\bbX_0}\bbX_1$ for $k\ge 2$.
So $N_h\bbX$ is given by the diagram
$$
\xymatrix{\ar@{}[r]|-\cdots&\bbX_1\times_{\bbX_0}\bbX_1\ar[r]|-m\ar@<.7ex>[r]^-{\pi_1}\ar@<-.7ex>[r]_-{\pi_2}
	&\bbX_1\ar@<.7ex>[r]^{d_0}\ar@<-.7ex>[r]_{d_1}&\bbX_0\ar[l]|s}
$$

\subsection{Pseudo-functors and strict functors}
As maps between double categories we consider those functors that correspond to 
natural transformations between their horizontal nerves.

\begin{dfn}\rm
\begin{enumerate}
 \item  A {\em strict functor} $F\colon \bbX\rightarrow \bbY$ between double categories 
is given by a natural transformation 
$$F\colon N_h\bbX\Rightarrow N_h\bbY\colon\Delta^{\mbox{\scriptsize op}}\rightarrow\Cat.$$
\item 
A {\em pseudo-functor} $F\colon \bbX\rightarrow \bbY$ between double categories 
is given by a pseudo-natural transformation 
$F\colon N_h\bbX\Rightarrow N_h\bbY\colon\Delta^{\mbox{\scriptsize op}}\rightarrow\Cat.$ 
\end{enumerate}
\end{dfn}

So strict functors send objects to objects, horizontal arrows to horizontal arrows, vertical arrows to vertical arrows, 
and double cells to double cells,
and preserve domains, codomains, identities and horizontal and vertical composition strictly.
However, pseudo-functors between double categories are  weak in a different way 
from what is described in \cite{DPP-spans2}, for instance.
A pseudo-functor $(F,\varphi,\sigma,\mu)\colon\bbX\rightarrow\bbY$ consists of functors 
$F_0\colon \bbX_0\rightarrow\bbY_0$, $F_1\colon \bbX_1\rightarrow \bbY_1$
and 
$F_k\colon \bbX_1\times_{\bbX_0}\stackrel{k}{\cdots}\times_{\bbX_0}\bbX_1
\rightarrow \bbY_1\times_{\bbY_0}\stackrel{k}{\cdots}\times_{\bbY_0}\bbY_1$ for $k\ge 2$,
together with invertible natural transformations,
$$
\xymatrix{
\bbX_1\ar[d]_{d_i}\ar[r]^{F_1}\ar@{}[dr]|{\varphi_i}
  &\bbY_1\ar[d]^{d_i} & \bbX_0\ar[d]_s\ar[r]^{F_0}\ar@{}[dr]|\sigma
  &\bbY_0\ar[d]^s 
\\
\bbX_0\ar[r]_{F_0}&\bbY_0 & \bbX_1\ar[r]_{F_1}&\bbY_1 
\\
\bbX_1\times_{\bbX_0}\bbX_1\ar[d]_m \ar[r]^{F_2}\ar@{}[dr]|\mu 
  &\bbY_1\times_{\bbY_0}\bbY_1\ar[d]^m 
  &  \bbX_1\times_{\bbX_0}\bbX_1\ar[d]_{\pi_i}\ar[r]^{F_2} \ar@{}[dr]|{\theta_i }
  &\bbY_1\times_{\bbY_0}\bbY_1\ar[d]^{\pi_i}
\\
\bbX_1\ar[r]_{F_1} & \bbY_1 
  &\bbX_1\ar[r]_{F_1} & \bbY_1\rlap{\,,}
}
$$
(where $i=1,2$), and analogously for $F_k$ with $k\ge 2$.
These satisfy the usual naturality and coherence conditions, that can be derived from their simplicial description. 
We will list the ones that we will use in the remainder of this paper.
For instance,
\begin{equation}\label{identityeqn}
\xymatrix{
\bbX_0\ar[d]_s\ar[r]^{F_0}\ar@{}[dr]|\sigma
  &\bbY_0\ar[d]^s  & \bbX_0\ar@{=}[dd]\ar[r]^{F_0}\ar@{}[ddr]|{1_{F_0}} & \bbY_0\ar@{=}[dd]
\\
\bbX_1\ar[d]_{d_i}\ar[r]^{F_1}\ar@{}[dr]|{\varphi_i}
  &\bbY_1\ar[d]^{d_i} \ar@{}[r]|{\textstyle =} & &&\mbox{for }i=1,2.
\\
\bbX_0\ar[r]_{F_0}&\bbY_0& \bbX_0\ar[r]_{F_0}&\bbY_0
}\end{equation}
This means that vertical composition and domains and codomains are preserved strictly,
but horizontal domains and codomains are only preserved up to a vertical isomorphism.
(Compare this to the notion of pseudo-morphism in \cite{DPP-spans2} which requires that domains and codomains
are preserved strictly, but horizontal composition and units are only preserved up to coherent isomorphisms.)
For the pseudo-morphisms we consider in this paper the typical image of a horizontal arrow 
$\xymatrix@1{A\ar[r]^f&B}$ under $F$ corresponds to the following diagram:
$$
\xymatrix@R=2em{d_0F_1f\ar[r]^{F_1f}\ar[d]|\bullet_{(\varphi_1)_A} ^\sim& d_1F_1f\ar[d]|\bullet_\sim^{(\varphi_2)_B}
\\
F_0A&F_0B
}$$
and the equation (\ref{identityeqn}) means that 
the components of $\sigma$ have the following shape:
$$
\xymatrix{
F_0A\ar[r]^{\mbox{\scriptsize Id}_{F_0A}}\ar[d]|\bullet_{(\varphi_1)_A^{-1}}\ar@{}[dr]|{\sigma_A} 
  & F_0A\ar[d]|\bullet^{(\varphi_2)_A^{-1}}
\\
d_0F_1(\mathrm{Id}_A) \ar[r]_{F_1(\mbox{\scriptsize Id}_A)} & d_1F_1(\mathrm{Id}_A)
}$$

Naturality of $\mu$ means that for  any pair of horizontally composable double cells in $\bbX$,
$$
\xymatrix{
A_1\ar[d]|\bullet_u\ar[r]^{f_1}\ar@{}[dr]|\alpha &B_1\ar[d]|\bullet_{v_1}\ar[r]^{g_1}\ar@{}[dr]|\beta 
	& C_1\ar[d]|\bullet^w
\\
A_2\ar[r]_{f_2} & B_2\ar[r]_{g_2} & C_2\rlap{\,,}
}$$
the following equation holds:
\begin{equation}\label{comp-nat}
 \xymatrix@C=4em@R=1.8em{
\ar[rrr]^{F_1(g_1\circ f_1)}\ar[d]|\bullet_\sim \ar@{}[drrr]|{\mu_{g_1,f_1}} &&& \ar[d]|\bullet^\sim
\\
\ar[r]^{\pi_2F_2(g_1,f_1)} \ar[d]|\bullet_\sim \ar@{}[dr]|{(\theta_2)_{g_1,f_1}} 
	&\ar[d]|\bullet^\sim \ar@{=}[r] \ar@{}[dddr]|=
	&\ar[r]^{\pi_1F_2(g_1,f_1)} \ar[d]|\bullet_\sim \ar@{}[dr]|{(\theta_1)_{g_1,f_1}} 
	&\ar[d]|\bullet^\sim
\\
\ar[r]|{F_1f_1}\ar[d]|\bullet_\sim \ar@{}[dr]|{F_1\alpha} & \ar[d]|\bullet^\sim 
	& \ar[d]|\bullet_\sim\ar@{}[dr]|{F_1\beta}\ar[r]|{F_1g_1} & \ar[d]|\bullet^\sim \ar@{}[drr]|{\textstyle =}
	&& \ar[r]|{F_1(g_1\circ f_1)}\ar@{}[dr]|{F_1(\beta\circ\alpha)} \ar[d]|\bullet_\sim &\ar[d]|\bullet^\sim
\\
\ar[d]|\bullet_\sim \ar[r]_{F_1f_2} \ar@{}[dr]|{(\theta_2)_{g_2,f_2}^{-1}} &\ar[d]|\bullet^\sim 
	& \ar[d]|\bullet_\sim \ar[r]|{F_1g_2} \ar@{}[dr]|{(\theta_1)_{g_2,f_2}^{-1}} &\ar[d]|\bullet^\sim 
	&& \ar[r]|{F_1(g_2\circ f_2)} &\rlap{\qquad.}
\\
\ar[r]_{\pi_2F_2(g_2,f_2)} \ar[d]|\bullet_\sim \ar@{}[drrr]|{\mu_{g_2,f_2}^{-1}}
	& \ar@{=}[r] & \ar[r]_{\pi_1F_2(g_2,f_2)} &\ar[d]|\bullet^\sim
\\
\ar[rrr]_{F_1(g_2\circ f_2)} &&&
}
\end{equation}
Furthermore, the identity coherence axioms for $m$ and $s$ state that
for a horizontal arrow  $\xymatrix@1{A\ar[r]^f&B}$,
\begin{equation}\label{ass-id1}
\xymatrix@C=4.5em@R=2em{
 \ar[rrr]^{F_1(\mbox{\scriptsize Id}_B\circ f)} \ar@{}[drrr]|{\mu_{\mathrm{Id}_B, f}} \ar[d]|\bullet 
	&&& \ar[d]|\bullet \ar@{}[dr]|{\textstyle =}
	& \ar[rr]^{F_1(f)}\ar[d]|\bullet\ar@{}[drr]|{\mathrm{id}_{f}}
	&& \ar[d]|\bullet
\\
\ar[ddd]|\bullet\ar[r]^{\pi_2(F_2(\mathrm{Id}_B,f))} \ar@{}[dddr]|{(\theta_2)_{\mathrm{Id}_B,f}}
	&\ar[ddd]|\bullet\ar@{=}[r] 
	& \ar[r]^{\pi_1(F_2(\mathrm{Id}_B,f))} \ar@{}[dr]|{(\theta_1)_{\mathrm{Id}_B,f}}\ar[d]|\bullet 
	&\ar[d]|\bullet & \ar[rr]_{F_1(f)} &&
\\
&  \ar@{}[dr]|{=}
      &\ar[r]|{F(\mbox{\scriptsize Id}_B)}\ar[d]|\bullet \ar@{}[dr]|{\sigma_B^{-1}} 
      & \ar[d]|\bullet 
\\
&&\ar[d]|\bullet \ar[r]|{\mbox{\scriptsize Id}_{F_0B}} \ar@{}[dr]|{\mbox{\scriptsize id}} &\ar[d]|\bullet 
\\
\ar[r]_{F_1(f)} & \ar@{=}[r] & \ar[r]_{\mbox{\scriptsize Id}_{d_1F_1(f)}} &}
\end{equation}
and
\begin{equation}\label{ass-id2}
\xymatrix@C=4.5em@R=2em{
 \ar[rrr]^{F_1(f\circ \mbox{\scriptsize Id}_A)} \ar@{}[drrr]|{\mu_{f,\mathrm{Id}_A}} \ar[d]|\bullet 
	&&& \ar[d]|\bullet \ar@{}[dr]|{\textstyle =}
	& \ar[rr]^{F_1(f)}\ar[d]|\bullet\ar@{}[drr]|{\mathrm{id}_{f}} 
	&& \ar[d]|\bullet
\\
\ar[d]|\bullet\ar[r]^{\pi_2(F_2(f,\mathrm{Id}))}\ar@{}[dr]|{(\theta_2)_{f,\mathrm{Id}_A}}
	&\ar[d]|\bullet\ar@{=}[r] \ar@{}[dddr]|{=}
	& \ar[r]^{\pi_1(F_2(f,\mathrm{Id}_A))} \ar@{}[dddr]|{(\theta_1)_{f,\mathrm{Id}_A}}\ar[ddd]|\bullet 
	&\ar[ddd]|\bullet & \ar[rr]_{F_1(f)} &&
\\
\ar[r]|{F_1(\mbox{\scriptsize Id}_A)} \ar[d]|\bullet \ar@{}[dr]|{\sigma_A^{-1}}&\ar[d]|\bullet  
\\
\ar[d]|\bullet \ar[r]|{\mbox{\scriptsize Id}_{F_0A}} \ar@{}[dr]|{\mbox{\scriptsize id}} &\ar[d]|\bullet &&
\\
\ar[r]_{\mbox{\scriptsize Id}_{d_0F_1(f)}}  & \ar@{=}[r] &\ar[r]_{F_1(f)}  &\rlap{\,.}}
\end{equation}

\subsection{Vertical and horizontal transformations}
Since double categories have two types of arrows, there are two possible choices
for types of transformations between maps of double functors: vertical and horizontal transformations.
Vertical transformations correspond to modifications between natural transformations of 
functors from $\dop$ into $\Cat$, so these are the ones that are relevant to our study of the functor $R$.

\begin{dfn}\rm
A {\em vertical transformation} $\gamma\colon F\Rightarrow G\colon \bbX\rightrightarrows \bbY$ between 
strict double functors has components
vertical arrows $\xymatrix@1{\gamma_A\colon FA\ar[r]|-\bullet &GA}$
indexed by the objects of $\bbX$ and for each horizontal arrow $\xymatrix@1{A\ar[r]^h&B}$
in $\bbX$, a double cell
$$
\xymatrix{
FA\ar[r]^{Fh}\ar[d]|\bullet_{\gamma_A} \ar@{}[dr]|{\gamma_h}& FB\ar[d]|\bullet^{\gamma_B}
\\
GA\ar[r]_{Gh} & GB,
}$$
such that $\gamma$ is strictly functorial in the horizontal direction,
i.e., $\gamma_{h_2\circ h_1}=\gamma_{h_2}\circ\gamma_{h_1}$,
and natural in the vertical direction, i.e., 
\begin{equation}\label{vertnat}
\xymatrix{
FA\ar[r]^{Fh}\ar[d]|\bullet_{Fv}\ar@{}[dr]|{F\zeta} & FB\ar[d]|\bullet^{Fw} 
		&& FA\ar[d]|\bullet_{\gamma_A}\ar[r]^{Fh}\ar@{}[dr]|{\gamma_h} & FB\ar[d]|\bullet^{Gw}
\\
FC\ar[d]|\bullet_{\gamma_C}\ar@{}[dr]|{\gamma_k} \ar[r]_{Fk} & FD\ar[d]|\bullet^{\gamma_D}
		& \equiv & GA\ar[d]|\bullet_{Gv}\ar@{}[dr]|{G\zeta}\ar[r]_{Gh} &GB\ar[d]|\bullet^{Gw}
\\
GC\ar[r]_{Gk} & GD && GC\ar[r]_{Gk} & GD\rlap{\,,}
}
\end{equation}
for any double cell $\zeta$ in $\bbX$.
\end{dfn}

To give a vertical transformation between pseudo-functors of double categories,
we need to require that the data above fits together with the structure cells of the
pseudo-transformations. We spell out part of the details and will leave the rest for the reader.

\begin{dfn}\rm
A {\em vertical transformation} $\gamma\colon F\Rightarrow G\colon \bbX\rightrightarrows \bbY$ between 
pseudo-double functors has components
vertical arrows $\xymatrix@1{\gamma_A\colon FA\ar[r]|\bullet &GA}$
indexed by the objects of $\bbX$ and for each horizontal arrow $\xymatrix@1{A\ar[r]^h&B}$
in $\bbX$, a double cell
$$
\xymatrix{
d_0Fh\ar[r]^{Fh}\ar[d]|\bullet_{d_0\gamma_h} \ar@{}[dr]|{\gamma_h}& d_1Fh\ar[d]|\bullet^{d_1\gamma_h}
\\
d_0Gh\ar[r]_{Gh} & d_1Gh,
}$$
such that the following squares of vertical arrows commute:
$$
\xymatrix{
F_0A\ar[r]|\bullet\ar[d]|\bullet_{\gamma_A} & d_0Fh\ar[d]|\bullet^{d_0\gamma_h}
		&F_0B\ar[r]|\bullet\ar[d]|\bullet_{\gamma_B} & d_1F_1h\ar[d]|\bullet^{d_1\gamma_h}
\\
G_0A\ar[r]|\bullet&d_0G_1h & G_0B\ar[r]|\bullet &d_1G_1h
}
$$
where the unlabeled arrows are the structure isomorphisms corresponding to $F$ and $G$.

We require that $\gamma$ is natural in the vertical direction, in the sense that 
the following square of vertical arrows commutes for a 
vertical arrow $\xymatrix@1{A\ar[r]|\bullet ^v&B}$
in $\bbX$,
$$
\xymatrix{
F_0A\ar[r]|\bullet^{F_1v} \ar[d]|\bullet_{\gamma_A}& F_0B \ar[d]|\bullet^{\gamma_B}
\\
G_0A \ar[r]|\bullet^{G_1v}& G_0B
}$$
and furthermore, for any double cell $\zeta$ in $\bbX$,
$$
\xymatrix{
d_0F_1h\ar[r]^{Fh}\ar[d]|\bullet_{d_0F_1\zeta}\ar@{}[dr]|{F_1\zeta} & d_1F_1h\ar[d]|\bullet^{d_1F_1\zeta} 
		&& d_0F_1h\ar[d]|\bullet_{d_0\gamma_h}\ar[r]^{Fh}\ar@{}[dr]|{\gamma_h} 
		& d_1F_1h\ar[d]|\bullet^{d_1\gamma_h}
\\
d_0F_1k\ar[d]|\bullet_{d_0\gamma_k}\ar@{}[dr]|{\gamma_k} \ar[r]_{Fk} 
		& d_1F_1k\ar[d]|\bullet^{d_1\gamma_k}
		& \equiv & d_0G_1h\ar[d]|\bullet_{d_0G_1\zeta}\ar@{}[dr]|{G_1\zeta}\ar[r]_{Gh} 
		&d_1G_1h\ar[d]|\bullet^{d_1G_1\zeta}
\\
d_0G_1k\ar[r]_{Gk} &d_1G_1k && d_0G_1k\ar[r]_{Gk} & d_1G_1k\rlap{\,.}
}
$$

In the horizontal direction we require pseudo-functoriality, which means that
$$
\xymatrix@C=4em@R=1.8em{
\ar[rrr]^{F_1(gf)} \ar[d]|\bullet\ar@{}[drrr]|{\mu^F_{g,f}} &&& \ar[d]|\bullet
\\
\ar[d]|\bullet \ar[r]^{\pi_2F_2(g,f)} \ar@{}[dr]|{(\theta_2)_{g,f}} & \ar[d]|\bullet \ar@{=}[r] 
	&\ar[d]|\bullet \ar[r]^{\pi_1F_2(g,f)} \ar@{}[dr]|{(\theta_1)_{g,f}} &\ar[d]|\bullet 
\\
\ar[r]|{F_1f} \ar[d]|\bullet_{d_0\gamma_f} \ar@{}[dr]|{\gamma_f} & \ar[d]|\bullet 
	&\ar[d]|\bullet \ar[r]|{F_1g}\ar@{}[dr]|{\gamma_g} 
	& \ar[d]|\bullet^{d_1\gamma_g} \ar@{}[drr]|{\textstyle =} 
	&& \ar[r]^{F_1(gf)} \ar[d]|\bullet\ar@{}[dr]|{\gamma_{gf}} &\ar[d]|\bullet
\\
\ar[d]|\bullet\ar@{}[dr]|{(\theta_2)_{g,f}^{-1}} \ar[r]|{G_1f} &\ar[d]|\bullet 
	&\ar[d]|\bullet \ar[r]|{G_1g}\ar@{}[dr]|{(\theta_1)_{g,f}^{-1}} 
	&\ar[d]|\bullet &&\ar[r]_{G_1(gf)}& \rlap{\quad.}
\\
\ar[d]|\bullet\ar[r]_{\pi_2G_2(g,f)}\ar@{}[drrr]|{(\mu_{g,f}^G)^{-1}} &\ar@{=}[r] & \ar[r]_{\pi_2G_2(g,f)} 
	&\ar[d]|\bullet 
\\
\ar[rrr]_{G_1(gf)}&&&
}$$
\end{dfn}

In order to state the universal property  in Section \ref{fracns} below, 
we need to consider horizontal 
transformations between strict functors of double categories.
The definition of a horizontal transformation is dual to that of a vertical transformation 
in that all mentions of vertical and horizontal
have been exchanged.

\begin{dfn} \rm A {\em horizontal transformation} $a\colon G\Rightarrow K\colon \bbD\rightrightarrows\bbE$
between functors of double categories has components horizontal arrows 
$\xymatrix@1{GX\ar[r]^{a_X}&KX}$ indexed by the objects of $\bbD$
and for each vertical arrow $\xymatrix@1{X\ar[r]|\bullet^v&Y}$ in $\bbD$, a double cell
$$
\xymatrix@R=1.9em{
GX\ar[r]^{a_X}\ar[d]|\bullet_{Gv} \ar@{}[dr]|{a_v}& KX\ar[d]|\bullet^{Kv}
\\
GY\ar[r]_{a_Y} & KY,
}$$
such that $a$ is strictly functorial in the vertical direction, i.e., $a_{v_1\cdot v_2}=a_{v_1}\cdot a_{v_2}$
and natural in the horizontal direction,
i.e., the composition of
$$
\xymatrix{
GX\ar[d]|\bullet_{Gv}\ar@{}[dr]|{G\zeta}\ar[r]^{Gf} 
    & GX'\ar[d]|\bullet^{Gv'}\ar[r]^{a_{X'}} \ar@{}[dr]|{a_{v'}} 
    & KX'\ar[d]|\bullet^{Kv'}
\\
GY\ar[r]_{Gg} & GY'\ar[r]_{a_{Y'}} &KY'
}$$
is equal to the composition of
$$
\xymatrix{
GX\ar[d]|\bullet_{Gv}\ar@{}[dr]|{a_{v}}\ar[r]^{a_X} 
    & KX\ar[d]|\bullet^{Kv}\ar[r]^{Kf} \ar@{}[dr]|{K\zeta} 
    & KX'\ar[d]|\bullet^{Kv'}
\\
GY\ar[r]_{a_Y} & KY\ar[r]_{Kg} &KY'
}$$
for any double cell $\zeta$ in $\bbD$.
\end{dfn}

It is possible to consider $\DblCat$ as a double category with strict functors, 
horizontal transformations, vertical transformations, and modifications. However, we won't need 
this result in this paper. We will either use just the vertical transformations, 
or just the horizontal transformations.

We write $\DblCat_v$, respectively $\DblCat_h$, for the 2-categories of double categories, strict functors,
and vertical transformations, respectively horizontal transformations.
We will write $\DblCat_{\ps}$ for the 2-category of double categories, pseudo-functors, and vertical transformations
of pseudo-functors.

There is an inclusion functor $H\colon \mbox{\bf Cat}\rightarrow \mbox{\bf DblCat}_h$
that sends a category $\bfC$ to a double category
with the category $\bfC$  as horizontal arrows and only identity arrows (on objects of $\bfC$) as vertical arrows 
and vertical identity cells as squares.
Adjoint to this functor there is $h\colon \mbox{\bf DblCat}_h\rightarrow\mbox{\bf Cat}$ sending 
a double category to its category of horizontal arrows.
Completely analogously, there is a functor $V\colon\mbox{\bf Cat}\rightarrow \mbox{\bf DblCat}_v$
that sends a category $\bfC$ to a double category with the arrows of $\bfC$ in the vertical direction
(and a discrete horizontal category) with adjoint $v\colon \mbox{\bf DblCat}_v\rightarrow\mbox{\bf Cat}$.

\subsection{The definition of weakly globular double categories}
\begin{definition}\label{s4.def1}\rm
    The 2-category $\WGDbl$ of weakly globular double categories is the full
    sub 2-category of double categories whose objects are double categories $\bbX$
    such that there is an equivalence of categories $\gamma:\bbX_0\rw \bbX_0^d$,
    natural in $\bbX_0$, where $\bbX_0^d$ is the discrete category of the path components of $\bbX_{0}$; 
    further, $\gamma$ induces
    an equivalence of categories, for all $n\geq 2$,
    \begin{equation}\label{s4.eq01}
        \pro{\bbX_1}{\bbX_0}{n}\simeq\pro{\bbX_1}{\bbX_0^d}{n}\rlap{\,.}
    \end{equation}
    Analogously to the notation introduced above we write $\WGDbl_{\ps}$ 
for the 2-category of weakly globular double categories, pseudo-functors, and vertical transformations 
between them. We also write $\WGDbl_v$ and $\WGDbl_h$ for the 2-categories with strict functors and
vertical, respectively horizontal, transformations.
\end{definition}

\begin{remark}\label{s4.rem1}\rm
    Notice that the condition \eqref{s4.eq01} is in particular satisfied in the
    case where at least one of the maps $\xymatrix@1{\bbX_1\ar@<.5ex>[r]^{d_0}\ar@<-.5ex>[r]_{d_1}&\bbX_0}$ is an
    isofibration.
\end{remark}

Note that the domain map $d_0\colon\bbX_1\rightarrow\bbX_0$ is an isofibration
if and only if each pair of arrows 
$$
\xymatrix{
C\ar[d]|\bullet_x
\\
A\ar[r]_f &B
}
$$
(with $x$ an isomorphism)
can be completed to a vertically invertible double cell
$$
\xymatrix{
C\ar[d]|\bullet_x\ar[r]^g \ar@{}[dr]|\alpha &D\ar[d]|\bullet^y
\\
A\ar[r]_f&B
}
$$
and the codomain map $d_1\colon\bbX_1\rightarrow\bbX_0$ is an isofibration
if and only if each pair of arrows 
$$
\xymatrix{
&B\ar[d]|\bullet^y
\\
C\ar[r]_f&D
}
$$
(with $y$ an isomorphism)
can be completed to a vertically invertible double cell
$$
\xymatrix{
A\ar[d]|\bullet_x\ar[r]^g \ar@{}[dr]|\alpha &B\ar[d]|\bullet^y
\\
C\ar[r]_f&D.
}
$$

For arbitrary weakly globular double categories this is not
always possible, but the condition that
\begin{equation}\label{cond2}\bbX_1\times_{\bbX_0^d}\bbX_1\simeq\bbX_1\times_{\bbX_0}\bbX_1 \end{equation}
(given by the Segal map $\eta_2\colon \bbX_1\times_{\bbX_0}\bbX_1\rightarrow\bbX_1\times_{\bbX_0^d}\bbX_1$
and its pseudo-inverse $\mu_2\colon \bbX_1\times_{\bbX_0^d}\bbX_1\rightarrow\bbX_1\times_{\bbX_0}\bbX_1$)
together with the condition that $\bbX_0^d\simeq \bbX_0$
does imply the following weaker results.

\begin{lma}\label{squarecompln} Let $\bbX$ be a weakly globular double category.
\begin{enumerate}
\item
For each  pair of arrows 
$$
\xymatrix{
A'\ar[d]|\bullet_x
\\
A\ar[r]_f &B
}
$$
in $\bbX$, there is a vertically invertible double cell of the form
$$
\xymatrix{
A''\ar[d]|\bullet_{y_1}\ar@{}[ddr]|\alpha\ar[r]^g & B'\ar[dd]|\bullet^{y_2}
\\
A'\ar[d]|\bullet_x
\\
A\ar[r]_f & B\rlap{\,.}
}
$$
\item
For each pair of arrows 
$$
\xymatrix{
&B'\ar[d]|\bullet^y
\\
A\ar[r]_f&B
}
$$
in $\bbX$,
there is a vertically invertible  double cell of the form
$$
\xymatrix{
A'\ar[dd]|\bullet_{x_1}\ar[r]^g \ar@{}[ddr]|\alpha & B''\ar[d]|\bullet^{x_2}
\\
& B'\ar[d]|\bullet^y
\\
A\ar[r]_f&B.
}
$$
\end{enumerate}
\end{lma}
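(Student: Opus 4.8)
The plan is to obtain both parts from the two features of a weakly globular double category at issue here. First, being equivalent to the discrete category $\bbX_0^d$, the category $\bbX_0$ is a posetal groupoid: every vertical arrow of $\bbX$ is invertible, and any two parallel vertical arrows coincide. Second, the comparison functor $\eta_2\colon\bbX_1\times_{\bbX_0}\bbX_1\to\bbX_1\times_{\bbX_0^d}\bbX_1$ of \eqref{cond2} is an equivalence, hence essentially surjective (equivalently, its pseudo-inverse $\mu_2$ is available). I will prove (1) in detail; (2) then follows by running the identical argument with the object $(f,\mathrm{Id}_{B'})$ of $\bbX_1\times_{\bbX_0^d}\bbX_1$ in place of $(\mathrm{Id}_{A'},f)$, interchanging the roles of the horizontal domain and codomain throughout.

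For (1), since $x\colon A'\to A$ is a vertical arrow the objects $A'$ and $A$ lie in one connected component of $\bbX_0$, so the pair $(\mathrm{Id}_{A'},f)$ is a well-defined object of $\bbX_1\times_{\bbX_0^d}\bbX_1$. By essential surjectivity of $\eta_2$ there is a genuinely composable pair $(h,k)\in\bbX_1\times_{\bbX_0}\bbX_1$ together with an isomorphism $(h,k)\xrightarrow{\sim}(\mathrm{Id}_{A'},f)$ in $\bbX_1\times_{\bbX_0^d}\bbX_1$ (concretely, one may take $(h,k)=\mu_2(\mathrm{Id}_{A'},f)$ and the counit component of $\eta_2\mu_2\Rightarrow\mathrm{id}$). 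Isomorphisms in a pullback of categories are componentwise, so this isomorphism amounts to a pair of vertically invertible double cells $\omega\colon h\Rightarrow\mathrm{Id}_{A'}$ and $\theta\colon k\Rightarrow f$. Applying the functors $d_0,d_1\colon\bbX_1\to\bbX_0$ shows that the right edge of $\omega$ is an isomorphism $r\colon d_1h\to A'$ while the left edge of $\theta$ is an isomorphism $s\colon d_0k\to A$; since $(h,k)$ is composable we have $d_1h=d_0k=:A''$, and since $\bbX_0$ is posetal the two isomorphisms $s$ and $x\cdot r$ from $A''$ to $A$ are equal. Thus $\theta$ is a vertically invertible double cell whose bottom edge is $f$, whose top edge is the horizontal arrow $k\colon A''\to d_1k$, whose right edge is an isomorphism $y_2\colon d_1k\to B$, and whose left edge $s=x\cdot r$ we display as the composite $A''\xrightarrow{r}A'\xrightarrow{x}A$. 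Setting $B':=d_1k$, $y_1:=r$, $g:=k$ and $\alpha:=\theta$ gives exactly the asserted completion.

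Everything that requires care in this argument is bookkeeping. One must check that an isomorphism in the strict pullback $\bbX_1\times_{\bbX_0^d}\bbX_1$ really unpacks into a compatible pair of vertically invertible cells with the stated sources, targets and edge maps — routine, using only that $d_0,d_1$ are functors — and one must identify the single vertical arrow carried by $\theta$ on its left with the composite $x\cdot r$; this last point is precisely where posetality of $\bbX_0$, hence the hypothesis $\bbX_0^d\simeq\bbX_0$, is used. I do not anticipate a genuine obstacle: notably, no coherence data of $\bbX$ and no horizontal composition of cells are required. The one thing one must get right is to feed essential surjectivity the correct object, namely $(\mathrm{Id}_{A'},f)$ (and, for (2), $(f,\mathrm{Id}_{B'})$), so that the new corner it produces is forced to sit over $A'$ (respectively $B'$) via the isomorphism $r$, rather than over some unrelated object.
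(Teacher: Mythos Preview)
Your proof is correct and follows the same approach as the paper: feed $(\mathrm{Id}_{A'},f)$ (respectively $(f,\mathrm{Id}_{B'})$) to the equivalence \eqref{cond2}, unpack the resulting isomorphism as a pair of vertically invertible cells, and use posetality of $\bbX_0$ to factor the relevant vertical edge through $A'$ (respectively $B'$). If anything, you are slightly more careful than the paper, which writes $\mu_2(\mathrm{Id}_{A'},f)=(\mathrm{Id}_{A''},g)$ as if the first component were forced to be an identity; your version avoids that unjustified assumption by working with a general composable pair $(h,k)$ and extracting what is needed from the cell $\omega$.
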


\begin{proof}
\begin{enumerate}
\item
Consider the pair of horizontal arrows $\mbox{Id}_{A'}\colon A'\rightarrow A'$ and $f\colon A\rightarrow B$.
Then $(\mbox{Id}_{A'},f)$ is an object of $\bbX_1\times_{\bbX_0^d}\bbX_1$, 
so let $\mu_2(\mbox{Id}_{A'},f)=(\mbox{Id}_{A''},g)$
and the equivalence (\ref{cond2})
gives rise to vertically invertible double cells as in the following diagram:
$$
\xymatrix{
A''\ar@{}[dr]|{\mbox{\scriptsize id}_{y_1}}\ar[d]|\bullet_{y_1}
	  \ar[r]^{\mbox{\scriptsize Id}_{A''}}
      &A''\ar[r]^g\ar[d]|\bullet^{y_1} \ar@{}[ddr]|\alpha & B'\ar[dd]|\bullet^{y_2}
\\
A'\ar[r]_{\mbox{\scriptsize Id}_{A'}} & A'\ar[d]|\bullet_x
\\
& A\ar[r]_f&B
}
$$
\item
Consider the pair of horizontal arrows $g\colon A\rightarrow B$ and $\mbox{Id}_{B'}\colon B'\rightarrow B'$.
Analogous to the argument given in the first part of this proof, there
are vertically invertible double cells as in the following diagram:
$$
\xymatrix{
A\ar[dd]|\bullet_{x_1}\ar@{}[ddr]|{\alpha} \ar[r]^g 
	& B''\ar[d]|\bullet_{x_2}\ar[r]^{\mbox{\scriptsize Id}_{B''}} \ar@{}[dr]|{\mbox{\scriptsize id}_{x_2}}
	&B''\ar[d]|\bullet^{x_2}
\\
	& B'\ar[r]_{\mbox{\scriptsize Id}_{B'}} \ar[d]|\bullet^{y} & B'
\\
A\ar[r]_f &B	
}
$$
\end{enumerate}
\end{proof}

Finally, we have the following corollary to Theorem \ref{s4.the1}.

\begin{corollary}\label{s4.cor1}
    There is a functor
    $$Q:\Ta\rw \WGDbl_v$$ such that, for all $ X\in \Ta$, there is a
    pseudo-morphism $N_{h}Q X\rw X$ which is a levelwise categorical
    equivalence.
\end{corollary}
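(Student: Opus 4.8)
The plan is to assemble this corollary directly from the results already established, so the proof is essentially a matter of checking that the rigidification functor $R\colon\Ta\rw[\dop,\Cat]$ of Theorem \ref{s4.the1} lands, after applying the horizontal nerve equivalence, in $\WGDbl_v$. First I would observe that for $X\in\Ta$, Theorem \ref{s4.the1}(a) says $(RX)_n\cong\pro{(RX)_1}{(RX)_0}{n}$ for all $n\ge 2$; this is precisely the Segal condition that identifies $RX$ as the horizontal nerve of an internal category in $\Cat$, i.e. of a strict double category $\bbX:=QX$ with $\bbX_0=(RX)_0$ and $\bbX_1=(RX)_1$. So define $Q$ on objects by $N_hQX=RX$.

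Next I would verify that $\bbX$ is weakly globular in the sense of Definition \ref{s4.def1}. By Theorem \ref{s4.the1}(b), $r_X\colon RX\rw X$ is a levelwise equivalence of categories; in particular $(r_X)_0\colon\bbX_0=(RX)_0\rw X_0$ is an equivalence onto a discrete category, so $\bbX_0^d\simeq X_0$ and $\bbX_0\simeq\bbX_0^d$, with the comparison map being $(r_X)_0$ up to the canonical identification $\pi_0\bbX_0\cong X_0$ (here one uses that an equivalence of categories induces a bijection on isomorphism classes, and that $(RX)_0$ has the property, from the construction via Power's strictification recalled in Remark \ref{rem2add}, of being a posetal groupoid so that $\pi_0(RX)_0$ is exactly the set $X_0$). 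Then Theorem \ref{s4.the1}(c) gives exactly the required equivalences $\pro{(RX)_1}{(RX)_0}{n}\simeq\pro{(RX)_1}{X_0}{n}=\pro{\bbX_1}{\bbX_0^d}{n}$ for all $n\ge 2$, which is condition \eqref{s4.eq01}. One should also check naturality of $\gamma=(r_X)_0$ in $\bbX_0$, which follows since $r_X$ is a (pseudo-)natural transformation and $\pi_0$ is functorial.

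On morphisms, given $F\colon X\rw X'$ in $\Ta$, the functor $R$ produces a morphism $RF\colon RX\rw RX'$ in $[\dop,\Cat]$; since both $RX$ and $RX'$ satisfy the Segal condition, a simplicial map between them is the same as a strict double functor $QF\colon QX\rw QX'$, so $Q$ is a functor into $\WGDbl$ with strict functors, and we regard it as valued in $\WGDbl_v$. The last assertion, the existence of a levelwise-equivalence pseudo-morphism $N_hQX\rw X$, is immediate: it is exactly $r_X$ of Theorem \ref{s4.the1}(b), read as a pseudo-natural transformation $N_hQX=RX\Rightarrow X$ of functors $\dop\rw\Cat$, which by definition of pseudo-functor of double categories is a pseudo-morphism $QX\rw X$ — with the caveat that $X$ here is viewed as a (horizontal-nerve of a) double category only in the generalized ``weakly globular'' sense, i.e. via its 2-nerve; matching $X\in\Ta$ with the target of this pseudo-morphism is the point where one invokes the identification of $\Ta$ with a full subcategory of the pseudo-double-category picture.

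The main obstacle I expect is not any single hard computation but the bookkeeping needed to reconcile the two languages: one must be careful that ``$(RX)_0$ is equivalent to a \emph{discrete} category and $\pi_0$ of it recovers $X_0$'' genuinely holds, which requires unwinding Power's construction (Remark \ref{rem2add}) enough to see that $L_{0}=(TUH)_0=\coprod_{[n]}\dop([0],[n])\times H_{n0}$ has, after the bijective-on-objects/fully-faithful factorization, the shape of a posetal groupoid with $\pi_0$ equal to the original object set; and that the pseudo-morphism $r_X$ really is ``levelwise an equivalence'' in a way compatible with the double-category structure (so that it respects the posetal groupoid $\bbX_0$). Everything else is a direct transcription of Theorem \ref{s4.the1}.
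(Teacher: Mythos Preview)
Your proposal is correct and follows essentially the same route as the paper: define $Q$ so that $N_hQX=RX$ (the paper phrases this as $Q=PR$ with $P$ left adjoint to $N_h$), then invoke Theorem \ref{s4.the1}(a)--(c) to verify that $QX$ is weakly globular and that $r_X$ supplies the required pseudo-morphism. The paper compresses your verification of Definition \ref{s4.def1} into the single line ``it is immediate from the definition that $RX$ is the horizontal nerve of a weakly globular double category''; your unpacking of this via parts (b) and (c) of the theorem is exactly what that sentence means. One small clarification: your closing caveat about viewing $X$ as a double category is unnecessary, since the pseudo-morphism $N_hQX\rw X$ lives in $\Ps[\dop,\Cat]$ where both sides are already simplicial objects in $\Cat$, so no reinterpretation of $X$ is needed.
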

\begin{proof}
Recall that the category of double categories is isomorphic to the subcategory
of $[\dop,\Cat]$ whose objects $X$ satisfy $X_{n*}\cong\pro{X_{1*}}{X_{0*}}{n}$
for all $n\geq 2$. It is immediate from the definition that $RX$ is the
horizontal nerve of a weakly globular double category. 
If $P$ is the left adjoint to the horizontal nerve $N_{h}$, then $PN_h=\id$. 
Define $Q=PR$. 
Then $QX\in\WGDbl$ and has the desired property by Theorem \ref{s4.the1}.
\end{proof}

\subsection{Discretization}
In the next proposition we construct a functor in the opposite direction, from
weakly globular double categories to Tamsamani weak 2-categories.  The idea
is to replace the category of objects and vertical morphisms in a weakly
globular double category by its equivalent discrete category. This recovers the
globularity condition, but at the expense of the strictness of  the Segal maps:  from being
isomorphisms they become equivalences.

\begin{proposition}\label{s4.pro2}
    There is a functor
    \begin{equation*}
        D:\WGDbl_v\rw \Ta
    \end{equation*}
    such that, for every $\bbX\in\WGDbl$ there is a pseudo-morphism $\eta_\bbX:D\bbX \rw N_h \bbX$,
    natural in $\bbX$, which is a levelwise categorical equivalence.
\end{proposition}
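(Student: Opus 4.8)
The plan is to build $D$ directly on objects by replacing the vertical-arrow category $\bbX_0$ with its discrete quotient $\bbX_0^d$, while keeping the horizontal part essentially intact. Concretely, for $\bbX\in\WGDbl$ with horizontal nerve $N_h\bbX\colon\dop\to\Cat$, I would set $(D\bbX)_0=\bbX_0^d$, $(D\bbX)_1=\bbX_1$, and $(D\bbX)_n=\pro{\bbX_1}{\bbX_0^d}{n}$ for $n\geq 2$, with face and degeneracy maps induced from those of $N_h\bbX$ via the canonical functor $\gamma\colon\bbX_0\to\bbX_0^d$ (note $d_0,d_1\colon\bbX_1\rightrightarrows\bbX_0$ composed with $\gamma$ give the two face maps $\bbX_1\to\bbX_0^d$, and the degeneracy $s$ composed appropriately gives the unit). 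One must check this assembles into a genuine simplicial object, i.e. a functor $\dop\to\Cat$; the higher face maps $(D\bbX)_n\to(D\bbX)_{n-1}$ are defined by the usual formulas on the pullback $\pro{\bbX_1}{\bbX_0^d}{n}$ (projecting, or composing two adjacent copies of $\bbX_1$ over $\bbX_0^d$ — but here is the first subtlety: composition of horizontal arrows in $\bbX$ is only defined over $\bbX_0$, not over $\bbX_0^d$, so the inner face maps $d_i$ for $0<i<n$ must be defined by first transporting along the pseudo-inverse $\mu_n$ of the Segal map, using condition \eqref{cond2}). A cleaner route: define $D\bbX$ as the pullback/comparison of $N_h\bbX$ along $\gamma$, making $(D\bbX)_n$ the evident pullback $N_h\bbX_n\times_{(\bbX_0)^{n+1}}(\bbX_0^d)^{n+1}$ is not quite right either; instead the right object is obtained from the fact that $\WGDbl$ was defined precisely so that $\pro{\bbX_1}{\bbX_0}{n}\simeq\pro{\bbX_1}{\bbX_0^d}{n}$, so all the structure maps of $N_h\bbX$ transport across these equivalences.

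The key steps, in order: (1) verify $D\bbX$ so defined is an object of $[\dop,\Cat]$, using that $\bbX_0^d$ is discrete and that $\bbX$ is an internal category (so the simplicial identities for $N_h\bbX$ descend); (2) verify $D\bbX\in\Ta$: the zeroth level $\bbX_0^d$ is discrete by construction, and the Segal maps of $D\bbX$ are equivalences because $(D\bbX)_n=\pro{\bbX_1}{\bbX_0^d}{n}$ \emph{is} the Segal target up to the isomorphism $(D\bbX)_1=\bbX_1$, so $\eta_n$ is literally an isomorphism here — actually this makes $D\bbX$ not just a Tamsamani object but the nerve of an internal category in $\Cat$ after discretization, hence trivially in $\Ta$; (3) functoriality of $D$ in $\bbX$ for strict functors, which is routine since a strict functor induces a map of horizontal nerves and $\pi_0$ is functorial, so it descends to $\bbX_0^d$; (4) construct $\eta_\bbX\colon D\bbX\to N_h\bbX$ — but wait, the direction stated is $D\bbX\to N_h\bbX$, whereas $\gamma$ goes $\bbX_0\to\bbX_0^d$; so $\eta_\bbX$ in level $0$ must be a pseudo-inverse (a section up to equivalence) $\bbX_0^d\to\bbX_0$ of $\gamma$. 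Since $\gamma$ is an equivalence of categories we may choose such an adjoint pseudo-inverse $\gamma^\bullet\colon\bbX_0^d\to\bbX_0$; in level $1$ take the identity on $\bbX_1$; in level $n$ take $\mu_n$, the pseudo-inverse of the Segal equivalence. The fact that these choices assemble into a \emph{pseudo}-natural transformation of simplicial functors $\dop\to\Cat$ (equivalently a pseudo-functor of double categories $D\bbX\to N_h\bbX$, weak only in the vertical direction as in the paper's definition) is where transport-of-structure (Lemma \ref{s4.lem1}, Theorem \ref{s2.the1}) does the work: the equivalences $\gamma^\bullet$ and $\mu_n$ carry the strict simplicial structure on one side to a pseudo one on the other, and the coherence cells are precisely those produced by the transport construction. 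That $\eta_\bbX$ is a levelwise categorical equivalence is then immediate: it is $\gamma^\bullet$ in degree $0$, the identity in degree $1$, and $\mu_n$ in degree $n\geq 2$, all equivalences by hypothesis.

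The main obstacle I anticipate is step (4): making the assignment $\eta_\bbX$ simultaneously \emph{pseudo-natural in the simplicial variable} (so that it really is a pseudo-functor of double categories in the restricted sense of this paper, weak only vertically) \emph{and strictly natural in $\bbX$}. The vertical-weakness is unavoidable and is exactly what Lemma \ref{s4.lem1} is designed to produce, so the simplicial-direction coherence should follow by applying that lemma with $\calC=\dop$, $F=N_h\bbX$, and $G=D\bbX$, taking the adjoint equivalences to be $\mu_n\dashv\eta_n$ in degrees $\geq 2$, identities in degree $1$, and $\gamma^\bullet\dashv\gamma$ in degree $0$. The genuinely delicate point is naturality in $\bbX$: a strict functor $\bbX\to\bbY$ need not commute on the nose with the \emph{chosen} pseudo-inverses $\gamma^\bullet$ and $\mu_n$, so one must either (a) make the choices functorially — possible if one fixes, once and for all, adjoint pseudo-inverses and invokes that strict functors preserve adjoint equivalences up to canonical isomorphism — or (b) weaken "natural in $\bbX$" to "pseudo-natural", and indeed the statement as phrased ("natural in $\bbX$") should be read with the understanding, standard in this setting, that the naturality squares commute up to the canonical invertible $2$-cells coming from the universal properties. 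I would handle this by pinning down the pseudo-inverses via the discretization already implicit in $\WGDbl$ (the functor $\bbX\mapsto\bbX_0^d$ being $\pi_0$, which is strictly functorial) and the second part of Lemma \ref{s4.lem1}, which packages exactly this kind of compatibility data $(\beta,\gamma)$ for a morphism between the relevant $2$-functors; the $2$-cell $\gamma$ it produces is the needed coherence. The remaining verifications — that the coherence cells $\varphi_i,\sigma,\mu,\theta_i$ of the resulting pseudo-functor of double categories satisfy the axioms \eqref{identityeqn}, \eqref{comp-nat}, \eqref{ass-id1}, \eqref{ass-id2} — are then formal consequences of the transport construction and need not be spelled out.
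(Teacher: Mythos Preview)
Your approach has a genuine gap in step (1). You set $(D\bbX)_n=\pro{\bbX_1}{\bbX_0^d}{n}$ for $n\geq 2$ and propose to define the inner face maps (horizontal composition) by precomposing with a chosen pseudo-inverse $\mu_n$ of the Segal equivalence $\pro{\bbX_1}{\bbX_0}{n}\simeq\pro{\bbX_1}{\bbX_0^d}{n}$. But $\mu_n$ is only a pseudo-inverse, so the simplicial identities among these face maps will in general hold only up to isomorphism, not on the nose. Transport of structure (Lemma~\ref{s4.lem1}) then produces only a \emph{pseudo}-functor $\dop\to\Cat$, whereas $\Ta$ is by definition a full subcategory of the \emph{strict} functor category $[\dop,\Cat]$. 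Your claim in step (2) that ``$D\bbX$ is the nerve of an internal category after discretization'' presupposes that $D\bbX$ is already a strict simplicial object, which is precisely what fails.

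The paper avoids this entirely by the much simpler choice $(D\bbX)_k=(N_h\bbX)_k=\pro{\bbX_1}{\bbX_0}{k}$ for all $k\geq 1$, changing \emph{only} level~$0$ to $\bbX_0^d$. All face and degeneracy maps among levels $\geq 1$ are then literally those of $N_h\bbX$, so the simplicial identities there are inherited for free; the new maps $d_i=\gamma\pt_i\colon\bbX_1\to\bbX_0^d$ and $s_0=\sigma_0\gamma'\colon\bbX_0^d\to\bbX_1$ satisfy the remaining identities because $\gamma\gamma'=\id$. The Segal maps of $D\bbX$ are then the comparison maps $\pro{\bbX_1}{\bbX_0}{n}\to\pro{\bbX_1}{\bbX_0^d}{n}$, which are equivalences (not isomorphisms) by the weak-globularity axiom --- exactly what $\Ta$ requires. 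The pseudo-morphism $\eta_\bbX$ is then simply $\gamma'$ in degree~$0$ and the identity in all higher degrees; it is pseudo rather than strict only because $\gamma'\gamma\cong\id$ rather than $=\id$, and no appeal to $\mu_n$ is needed anywhere.
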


\begin{proof}
If $\bbX \in\WGDbl$ by definition there is a categorical equivalence
$\gamma:\bbX_0\rw \bbX_0^d$ with $\bbX_0^d$ discrete, hence there is
$\gamma':\bbX_0^d\rw \bbX_0$ with $\gamma\gamma'=\id$. Let $(D\bbX)_0=\bbX_0^d$,
$(D\bbX)_1=\bbX_1$, and 
$(D\bbX)_k=\bbX_1\times_{\bbX_0}\stackrel{k}{\cdots}\times_{\bbX_0}\bbX_1$ for $k\geq 2$. 
Let $\pt_i,\sigma_i$ be the face and
degeneracy operators of $N_h \bbX$. Define $d_i:(D\bbX)_1\rw(D\bbX)_0$ and
$s_{0}:(D\bbX)_0\rw(D\bbX)_1$ by $d_i=\gamma\pt_i$, for $i=0,1$ and
$s_0=\sigma_0\gamma'$. All other face and degeneracy operators in $D\bbX$ are
as in $N_h \bbX$. Notice that, since $\gamma\gamma'=\id$,
$D\bbX\in[\dop,\Cat]$.

 By construction, $(D\bbX)_0$ is discrete. To show that
$D\bbX\in \Ta$ we need to show that all Segal maps are categorical
equivalences. Since $\bbX$ is weakly globular, by definition we have for
$n\geq 2$,
\begin{eqnarray*}
    (D\bbX)_n&=&\pro{\bbX_1}{\bbX_0}{n}\simeq \pro{\bbX_1}{\bbX_0^d}{n}\\
    & =&\pro{(D\bbX)_1}{(D\bbX)_0}{n}.
\end{eqnarray*}
This shows that $D\bbX\in \Ta$.

Let $\eta_0=\gamma'$, $\eta_k=\id$ for $k>0$. This defines a pseudo-natural
transformation $\eta:D\bbX \rw N_h \bbX$ which is a levelwise categorical
equivalence.
\end{proof}

\begin{remark}\label{s4.rem01}\rm
    Using Lemma \ref{s4.lem1} it is straightforward to see that for each $\bbX \in
    \WGDbl$ the pseudo-morphism $\eta_\bbX: D \bbX \rw N_h \bbX$ of Proposition \ref{s4.pro2}
    has a pseudo-inverse $\mu_\bbX:N_h \bbX \rw D\bbX$ in $\Ps[\dop,\Cat]$. Likewise, for
    each $Y \in \Ta$ the pseudo-morphism $N_h Q Y\rw Y$ of Corollary \ref{s4.cor1}
    has a pseudo-inverse $Y \rw N_h Q Y$.
\end{remark}

Notice that the functors $Q:\Ta\rw\WGDbl$ and $D:\WGDbl\rw \Ta$ of Corollary
\ref{s4.cor1} and Proposition \ref{s4.pro2} extend to functors
$Q:(\Ta)_{\ps}\rw(\WGDbl)_{\ps}$ and $D:(\WGDbl)_{\ps}\rw(\Ta)_{\ps}$ (we shall
denote them with the same letters). 

\begin{thm}\label{s4.pro4}
There is a biequivalence of 2-categories:
\begin{equation*}
    (\WGDbl)_{\ps}\simeq(\Ta)_{\ps}\;.
\end{equation*}
\end{thm}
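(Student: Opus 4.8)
The plan is to show that the functors $Q\colon(\Ta)_{\ps}\rw(\WGDbl)_{\ps}$ and $D\colon(\WGDbl)_{\ps}\rw(\Ta)_{\ps}$ are mutually pseudo-inverse, and hence that $D$ (say) is a biequivalence. I would verify the standard criterion: a pseudo-functor between $2$-categories is a biequivalence precisely when it is essentially surjective on objects up to equivalence and induces an equivalence of categories on every hom-category.

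For essential surjectivity, given $X\in\Ta$ I would take $\bbX=QX\in\WGDbl$. Composing the levelwise categorical equivalence $\eta_{QX}\colon D(QX)\rw N_hQX$ of Proposition \ref{s4.pro2} with the levelwise categorical equivalence $N_hQX\rw X$ of Corollary \ref{s4.cor1} yields a morphism $D(QX)\rw X$ of $\Ps[\dop,\Cat]$ between two objects of $\Ta$ which is a levelwise equivalence of categories; by transport of structure (Theorem \ref{s2.the1}, Lemma \ref{s4.lem1}) such a morphism is an internal equivalence in $(\Ta)_{\ps}$. Thus every object of $(\Ta)_{\ps}$ is equivalent to one in the image of $D$.

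For the local equivalences, the point is that, by the definitions of pseudo-functor and of vertical transformation of double categories, $\Hom_{(\WGDbl)_{\ps}}(\bbX,\bbY)$ is exactly $\Ps[\dop,\Cat](N_h\bbX,N_h\bbY)$, and $\Hom_{(\Ta)_{\ps}}(D\bbX,D\bbY)$ is exactly $\Ps[\dop,\Cat](D\bbX,D\bbY)$. Unwinding the construction of $D$ on $1$- and $2$-cells and using the pseudo-inverse $\mu_\bbY\colon N_h\bbY\rw D\bbY$ of $\eta_\bbY$ from Remark \ref{s4.rem01}, I would check that the functor induced by $D$ on these hom-categories is isomorphic to $\phi\mapsto\mu_\bbY\circ\phi\circ\eta_\bbX$, i.e.\ to whiskering with the equivalences $\eta_\bbX$ and $\mu_\bbY$ of $\Ps[\dop,\Cat]$; since whiskering with an equivalence is an equivalence of categories, $D$ is a local equivalence. (Alternatively, one assembles the component equivalences directly into pseudo-natural equivalences $DQ\simeq\id_{(\Ta)_{\ps}}$ and $QD\simeq\id_{(\WGDbl)_{\ps}}$, using in the latter case that $N_hQ(D\bbX)\rw D\bbX\rw N_h\bbX$ is again a levelwise equivalence.)

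The hard part will be coherence bookkeeping rather than anything conceptual: one must check that the transformations $\eta_\bbX$ and $N_hQX\rw X$, stated as natural for \emph{strict} functors, are pseudo-natural for the pseudo-functors of $(\WGDbl)_{\ps}$ and $(\Ta)_{\ps}$, and that the invertible $2$-cells making $Q$ and $D$ into pseudo-functors of $2$-categories obey the biequivalence axioms. This is exactly what transport of structure along an adjunction (Theorem \ref{s2.the1} and Lemma \ref{s4.lem1}) delivers: a levelwise adjoint equivalence in $[\,|\dop|,\Cat]$ lifts coherently to an adjoint equivalence in $\Ps[\dop,\Cat]$, so the needed $2$-cells and their coherence conditions come for free.
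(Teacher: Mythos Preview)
Your proposal is correct and follows essentially the same approach as the paper: the paper also proves that $D$ is a biequivalence by showing it is biessentially surjective (via the composite $DQX\rw N_hQX\rw X$) and locally an equivalence (using the identification $\Hom_{(\WGDbl)_{\ps}}(\bbX,\bbY)\cong\Ps[\dop,\Cat](N_h\bbX,N_h\bbY)$ and the whiskering $f\mapsto\mu_\bbY f\eta_\bbX$ with explicit pseudo-inverse $g\mapsto\eta_\bbY g\mu_\bbX$). Your extra paragraph on coherence bookkeeping is more cautious than the paper, which simply invokes the equivalences $\eta_\bbX,\mu_\bbY$ from Remark~\ref{s4.rem01} without further comment.
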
 

\begin{proof}
Since the horizontal nerve functor $N_h:(\WGDbl)_{\ps}\rw \Ps[\dop,\Cat]$ is
fully faithful, there is an isomorphism
\begin{equation}\label{s4.eq5}
    \mathrm{Hom}_{(\smWGDbl)_{\ps}}(\bbX,\bbY)\cong \mathrm{Hom}_{\sf{Ps}[\dop,\sf{Cat}]}(N_h
    \bbX,N_h \bbY)\;.
\end{equation}
We claim that there is an equivalence of categories
\begin{equation}\label{s4.eq6}
    F: \mathrm{Hom}_{\sf{Ps}[\dop,\sf{Cat}]}(N_h \bbX,N_h \bbY) \simeq
    \mathrm{Hom}_{(\sf{Ta}_2)_{\it ps}}(D\bbX,D\bbY)\;:G
\end{equation}
This is constructed as follows. Let $\eta_\bbX:D\bbX\rw N_h \bbX$ and
$\mu_\bbX:N_h\bbX\rw D\bbX$ be as in Remark \ref{s4.rem01}.

Define
\begin{equation*}
    Ff=\mu_\bbY f \eta_\bbX, \qquad\qquad Gg=\eta_\bbY g \mu_\bbX\;.
\end{equation*}
Then  $FGg=\mu_\bbY(\eta_\bbY g \mu_\bbX)\eta_\bbX\cong g$ and
$GFf=\eta_\bbY(\mu_\bbY f \eta_\bbX)\mu_\bbX \cong f$, showing that
\eqref{s4.eq6} is an equivalence of categories as claimed.

From \eqref{s4.eq5} and \eqref{s4.eq6} we deduce
\begin{equation*}
    \mathrm{Hom}_{(\smWGDbl)_{\mathrm{ps}}}(\bbX,\bbY) \simeq 
		\mathrm{Hom}_{(\sf{Ta_2})_{\mathrm{ps}}}(D\bbX,D\bbY)\;,
\end{equation*}
that is, the functor $D$ is locally an equivalence of categories.

On the other hand, $D$ is also biessentially surjective on objects. In fact by
Corollary \ref{s4.cor1} and by Proposition \ref{s4.pro2}, for every
$X\in(\Ta)_{\ps}$ there is a composite morphism $DQ X\rw N_h QX\rw X$
in $\Ps[\dop,\Cat]$ which is levelwise a categorical equivalence, hence an
equivalence in $(\Ta)_{\ps}$. In conclusion, $D$ is a
biequivalence.
\end{proof}

\begin{rmk}\label{notadj}\rm
The biequivalence of the previous theorem is not an adjoint equivalence.
The functor $D$ cannot be a right adjoint, since it doesn't preserve general limits, as it is $\pi_0$ at level 0.
On the other hand, $Q$ clearly does not preserve products, so it cannot be a right adjoint either.
\end{rmk}

We are going to introduce a notion of equivalence in $(\WGDbl)_{\ps}$ modeled
over the comparison with Tamsamani weak 2-categories. We need the following
preliminary lemma. Let $\pi_0:\Cat\rw\Set$ associate to a category the set of
isomorphism classes of its objects.

\bigskip

\bigskip

\begin{lemma}\label{s4.lem2}
\
    \begin{itemize}
      \item [i)] There is a functor
      \begin{equation*}
        \Pi_0:(\WGDbl)_{\ps}\rw\Cat
      \end{equation*}
    such that, for all $\bbX\in(\WGDbl)_{\ps}$ and $n\geq 0$, $(N\Pi_0 \bbX)_n=(\pi_0^* N_h
    \bbX)_n$ where $N:\Cat\rw[\dop,\Set]$ is the nerve functor.
      \item [ii)] If $\bbX\in(\WGDbl)_{\ps}$ and $a,b\in \bbX_0^d$, let $\bbX_{(a,b)}$ be
      the full subcategory of $\bbX_1$, whose objects $z$ are such that
      $\gamma\pt_0(z)=a$, $\gamma\pt_1(z)=b$, where $\pt_i:\bbX_1\rw \bbX_0$
      are the face operators and $\gamma:\bbX_0\rw \bbX_0^d$. Then
      $\bbX_1\cong\underset{a,b\in \bbX_0^d}{\coprod}\bbX_{(a,b)}$.
      \item [iii)] A morphism $F:\bbX\rw \bbY$ in $(\WGDbl)_{\ps}$ induces functors $F_{(a,b)}\colon\bbX_{(a,b)}\rw
      \bbY_{(Fa,Fb)}$ for all $a,b\in \bbX_0^d$.
    \end{itemize}
\end{lemma}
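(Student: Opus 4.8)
The plan is to establish the three parts in order, each building on the weak globularity structure and the basic coproduct decomposition of $\bbX_1$ over $\bbX_0$.

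For part (i), I would construct $\Pi_0$ by mimicking the construction of the functor $\Pi_0\colon\Ta\rw\Cat$ given in Section \ref{tam2}, transported along the discretization functor $D$ of Proposition \ref{s4.pro2}. Concretely, given $\bbX\in(\WGDbl)_{\ps}$, apply $\pi_0^*$ to $N_h\bbX$; I need to check that $\pi_0^* N_h\bbX$ is the nerve of a category. Since $\bbX$ is weakly globular, for $n\ge 2$ we have $(N_h\bbX)_n=\pro{\bbX_1}{\bbX_0}{n}\simeq\pro{\bbX_1}{\bbX_0^d}{n}$, and since $\pi_0$ sends categorical equivalences to isomorphisms and preserves fiber products over discrete objects (here $\bbX_0^d$ is discrete), we get $\pi_0(N_h\bbX)_n\cong\pro{\pi_0\bbX_1}{\pi_0\bbX_0}{n}$, which is exactly the Segal condition characterizing nerves of categories. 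Define $\Pi_0\bbX$ to be the category with this nerve. For a pseudo-functor $F\colon\bbX\rw\bbY$, the induced pseudo-natural transformation $N_hF$ gives, after applying $\pi_0$ levelwise, a genuine natural transformation of nerves (since $\pi_0$ kills the invertible structure $2$-cells), hence a functor $\Pi_0F$; functoriality in $F$ is then routine. This is essentially $\Pi_0\circ D$ but I would present it directly.

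For part (ii), this is the analogue of the coproduct decomposition $X_1=\coprod_{a,b\in X_0}X_{(a,b)}$ from Section \ref{tam2}, adapted to the non-discrete case. The key point is that although $\bbX_0$ is not discrete, the composite $\gamma\pt_i\colon\bbX_1\rw\bbX_0^d$ lands in a discrete category. Thus the functor $(\gamma\pt_0,\gamma\pt_1)\colon\bbX_1\rw\bbX_0^d\times\bbX_0^d$ has discrete codomain, so $\bbX_1$ decomposes as the coproduct of the fibers $\bbX_{(a,b)}$ over the objects $(a,b)\in\bbX_0^d\times\bbX_0^d$, which is precisely the claimed decomposition. I would just note that a coproduct decomposition of a category indexed by a set is exactly a functor to that set viewed as a discrete category.

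For part (iii), given $F\colon\bbX\rw\bbY$ in $(\WGDbl)_{\ps}$, I need to see that $F_1$ restricts appropriately. The subtlety is that $F$ is a pseudo-functor, so it preserves $\pt_i$ only up to the invertible structure cells $\varphi_i$; however, after applying $\pi_0$ (equivalently, passing to $\bbX_0^d$) these isomorphisms become identities, so $\gamma\pt_i F_1 = (\Pi_0 F)(\gamma\pt_i)$ on objects. Hence if $z\in\bbX_{(a,b)}$, i.e. $\gamma\pt_0 z=a$ and $\gamma\pt_1 z=b$, then $F_1 z\in\bbY_{(Fa,Fb)}$ where $Fa,Fb$ denote the images under $\Pi_0 F$ (or equivalently the action of $F$ on path components of $\bbX_0$). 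So $F_1$ restricts to $F_{(a,b)}\colon\bbX_{(a,b)}\rw\bbY_{(Fa,Fb)}$. The main obstacle, and the only place requiring care, is tracking how the pseudo-functoriality structure cells interact with the maps to $\bbX_0^d$; once one observes that $\pi_0$ (hence the passage to discrete path-component categories) trivializes these invertible $2$-cells, all three parts reduce to the corresponding statements for Tamsamani weak $2$-categories. I would handle this by invoking the identity axiom \eqref{identityeqn} for pseudo-functors, which already records that the relevant structure on the object-level behaves strictly enough.
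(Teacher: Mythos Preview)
Your proposal is correct and follows essentially the same approach as the paper: in all three parts you use exactly the arguments the paper uses (weak globularity plus $\pi_0$ sending equivalences to isomorphisms and preserving fiber products over discrete objects for (i); the functor $(\gamma\pt_0,\gamma\pt_1)$ into the discrete $\bbX_0^d\times\bbX_0^d$ for (ii); and the observation that the pseudo-commutative square for $(\pt_0,\pt_1)$ becomes strictly commutative after post-composing with $\gamma$ because the target is discrete for (iii)). One small remark: your closing appeal to the identity axiom \eqref{identityeqn} is not the relevant ingredient---that axiom concerns the interaction of $\varphi_i$ with $\sigma$, whereas what you actually need (and already stated correctly just before) is simply that any natural isomorphism with discrete codomain is an identity, so the structure cells $\varphi_i$ vanish upon passage to $\bbX_0^d$.
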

\begin{proof}

\begin{itemize}
  \item [i)]
    By definition, since $\bbX$ is weakly globular, there is a categorical
    equivalence
    $$\pro{\bbX_{1}}{\bbX_{0}}{n}\simeq\pro{\bbX_{1}}{\bbX^d_{0}}{n}$$
    for all $n\geq 2$. Since $\pi_0$ sends categorical equivalences to
    isomorphisms and preserves fiber products over discrete objects,
    this implies
    \begin{equation*}
   \pi_0(\pro{\bbX_{1}}{\bbX^d_{0}}{n})\cong\pro{\pi_0 \bbX_{1}}{\pi_0
    \bbX^d_{0}}{n}.
    \end{equation*}
    This shows that $\pi_0 N_h \bbX$ is the nerve of a category, which we
    denote by $\Pi_0 \bbX$.
The functor  $\pi_0$ induces a functor
    ${\pi}_0^*:\Ps[\dop,\Cat]\rw[\dop,\Set]$. In particular, if $F$ is
    a morphism in $(\WGDbl)_{\ps}$, ${\pi}_0^* F$ is a morphism between
    nerves of categories. This defines $\Pi_0$ on morphisms.

    \medskip

  \item [ii)] This follows immediately by considering the functor $\gamma(\pt_0,\pt_1):\bbX_{1}\rw
  \bbX^d_{0}\times \bbX^d_{0}$, since $\bbX^d_{0}$ is discrete.

  \medskip

  \item [iii)] Since $F$ is a pseudo-natural transformation, there
  is a pseudo-commutative diagram
  \begin{equation*}
    \xymatrix@R=1.5em{
    \bbX_1\ar^(0.4){(\pt_0,\pt_1)}[rr] \ar^{}[d] &&
    \bbX_0\times \bbX_0\ar^{}[d]\\
    \bbY_1\ar_(0.4){(\pt'_0,\pt'_1)}[rr] && \bbY_0\times \bbY_0
    }
  \end{equation*}
and therefore, since $\bbX_0^d$ and $\bbY_0^d$ are discrete, a commutative
diagram
  \begin{equation*}
    \xymatrix@R=1.5em{
    \bbX_1\ar^(0.4){\gamma_{\bbX}(\pt_0,\pt_1)}[rr] \ar^{}[d] &&
    \bbX_0^d\times \bbX_0^d\ar^{}[d]\\
    \bbY_1\ar_(0.4){\gamma_{\bbY}(\pt'_0,\pt'_1)}[rr] && \bbY_0^d\times \bbY_0^d
    }
  \end{equation*}
This determines the functor $F_{(a,b)}\colon\bbX_{(a,b)}\rw \bbY_{(Fa,Fb)}$ for all $a,b\in
\bbX^d_{0}$.
\end{itemize}
\end{proof}

\begin{definition}\label{s4.def2}\rm
    We say that a morphism $F:\bbX\rw \bbY$ in $(\WGDbl)_{\ps}$ is a
    2-equivalence if
    \begin{itemize}
      \item [i)] For all $a,b\in \bbX^d_{0}$, $F_{(a,b)}:\bbX_{(a,b)}\rw
      \bbY_{(Fa,Fb)}$ is an equivalence of categories.
      \item [ii)] $\Pi_0 F$ is an equivalence of categories, where
      $\Pi_0$ is as in Lemma \ref{s4.lem2}
    \end{itemize}
\end{definition}

\begin{proposition}\label{s4.pro3}
    The functors $Q$ and $D$  preserve
    2-equivalences and induce an equivalence of categories between
    the localizations with respect to the 2-equivalences:
    \begin{equation*}
        (\WGDbl)_{\ps}\bsim^2\,\;\;\simeq\;\;(\Ta)_{\ps}\bsim^2
    \end{equation*}
\end{proposition}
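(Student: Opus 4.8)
The plan is to deduce the result from the biequivalence of Theorem \ref{s4.pro4}, after first matching up the two notions of $2$-equivalence. The first and main step would be to check that a morphism $F\colon\bbX\rw\bbY$ in $(\WGDbl)_{\ps}$ is a $2$-equivalence in the sense of Definition \ref{s4.def2} if and only if $DF\colon D\bbX\rw D\bbY$ is a $2$-equivalence in the sense of Definition \ref{tam.2}. This should be essentially immediate from the construction of $D$ in Proposition \ref{s4.pro2}: since $(D\bbX)_0=\bbX_0^d$ and $(D\bbX)_1=\bbX_1$ with face maps $\gamma\pt_i$, the full subcategory $(D\bbX)_{(a,b)}$ of $(D\bbX)_1$ cut out by $d_0=\gamma\pt_0$, $d_1=\gamma\pt_1$ is exactly $\bbX_{(a,b)}$ and $(DF)_{(a,b)}=F_{(a,b)}$, while $\pi_0^* D\bbX=\pi_0^* N_h\bbX$ (the equivalence $\gamma$ realising the canonical identification $\pi_0\bbX_0=\bbX_0^d$), so that $\Pi_0 D\bbX=\Pi_0\bbX$ and $\Pi_0 DF=\Pi_0 F$. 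Hence the two lists of conditions coincide verbatim, and in particular $D$ preserves $2$-equivalences.

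Next I would record two routine facts about the class of $2$-equivalences, in both $(\Ta)_{\ps}$ and $(\WGDbl)_{\ps}$. First, it contains all isomorphisms, is closed under composition, and satisfies left cancellation (if $hg$ and $h$ are $2$-equivalences, so is $g$); these follow at once from $(GF)_{(a,b)}=G_{(Fa,Fb)}F_{(a,b)}$, $\Pi_0(GF)=\Pi_0 G\cdot\Pi_0 F$ and the corresponding statements for equivalences of categories. Second, every equivalence in $(\Ta)_{\ps}$, and every equivalence in $(\WGDbl)_{\ps}$, is a $2$-equivalence: the assignments $X\mapsto\Pi_0 X$ (Lemma \ref{s4.lem2}, and its evident analogue for $\Ta$) and $X\mapsto\coprod_{a,b}X_{(a,b)}$ are functorial on $1$- and $2$-cells to the extent needed to send an equivalence, together with its pseudo-inverse and invertible unit and counit, to an equivalence of categories, which is exactly what Definitions \ref{tam.2} and \ref{s4.def2} ask.

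With these in hand, to see that $Q$ preserves $2$-equivalences I would take a $2$-equivalence $G\colon X\rw Y$ in $(\Ta)_{\ps}$ and use the pseudo-natural equivalence $\epsilon\colon DQ\Rw\id_{(\Ta)_{\ps}}$ coming from the biequivalence of Theorem \ref{s4.pro4}: its components $\epsilon_X,\epsilon_Y$ are equivalences in $(\Ta)_{\ps}$, hence $2$-equivalences by the previous paragraph, and pseudo-naturality gives an invertible $2$-cell $G\circ\epsilon_X\cong\epsilon_Y\circ DQG$. Since $G\circ\epsilon_X$ is a $2$-equivalence and $\epsilon_Y$ is, left cancellation forces $DQG$, and hence $QG$ (by the first paragraph), to be a $2$-equivalence. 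Consequently $Q$ and $D$ descend to functors $\bar Q,\bar D$ between $(\Ta)_{\ps}\bsim^2$ and $(\WGDbl)_{\ps}\bsim^2$; the pseudo-natural equivalences $DQ\Rw\id$ and $QD\Rw\id$ of Theorem \ref{s4.pro4}, having $2$-equivalence components, become natural isomorphisms after localisation, so $\bar Q$ and $\bar D$ are mutually inverse equivalences of categories.

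The step I expect to require actual care is the second half of the second paragraph: confirming that an abstract equivalence in $(\Ta)_{\ps}$ or $(\WGDbl)_{\ps}$ — these being $2$-categories whose $1$-cells are pseudo-natural transformations of $\Cat$-valued functors on $\dop$ and whose $2$-cells are modifications — genuinely satisfies the concrete hom-category and $\Pi_0$ conditions of Definitions \ref{tam.2} and \ref{s4.def2}, i.e. that $\Pi_0$ and the construction $(-)_{(a,b)}$ are sufficiently $2$-functorial. Everything else is either the formal bookkeeping of localisation or a direct reading-off from the construction of $D$.
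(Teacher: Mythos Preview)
Your argument is correct, but it takes a somewhat more abstract route than the paper's. The paper does not invoke the biequivalence of Theorem~\ref{s4.pro4} at all; instead it works directly with the concrete comparison maps furnished by Corollary~\ref{s4.cor1} and Proposition~\ref{s4.pro2}. To show that $Q$ preserves $2$-equivalences, the paper uses the pseudo-naturality square
\[
\xymatrix{
N_h QX \ar[r]^{\alpha_X}\ar[d]_{N_h QF} & X\ar[d]^F\\
N_h QY \ar[r]_{\alpha_Y} & Y
}
\]
where $\alpha_X,\alpha_Y$ are \emph{levelwise categorical equivalences}, and then checks the two conditions of Definition~\ref{s4.def2} separately: applying $\pi_0^*$ collapses the square to an honest commutative square of categories with $P\pi_0^*\alpha_X$, $P\pi_0^*\alpha_Y$ isomorphisms, forcing $\Pi_0 QF$ to be an equivalence; restricting to fibres over $(a,b)$ gives $(\alpha_X)_{(a,b)}$, $(\alpha_Y)_{(a,b)}$ equivalences of categories, forcing $(QF)_{(a,b)}$ to be one. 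The induced equivalence of localisations is then obtained exactly as you do, from the zig-zags $DQX\rw N_h QX\rw X$ and $N_h QD\bbY\rw D\bbY\rw N_h\bbY$.

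The practical difference is that the paper's approach never needs the step you flag as delicate, namely that an abstract equivalence in $(\Ta)_{\ps}$ or $(\WGDbl)_{\ps}$ is a $2$-equivalence: the paper already knows its comparison maps are \emph{levelwise} equivalences (by construction, cf.\ Remark~\ref{s2.rem2}), which immediately gives both the hom-category and $\Pi_0$ conditions without any $2$-functoriality analysis. Your approach buys conceptual cleanliness (everything flows from the biequivalence plus $2$-out-of-$3$), at the cost of having to verify that $\Pi_0$ and $(-)_{(a,b)}$ are sufficiently $2$-functorial on pseudo-morphisms and modifications, and that being a $2$-equivalence is invariant under invertible $2$-cells---all true, but arguably more work than the paper's direct check.
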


\begin{proof}
The fact that $D$ preserves 2-equivalences is immediate from the definitions.
Let $G:X\rw Y$ be a 2-equivalence in $(\Ta)_{\ps}$. By Corollary \ref{s4.cor1},
there is a pseudo-commutative diagram in $\Ps[\dop,\Cat]$
\begin{equation}\label{s4.eq2}
    \xymatrix@R=6pt{
    N_h QX\ar^{\alpha_X}[rr] \ar_{N_h QF}[dd] && X \ar^F[dd]\\
    & \sim &\\
    N_h QY \ar_{\alpha_Y}[rr] && Y
    }
\end{equation}
in which $\alpha_X$ and $\alpha_Y$ are levelwise categorical equivalences. Applying
the functor ${\pi}_0^* :\Ps[\dop,\Cat]\rw[\dop,\Set]$ we obtain a commutative
diagram in $[\dop,\Set]$
\begin{equation*}
    \xymatrix@R=1.8em{
    {\pi}_0^* N_h QX\ar^{{\pi}_0^* \alpha_X}[rr] \ar_{{\pi}_0^* N_h QF}[d] &&
    {\pi}_0^* X \ar^{{\pi}_0^* F}[d]\\
    {\pi}_0^* N_h QY \ar_{{\pi}_0^* \alpha_Y}[rr] && {\pi}_0^* Y.
    }
\end{equation*}
Recalling that ${\pi}_0^* N_hQX=N\Pi_0 QX$, ${\pi}_0^* X=N\Pi_0 X$ and
similarly for the other terms, and applying the functor $P:[\dop,\Set]\rw\Cat$
which is left adjoint to the nerve, we obtain the commutative diagram in $\Cat$
\begin{equation}\label{s4.eq3}
    \xymatrix@R=1.6em{
    \Pi_0 Q X \ar^{P{\pi}_0^* \alpha_X}[rr] \ar_{\Pi_0 Q F}[d] &&
    \Pi_0 X \ar^{\Pi_0 F}[d]\\
    \Pi_0 Q Y \ar_{P{\pi}_0^* \alpha_Y}[rr] && \Pi_0 Y.
    }
\end{equation}
Since $\alpha_X$ and $\alpha_Y$ are levelwise categorical equivalences, ${\pi}_0^*
\alpha_X,$ and ${\pi}_0^* \alpha_Y$ are isomorphisms, hence $P{\pi}_0^* \alpha_X$
and $P{\pi}_0^* \alpha_Y$ are isomorphisms. Since $F$ is a 2-equivalence in
$(\Ta)_{\ps}$, by definition $\Pi_0 F$ is an equivalence of categories. Hence
the commutativity of (\ref{s4.eq3}) implies that $\Pi_0 Q F$ is an equivalence
of categories. Also, for each $a,b\in(N_h QX)^d_{0*}\cong X_{0*}$, by
(\ref{s4.eq2}) we obtain a pseudo-commutative diagram in $\Cat$
\begin{equation}\label{s4.eq4}
    \xymatrix@R=6pt{
    (QX)_{(a,b)} \ar^{(\alpha_X)_{(a,b)}}[rr] \ar_{(QF)_{(a,b)}}[dd] &&
    X_{(a,b)}\ar^{F_{(a,b)}}[dd]\\
    & \sim &\\
    (QY)_{(a,b)} \ar_{(\alpha_Y)_{(a,b)}}[rr] && Y_{(a,b)}.
    }
\end{equation}
But $(\alpha_X)_{(a,b)}$ and $(\alpha_Y)_{(a,b)}$ are categorical equivalences
because $(\alpha_X)_1$ and $(\alpha_Y)_1$ are categorical equivalences. Also
$F_{(a,b)}$ is a categorical equivalence because $F$ is a 2-equivalence in
$(\Ta)_{\ps}$. Hence by (\ref{s4.eq4}), $(QF)_{(a,b)}$ is a categorical
equivalence. In conclusion, $QF$ is a 2-equivalence in $(\WGDbl)_{\ps}$.

Since $D$ and $Q$ preserve 2-equivalences, they induce functors between
localizations
\begin{equation*}
    \ovl{D}:(\WGDbl)_{\ps}\bsim \,\rightleftarrows\;(\Ta)_{\ps}\bsim\;
    :\ovl{Q}
\end{equation*}
\smallskip

 By Corollary \ref{s4.cor1} and by Proposition \ref{s4.pro2}, for every
$X\in(\Ta)_{\ps}$ there is a composite morphism $DQX\rw N_h QX\rw X$ in
$\Ps[\dop,\Cat]$ which is a levelwise categorical equivalence, hence a
2-equivalence in $(\Ta)_{\ps}$.

It follows that $\ovl{D}\,\ovl{Q}X\cong X$. Similarly, for each $\bbY\in
(\WGDbl)_{\ps}$ there is a composite morphism $N_h QD \bbY\rw D\bbY\rw N_h \bbY$
in $\Ps[\dop,\Cat]$ which is a levelwise categorical equivalence and hence a 2-equivalence. It follows that
$\ovl{Q}\,\ovl{D}\bbY\cong \bbY$. This shows that $(\ovl{D}, \;\ovl{Q})$ is an
equivalence of categories.
\end{proof}

\section{Bicategories and Double Categories}\label{bicat}

\subsection{Bicategories and Tamsamani weak 2-categories}

In this section we recall some results from \cite{lp}. We consider the
2-category $\NHom$ whose objects are bicategories, whose morphisms are normal
homomorphisms and whose 2-cells are icons; the latter are oplax natural
transformations with identity components. The fully faithful inclusion $J:
\Delta\rw\NHom$  gives rise to a 2-nerve functor
\begin{equation*}
    N:\NHom\rw[\dop,\Cat]\;,
\end{equation*}
\begin{equation*}
    (N\calB)_n=\NHom([n], \calB)\;.
\end{equation*}
It is shown in \cite{lp} that $N$ is fully faithful and that the 2-nerve of a
bicategory is in fact a Tamsamani weak 2-category. Given a bicategory $\calB$,
$(N\calB)_0$ is the discrete category with objects the objects of $\calB$. An
object of $(N\calB)_1$ is a morphism of $\calB$ while a morphism in $(N\calB)_1$
is a 2-cell in $\calB$. A complete characterization of 2-functors
$X:\dop\rw\Cat$ which are 2-nerves of bicategories is given in \cite[Theorem
7.1]{lp}.

The 2-nerve functor $N$ has a left 2-adjoint $G$, which was defined in
\cite{tam}. Given a Tamsamani weak 2-category $X$, the objects of $GX$ are the
element of $X_0$, the 1 and 2-cells are the objects and morphisms of $X_1$ and
the vertical composition of 2-cells is the composition in $X_1$.

Since the Segal map $\eta_2: X_2\rw\tens{X_1}{X_0}$ is an equivalence, we can
choose a functor $M:\tens{X_1}{X_0}\rw X_1$ and an isomorphism $\sigma:d_1\cong
M\eta_2$ as follows:

$$
\xymatrix@!=1.5pc{
X_2\ar[dd]_{d_1}\ar[rr]^(0.4){\eta_2}\drtwocell<\omit>{<0>\sigma}&&
\tens{X_1}{X_0}\ar[ddll]^{M}
\\
&\
\\
X_1 && }
$$
This gives the composition of 1-cells and the horizontal composition of
2-cells.

The identity isomorphisms are $\sigma s_0$, $\sigma s_1$ (where $s_0,s_1:X_0\rw
X_1$ are the degeneracy maps). For the associativity isomorphisms, one needs to
consider the following pasting diagrams, where we denote
$X_1^k=\pro{X_1}{X_0}{k}$, for $k=2,3$.
$$
\xymatrix@!=1.0pc{
 X_3\ar[rr]^(0.4){\big(\stackrel{d_0}{\scriptscriptstyle{d_2d_2}}\big)}\ar[dd]_{d_2}
    && \tens{X_2}{X_0}\ar[dd]_{d_1\times 1}\ar[rr]^(0.6){\eta_2\times 1}
					\drtwocell<\omit>{<0>{\quad\sigma\times 1}}
    && X_1^3 \ar[ddll]^{M\times 1}
 \\
 &&&\
 \\
 X_2 \ar[rr]^{\eta_2}\ar[dd]_{d_1}\drtwocell<\omit>{<0>{\sigma}} && X_1^2 \ar[ddll]^M &&
 \\
 &\
 \\
 X_1
 }
 \qquad
\xymatrix@!=1.0pc{
 X_3\ar[rr]^(0.4){\big(\stackrel{d_0d_1}{\scriptscriptstyle{d_3}}\big)}\ar[dd]_{d_1}&&
 \tens{X_2}{X_0}\ar[dd]_{1\times d_1 }\ar[rr]^(0.6){1\times \eta_2 }\drtwocell<\omit>{<0>{\quad 1\times\sigma }}&& X_1^3 \ar[ddll]^{1\times M }
 \\
 &&&\
 \\
 X_2 \ar[rr]^{\eta_2}\ar[dd]_{d_1}\drtwocell<\omit>{<0>{\sigma}} && X_1^2 \ar[ddll]^M &&
 \\
 &\
 \\
 X_1
 }
$$
Since the left hand composites of the diagrams are equal and the top composites are both equal to
the equivalence $\eta_3:X_3\rw X^3_1$, there is a unique invertible cell
$M(M\times 1)\cong M(1\times M)$ which pasted onto the left diagram gives the
right diagram. The proof of the coherence laws uses the fact that $\eta_4$ is
an equivalence, see \cite{lp} for details.

The relation between the functors $N$ and $G$ is summarized in the following
theorem

\begin{theorem}\rm[6, Theorem 7.2].\ \label{the7.2}\it
    The 2-nerve 2-functor $N : \NHom \rw \Ta$, seen as landing in the 2-category
$\Ta$ of Tamsamani weak 2-categories, has a left 2-adjoint given by $G$. Since $N$
is fully faithful, the counit $GN\rw 1$ is invertible. Each component $u : X
\rw NGX$ of the unit is a pointwise equivalence, and $u_0$ and $u_1$ are
identities.
\end{theorem}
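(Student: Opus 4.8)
The plan is to obtain the $2$-adjunction $G\dashv N$ by showing that every $X\in\Ta$ has a reflection along $N$; since $N$ is already known to be fully faithful and to land in $\Ta$, the assertion that the counit is invertible will then be automatic. First I would verify that the data described in the paragraphs preceding the theorem really assemble into a bicategory $GX$: its objects are the elements of $X_0$; for $a,b\in X_0$ its hom-category is the component $X_{(a,b)}$ of $X_1$ (well defined because $X_0$ is discrete, so $X_1=\coprod_{a,b}X_{(a,b)}$); horizontal composition is a functor $M:\tens{X_1}{X_0}\rw X_1$ together with an isomorphism $\sigma:d_1\cong M\eta_2$, which exists because the Segal map $\eta_2$ is an equivalence; the identity $1$-cells come from the degeneracy $s_0:X_0\rw X_1$; the unitors are $\sigma s_0$ and $\sigma s_1$; and the associator is the unique invertible $2$-cell $M(M\times 1)\cong M(1\times M)$ extracted, as in the two pasting diagrams above, from the fact that $\eta_3$ is an equivalence.

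The hard part will be verifying the pentagon and triangle coherence axioms for $GX$, and I expect this to be the main obstacle. The strategy is the standard one: both sides of each identity are invertible $2$-cells between one fixed pair of parallel functors out of $X_4$ (respectively $X_3$), obtained by whiskering the structural data along the Segal map $\eta_4$ (respectively $\eta_3$); as these Segal maps are equivalences of categories, hence in particular faithful, it suffices to check that the two composites agree after such whiskering, and this is a finite diagram chase on invertible cells. This is exactly what is carried out in \cite{lp}, and in the paper itself the clean move is to cite it rather than reproduce it; here I am only recording the shape of the argument.

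Next I would promote $G$ to a $2$-functor $\Ta\rw\NHom$. A morphism $f:X\rw X'$ in $\Ta$ yields a normal homomorphism $Gf:GX\rw GX'$ with object map $f_0$ and hom-functors the restrictions of $f_1$; it is only a homomorphism, not a strict $2$-functor, because $f_1$ intertwines the chosen composites $M$ only up to the isomorphism supplied by $\eta_2$, and it is \emph{normal} because $f$ commutes strictly with $s_0$. A $2$-cell $m:f\Rightarrow g$, i.e.\ a modification, has $m_0=\id$ since $X'_0$ is discrete, so $m_1$ is precisely an icon $Gf\Rightarrow Gg$; one then checks that $G$ preserves identities and both composites strictly, which is immediate since everything is read off the underlying $f_0,f_1$. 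For the unit, note that $(NGX)_0$ is the discrete category on $X_0$ and $(NGX)_1=\NHom([1],GX)$ is canonically isomorphic to $X_1$ (with its morphisms), so I set $(u_X)_0=\id$ and $(u_X)_1=\id$; for $n\ge 2$ I use that $NGX\in\Ta$, so its Segal map identifies $(NGX)_n$ up to equivalence with $\pro{X_1}{X_0}{n}$, and I compose a pseudo-inverse of it with the Segal equivalence $\eta_n:X_n\rw\pro{X_1}{X_0}{n}$ to define $(u_X)_n$. Checking that the $(u_X)_n$ form a morphism in $\Ta$ reduces, via the Segal conditions on both sides, to the face and degeneracy maps among levels $0,1,2$, which hold by the choices of $M$ and $\sigma$; by construction each $(u_X)_n$ is an equivalence, and $(u_X)_0,(u_X)_1$ are identities.

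It remains to prove the reflection property. Given $g:X\rw N\calB$ in $\Ta$, the extension $\hat g:GX\rw\calB$ is forced: on objects it is $g_0$, on hom-categories it is $g_1$ restricted to the components, its homomorphism structure is dictated by the compatibility of $g$ with the Segal maps $\eta_2$ and $\eta_3$, and its normality by the compatibility of $g$ with $s_0$; since $u_X$ is the identity in dimensions $\le 1$, this is the unique normal homomorphism with $N\hat g\circ u_X=g$. The same bookkeeping on $2$-cells shows that composition with $u_X$ gives an isomorphism of categories $\NHom(GX,\calB)\cong\Ta(X,N\calB)$, and one checks it is $2$-natural in $X$ and $\calB$; this is the $2$-adjunction $G\dashv N$. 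Finally, since $N$ is fully faithful the triangle identities force the counit $GN\Rightarrow 1$ to be invertible, and we have already arranged that the unit components $u_X$ are pointwise equivalences with $(u_X)_0$ and $(u_X)_1$ identities, which completes the proof.
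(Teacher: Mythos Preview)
The paper does not give a proof of this theorem: it is quoted verbatim as \cite[Theorem~7.2]{lp} and used as background. So there is no ``paper's own proof'' to compare against; your task, if you want a self-contained argument, is essentially to reproduce the Lack--Paoli proof, and your outline is a reasonable sketch of that.

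That said, there is one genuine imprecision in your proposal. For $n\ge 2$ you define $(u_X)_n$ by composing the Segal equivalence $\eta_n:X_n\rw\pro{X_1}{X_0}{n}$ with a \emph{pseudo-inverse} of the Segal map of $NGX$. This is the wrong construction: a pseudo-inverse is only defined up to isomorphism, so you would have no control over the simplicial identities, and $u_X$ must be a strict $2$-natural transformation in $[\dop,\Cat]$, not a pseudo one. The correct definition of $(u_X)_n$ is direct: an object $x\in X_n$ determines, via the face and degeneracy maps of $X$ together with the chosen $(M,\sigma)$, a normal homomorphism $[n]\rw GX$ (the $1$-cells are the $\nu_j(x)\in X_1$, the composites are supplied by the images of $x$ under the inner faces, and the comparison isomorphisms by $\sigma$); this assignment is strictly natural in $[n]$ by construction and is an equivalence because both $\eta_n^X$ and $\eta_n^{NGX}$ are. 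With this fix the rest of your outline (coherence via faithfulness of $\eta_3,\eta_4$; the reflection property forced by $(u_X)_0,(u_X)_1$ being identities; invertibility of the counit from full faithfulness of $N$) is correct.
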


A morphism $f:X\rw Y$ in $\Ta$ is a 2-equivalence if and only if $Gf$ is a
biequivalence of bicategories. It is not hard to see that inverting these maps
in $\NHom$ and in $\Ta$ gives equivalent categories. Another approach consists
in enlarging the morphisms in $\Ta$ to include pseudo-natural transformations.
One then obtains:

\begin{theorem}\rm[6, Theorem 7.3].\ \label{the7.3}\it
    The 2-nerve 2-functor $N:\NHom\rw(\Ta)_{\ps}$ is a biequivalence of
    2-categories.
\end{theorem}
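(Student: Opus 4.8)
The plan is to verify the two conditions that make a $2$-functor a biequivalence: that $N\colon\NHom\rw(\Ta)_{\ps}$ is biessentially surjective on objects, and that each of its hom-functors $\NHom(\calB,\calC)\rw(\Ta)_{\ps}(N\calB,N\calC)$ is an equivalence of categories. Everything is bootstrapped from Theorem \ref{the7.2}.

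First I would dispatch biessential surjectivity. Given a Tamsamani weak $2$-category $X$, Theorem \ref{the7.2} supplies a unit $u_X\colon X\rw NGX$ which is a levelwise equivalence of categories (and the identity in degrees $0$ and $1$). A pseudo-natural transformation in $\Ps[\dop,\Cat]$ that is a levelwise equivalence is an equivalence there, with a pseudo-inverse obtained by picking adjoint inverses levelwise and transporting the structure along these adjunctions --- precisely the mechanism of Theorem \ref{s2.the1}, as already exploited in Remark \ref{s4.rem01}. Since $X$ and $NGX$ both lie in the full sub-$2$-category $(\Ta)_{\ps}$ of $\Ps[\dop,\Cat]$, the map $u_X$ is an equivalence in $(\Ta)_{\ps}$; hence $X$ is equivalent to the $2$-nerve $NGX$, and $N$ is biessentially surjective.

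Next the local statement. Fix bicategories $\calB$ and $\calC$. By Theorem \ref{the7.2} the $2$-functor $N$ into $\Ta$ with \emph{strict} natural transformations is fully faithful, so $\NHom(\calB,\calC)$ is equivalent to the hom-category $\Ta_{\mathrm{str}}(N\calB,N\calC)$ of strict natural transformations of nerves and modifications; it therefore suffices to show that the inclusion $\Ta_{\mathrm{str}}(N\calB,N\calC)\hookrightarrow(\Ta)_{\ps}(N\calB,N\calC)$ is an equivalence of categories. This inclusion is fully faithful for free: a modification between two pseudo-natural transformations which happen to be strict is compatible with identity structure isomorphisms, hence is nothing but a modification in $[\dop,\Cat]$. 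The content is the essential surjectivity, i.e.\ the claim that every pseudo-natural transformation $f\colon N\calB\rw N\calC$ is isomorphic, via an invertible modification, to a strict one. For this I would invoke the explicit characterisation of $2$-nerves in \cite[Theorem 7.1]{lp}: because $(N\calB)_0$ and $(N\calC)_0$ are discrete, the naturality isomorphisms of $f$ along the maps $\delta_0,\delta_1\colon[0]\rw[1]$ are forced to be identities, so $f$ is already strict in degrees $0$ and $1$; and the naturality isomorphisms of $f$ along the degeneracy $[1]\rw[0]$ and along the Segal maps $\nu_j$ in degree $2$ encode exactly the unit and composition constraints of a homomorphism $h\colon\calB\rw\calC$ on objects, $1$- and $2$-cells, which, after absorbing the unit constraint, may be taken to be normal. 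Using that $\eta_3$ and $\eta_4$ are equivalences --- so that the bicategory structure of $N\calC$ is determined by its low-degree data --- one then checks that $Nh$ and $f$ agree in degrees $0$ and $1$ and are joined by an invertible modification in every degree, which finishes the local equivalence and hence the biequivalence.

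I expect this last point --- rigidifying an arbitrary pseudo-natural transformation of $2$-nerves to a strict one and exhibiting the connecting modification --- to be the main obstacle, since the compositor and unitor data of such a transformation are spread across all simplicial degrees and one must show both that they are governed by degrees $\le 2$ and that the homomorphism extracted from those degrees has the right $2$-nerve. By comparison, the biessential-surjectivity step and the full faithfulness of the strict-into-pseudo inclusion on modifications are routine.
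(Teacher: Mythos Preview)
The paper does not give its own proof of this theorem: it is stated as a citation, ``[6, Theorem 7.3]'', from Lack--Paoli \cite{lp}, and no proof environment follows. So there is nothing in the present paper to compare your argument against; the result is imported wholesale as background for the biequivalence $(\WGDbl)_{\ps}\simeq\NHom$ assembled in Section~\ref{bicat}.

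That said, your strategy is the natural one and matches the shape of the argument in \cite{lp}. Biessential surjectivity via the unit $u_X\colon X\to NGX$ of Theorem~\ref{the7.2} is exactly right. For the local statement, your reduction to showing that every pseudo-natural transformation $f\colon N\calB\to N\calC$ is isomorphic to a strict one is also the right move, and your observation that discreteness of $(N\calC)_0$ forces the degree-$0$ and degree-$1$ coherence data to be trivial is the key simplification. The step you flag as the main obstacle---extracting a normal homomorphism $h\colon\calB\to\calC$ from the low-degree data of $f$ and then exhibiting an invertible modification $Nh\cong f$ in all degrees---is indeed where the work lies; it is handled in \cite{lp} by exploiting the explicit characterisation of $2$-nerves (their Theorem 7.1, which identifies $2$-nerves as those Tamsamani objects that are $3$-coskeletal with discrete $X_0$ and satisfying further conditions), so that agreement in degrees $\leq 2$ propagates. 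Your sketch is correct in outline but would need that characterisation spelled out to be complete.
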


\subsection{The fundamental bicategory}\label{fundabicat}
The functor $G\colon (\mbox{Ta}_2)_{\ps}\rightarrow \NHom$ from \cite{lp} 
can be composed with the discretization functor $D\colon \mbox{\sf WGDbl}\rightarrow \mbox{\sf Ta}_2$
to obtain a functor $GD\colon \mbox{\sf WGDbl}_{\ps}\rightarrow \NHom$
which associates a bicategory $\mathcal X$ to a weakly globular double category $\mathbb X$.
This bicategory $\mathcal X$ is obtained by taking the connected components of the vertical arrow category 
of the weakly globular double category $\mathbb X$ as the objects, and is also called the
{\em fundamental bicategory} of $\mathbb X$. We 
write $\Bic\colon  \mbox{\sf WGDbl}\rightarrow \NHom$ for the composite $GD$.

Since the relationship between a weakly globular double category $\bbX$ and its fundamental bicategory $\Bic\bbX$
plays an important role in this paper, we spell out how the arrows and 2-cells of $\Bic\bbX$ 
are obtained from the horizontal and vertical arrows and double cells in $\bbX$.

As noted above, the {\em objects} of $\Bic\bbX$ are obtained as the connected components $\pi_0\bbX_0$
of 
the vertical arrow category $\bbX_0$.
When $A$ is an object of $\bbX$, i.e., an element of $\bbX_{00}$,
we write $\bar{A}$ for the corresponding object in $\Bic\bbX$.
Note that  $\bar{A}=\bar{B}$ if and only if there is a (unique) vertical arrow 
$v\colon \xymatrix@1{{A}\ar[r]|\bullet&B}$ in $\bbX$ 
(since the vertical arrow category $\bbX_0$ is posetal and groupoidal).

For any two objects $\bar{A}$ and $\bar{B}$
in $\bbX$, the {\em set of arrows}, $\Hom_{\smBic\bbX}(\bar{A},\bar{B})$
is obtained as a disjoint union of horizontal hom-sets in $\bbX$,
$$
\Hom_{\smBic\bbX}(\bar{A},\bar{B})
=\coprod_{\scriptstyle \begin{array}{c}\bar{A'}=\bar{A}\\\bar{B'}=\bar{B}\end{array}}\Hom_{\bbX,h}(A',B').
$$
Note that we do not put an equivalence relation on the horizontal arrows
of $\bbX$ to obtain the arrows of the fundamental bicategory; we will use the same symbol to denote 
a horizontal arrow in $\bbX$ and the corresponding arrow in $\Bic(\bbX)$.

For any two arrows $\xymatrix@1{\bar{A}\ar@<.5ex>[r]^f\ar@<-.5ex>[r]_g&\bar{B}}$  
in $\Bic\bbX$ represented by horizontal arrows $\xymatrix@1{A_1\ar[r]^f&B_1}$
and $\xymatrix@1{A_2\ar[r]^g&B_2}$ in $\bbX$, the {\em 2-cells} from $f$ to $g$ 
correspond to double cells of the form 
$$
\xymatrix{
A_1\ar[d]|\bullet_{v}\ar[r]^f\ar@{}[dr]|\alpha &B_1\ar[d]|\bullet^{w}
\\
A_2\ar[r]_g & B_2\rlap{\quad.}
}
$$
Since $v$ and $w$ are unique, we will denote the corresponding 2-cell in $\Bic\bbX$ by
$\alpha\colon f\Rightarrow g$.

Let  $f\colon A_1\rightarrow B_1$ and $g\colon B_2\rightarrow C_2$  be horizontal arrows
such that there is an invertible vertical arrow $\xymatrix@1@C=1.5em{v\colon B_2\ar[r]|-\bullet &B_1}$.
By an argument as given in the proof of Lemma \ref{squarecompln}
there are horizontal arrows $f_3\colon A_3\rightarrow B_3$ and $g_3\colon B_3\rightarrow C_3$
with vertically invertible double cells $\varphi_{f_3,f}$ and $\varphi_{g_3,g}$
as in the following diagram
$$
\xymatrix@R=2em{
A_3\ar[dd]|\bullet_{x}\ar@{}[ddr]|{\varphi_{f_3,f}}\ar[r]^{f_3} 
	& B_3\ar[d]|\bullet^y\ar@{}[dr]|{\varphi_{g_3,g}}\ar[r]^{g_3} & C_3\ar[d]|\bullet^z
\\
& B_2\ar[d]|\bullet^{v}\ar[r]_g& C_2 
\\
A_1\ar[r]_f & B_1
}
$$
The composition of $f\colon \bar{A}_1\rightarrow\bar{B}_1$ and $g\colon\bar{B}_2\rightarrow\bar{C}_2$ 
(where $\bar{B}_1=\bar{B}_2$) in $\Bic\bbX$ is defined to be the horizontal composite
$g_3\circ f_3\colon\bar{A}_3\rightarrow\bar{C}_3$.

The horizontal composition of 2-cells is defined as follows:
Let 
$$\xymatrix@R=2em{\bar{A}\ar@/^2ex/[r]^{f}\ar@{}[r]|{\Downarrow\alpha}\ar@/_2ex/[r]_g&
	\bar{B} \ar@/^2ex/[r]^{h}\ar@{}[r]|{\Downarrow\beta}\ar@/_2ex/[r]_k& \bar{C}}$$
be a diagram of arrows and cells in the fundamental bicategory, represented by 
double cells in $\bbX$,
$$
\xymatrix@R=2em{
A_2 \ar[d]|\bullet_{u_{21}} \ar[r]^f \ar@{}[dr]|\alpha &
	B_2 \ar[d]|\bullet^{v_{21}} \ar@{}[drr]|{\mbox{and}} &&
	B_4 \ar[d]|\bullet_{v_{43}}\ar[r]^h\ar@{}[dr]|\beta & C_4\ar[d]|\bullet^{z}
\\
A_1\ar[r]_g & B_1 && B_3\ar[r]_k & C_3.}
$$
Let the composite of $g$ and $k$ in $\Bic\bbX$ be the arrow $k_5\circ g_5$ 
as in the diagram
$$
\xymatrix@R=2em{
A_5\ar[dd]|\bullet_{u_{51}}\ar@{}[ddr]|{\varphi_{g_5,g}}\ar[r]^{g_5} 
	& B_5\ar[d]|\bullet_{v_{53}}\ar@{}[dr]|{\varphi_{k_5,k}}\ar[r]^{k_5} 
	& C_5\ar[d]|\bullet^{w_{53}}
\\
& B_3\ar[d]|\bullet^{v_{31}}\ar[r]_k & C_3
\\
A_1\ar[r]_g & B_1
}
$$
and let the composite of $f$ and $h$ be the arrow $h_6\circ f_6$ as in the diagram
$$
\xymatrix@R=2em{
A_6\ar[dd]|\bullet_{u_{62}}\ar@{}[ddr]|{\varphi_{f_6,f}}\ar[r]^{f_6} 
	& B_6\ar[d]|\bullet_{v_{64}}\ar@{}[dr]|{\varphi_{h_6,h}}\ar[r]^{h_6} 
	& C_6\ar[d]|\bullet^{w_{64}}
\\
& B_4\ar[d]|\bullet^{v_{42}}\ar[r]_h & C_4
\\
A_2\ar[r]_f & B_2
}
$$
Then the composition of $\alpha$ and $\beta$ is represented by the following 
pasting of double cells:
$$
\xymatrix@R=2em{
A_6 \ar[ddd]|\bullet_{u_{62}} \ar@{}[dddr]|{\varphi_{f_6,f}} \ar[r]^{f_6} 
	& B_6\ar[d]|\bullet_{v_{64}} \ar@{}[dr]|{\varphi_{h_6,h}} \ar[r]^{h_6} 
	& C_6\ar[d]|\bullet^{w_{64}}
\\
&B_4\ar[d]|\bullet^{v_{43}}\ar@{}[dr]|{\beta} \ar[r]_h & C_4\ar[d]|\bullet^{w_{43}}
\\
&B_3\ar[r]_k\ar[d]|\bullet^{v_{32}} \ar@{}[dddr]|{\varphi^{-1}_{k_5,k}} & C_3\ar[ddd]|\bullet^{v_{35}}
\\
A_2\ar[d]|\bullet_{u_{21}}\ar@{}[dr]|{\alpha} \ar[r]^f & B_2\ar[d]|\bullet_{v_{21}}
\\
A_1\ar[d]|\bullet_{u_{15}}\ar@{}[dr]|{\varphi_{g_5,g}^{-1}} \ar[r]_g & B_1\ar[d]|\bullet^{v_{15}}
\\
A_5\ar[r]_{g_5}&B_5\ar[r]_{k_5} & C_5.
}
$$
(Here, $u_{ij}=u_{ji}^{-1}$ and $u_{jk}\cdot u_{ij}=u_{ik}$, and analogous for $v$ and $w$, 
since the vertical category is groupoidal posetal.
Furthermore, the same holds for the cells, because they are components of a vertical transformation.)

The units for the composition are obtained from the functor $\mu_0\colon\bbX_0^d\rightarrow\bbX_0$ 
which is part of the equivalence of
categories, $\bbX_0^d\simeq \bbX_0$. For an object $\bar{A}$ in $\Bic\bbX$, $1_{\bar{A}}$ 
is the horizontal arrow 
$\mbox{Id}_{\mu_0(\bar{A})}$. 

There are associativity and isomorphisms for this composition that satisfy the usual coherence conditions 
by the results in \cite{tam} and \cite {lp}.
This can also be found in \cite{Simpson}.
However, we won't explicitly use this, so we won't spell out the details.
 
\subsection{The associated double category}

The composition of the functors $$\xymatrix{\NHom\ar[r]^N&(\Ta)_{\ps}\ar[r]^Q&\WGDbl_{\ps}}$$
gives us a pseudo-inverse $\NHom\rightarrow \WGDbl_{\ps}$ 
to the fundamental bicategory functor $\Bic$. We will call this pseudo-inverse $\mbox{\bf Dbl}$.
Note that this is a pseudo-inverse in the sense that $\mbox{\bf Dbl}\circ\Bic\simeq \mbox{Id}_{\smWGDbl}$
and $\Bic\circ\mbox{\bf Dbl}\simeq \mbox{Id}_{\smBicat}$, but these functors are not adjoint, since
$D$ and $Q$ are not adjoint as shown in Remark \ref{notadj}, but $G$ and $N$ form a biadjoint biequivalence  
as proved in \cite{lp}.

Both $\Bic$ and $\Dbl$ are useful in translating results from weakly globular double categories to bicategories 
and back, so we also give an explicit description of the arrows and cells of $\Dbl(\calB)$ for a bicategory $\calB$.
This construction shows also in detail how one builds a strict structure out of a weak one by adding 
a second class of arrows.
Before we begin the construction of $\Dbl(\calB)$, we first choose a composite $\varphi_{f_{1},\ldots,f_{n}}$ for each finite path 
$\xymatrix@1@C=1.3em{A_0\ar[r]^{f_{1}}&A_1\ar[r]^{f_{2}}&\cdots\ar[r]^{f_{n}}&A_n}$ of 
(composable) arrows in
$\calB$. If the path is empty, we take $\varphi_{A_0}=1_{A_0}$.
For each path of such paths, 
$$
\xymatrix@C=3.85em{(\ar[r]^{f_{i_1}}&\cdots\ar[r]^{f_{i_{n_1}}}&)(\ar[r]^{f_{i_{n_1+1}}}
    &\cdots\ar[r]^{f_{i_{n_2}}}&)
    \quad\cdots\quad (\ar[r]^-{f_{i_{n_{m-1}+1}}} &\cdots\ar[r]^{f_{i_{n_m}}}&)}
$$
the associativity and unit cells give rise to unique invertible comparison 2-cells, which we denote by
$$
\xymatrix@C=20em{
\ar@/^5ex/[r]^{\varphi_{\varphi_{f_{i_1},\ldots,f_{i_{n_1}}},\varphi_{f_{i_{n_1+1}},\ldots,f_{i_{n_2}}},\ldots,
    \varphi_{f_{i_{n_{m-1}+1}},\ldots,f_{i_{n_m}}}}} \ar@/_5ex/[r]_{\varphi_{f_{i_1},\ldots,f_{i_{n_m}}}}
    \ar@{}[r]|{\Phi_{f_{i_1}\cdots f_{i_{n_1}},f_{i_{n_1+1}}\cdots f_{i_{n_2}},\ldots,f_{i_{n_{m-1}+1}}\cdots f_{i_{n_m}}}} &\rlap{\quad.}
}
$$
(The uniqueness of these cells follows from the associativity and unit coherence axioms.)
With these chosen composites and cells, we will walk through the constructions corresponding to the functors 
in the composition $\Dbl=QN=PRN=P\St \,SN$. The 2-nerve 
$N\calB\colon\dop\rightarrow\Cat$ has the following components:
\begin{itemize}
\item $(N\calB)_0$ is the discrete category with objects $\calB_0$.
\item $(N\calB)_1$ has objects $\calB_1$, i.e., the arrows of $\calB$, and arrows the 2-cells of $\calB$.
\item $(N\calB)_2$ has objects diagrams of the form
$$\xymatrix{\ar[r]^f\ar@<-1.5ex>@{}[rr]|{\alpha\cong}\ar@/_3ex/[rr]_h&\ar[r]^g&}$$
in $\calB$, and arrows cylinders between such diagrams, i.e., an arrow from $(f,g,h,\alpha)$ to
$(f',g',h',\alpha')$ is a triple $(\varphi,\gamma,\theta)$
of 2-cells, $\varphi\colon f\Rightarrow f'$, $\gamma\colon g\Rightarrow g'$, and $\theta\colon h\Rightarrow h'$,
such that $\theta\cdot\alpha=\alpha'\cdot(\gamma\circ\varphi)$,
$$
\xymatrix@R=3em{
\ar[r]^f \ar@<-1.5ex>@{}[rr]|{\alpha\cong}\ar@/_3ex/[rr]_h \ar@{=}[d]  &\ar[r]^g\ar@{}[d]|(1){\theta\Downarrow}
    &\ar@{=}[d] \ar@{}[dr]|{\textstyle =}& \ar@{=}[d] \ar[r]^f \ar@{}[dr]|{\varphi \Downarrow}
    & \ar@{=}[d]\ar@{}[dr]|{\gamma\Downarrow} \ar[r]^g &\ar@{=}[d]
\\
\ar@/_3ex/[rr]_{h'}&&&\ar[r]^{f'}\ar@<-1.5ex>@{}[rr]|{\alpha'\cong}\ar@/_3ex/[rr]_{h'} 
    & \ar[r]^{g'} &
}
$$
\end{itemize}
The pseudo-functor $SN\calB\dop\rightarrow\Cat$ has then
\begin{itemize}
 \item $(SN\calB)_0$ is the discrete category on the objects of $\calB$;
\item $(SN\calB)_1$ has as objects the arrows of $\calB$ and as arrows the 2-cells of $\calB$;
\item $(SN\calB)_2$ has as objects paths of length 2 in $\calB$, $\xymatrix@1{\ar[r]^{f_1}&\ar[r]^{f_2}&}$
and as arrows horizontal paths of 2-cells of length 2 in $\calB$, 
$\xymatrix@1{\ar@/^2ex/[r]^{f_1}\ar@/_2ex/[r]_{g_1}\ar@{}[r]|{\alpha_1}
    & \ar@/^2ex/[r]^{f_2}\ar@/_2ex/[r]_{g_2}\ar@{}[r]|{\alpha_2}&}
$.
\item $(SN\calB)_n$ has as objects paths of length $n$ in $\calB$,
 $\xymatrix@1{\ar[r]^{f_1}&\ar[r]^{f_2}&\ar@{..}[r]&\ar[r]^{f_n}&}$
and as arrows horizontal paths of 2-cells of length $n$ in $\calB$, 
$\xymatrix@1{\ar@/^2ex/[r]^{f_1}\ar@/_2ex/[r]_{g_1}\ar@{}[r]|{\alpha_1}
    & \ar@/^2ex/[r]^{f_2}\ar@/_2ex/[r]_{g_2}\ar@{}[r]|{\alpha_2}&\ar@{..}[r]& 
	\ar@/^2ex/[r]^{f_n}\ar@/_2ex/[r]_{g_n}\ar@{}[r]|{\alpha_n}&}
$.
\end{itemize}
By the construction described in Remark \ref{rem2add} and taking the pseudo-inverse of the horizontal nerve,
we obtain the following description of the double category $\Dbl(\calB)$.

The {\em objects} of $\Dbl(\calB)$ are given as pairs of an arrow $\psi\colon[0]\rightarrow [n]$ in 
$\Delta$ with a path, $\xymatrix@1{A_0\ar[r]^{f_1} & A_2\ar[r]^{f_2}& \cdots\ar[r]^{f_n}&A_n}$, 
of length $n$ in $\calB$, for all $n$.
Since the arrow $\psi$ is determined by its image $i_0=\psi(0)\in[n]$, we will denote this object in
$\Dbl(\calB)$ by $$(\xymatrix{A_0\ar[r]^{f_1} & A_2\ar[r]^{f_2}& \cdots\ar[r]^{f_n}&A_n};i_0)$$
and think of $A_{i_0}$ as a marked object along the path. So we will also use the notation
$$\xymatrix{A_0\ar[r]^{f_1} & A_2\ar[r]^{f_2}&\cdots\ar[r]^{f_{i_0}}
  &[A_{i_0}]\ar[r]^{f_{i_0+1}}& \cdots\ar[r]^{f_n}&A_n\rlap{\,.}}$$

There is a unique {\em vertical arrow} from 
$\xymatrix@1@C=1.8em{A_0\ar[r]^{f_1} & A_2\ar[r]^{f_2}&\cdots\ar[r]^{f_{i_0}}
  &[A_{i_0}]\ar[r]^{f_{i_0+1}}& \cdots\ar[r]^{f_n}&A_n}$ to 
$\xymatrix@1@C=2em{B_0\ar[r]^{g_1} & B_2\ar[r]^{g_2}&\cdots\ar[r]^{g_{j_0}}
  &[B_{j_0}]\ar[r]^{g_{j_0+1}}& \cdots\ar[r]^{g_m}&A_m}$ if and only if $A_{i_0}=B_{j_0}$.
In diagrams we will include this vertical arrow in the following way
$$\xymatrix{
A_0\ar[r]^{f_1} & A_2\ar[r]^{f_2}&\cdots\ar[r]^{f_{i_0}}
  &[A_{i_0}]\ar@{=}[d]\ar[r]^{f_{i_0+1}}& \cdots\ar[r]^{f_n}&A_n
\\
B_0\ar[r]^{g_1} & B_2\ar[r]^{g_2}&\cdots\ar[r]^{g_{j_0}}
  &[B_{j_0}]\ar[r]^{g_{j_0+1}}& \cdots\ar[r]^{g_m}&A_n
}
$$

{\em Horizontal arrows} in $\Dbl(\calB)$ are given as  pairs of an arrow 
$\psi\colon[1]\rightarrow [n]$ in $\Delta$ with a path of length $n$
in $\calB$, for all $n$. Analogous to what we did for objects we denote horizontal arrows by
$$(\xymatrix{A_0\ar[r]^{f_1} & A_2\ar[r]^{f_2}& \cdots\ar[r]^{f_n}&A_n};i_0,i_1)\quad\mbox{ with }\quad i_0\le i_1,$$
or by
$$\xymatrix{A_0\ar[r]^{f_1} & A_2\ar[r]^{f_2}&\cdots\ar[r]^{f_{i_0}}&[A_{i_0}]\ar[r]^{f_{i_0+1}}& \cdots
\ar[r]^{f_{i_1}}&[A_{i_1}]\ar[r]^{f_{i_1+1}}& \cdots\ar[r]^{f_n}&A_n\rlap{\,.}}$$
The domain of $(\xymatrix@C=1.2em@1{A_0\ar[r]^{f_1} & A_2\ar[r]^{f_2}& \cdots\ar[r]^{f_n}&A_n};i_0,i_1)$
is $(\xymatrix@1@C=1.2em{A_0\ar[r]^{f_1} & A_2\ar[r]^{f_2}& \cdots\ar[r]^{f_n}&A_n};i_0)$
and the codomain is $(\xymatrix@C=1.2em{A_0\ar[r]^{f_1} & A_2\ar[r]^{f_2}& \cdots\ar[r]^{f_n}&A_n};i_1)$.
For a horizontal identity arrow,
$$
(\xymatrix{A_0\ar[r]^{f_1} & A_2\ar[r]^{f_2}& \cdots\ar[r]^{f_n}&A_n};i_0,i_0)
$$
we will also use the notation
$$
\xymatrix{A_0\ar[r]^{f_1} & A_2\ar[r]^{f_2}& \ar[r]^{f_{i_0}}&[A_{i_0}]\ar@{=}[r] &[A_{i_0}]
\ar[r]^{f_{i_0+1}}& \cdots\ar[r]^{f_n}&A_n}
$$
when this makes it easier to fit such an arrow into a diagram representing a double cell as shown below.

A {\em double cell} consists of two horizontal arrows 
$$(\xymatrix@C=1.7em{A_0\ar[r]^{f_1} & A_2\ar[r]^{f_2}& \cdots\ar[r]^{f_n}&A_n};i_0,i_1)\mbox{ and }
(\xymatrix@C=1.7em{B_0\ar[r]^{g_1} & B_2\ar[r]^{g_2}& \cdots\ar[r]^{g_m}&B_m};j_0,j_1)$$
(for the vertical domain and codomain respectively), such that 
$A_{i_0}=B_{j_0}$ and $A_{i_1}=B_{j_1}$ (such that there are unique vertical arrows between the domains
of these arrows and between the codomains of these arrows),
together with a 2-cell in $\calB$ between the chosen composites,
$$
\xymatrix@C=5em{\ar@/^4ex/[r]^{\varphi_{f_{i_0+1},\ldots,f_{i_1}}} 
      \ar@/_4ex/[r]_{\varphi_{g_{j_0+1},\ldots,g_{j_1}}}
      \ar@{}[r]|{\Downarrow\alpha}&\rlap{\quad.}}
$$
We combine all this information together in the following diagram
$$
\xymatrix@R=4em{
A_0\ar[r]^{f_1}&\cdots\ar[r]^{f_{i_0}}
  &[A_{i_0}]\ar@/_2ex/[rr]_{\varphi_{f_{i_0+1},\ldots,f_{i_1}}}\ar@{}[drr]|{\alpha}\ar@{=}[d]\ar[r]^{f_{i_0+1}}
  &\cdots\ar[r]^{f_{i_1}}
  &[A_{i_1}]\ar@{=}[d]\ar[r]^{f_{i_1+1}}
  &\cdots\ar[r]^{f_n}&A_n
\\
B_0\ar[r]_{g_1}&\cdots\ar[r]_{g_{j_0}}
  &[B_{j_0}] \ar@/^2ex/[rr]^{\varphi_{g_{j_0+1},\ldots,g_{j_1}}}\ar[r]_{g_{j_0+1}}
  &\cdots\ar[r]_{g_{j_1}}
  &[B_{j_1}]\ar[r]_{g_{j_1+1}}
  &\cdots\ar[r]_{g_m}&B_m
}
$$

Two horizontal arrows, $$(\xymatrix@1{A_0\ar[r]^{f_1} & A_2\ar[r]^{f_2}& \cdots\ar[r]^{f_n}&A_n};i_0,i_1)
\mbox{ and }(\xymatrix@1{B_0\ar[r]^{g_1} & B_2\ar[r]^{g_2}& \cdots\ar[r]^{g_n}&A_n};j_0,j_1),$$ are composable if
and only if the two paths are the same, i.e., $m=n$, $A_i=B_i$ and $f_i=g_i$ for all $i=1,\ldots,n$, 
and furthermore, $i_1=j_0$. In that case, the {\em horizontal composition} of these arrows is given by
$(\xymatrix@1{A_0\ar[r]^{f_1} & A_2\ar[r]^{f_2}& \cdots\ar[r]^{f_n}&A_n};i_0,j_1)$.

The {\em horizontal composition of double cells}
$$
\xymatrix@R=4em{
A_0\ar[r]^{f_1}&\cdots\ar[r]^{f_{i_0}}
  &[A_{i_0}]\ar@/_2ex/[rr]_{\varphi_{f_{i_0+1},\ldots,f_{i_1}}}\ar@{}[drr]|{\alpha}\ar@{=}[d]\ar[r]^{f_{i_0+1}}
  &\cdots\ar[r]^{f_{i_1}}
  &[A_{i_1}]\ar@{=}[d]\ar[r]^{f_{i_1+1}}
  &\cdots\ar[r]^{f_n}&A_n 
\\
B_0\ar[r]_{g_1}&\cdots\ar[r]_{g_{j_0}}
  &[B_{j_0}] \ar@/^2ex/[rr]^{\varphi_{g_{j_0+1},\ldots,g_{j_1}}}\ar[r]_{g_{j_0+1}}
  &\cdots\ar[r]_{g_{j_1}}
  &[B_{j_1}]\ar[r]_{g_{j_1+1}}
  &\cdots\ar[r]_{g_m}&B_m
}
$$
and
$$
\xymatrix@R=4em{
A_0\ar[r]^{f_1}&\cdots\ar[r]^{f_{i_1}}
  &[A_{i_1}]\ar@/_2ex/[rr]_{\varphi_{f_{i_1+1},\ldots,f_{i_2}}}\ar@{}[drr]|{\beta}\ar@{=}[d]\ar[r]^{f_{i_1+1}}
  &\cdots\ar[r]^{f_{i_2}}
  &[A_{i_2}]\ar@{=}[d]\ar[r]^{f_{i_2+1}}
  &\cdots\ar[r]^{f_n}&A_n 
 \\
 B_0\ar[r]_{g_1}&\cdots\ar[r]_{g_{j_1}}
  &[B_{j_1}] \ar@/^2ex/[rr]^{\varphi_{g_{j_1+1},\ldots,g_{j_2}}}\ar[r]_{g_{j_1+1}}
  &\cdots\ar[r]_{g_{j_2}}
  &[B_{j_2}]\ar[r]_{g_{j_2+1}}
  &\cdots\ar[r]_{g_m}&B_m
 }
 $$
 is defined to be
 $$
\xymatrix@R=4em{
A_0\ar[r]^{f_1}&\cdots\ar[r]^{f_{i_0}}
  &[A_{i_0}]\ar@/_2ex/[rr]_{\varphi_{f_{i_0+1},\ldots,f_{i_2}}}
	\ar@{}[drr]|{\alpha\otimes\beta}\ar@{=}[d]\ar[r]^{f_{i_0+1}}
  &\cdots\ar[r]^{f_{i_2}}
  &[A_{i_2}]\ar@{=}[d]\ar[r]^{f_{i_2+1}}
  &\cdots\ar[r]^{f_n}&A_n 
 \\
  B_0\ar[r]_{g_1}&\cdots\ar[r]_{g_{j_0}}
  &[B_{j_0}] \ar@/^2ex/[rr]^{\varphi_{g_{j_0+1},\ldots,g_{j_2}}}\ar[r]_{g_{j_0+1}}
  &\cdots\ar[r]_{g_{j_2}}
  &[B_{j_2}]\ar[r]_{g_{j_2+1}}
  &\cdots\ar[r]_{g_m}&B_m
 }
 $$
 where $\alpha\otimes\beta$ is the 2-cell in $\calB$ given by the following pasting diagram
 $$
\xymatrix@C=15em@R=8em{
 \ar@/^8ex/[rr]^{\varphi_{f_{i_0+1},\ldots,f_{i_2}}}
 \ar@/_8ex/[rr]_{\varphi_{g_{j_0+1},\ldots,g_{j_2}}}
 \ar@{}@<3ex>[rr]^{\Phi_{f_{i_0+1}\cdots f_{i_1},f_{i_1+1}\cdots f_{i_2}}}
\ar@{}@<-3ex>[rr]_{\Phi_{g_{j_0+1}\cdots g_{j_1},g_{j_1+1}\cdots g_{j_2}}}
\ar@/^4ex/[r]_{\varphi_{f_{i_0+1},\ldots,f_{i_1}}}
\ar@/_4ex/[r]^{\varphi_{g_{j_0+1},\ldots,g_{j_1}}}
\ar@{}[r]|\alpha
&\ar@/^4ex/[r]_{\varphi_{f_{i_1+1},\ldots,f_{i_2}}}
\ar@/_4ex/[r]^{\varphi_{g_{j_1+1},\ldots,g_{j_2}}}
\ar@{}[r]|\beta
&
}
$$

\begin{rmks}\rm
\begin{enumerate}
 \item Note that both the category of horizontal arrows and the category of vertical arrows of 
$\Dbl(\calB)$ are posetal.
\item We will also call $\Dbl(\calB)$ the {\em double category of marked paths in $\calB$}.
\item 
For a 2-category $\calC$ there is a double category $H\calC$ with the arrows of $\calC$ in the horizontal direction and only identity 
arrows in the vertical direction, and the double cells correspond to the 2-cells in $\calC$. 
The double category $\Dbl(\calC)$ is not isomorphic to this double category,  $H\calC$, but it is 2-equivalent to it.
 And the same statement applies to a category $\bfC$: $H\bfC\not\cong\Dbl(\bfC)$, but $H\bfC\simeq_2\Dbl(\bfC)$.
\end{enumerate}
\end{rmks}

\section{Companions, conjoints, equivalences and quasi units}\label{ccqu}

In general double categories, the notions of companion and conjoint have been recognized
as important concepts related to the notion of adjoint.
While adjoint arrows have to be of the same kind, i.e., both horizontal or both vertical, 
the relations of companionship and conjointship are for arrows of different types.
The notions of companion and conjoint were first introduced by Ehresmann in \cite{GP},
but companions in symmetric double categories (where the horizontal and vertical arrow categories are the same)
were studied by Brown and Mosa \cite{BM} under the name `connections'.
Connection pairs were first introduced by Spencer in \cite{spen}.
The existence of companions and conjoints for the vertical arrows in a double category is
related to Shulman's notion of an anchored bicategory \cite{Shul}, 
also called a gregarious double category in \cite{DPP-spans2}.
We will show that for weakly globular double categories horizontal companions are related to
so-called quasi units, i.e., arrows with an invertible 2-cell to a unit arrow, in their associated bicategories.
Then we will introduce a slightly weaker notion, that of a {\em pre-companion}. 
We will show that pre-companions in weakly globular double categories correspond to equivalences in bicategories.
In Section \ref{fracns} we will show that both companions and pre-companions  play an important role in 
the description of the universal properties 
of a weakly globular double
category of fractions. 

We begin this section by repeating the definitions of companions and conjoints 
from \cite{GP} to set our notation, and then we will discuss their relationship to quasi units.
Then we will introduce both a category and a double category of companions. 
And in the last part we will study pre-companions and their properties.

\subsection{Companions and conjoints}

\begin{dfn}\label{friends}\rm
Let $\bbD$ be a double category and consider horizontal morphisms $f\colon A\rightarrow B$ 
and $u\colon B\rightarrow A$ and a vertical morphism 
$\xymatrix@1@C=1.5em{v\colon A\ar[r]|-{\scriptscriptstyle\bullet}&B}$.
We say that $f$ and $v$ are {\em companions} if there exist {\em binding cells}
$$
\xymatrix{
A\ar@{=}[d]\ar@{=}[r] \ar@{}[dr]|\psi & A\ar[d]|{\scriptscriptstyle\bullet}^v \ar@{}[drr]|{\mbox{and}} 
	&& A\ar[d]|{\scriptscriptstyle\bullet}_v \ar[r]^f \ar@{}[dr]|\chi & B\ar@{=}[d]
\\
A\ar[r]_f & B && B\ar@{=}[r] &B,
}
$$
such that
\begin{equation}\label{compcondns}
\xymatrix@R=.5em@C=2em{
	&&&&&&A\ar@{=}[dd]\ar@{=}[r]\ar@{}[ddr]|\psi & A\ar[dd]|{\scriptscriptstyle\bullet}^v 
\\
A \ar@{=}[dd]\ar@{=}[r] \ar@{}[ddr]|\psi
	& A\ar[dd]|{\scriptscriptstyle\bullet}^v \ar[r]^f \ar@{}[ddr]|\chi
	& B\ar@{=}[dd] \ar@{}[ddr]|{\textstyle =} & A\ar[r]^f\ar@{=}[dd] \ar@{}[ddr]|{\mbox{\scriptsize id}_f} 
	& B\ar@{=}[dd]
	&&&& A \ar[dd]|{\scriptscriptstyle\bullet}_v \ar@{}[ddr]|{1_v} \ar@{=}[r] 
	& A \ar[dd]|{\scriptscriptstyle\bullet}^v
\\
	&&&& \ar@{}[rr]|{\mbox{and}}&& A\ar[r]_f \ar[dd]|{\scriptscriptstyle\bullet}_v \ar@{}[ddr]|\chi 
	& B\ar@{=}[dd] \ar@{}[r]|{\textstyle{=}}&
\\
A\ar[r]_f & B\ar@{=}[r] & B & A\ar[r]_f & B&&	&&B\ar@{=}[r] & B
\\
	&&&&&&B\ar@{=}[r] & B&&.
}
\end{equation}
Dually, $u$ and $v$ are {\em conjoints} if there exist {\em binding cells}
$$
\xymatrix{
A\ar[d]|{\scriptscriptstyle\bullet}_v \ar@{}[dr]|\alpha \ar@{=}[r] & A\ar@{=}[d] \ar@{}[drr]|{\mbox{and}} 
	&& B\ar@{=}[d] \ar[r]^u\ar@{}[dr]|\beta & A \ar[d]|{\scriptscriptstyle\bullet}^v 
\\
B\ar[r]_u & A  && B\ar@{=}[r] & B,	
}
$$
such that
\begin{equation}\label{conjcondns}
\xymatrix@R=.5em{
	&&&&&& A\ar[dd]|{\scriptscriptstyle\bullet}_v\ar@{}[ddr]|\alpha \ar@{=}[r] & A\ar@{=}[dd]
\\
B\ar@{=}[dd]\ar@{}[ddr]|\beta \ar[r]^u 
	& A\ar[dd]|{\scriptscriptstyle\bullet}^v \ar@{}[ddr]|\alpha\ar@{=}[r] 
	& A\ar@{=}[dd] \ar@{}[ddr]|{\textstyle =} 
	& B\ar@{=}[dd]\ar[r]^u\ar@{}[ddr]|{\mbox{\scriptsize id}_u} & A\ar@{=}[dd]
	&&&&A\ar@{=}[r]\ar[dd]|{\scriptscriptstyle\bullet}_v \ar@{}[ddr]|{1_v} & A \ar[dd]|{\scriptscriptstyle\bullet}^v
\\
	&&&&\ar@{}[rr]|{\mbox{and}} 
	&&B\ar@{=}[dd]\ar@{}[ddr]|\beta \ar[r]_u & A\ar[dd]|{\scriptscriptstyle\bullet}^v \ar@{}[r]|{\textstyle =}&
\\
B\ar@{=}[r] & B\ar[r]_u & A &B\ar[r]_u &A, &&&&B\ar@{=}[r] & B
\\
&&&&&&B\ar@{=}[r] & B&&.
}
\end{equation}
\end{dfn}

\begin{rmk}\label{compconj}\rm
Since the vertical arrow category in a weakly globular double category is a posetal groupoid, it follows 
from  Proposition 3.5 in \cite{DPP-spans2} that 
the inverse of any companion is a conjoint, and the binding cells for the conjoint pair can be taken to be the vertical inverses of the
binding cells for the companion pair. So a horizontal arrow in a weakly globular double category has a vertical conjoint if and 
only if it has a vertical companion. So it will often be sufficient to focus on the companion pairs.
\end{rmk}

\subsection{Units}
\label{compquasi}

The units in a bicategory are weak units in the sense that we only have that 
there are invertible 2-cells 
\begin{equation}\label{weakunits}
 \lambda_f\colon 1_B\circ f \stackrel{\sim}{\Rightarrow}f
\mbox{ and }\rho_f\colon f\circ 1_A\stackrel{\sim}{\Rightarrow} f
\end{equation}
for any $f\colon A\rightarrow B$
(and these need to satisfy the coherence conditions). However, any arrow $g\colon A\rightarrow A$ with an invertible 2-cell 
$g\stackrel{\sim}{\Rightarrow} 1_A$ would satisfy (\ref{weakunits}).
Since we don't want to consider the coherence conditions that one normally requires of weak units, 
we introduce the notion of quasi units:

\begin{dfn}\rm
An endomorphism $f\colon A\rightarrow A$ in a bicategory is a {\em quasi unit}
if it satisfies the following equivalent conditions:
\begin{enumerate}
\item
$f\cong 1_A$;
\item
$f\circ g\cong g$ for all $g\colon C\rightarrow A$ and $h\circ f \cong h$ for all $h\colon B\rightarrow D$.
\end{enumerate}
\end{dfn} 

We want to characterize the quasi units in the fundamental bicategory $\Bic\bbX$.

\begin{lma}
Every arrow of the form $\mbox{\rm Id}_{A}\colon \bar{A}\rightarrow\bar{A}$
is a quasi unit in $\Bic\bbX$.
\end{lma}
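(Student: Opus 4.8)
The plan is to show directly that an arrow of the form $\mbox{Id}_A\colon \bar A\rightarrow\bar A$ satisfies condition (1) in the definition of quasi unit, namely that $\mbox{Id}_A\cong 1_{\bar A}$ in $\Bic\bbX$. Recall that by the construction of $\Bic\bbX$ in Section \ref{fundabicat}, the unit $1_{\bar A}$ is the horizontal arrow $\mbox{Id}_{\mu_0(\bar A)}$, where $\mu_0\colon\bbX_0^d\rightarrow\bbX_0$ is the chosen section of $\gamma\colon\bbX_0\rightarrow\bbX_0^d$. Since $\overline{\mu_0(\bar A)}=\bar A$ (as $\gamma\mu_0=\id$), both $\mbox{Id}_A$ and $\mbox{Id}_{\mu_0(\bar A)}$ represent arrows $\bar A\rightarrow\bar A$, and a $2$-cell between them in $\Bic\bbX$ is, by definition, a double cell in $\bbX$ of the form
$$
\xymatrix{
A\ar[d]|\bullet_{v}\ar[r]^{\mbox{\scriptsize Id}_A}\ar@{}[dr]|\alpha &A\ar[d]|\bullet^{w}
\\
\mu_0(\bar A)\ar[r]_{\mbox{\scriptsize Id}_{\mu_0(\bar A)}} & \mu_0(\bar A)\rlap{\,,}
}
$$
where $v$ and $w$ are the unique vertical arrows $A\rightarrow\mu_0(\bar A)$ (which exist since $\bbX_0$ is a posetal groupoid and $\bar A=\overline{\mu_0(\bar A)}$).

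The key step is to produce such a double cell $\alpha$ that is vertically invertible. The natural candidate is the horizontal identity cell $\mbox{id}_v=s(v)$ on the vertical arrow $v\colon A\rightarrow\mu_0(\bar A)$:
$$
\xymatrix{
A\ar[d]|\bullet_{v}\ar[r]^{\mbox{\scriptsize Id}_A}\ar@{}[dr]|{\mbox{\scriptsize id}_v} &A\ar[d]|\bullet^{v}
\\
\mu_0(\bar A)\ar[r]_{\mbox{\scriptsize Id}_{\mu_0(\bar A)}} & \mu_0(\bar A)\rlap{\,.}
}
$$
Indeed $s$ is a functor $\bbX_0\rightarrow\bbX_1$, so $s(v)$ is an arrow in the category $\bbX_1$, i.e. a vertically invertible double cell precisely when $v$ is invertible in $\bbX_0$; and every arrow of the posetal groupoid $\bbX_0$ is invertible. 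Its vertical inverse is $s(v^{-1})$. One then checks the horizontal domain and codomain of $\mbox{id}_v$ are $v$ and $v$ (both equal to the unique arrow $A\rightarrow\mu_0(\bar A)$), and its vertical domain and codomain are $\mbox{Id}_A$ and $\mbox{Id}_{\mu_0(\bar A)}$, as required. This exhibits $\mbox{id}_v$ as a vertically invertible $2$-cell $\mbox{Id}_A\Rightarrow 1_{\bar A}$ in $\Bic\bbX$, hence an isomorphism $\mbox{Id}_A\cong 1_{\bar A}$, so $\mbox{Id}_A$ is a quasi unit.

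The only genuine subtlety—and where I would be most careful—is bookkeeping around the identification of the marked object: one must confirm that the unique vertical arrow used as $v$ is the \emph{same} on both sides of the square (so that $\mbox{id}_v$ is literally a horizontal identity cell, not merely a cell between two distinct vertical arrows), and that this matches the prescription for $1_{\bar A}$ from Section \ref{fundabicat}. Since $\bbX_0$ is posetal there is at most one arrow $A\rightarrow\mu_0(\bar A)$, so this is automatic, but it is worth stating explicitly. No coherence computation is needed: being a quasi unit only asks for the existence of the isomorphism, not for it to satisfy any unit axioms.
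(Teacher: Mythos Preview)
Your proof is correct and is essentially identical to the paper's: both exhibit the horizontal identity cell $\mathrm{id}_v$ on the unique vertical arrow $v\colon A\to\mu_0(\bar A)$ as a vertically invertible double cell witnessing $\mathrm{Id}_A\cong 1_{\bar A}$, with inverse $\mathrm{id}_{v^{-1}}$. The paper's argument is just the two-line version of what you wrote.
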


\begin{proof}
There is an invertible 2-cell $\mbox{Id}_{A}\Rightarrow \mbox{Id}_{\theta(\bar{A})}$
given by the vertically invertible double cell
$$
\xymatrix{
A\ar[d]|\bullet_{x} \ar@{}[dr]|{\mbox{\scriptsize id}_{x}} \ar[r]^{\mbox{\scriptsize Id}_A} & A\ar[d]|\bullet^x
\\
\theta(\bar{A}) \ar[r]_{\mbox{\scriptsize Id}_{\theta(\bar{A})}} & \theta(\bar{A})
}
$$
(with inverse $\mbox{id}_{x^{-1}}$).
\end{proof}

We can now characterize the quasi units in $\Bic\bbX$ as those horizontal arrows in $\bbX$ which have a companion.

\begin{prop}\label{quasi-comp}
Let $w\colon A\rightarrow B$ be a horizontal arrow in a weakly globular double category $\bbX$.
Then $w\colon \bar{A}\rightarrow\bar{B}$ is a quasi unit in $\Bic\bbX$ if and only if 
$w\colon  A\rightarrow B$ has a companion $w_* \colon \xymatrix@1{A\ar[r]|\bullet&B}$ in $\bbX$.
\end{prop}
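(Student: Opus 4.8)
The plan is to read off both conditions directly from the description of $\Bic\bbX$ in Section \ref{fundabicat}. Two facts will be used throughout. First, a $2$-cell of $\Bic\bbX$ from a horizontal arrow $f$ to a parallel horizontal arrow $g$ is exactly a double cell of $\bbX$ with $f$ on top, $g$ on the bottom and the (unique) vertical arrows on its sides, and by the construction of $G$ the vertical composition of such $2$-cells is the vertical composition of double cells in $\bbX_1$; hence a $2$-cell of $\Bic\bbX$ is invertible precisely when the corresponding double cell is vertically invertible. Second, by the preceding lemma every horizontal identity $\mathrm{Id}_C\colon\bar C\rw\bar C$ represents a quasi unit; since quasi units are closed under isomorphism, $w\colon\bar A\rw\bar B$ is a quasi unit if and only if $\bar A=\bar B$ and $w\cong\mathrm{Id}_C$ in $\Bic\bbX$ for some (hence any) object $C$ with $\bar C=\bar A$. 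Both sides of the statement already force $\bar A=\bar B$: a quasi unit is an endomorphism, while a vertical companion of $w$ can only exist when there is a vertical arrow $A\rw B$, and then, $\bbX_0$ being a posetal groupoid, that arrow $v\colon A\rw B$ is unique and invertible.

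For the implication ``$\Leftarrow$'', suppose $w$ has a vertical companion, necessarily $v$, with binding cells $\psi$ and $\chi$ as in Definition \ref{friends}. By Remark \ref{compconj} --- which is precisely the statement that, the vertical arrow category of $\bbX$ being a posetal groupoid, the binding cells are vertically invertible --- the cell $\chi$ is vertically invertible. It is a double cell with $w$ on top and $\mathrm{Id}_B$ on the bottom, so it represents an invertible $2$-cell $w\cong\mathrm{Id}_B$ in $\Bic\bbX$; since $\mathrm{Id}_B$ represents a quasi unit, so does $w$.

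For the implication ``$\Rightarrow$'', suppose $w\colon\bar A\rw\bar B$ is a quasi unit. Then $\bar A=\bar B$, and since $\mathrm{Id}_A$ also represents a quasi unit we have $w\cong\mathrm{Id}_A$ in $\Bic\bbX$ (both being isomorphic to $1_{\bar A}$). Unravelling, there is a vertically invertible double cell $\beta$ with $w$ on top, $\mathrm{Id}_A$ on the bottom, $1_A$ on the left, and the unique vertical arrow $B\rw A$ --- namely $v^{-1}$ --- on the right. Set $\psi:=\beta^{-1}$ and let $\chi$ be the double cell obtained by pasting $\beta$ vertically on top of the horizontal identity cell $\mathrm{id}_v$; inspecting tops, bottoms and sides shows that $\psi$ and $\chi$ have exactly the shapes of binding cells for the pair $(w,v)$. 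The second binding-cell equation of \eqref{compcondns} holds because pasting $\psi$ above $\chi$ is the same as pasting $\beta^{-1}$, then $\beta$, then $\mathrm{id}_v$, and $\beta^{-1}$ pasted above $\beta$ is the identity cell $\iota_A$ on $\mathrm{Id}_A$, leaving $\mathrm{id}_v$. For the first binding-cell equation, that the horizontal composite of $\psi$ and $\chi$ equals $1_w$, I would write $\psi$ as $\iota_A$ pasted above $\beta^{-1}$ and $\chi$ as $\beta$ pasted above $\mathrm{id}_v$, apply the middle four interchange law, use the horizontal unit laws (horizontal composition with $\iota_A=\mathrm{id}_{1_A}$ on the left, respectively with $\mathrm{id}_v$ on the right, changes nothing), and finish with $\beta^{-1}$ pasted above $\beta$ being the identity. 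Thus $(w,v)$ is a companion pair, so $v$ is the required companion of $w$.

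The genuine work is all in the ``$\Rightarrow$'' direction: verifying the two binding-cell equations of \eqref{compcondns}, and in particular the interchange computation for the first one, where the only real difficulty is the careful bookkeeping of horizontal versus vertical orientation and of which edges are glued. The ``$\Leftarrow$'' direction is immediate once Remark \ref{compconj} and the preceding lemma are in hand.
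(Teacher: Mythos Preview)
Your proof is correct and follows essentially the same strategy as the paper's: in both directions one passes between a vertically invertible double cell witnessing $w\cong\mathrm{Id}_{(-)}$ and the binding cells of a companion pair. The differences are cosmetic. For ``$\Leftarrow$'' you use $\chi$ to get $w\cong\mathrm{Id}_B$, while the paper uses $\psi$ to get $\mathrm{Id}_A\cong w$; both work for the same reason (Remark~\ref{compconj}). For ``$\Rightarrow$'' you compare $w$ directly with $\mathrm{Id}_A$, whereas the paper compares with $\mathrm{Id}_{\theta(\bar A)}$ and then pads by horizontal identity cells $\mathrm{id}_x$, $\mathrm{id}_{y^{-1}}$ to land on the right boundary; your route avoids the detour through the chosen representative and yields the companion $v$ immediately. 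You also spell out the verification of the two equations in \eqref{compcondns} via interchange, which the paper simply declares ``straightforward''; your interchange computation is correct (the $2\times 2$ grid $\iota_A\mid\beta$ over $\beta^{-1}\mid\mathrm{id}_v$ is well-formed and collapses to $\beta^{-1}\cdot\beta=1_w$).
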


\begin{proof}
If $w$ is a quasi unit, then $\bar{A}=\bar{B}$ and there is an invertible 2-cell 
$\zeta\colon w\Rightarrow 1_{\bar{A}}$ in $\Bic\bbX$, given by a vertically 
invertible double cell in $\bbX$,
$$
\xymatrix{
A\ar[r]^w\ar[d]|\bullet_x\ar@{}[dr]|\zeta &B\ar[d]|\bullet^y
\\
\theta(\bar{A})\ar[r]_{\mbox{\scriptsize Id}_{\theta(\bar{A})}} & \theta(\bar{A})
}
$$ 
We can compose this with the horizontal identity cell on $y^{-1}$ to obtain a double cell
$$
\xymatrix{
A\ar[r]^w\ar[d]|\bullet_x\ar@{}[dr]|\zeta &B\ar[d]|\bullet^y 
	&& A\ar[dd]|\bullet_{y^{-1}\cdot x} \ar[rr]^w \ar@{}[ddrr]|{\mbox{\scriptsize id}_{y^{-1}}\cdot\zeta} 
	&& B\ar[dd]|\bullet^{1_B}
\\
\theta(\bar{A})\ar[r]_{\mbox{\scriptsize Id}_{\theta(\bar{A})}} \ar[d]|\bullet_{y^{-1}} 
    \ar@{}[dr]|{\mbox{\scriptsize id}_{y^{-1}}} 
	& \theta(\bar{A})\ar[d]|\bullet^{y^{-1}} &=& 
\\
B\ar[r]_{\mbox{\scriptsize Id}_B} & B && B\ar[rr]_{\mbox{\scriptsize Id}_B} && B
}
$$ 
Furthermore, precomposing $\zeta^{-1}$ (the vertical inverse of $\zeta$) with 
the horizontal identity cell on $x$ gives
$$
\xymatrix{
A\ar[r]^{\mbox{\scriptsize Id}_A} \ar[d]|\bullet_{x} \ar@{}[dr]|{\mbox{\scriptsize id}_x} &A \ar[d]|\bullet^x 
	&& A\ar[dd]|\bullet_{1_A} \ar[rr]^{\mbox{\scriptsize Id}_A} \ar@{}[ddrr]|{\zeta^{-1}\cdot \mbox{\scriptsize id}_{x}} 
	&& A\ar[dd]|\bullet^{y^{-1}\cdot x}
\\
\theta(\bar{A})\ar[r]_{\mbox{\scriptsize Id}_{\theta(\bar{A})}} \ar[d]|\bullet_{x^{-1}} \ar@{}[dr]|{\zeta^{-1}} 
	& \theta(\bar{A})\ar[d]|\bullet^{y^{-1}} &=& 
\\
A\ar[r]_{w} & B && A\ar[rr]_{w} && B
}
$$
So we claim that we may take $w_*=y^{-1}\cdot x$ with binding cells $\chi_w= \mbox{id}_{y^{-1}}\cdot\zeta$
and $\psi_w=\zeta^{-1}\cdot \mbox{id}_{x}$. (It is straightforward to verify that these double cells satisfy the 
binding cell equations (\ref{compcondns}) for companions.)

Conversely, let $w\colon A\rightarrow B$ be a horizontal arrow with a companion 
$w_*\colon\xymatrix@1{A\ar[r]|\bullet &B}$
and binding cells 
$$
\xymatrix{
A\ar[d]|\bullet_{w_*}\ar@{}[dr]|{\chi_w}\ar[r]^w &B\ar[d]|\bullet^{1_B} &\ar@{}[d]|{\mbox{and}} 
	& A\ar[d]|\bullet_{1_A}\ar@{}[dr]|{\psi_w}\ar[r]^{\mbox{\scriptsize Id}_A} & A\ar[d]|\bullet^{w_*}
\\
B\ar[r]_{\mbox{\scriptsize Id}_B} & B && A\ar[r]_w & B.
}
$$
By the previous lemma it is sufficient to show that there is an invertible 2-cell
$1_A \Rightarrow w$ in $\Bic\bbX$. Such a 2-cell is provided by the binding cell $\psi_w$.
\end{proof}

Now we start with a bicategory $\calB$ and consider its associated double category $\Dbl(\calB)$.
The following proposition describes the relationship between quasi units in $\calB$ and  companions in
$\Dbl(\calB)$.

\begin{prop}\label{comp-quasi}
A horizontal arrow $$\xymatrix{A_0\ar[r]^{f_1}&\cdots\ar[r]^{f_{i_0}}
  &[A_{i_0}]\ar[r]^{f_{i_0+1}}&\cdots\ar[r]^{f_{i_1}}&[A_{i_1}]\ar[r]^{f_{i_1}+1}&\cdots\ar[r]^{f_{n}}&A_n}$$
in $\Dbl(\calB)$ has a companion if and only if $A_{i_0}=A_{i_1}$ and the chosen composition 
$\varphi_{f_{i_0+1}\cdots f_{i_1}}$ is a quasi unit. 
\end{prop}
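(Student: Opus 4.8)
The plan is to transfer the problem through the two ways the excerpt connects double categories and bicategories: the description of horizontal arrows and double cells of $\Dbl(\calB)$ in Section~\ref{bicat}, and Proposition~\ref{quasi-comp} (companions in a weakly globular double category $\Leftrightarrow$ quasi units in its fundamental bicategory). Since $\Dbl(\calB)$ is a weakly globular double category, Proposition~\ref{quasi-comp} applies directly: the horizontal arrow $w = (\,\xymatrix@1@C=1.2em{A_0\ar[r]^{f_1}&\cdots\ar[r]^{f_n}&A_n}\,;i_0,i_1)$ has a companion in $\Dbl(\calB)$ if and only if $w\colon\bar A\to\bar B$ is a quasi unit in $\Bic(\Dbl(\calB))$, which up to the biequivalence $\Bic\circ\Dbl\simeq\mathrm{Id}_{\smBicat}$ is the bicategory $\calB$ itself. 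So the statement reduces to showing that, under this identification, $w$ corresponds to the arrow $\varphi_{f_{i_0+1},\ldots,f_{i_1}}\colon A_{i_0}\to A_{i_1}$ in $\calB$, and that $\bar A=\bar B$ in $\Dbl(\calB)$ exactly when $A_{i_0}=A_{i_1}$.

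First I would unwind the vertical arrow structure of $\Dbl(\calB)$: the domain of $w$ is the marked path with marking $i_0$, the codomain is the same path with marking $i_1$, and by the description of vertical arrows in $\Dbl(\calB)$ there is a (unique) vertical arrow between two marked paths precisely when the marked objects agree. Hence $\bar{\mathrm{dom}(w)}=\bar{\mathrm{cod}(w)}$ in $\Dbl(\calB)$ if and only if $A_{i_0}=A_{i_1}$, which is the first half of the claimed condition and is a necessary condition for $w$ to be a quasi unit (a quasi unit is in particular an endomorphism up to the identification of its endpoints). Assuming this, the image of $w$ under $\Bic$ is computed: the $2$-cells out of $w$ in $\Bic(\Dbl(\calB))$ are double cells of $\Dbl(\calB)$ with horizontal domain $w$, and by the explicit description of double cells in $\Dbl(\calB)$ such a cell is exactly a $2$-cell in $\calB$ between the chosen composite $\varphi_{f_{i_0+1},\ldots,f_{i_1}}$ and another chosen composite. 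Tracking this through the equivalence $\Bic\circ\Dbl\simeq\mathrm{Id}$, one sees the arrow of $\calB$ that $w$ represents is $\varphi_{f_{i_0+1},\ldots,f_{i_1}}$ (with the marked objects $A_{i_0}=A_{i_1}$ as its source and target).

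Putting these together: $w$ has a companion in $\Dbl(\calB)$ $\iff$ (by Proposition~\ref{quasi-comp}) $w$ is a quasi unit in $\Bic(\Dbl(\calB))$ $\iff$ $A_{i_0}=A_{i_1}$ and the corresponding arrow $\varphi_{f_{i_0+1},\ldots,f_{i_1}}\colon A_{i_0}\to A_{i_1}$ of $\calB$ is isomorphic to $1_{A_{i_0}}$, i.e.\ is a quasi unit in $\calB$. This is exactly the asserted equivalence. As a cross-check, one can also give the companion explicitly when the hypothesis holds: if $\varphi_{f_{i_0+1},\ldots,f_{i_1}}\cong 1_{A_{i_0}}$, the binding cells are built from that invertible $2$-cell together with the marked-path description, mirroring the construction of $\chi_w,\psi_w$ in the proof of Proposition~\ref{quasi-comp}.

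The main obstacle I expect is bookkeeping: making the identification of $w$ with $\varphi_{f_{i_0+1},\ldots,f_{i_1}}$ under $\Bic\circ\Dbl\simeq\mathrm{Id}_{\smBicat}$ fully precise, since the biequivalence is only pseudo-inverse and involves the strictification machinery $Q=P\,\St\,S$; one must check that the hom-category of $2$-cells out of $w$, as described combinatorially for $\Dbl(\calB)$, matches the hom-category of $2$-cells out of $\varphi_{f_{i_0+1},\ldots,f_{i_1}}$ in $\calB$ under this comparison. An alternative that sidesteps some of this is to argue directly inside $\Dbl(\calB)$: show by hand that a companion for $w$ forces $A_{i_0}=A_{i_1}$ and yields an invertible double cell exhibiting $\varphi_{f_{i_0+1},\ldots,f_{i_1}}$ as isomorphic to the relevant identity chosen composite, and conversely; this is more calculation but avoids invoking the comparison functor.
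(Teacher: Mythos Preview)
Your proposal is correct but takes a different route from the paper, and the detour through the biequivalence $\Bic\circ\Dbl\simeq\mathrm{Id}_{\smBicat}$ is unnecessary overhead that you yourself flag as the main obstacle.

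The paper does exactly what you describe in your final paragraph as the ``alternative'': it works directly inside $\Dbl(\calB)$. A companion for the given horizontal arrow requires binding cells $\psi$ and $\chi$; by the explicit description of double cells in $\Dbl(\calB)$, such cells force $A_{i_0}=A_{i_1}$ and amount to $2$-cells $\psi\colon 1_{A_{i_0}}\Rightarrow\varphi_{f_{i_0+1},\ldots,f_{i_1}}$ and $\chi\colon\varphi_{f_{i_0+1},\ldots,f_{i_1}}\Rightarrow 1_{A_{i_0}}$ in $\calB$. The binding-cell equations (\ref{compcondns}) then say precisely that $\psi$ and $\chi$ are mutually inverse, so $\varphi_{f_{i_0+1},\ldots,f_{i_1}}$ is a quasi unit. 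The converse runs the same way with an invertible $\theta\colon 1_{A_{i_0}}\Rightarrow\varphi_{f_{i_0+1},\ldots,f_{i_1}}$ supplying both binding cells.

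Your route via Proposition~\ref{quasi-comp} is legitimate, but you do not need the comparison functor at all: once you know that $w$ has a companion iff it is a quasi unit in $\Bic(\Dbl(\calB))$, you can compute directly what the latter means. The unit $1_{\bar A}$ in $\Bic(\Dbl(\calB))$ is a horizontal identity $(\,\cdots;j,j\,)$ whose chosen composite is $\varphi_\emptyset=1_{A_{i_0}}$, and an invertible $2$-cell $w\Rightarrow 1_{\bar A}$ is by definition a double cell in $\Dbl(\calB)$, i.e.\ an invertible $2$-cell $\varphi_{f_{i_0+1},\ldots,f_{i_1}}\Rightarrow 1_{A_{i_0}}$ in $\calB$ (together with $A_{i_0}=A_{i_1}$ for the vertical arrows to exist). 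That is already the conclusion --- no transport along a biequivalence is needed. So your approach buys reuse of Proposition~\ref{quasi-comp} at the cost of one extra layer of unpacking; the paper's approach is marginally more self-contained since it reads off the same $2$-cells directly from the binding-cell shape rather than from the quasi-unit shape.
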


\begin{proof}
Suppose that $$\xymatrix{A_0\ar[r]^{f_1}&\cdots\ar[r]^{f_{i_0}}
  &[A_{i_0}]\ar[r]^{f_{i_0+1}}&\cdots\ar[r]^{f_{i_1}}&[A_{i_1}]\ar[r]^{f_{i_1}+1}&\cdots\ar[r]^{f_{n}}&A_n}$$
has a companion. Then there are double cells of the form
$$
\xymatrix@R=2.8em{
A_0\ar[r]^{f_1}&\cdots\ar[r]^{f_{i_0}}
  &[A_{i_0}]\ar@{=}[d] \ar@{}[1,4]|(.25)\psi\ar@{=}[rr] && [A_{i_0}]\ar@{=}[d]\ar[r]\ar[r]^{f_{i_0+1}}&\cdots\ar[r]^{f_n}&A_n
\\
A_0\ar[r]_{f_1}&\cdots\ar[r]_{f_{i_0}}
  &[A_{i_0}]\ar@/^3ex/[rr]^{\varphi_{f_{i_0+1}\cdots f_{i_1}}}\ar[r]_{f_{i_0+1}}
  &\cdots\ar[r]_{f_{i_1}}&[A_{i_1}]\ar[r]_{f_{i_1}+1}&\cdots\ar[r]_{f_{n}}&A_n
}
$$
$$
\xymatrix@R=2.8em{
A_0\ar[r]^{f_1}&\cdots\ar[r]^{f_{i_0}}
  &[A_{i_0}] \ar@/_3ex/[rr]_{\varphi_{f_{i_0+1}\cdots f_{i_1}}}\ar@{=}[d]\ar[r]^{f_{i_0+1}}&\cdots\ar[r]^{f_{i_1}}
  &[A_{i_1}]\ar@{=}[d]\ar[r]^{f_{i_1}+1}&\cdots\ar[r]^{f_{n}}&A_n
\\
A_0\ar[r]_{f_1}&\cdots\ar[r]_{f_{i_1}}
  &[A_{i_1}]\ar@{=}[rr]   \ar@{}[-1,4]|(.25)\chi &&[A_{i_1}]\ar[r]_{f_{i_1}+1}&\cdots\ar[r]_{f_{n}}&A_n\rlap{\,.}
}
$$
So $A_{i_0}=A_{i_1}$ and there are 2-cells $\psi\colon \mbox{Id}_{A_{i_0}}\Rightarrow \varphi_{f_{i_0+1}\cdots f_{i_1}}$
and $\chi\colon \varphi_{f_{i_0+1}\cdots f_{i_1}}\Rightarrow \mbox{Id}_{A_{i_1}}$
in $\calB$. The binding cell equations are equivalent to stating that these two 2-cells are inverse to each other, so 
$\varphi_{f_{i_0+1}\cdots f_{i_1}}$ is a quasi unit.

Conversely, if $A_{i_0}=A_{i_1}$ and  $\varphi_{f_{i_0+1}\cdots f_{i_1}}$ is a quasi unit with an invertible 2-cell
$\theta\colon \mbox{Id}_{A_{i_0}}\Rightarrow \varphi_{f_{i_0+1}\cdots f_{i_1}}$, then
$$\xymatrix{A_0\ar[r]^{f_1}&\cdots\ar[r]^{f_{i_0}}
  &[A_{i_0}]\ar[r]^{f_{i_0+1}}&\cdots\ar[r]^{f_{i_1}}&[A_{i_1}]\ar[r]^{f_{i_1}+1}&\cdots\ar[r]^{f_{n}}&A_n}$$
has a companion
$$
\xymatrix@R=1.5em{
A_0\ar[r]^{f_1}&\cdots\ar[r]^{f_{i_0}}
  &[A_{i_0}]\ar@{=}[d]\ar[r]^{f_{i_0+1}}&\cdots\ar[r]^{f_{n}}&A_n
\\
A_0\ar[r]^{f_1}&\cdots\ar[r]^{f_{i_1}}&[A_{i_1}]\ar[r]^{f_{i_1}+1}&\cdots\ar[r]^{f_{n}}&A_n
}
$$
with binding cells
$$
\xymatrix@R=2.8em{
A_0\ar[r]^{f_1}&\cdots\ar[r]^{f_{i_0}}
	&[A_{i_0}]\ar@{=}[d] \ar@{}[1,4]|(.25)\theta\ar@{=}[rr] 
	&& [A_{i_0}]\ar@{=}[d]\ar[r]\ar[r]^{f_{i_0+1}}&\cdots\ar[r]^{f_n}&A_n
\\
A_0\ar[r]_{f_1}&\cdots\ar[r]_{f_{i_0}}
	&[A_{i_0}]\ar@/^3ex/[rr]^{\varphi_{f_{i_0+1}\cdots f_{i_1}}}\ar[r]_{f_{i_0+1}}
	&\cdots\ar[r]_{f_{i_1}}&[A_{i_1}]\ar[r]_{f_{i_1}+1}&\cdots\ar[r]_{f_{n}}
	&A_n
}
$$
$$
\xymatrix@R=2.8em{
A_0\ar[r]^{f_1}&\cdots\ar[r]^{f_{i_0}}
  &[A_{i_0}] \ar@/_3ex/[rr]_{\varphi_{f_{i_0+1}\cdots f_{i_1}}}\ar@{=}[d]\ar[r]^{f_{i_0+1}}&\cdots\ar[r]^{f_{i_1}}
  &[A_{i_1}]\ar@{=}[d]\ar[r]^{f_{i_1}+1}&\cdots\ar[r]^{f_{n}}&A_n
\\
A_0\ar[r]_{f_1}&\cdots\ar[r]_{f_{i_1}}
  &[A_{i_1}]\ar@{=}[rr]   \ar@{}[-1,4]|(.25){\theta^{-1}} &&[A_{i_1}]\ar[r]_{f_{i_1}+1}&\cdots\ar[r]_{f_{n}}&A_n
}
$$
\end{proof}

\subsection{Preservation of companions}
It is clear that strict functors between double categories preserve companions and conjoints.
It is not immediately obvious that the same is true for pseudo-functors, given the fact that horizontal identities
and domains and codomains are not preserved strictly.

However, since there is such a close relationship between companions in weakly globular 
double categories and quasi units in bicategories, and we know that homomorphisms of bicategories 
preserve quasi units, pseudo-functors between weakly globular double categories should preserve companions.
Here we give a direct proof of this result
in terms of companions and binding cells.

\begin{prop}\label{comp-pres}
Let $F\colon \bbX\rightarrow\bbY$ be a pseudo-functor of weakly globular double categories.
If a horizontal arrow $f $ in $\bbX$ has a companion then so does $F(f)$ in $\bbY$.
\end{prop}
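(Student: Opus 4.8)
The plan is to transport the companion data of $f$ across the structure isomorphisms of the pseudo-functor $F=(F,\varphi,\sigma,\mu)$. Write $f\colon A\to B$ and let $v$ be its vertical companion, with binding cells $\psi$ and $\chi$. Recall that the horizontal arrow $F_{1}f$ in $\bbY$ has domain $d_{0}F_{1}f$ and codomain $d_{1}F_{1}f$, and that the structure maps $(\varphi_{1})_{A}\colon d_{0}F_{1}f\to F_{0}A$ and $(\varphi_{2})_{B}\colon d_{1}F_{1}f\to F_{0}B$ are invertible vertical arrows. Since $v$ is vertical, $F_{0}v\colon F_{0}A\to F_{0}B$ is defined, and I would take the companion of $F_{1}f$ to be the vertical arrow
\[
w\;:=\;(\varphi_{2})_{B}^{-1}\cdot F_{0}v\cdot(\varphi_{1})_{A}\colon d_{0}F_{1}f\longrightarrow d_{1}F_{1}f .
\]

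The binding cells are then built by correcting $F_{1}\psi$ and $F_{1}\chi$ for the fact that $F$ preserves horizontal identities only up to the isomorphisms $\sigma$. Concretely I would let $\psi'$ be the vertical pasting of $\mathrm{id}_{(\varphi_{1})_{A}}$ on top of $\sigma_{A}$ on top of $F_{1}\psi$, and $\chi'$ the vertical pasting of $F_{1}\chi$ on top of $\sigma_{B}^{-1}$ on top of $\mathrm{id}_{(\varphi_{2})_{B}^{-1}}$. Using the naturality of $\varphi_{1}$ and $\varphi_{2}$ at the double cells $\psi$, $\chi$ (which computes the vertical edges of $F_{1}\psi$ and $F_{1}\chi$ as composites of the $(\varphi_{j})$'s and $F_{0}v$) together with the shape of $\sigma_{A},\sigma_{B}$ recorded around \eqref{identityeqn}, one checks that $\psi'$ and $\chi'$ indeed have the boundary of binding cells for $(F_{1}f,w)$: $\psi'$ has vertical domain $\mathrm{Id}_{d_{0}F_{1}f}$, vertical codomain $F_{1}f$, left edge $1_{d_{0}F_{1}f}$ and right edge $w$, and dually for $\chi'$.

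It then remains to verify the two binding-cell equations of \eqref{compcondns}. The vertical one, $\chi'\cdot\psi'=\mathrm{id}_{w}$, is straightforward: since $F_{1}$ is a functor it preserves vertical composition of cells, so the two middle layers contribute $F_{1}\chi\cdot F_{1}\psi=F_{1}(\chi\cdot\psi)=F_{1}(\mathrm{id}_{v})$ by the companion equation for $(f,v)$; naturality of $\sigma$ at the vertical arrow $v$ rewrites $F_{1}(\mathrm{id}_{v})$ as $\sigma_{B}\cdot\mathrm{id}_{F_{0}v}\cdot\sigma_{A}^{-1}$, which cancels the surrounding $\sigma_{B}^{-1}$ and $\sigma_{A}$; and since $s$ is a functor, a vertical composite of horizontal identity cells is again one, so the three remaining layers collapse to $\mathrm{id}_{(\varphi_{2})_{B}^{-1}\cdot F_{0}v\cdot(\varphi_{1})_{A}}=\mathrm{id}_{w}$.

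The horizontal equation $\chi'\circ\psi'=1_{F_{1}f}$ is where the real work is, and I expect it to be the main obstacle. Because $F_{1}$ does not preserve horizontal composition, one cannot merely invoke $F_{1}$ of $\chi\circ\psi=1_{f}$; instead, expanding $\chi'\circ\psi'$ and regrouping by the interchange law exhibits $F_{1}\chi\circ F_{1}\psi$ flanked by horizontal identity cells on the $(\varphi_{j})$'s and by $\sigma_{A}$, $\sigma_{B}^{-1}$. The pair $(\psi,\chi)$ is horizontally composable with top row $f\circ\mathrm{Id}_{A}=f$ and bottom row $\mathrm{Id}_{B}\circ f=f$, so the naturality axiom \eqref{comp-nat} for $\mu$, applied with $\alpha=\psi$ and $\beta=\chi$, rewrites $F_{1}\chi\circ F_{1}\psi$ in terms of $F_{1}(\chi\circ\psi)=F_{1}(1_{f})=1_{F_{1}f}$ together with the unit comparison cells $\mu_{f,\mathrm{Id}_{A}}$, $\mu_{\mathrm{Id}_{B},f}$ and the $\theta_{i}$'s. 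These particular unit comparisons are in turn determined by the unit coherence axioms \eqref{ass-id1} and \eqref{ass-id2}, which express $\mu_{\mathrm{Id}_{B},f}$ and $\mu_{f,\mathrm{Id}_{A}}$ as pastings built only out of $\sigma_{B}^{-1}$ (resp. $\sigma_{A}^{-1}$), the $\theta_{i}$'s, and horizontal identity cells. Substituting these in and cancelling --- the $\sigma$'s and $\theta$'s produced by \eqref{comp-nat}, \eqref{ass-id1} and \eqref{ass-id2} annihilate exactly the structure cells introduced when defining $\psi'$ and $\chi'$ --- leaves $\chi'\circ\psi'=1_{F_{1}f}$. This last identification is a lengthy string-diagram computation, but it uses no new idea beyond functoriality of $F_{1}$ and $s$, naturality of $\varphi_{i}$ and $\sigma$, and the coherence identities \eqref{identityeqn}, \eqref{comp-nat}, \eqref{ass-id1}, \eqref{ass-id2}. (In passing this shows weak globularity is not actually needed, and the conjoint case then follows from Remark \ref{compconj}.)
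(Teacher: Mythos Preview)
Your proposal is correct and follows essentially the same route as the paper: the paper builds $\psi_{F_{1}f}$ and $\chi_{F_{1}f}$ as the very same three-layer vertical pastings you describe (an identity cell, $\sigma_{A}$ or $\sigma_{B}^{-1}$, and $F_{1}\psi$ or $F_{1}\chi$), declares the vertical binding equation ``relatively easy'', and for the horizontal one invokes exactly the coherence axioms \eqref{comp-nat}, \eqref{ass-id1}, \eqref{ass-id2} that you single out. The only cosmetic difference is that the paper records the companion as $d_{1}F_{1}\psi\cdot(\varphi_{2})_{A}^{-1}\cdot(\varphi_{1})_{A}\cdot(d_{0}F_{1}\psi)^{-1}$ and matches it with the dual expression coming from $\chi$ by citing posetality of $\bbY_{0}$, whereas you reach the equivalent form $(\varphi_{2})_{B}^{-1}\cdot F_{0}v\cdot(\varphi_{1})_{A}$ directly via naturality of $\varphi_{1},\varphi_{2}$; this is what makes your parenthetical about weak globularity being inessential go through.
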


\begin{proof}
 Let $\xymatrix@1{A\ar[r]^f&B}$ be a horizontal arrow with vertical 
companion $\xymatrix@1{A\ar[r]|\bullet^{v}&B}$ and binding cells
$$
\xymatrix{
A\ar@{=}[d]\ar@{=}[r]\ar@{}[dr]|\psi & A\ar[d]|\bullet^v 
	& A\ar[d]|\bullet_v \ar[r]^f  \ar@{}[dr]|\chi & B\ar@{=}[d]
\\
A\ar[r]_f & B&B\ar@{=}[r] & B\rlap{\quad.}
}$$
Consider the following pastings of double cells in $\bbY$:
$$
\xymatrix@C=4em{
& d_0F_1f \ar[r]^{\mathrm{Id}_{d_0F_1f}} \ar[d]|\bullet_{(\varphi_1)_A\cdot(d_0F_1\psi)^{-1}}
	    \ar@{}[dr]|{\mathrm{id}} 
& d_0F_1f  \ar[d]|\bullet^{(\varphi_1)_A\cdot(d_0F_1\psi)^{-1}}
\\
\ar@{}[d]|{\textstyle\psi_{F_1f}:=}&FA\ar[r]_{\mathrm{Id}_{F_0A}}\ar[d]|\bullet_{(\varphi_1)_A^{-1}}\ar@{}[dr]|{\sigma_A}
&FA\ar[d]|\bullet^{(\varphi_2)_A^{-1}}
\\
&d_0F_1(\mathrm{Id}_A) \ar[d]|\bullet_{d_0F_1\psi} \ar@{}[dr]|{F_1\psi} \ar[r]_{F_1(\mathrm{Id}_A)}
& d_1F_1(\mathrm{Id}_A) \ar[d]|\bullet^{d_1F_1\psi} 
\\
&d_0F_1f\ar[r]_{F_1f} & d_1F_1f
}
$$
and
$$
\xymatrix@C=4em{
& d_0F_1f \ar[r]^{F_1f} \ar[d]|\bullet_{d_0F_1\chi}
	    \ar@{}[dr]|{F_1\chi} 
& d_0F_1f  \ar[d]|\bullet^{d_1F_1\chi}
\\
\ar@{}[d]|{\textstyle\chi_{F_1f}:=}
&d_0F_1(\mathrm{Id}_B)\ar[r]_{F_1(\mathrm{Id}_{B})}\ar[d]|\bullet_{(\varphi_1)_B}\ar@{}[dr]|{\sigma_B^{-1}}
&d_1F_1(\mathrm{Id}_B)\ar[d]|\bullet^{(\varphi_2)_B}
\\
&FB\ar[d]|\bullet_{(d_1F_1\chi)^{-1}\cdot(\varphi_2)^{-1}_B} \ar@{}[dr]|{\mathrm{id}} 
	    \ar[r]_{\mathrm{Id}_{F_0B}}
&FB\ar[d]|\bullet^{(d_1F_1\chi)^{-1}\cdot(\varphi_2)_B^{-1}} 
\\
&d_1F_1f\ar[r]_{\mathrm{Id}_{d_1F_1f}} & d_1F_1f\rlap{\quad.}
}
$$
Note that $d_1F_1\psi\cdot(\varphi_2)^{-1}_A\cdot(\varphi_1)_A\cdot(d_0F_1\psi)^{-1}=
(d_1F_1\chi)^{-1}\cdot(\varphi_2)^{-1}_B\cdot(\varphi_1)_B\cdot d_0F_1\chi$
since the vertical arrow category is posetal, and we claim that this arrow is the vertical 
companion of $F_1f$ with binding cells $\psi_{F_1f}$ and $\chi_{F_1f}$.
So we need to check that the equations in (\ref{compcondns}) are satisfied.
It is relatively easy to check that the second equation, involving the vertical composition of these cells, 
is satisfied. For the first equation we need to refer to the coherence conditions in 
(\ref{comp-nat}), (\ref{ass-id1}), and (\ref{ass-id2}).
The horizontal composition considered in the binding cell equation is
$$
\xymatrix@C=4em{
d_0F_1f \ar[r]^{\mathrm{Id}_{d_0F_1f}} \ar[d]|\bullet_{(\varphi_1)_A\cdot(d_0F_1\psi)^{-1}}
	    \ar@{}[dr]|{\mathrm{id}} 
& d_0F_1f  \ar[d]|\bullet^(.35){(\varphi_1)_A\cdot(d_0F_1\psi)^{-1}}\ar@{=}[r] \ar@{}[dddr]|{=} 
& d_0F_1f \ar[r]^{F_1f} \ar[d]|\bullet_(.6){d_0F_1\chi}
	    \ar@{}[dr]|{F_1\chi} 
& d_0F_1f  \ar[d]|\bullet^{d_1F_1\chi}
\\
FA\ar[r]|{\mathrm{Id}_{F_0A}}\ar[d]|\bullet_{(\varphi_1)_A^{-1}}\ar@{}[dr]|{\sigma_A}
&FA\ar[d]|\bullet^{(\varphi_2)_A^{-1}} 
& d_0F_1(\mathrm{Id}_B)\ar[r]|{F_1(\mathrm{Id}_{B})}\ar[d]|\bullet_{(\varphi_1)_B}\ar@{}[dr]|{\sigma_B^{-1}}
&d_1F_1(\mathrm{Id}_B)\ar[d]|\bullet^{(\varphi_2)_B}
\\
d_0F_1(\mathrm{Id}_A) \ar[d]|\bullet_{d_0F_1\psi} \ar@{}[dr]|{F_1\psi} \ar[r]|{F_1(\mathrm{Id}_A)}
& d_1F_1(\mathrm{Id}_A) \ar[d]|\bullet^(.35){d_1F_1\psi} 
& FB\ar[d]|\bullet_(.6){(d_1F_1\chi)^{-1}\cdot(\varphi_2)^{-1}_B} \ar@{}[dr]|{\mathrm{id}} 
	    \ar[r]|{\mathrm{Id}_{F_0B}}
&FB\ar[d]|\bullet^{(d_1F_1\chi)^{-1}\cdot(\varphi_2)_B^{-1}} 
\\
d_0F_1f\ar[r]_{F_1f} & d_1F_1f\ar@{=}[r]&d_1F_1f\ar[r]_{\mathrm{Id}_{d_1F_1f}} & d_1F_1f\rlap{\quad.}
}$$
By (\ref{ass-id1}), and (\ref{ass-id2})
this is equal to
$$
\xymatrix@C=6em@R=1.8em{
 \ar[rrr]^{F_1(f\circ \mbox{\scriptsize Id}_A)} \ar@{}[drrr]|{\mu_{f,\mathrm{Id}_A}} \ar[d]|\bullet 
	&&& \ar[d]|\bullet 
\\
\ar[d]|\bullet\ar[r]^{\pi_2(F_2(f,\mathrm{Id}))}\ar@{}[dr]|{(\theta_2)_{f,\mathrm{Id}_A}}
	&\ar[d]|\bullet\ar@{=}[r] \ar@{}[dddr]|{=}
	& \ar[r]^{\pi_1(F_2(f,\mathrm{Id}_A))} \ar@{}[dddr]|{(\theta_1)_{f,\mathrm{Id}_A}}\ar[ddd]|\bullet 
	&\ar[ddd]|\bullet 
\\
\ar[r]|{F_1(\mbox{\scriptsize Id}_A)} \ar[d]|\bullet \ar@{}[dr]|{\sigma_A^{-1}}&\ar[d]|\bullet  &&
\\
\ar[d]|\bullet \ar[r]|{\mbox{\scriptsize Id}_{F_0A}} \ar@{}[dr]|{\mbox{\scriptsize id}} &\ar[d]|\bullet &&
\\
\ar[r]|{\mathrm{Id}_{d_0F_1f}} \ar[d]|\bullet
	    \ar@{}[dr]|{\mathrm{id}} 
&  \ar[d]|\bullet\ar@{=}[r] \ar@{}[dddr]|{=} 
&  \ar[r]|{F_1f} \ar[d]|\bullet
	    \ar@{}[dr]|{F_1\chi} 
&  \ar[d]|\bullet
\\
\ar[r]|{\mathrm{Id}_{F_0A}}\ar[d]|\bullet\ar@{}[dr]|{\sigma_A}
&\ar[d]|\bullet
& \ar[r]|{F_1(\mathrm{Id}_{B})}\ar[d]|\bullet\ar@{}[dr]|{\sigma_B}
&\ar[d]|\bullet
\\
 \ar[d]|\bullet \ar@{}[dr]|{F_1\psi} \ar[r]|{F_1(\mathrm{Id}_A)}
&  \ar[d]|\bullet
& \ar[d]|\bullet \ar@{}[dr]|{\mathrm{id}} 
	    \ar[r]|{\mathrm{Id}_{F_0B}}
&\ar[d]|\bullet
\\
\ar[r]|{F_1f} \ar[ddd]|\bullet\ar@{}[dddr]|{(\theta_2)^{-1}_{\mathrm{Id}_B,f}}
& \ar[ddd]|\bullet \ar@{=}[r]\ar@{}[dddr]|= 
&\ar[r]|{\mathrm{Id}_{d_1F_1f}}\ar[d]|\bullet \ar@{}[dr]|{\mathrm{id}} 
& \ar[d]|\bullet
\\
&&\ar[r]|{\mathrm{Id}_{F_0B}}\ar[d]|\bullet \ar@{}[dr]|{\sigma_B^{-1}} &\ar[d]|\bullet
\\
&&\ar[r]|{F_1(\mathrm{Id}_{B})}\ar[d]|\bullet\ar@{}[dr]|{(\theta_1)^{-1}_{\mathrm{Id}_B,f}}
&\ar[d]|\bullet 
\\
\ar[d]|\bullet \ar[r]_{\pi_2F_2(\mathrm{Id}_B,f)} \ar@{}[drrr]|{\mu^{-1}_{\mathrm{Id}_B,f}}
&\ar@{=}[r]& \ar[r]_{\pi_1F_2(\mathrm{Id}_B,f)} & \ar[d]|\bullet
\\
\ar[rrr]_{F_1(f)} &&&
\rlap{\quad.}
}$$
Cancellation of vertical inverses gives that this is equal to
$$
 \xymatrix@C=4em{
\ar[rrr]^{F_1(f\circ \mathrm{Id}_A)}\ar[d]|\bullet \ar@{}[drrr]|{\mu_{f,\mathrm{Id}_A}} 
	&&& \ar[d]|\bullet
\\
\ar[r]^{\pi_2F_2(f,\mathrm{Id}_A)} \ar[d]|\bullet \ar@{}[dr]|{(\theta_2)_{f,\mathrm{Id}_A}} 
	&\ar[d]|\bullet \ar@{=}[r] \ar@{}[dddr]|=
	&\ar[r]^{\pi_1F_2(f,\mathrm{Id}_A)} \ar[d]|\bullet_\sim \ar@{}[dr]|{(\theta_1)_{f,\mathrm{Id}_A}} 
	&\ar[d]|\bullet
\\
\ar[r]|{F_1\mathrm{Id}_A}\ar[d]|\bullet \ar@{}[dr]|{F_1\psi} & \ar[d]|\bullet 
	& \ar[d]|\bullet\ar@{}[dr]|{F_1\chi}\ar[r]|{F_1f} & \ar[d]|\bullet \ar@{}[drr]|{\textstyle =}
	&& \ar[r]^{F_1(f)}\ar@{}[dr]|{F_1(\chi\circ\psi)} \ar[d]|\bullet &\ar[d]|\bullet
\\
\ar[d]|\bullet \ar[r]|{F_1f} \ar@{}[dr]|{(\theta_2)_{\mathrm{Id}_B,f}^{-1}} &\ar[d]|\bullet
	& \ar[d]|\bullet \ar[r]|{F_1\mathrm{Id}_B} \ar@{}[dr]|{(\theta_1)_{\mathrm{Id}_B,f}^{-1}} 
	&\ar[d]|\bullet
	&& \ar[r]_{F_1(\mathrm{Id}_B\circ f)} &
\\
\ar[r]_{\pi_2F_2(\mathrm{Id}_B,f)} \ar[d]|\bullet \ar@{}[drrr]|{\mu_{\mathrm{Id}_B,f}^{-1}}
	& \ar@{=}[r] & \ar[r]_{\pi_1F_2(\mathrm{Id}_B,f)} &\ar[d]|\bullet
\\
\ar[rrr]_{F_1(\mathrm{Id}_B\circ f)} &&&
}
$$
where the last equality is by (\ref{comp-nat}).
This gives us the required identity double cell.
\end{proof}

\subsection{Companion categories}
In the study of companions and conjoints and functors that send arrows to companions or conjoints
the following construction on (arbitrary) double categories is very useful.

\begin{dfn}\rm
Let $\bbD$ be an arbitrary double category. Then $\bbComp(\bbD)$, the {\em double category 
of companions in $\bbD$}, is defined as follows. It has the 
same objects and horizontal arrows as $\bbD$, but its vertical arrows are companion pairs 
(with their binding cells)
in $\bbD$. So a vertical arrow $\theta\colon\xymatrix@1{A\ar[r]|\bullet & B}$ in $\bbComp(\bbD)$ is given 
by a quadruple $\theta=(h_\theta,v_\theta,\psi_\theta,\chi_\theta)$, where $v_\theta$ and $h_\theta$ 
are companions with binding cells
$$
\xymatrix{
A\ar[r]^{\mathrm{Id}_A} \ar@{}[dr]|{\psi_\theta} \ar[d]|\bullet_{1_A} & A\ar[d]|\bullet^{v_\theta}\ar@{}[drr]|{\mbox{and}}
    &&  A\ar[d]|\bullet_{v_\theta} \ar[r]^{h_\theta} \ar@{}[dr]|{\chi_\theta}& B\ar[d]|\bullet^{1_A}
\\
A\ar[r]_{h_\theta} & B && B\ar[r]_{\mathrm{Id}_A}  &B.
}$$
The vertical identity arrow $1_A$ is given by $(\mathrm{Id}_A,1_A, \iota_A,\iota_A)$.
Vertical composition is defined by $\theta'\cdot\theta=(h_{\theta'}h_{\theta},v_{\theta'}\cdot v_\theta, 
(\psi_{\theta'}\circ 1_{h_\theta})\cdot(\mbox{id}_{v_{\theta'}}\circ\psi_\theta),
(\chi_{\theta'}\circ\mbox{id}_{v_\theta})\cdot(1_{h_{\theta'}}\circ\chi_\theta))$.
A double cell 
$$
\xymatrix{
A\ar[r]^f\ar[d]|\bullet_\theta\ar@{}[dr]|\Theta & A'\ar[d]|\bullet^{\theta'}
\\
B\ar[r]_g&B'
}
$$
in $\bbComp(\bbD)$ consists of a double cell
$$
\xymatrix@R=2em{
A\ar[r]^f\ar[d]|\bullet_{v_\theta}\ar@{}[dr]|\Theta & A'\ar[d]|\bullet^{v_{\theta'}}
\\
B\ar[r]_g&B'
}
$$ 
in $\bbD$ which has the following properties:
\begin{enumerate}
 \item the square of horizontal arrows 
$$
\xymatrix@R=2em{
A\ar[d]_{h_\theta} \ar[r]^f& A'\ar[d]^{h_{\theta'}}
\\
B\ar[r]_g & B'
}$$
commutes in $\bbD$;
\item 
$$
\xymatrix@C=3.5em@R=2em{
A\ar[r]^f\ar[d]|\bullet_{v_\theta}\ar@{}[dr]|\Theta 
    & A'\ar[d]|\bullet^{v_{\theta'}}\ar@{}[dr]|{\chi_{\theta'}} \ar[r]^{h_{\theta'}} & B' \ar@{=}[d]
    \ar@{}[dr]|{\textstyle =} & A\ar[d]|\bullet_{v_{\theta}}\ar[r]^{h_\theta}\ar@{}[dr]|{\chi_\theta}
    & B\ar@{=}[d] \ar[r]^g\ar@{}[dr]|{1_g} & B'\ar@{=}[d]
\\
B\ar[r]_g&B' \ar@{=}[r] & B' & B\ar@{=}[r] &B\ar[r]_g & B'
}
$$ 
\item
$$
\xymatrix@C=3.5em@R=2em{
A\ar@{=}[r]\ar@{=}[d] \ar@{}[dr]|{\psi_\theta}
    & A\ar[d]|\bullet^{v_{\theta}}\ar@{}[dr]|{\Theta} \ar[r]^{f} & A' \ar[d]|\bullet^{v_{\theta'}}
    \ar@{}[dr]|{\textstyle =} & A\ar@{=}[d] \ar[r]^{f}\ar@{}[dr]|{1_f}
    & A'\ar@{=}[d] \ar@{=}[r]\ar@{}[dr]|{\psi_\theta} & B'\ar[d]|\bullet^{v_{\theta'}}
\\
A\ar[r]_{h_{\theta}}&B \ar[r]_g & B' & A\ar[r]_f &A'\ar[r]_{h_{\theta'}} & B'
}
$$ 
\end{enumerate}
\end{dfn}

\begin{rmks}\rm
\begin{enumerate}
 \item We write $\Comp(\bbD)$ for the category of vertical arrows, $v\bbComp(\bbD)$, i.e., 
the category of companion pairs (with chosen binding cells).
\item If we have a functorial choice of companions and binding cells  and we only use the 
horizontal arrows that are taken in these choices in the construction of $\bbComp(\bbD)$,
the result is a double category with a thin structure as defined in \cite{BM}.
\end{enumerate}
\end{rmks}

\subsection{Pre-companions}\label{pre-comps}
We saw before that companions in weakly globular double categories correspond to quasi units in 
bicategories. 
We now introduce a class of horizontal arrows in weakly globular double categories that will correspond to  
equivalences in bicategories.

\begin{dfn}\label{precomp}\rm
Let $\bbX$ be a weakly globular double category.
\begin{enumerate}
 \item A horizontal arrow $\xymatrix@1{A\ar[r]^f&B}$  in $\bbX$ is a {\em left pre-companion}
if there are horizontal arrows $\xymatrix@1{A'\ar[r]^{f'} &B'}$ and $\xymatrix@1{B'\ar[r]^{r_f}&C}$
with a vertically invertible double cell
$$
\xymatrix{
A\ar[d]|\bullet\ar@{}[dr]|\varphi \ar[r]^f&B\ar[d]|\bullet\\
A'\ar[r]_{f'}&B'}
$$
such that $r_f\circ f'$ is a companion in $\bbX$.
\item
Dually, a horizontal arrow $\xymatrix@1{A\ar[r]^f&B}$  in $\bbX$ is a {\em right pre-companion}
if there are horizontal arrows $\xymatrix@1{A''\ar[r]^{f''} &B''}$ and $\xymatrix@1{D\ar[r]^{l_f}&A''}$
with a vertically invertible double cell
$$
\xymatrix{
A\ar[d]|\bullet\ar@{}[dr]|{\varphi'} \ar[r]^f&B\ar[d]|\bullet\\
A''\ar[r]_{f''}&B''}
$$
such that $f''\circ l_f$ is a companion in $\bbX$.  
\item
A horizontal arrow $\xymatrix@1{A\ar[r]^f&B}$  in $\bbX$ is a {\em pre-companion}
if it is both a left and a right pre-companion.
\end{enumerate}
\end{dfn}

We first prove some basic properties of pre-companions.

\begin{lma}\label{lr}
 If $f$ is a pre-companion with arrows $l_f$ and $r_f$ as in Definition \ref{precomp}, then 
there is a vertically invertible double cell
of the form
$$
\xymatrix@R=3em@C=3em{
\ar[d]|\bullet\ar@{}[dr]|{\nu_f}\ar[r]^{r_f} & \ar[d]|\bullet
\\
\ar[r]_{l_f} &\rlap{\quad.}
}
$$
\end{lma}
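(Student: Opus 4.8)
The plan is to pass to the fundamental bicategory $\Bic\bbX$, where the hypotheses become statements about quasi units, do the standard ``cancellation of units'' computation there, and then read the cell $\nu_f$ back off an invertible $2$-cell.

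First I would record the key observation. Since $\bbX$ is weakly globular, $\bbX_0$ is equivalent to the discrete category $\bbX_0^d$, hence $\bbX_0$ is a posetal groupoid: every vertical arrow is invertible, and between any two objects in the same connected component there is exactly one vertical arrow. By the explicit description of $\Bic\bbX$ in Section \ref{fundabicat}, its $2$-cells are exactly double cells of the shape displayed for $\nu_f$ (with the vertical sides the forced unique vertical arrows), and their vertical composition is composition in $\bbX_1$; therefore an invertible $2$-cell $r_f\Rightarrow l_f$ in $\Bic\bbX$ \emph{is} a vertically invertible double cell of the required form. So it suffices to produce an isomorphism $r_f\cong l_f$ in $\Bic\bbX$.

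Next I would translate the data of Definition \ref{precomp}. The vertically invertible double cells $\varphi$ and $\varphi'$ give isomorphisms $f\cong f'$ and $f\cong f''$ in $\Bic\bbX$, hence $f'\cong f''$; in particular $\bar{A'}=\bar{A}=\bar{A''}$ and $\bar{B'}=\bar{B}=\bar{B''}$. Since $r_f\circ f'$ and $f''\circ l_f$ are companions in $\bbX$, Proposition \ref{quasi-comp} shows they represent quasi units in $\Bic\bbX$; a companion pair has a common (vertical) domain and codomain, so $\bar{A'}=\bar{C}$ and $\bar{D}=\bar{B''}$, whence $r_f$ and $l_f$ are parallel $1$-cells $\bar{B}\rightarrow\bar{A}$ of $\Bic\bbX$, and moreover $r_f\circ f'\cong 1_{\bar{A}}$ and $f''\circ l_f\cong 1_{\bar{B}}$. (Here composites are taken in $\Bic\bbX$; for horizontal arrows that are composable in $\bbX$ their $\bbX$-composite represents the $\Bic\bbX$-composite up to a canonical invertible $2$-cell, so no ambiguity arises.) Then, using the unit and associativity constraints of $\Bic\bbX$ together with these isomorphisms, suitably whiskered,
\begin{align*}
l_f &\cong 1_{\bar{A}}\circ l_f \cong (r_f\circ f')\circ l_f \cong r_f\circ(f'\circ l_f) \\
&\cong r_f\circ(f''\circ l_f) \cong r_f\circ 1_{\bar{B}} \cong r_f\,.
\end{align*}
By the first paragraph, the resulting invertible $2$-cell $r_f\Rightarrow l_f$ is a vertically invertible double cell $\nu_f$ of the required shape.

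The only delicate point is the bookkeeping of connected components, needed to see that $r_f$ and $l_f$ are genuinely parallel and that every composite above is defined, together with the identification of $\bbX$-horizontal composites with their images in $\Bic\bbX$. Once that is in place the proof is just the elementary unit-cancellation computation in a bicategory, which is precisely what is \emph{not} available directly in the strict double category $\bbX$ and is the reason for working inside $\Bic\bbX$.
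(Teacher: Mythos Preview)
Your argument is correct. You pass to the fundamental bicategory, invoke Proposition~\ref{quasi-comp} to recognise $r_f\circ f'$ and $f''\circ l_f$ as quasi units, and then run the standard bicategorical cancellation $l_f\cong 1\cdot l_f\cong (r_f f')l_f\cong r_f(f'' l_f)\cong r_f\cdot 1\cong r_f$, reading the resulting invertible $2$-cell back as a vertically invertible double cell. The bookkeeping of connected components and the comparison between $\bbX$-composites and $\Bic\bbX$-composites that you flag are exactly the points one has to check, and they go through.

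The paper takes a different, entirely internal route: it writes down $\nu_f$ directly as an explicit pasting in $\bbX$ of the binding cells $\psi^l_f$ and $\chi^r_f$, the structure cells $\varphi$, $\varphi'$ and their inverses, and horizontal/vertical identity cells, with the vertical boundary arrows pinned down by posetality. The virtue of the paper's construction is that the cell $\nu_f$ is explicit and manifestly built from the given companion data, which is convenient when one later manipulates these cells (as in Lemma~\ref{pre-comp-uniq}). The virtue of your approach is conceptual clarity: it makes transparent that the lemma is nothing more than the usual ``two one-sided inverses of an equivalence are isomorphic'' argument, transported along the $\Bic$/$\Dbl$ correspondence. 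The price you pay is that the actual double cell is buried under the unit, associativity, and composition-comparison isomorphisms of $\Bic\bbX$, so its concrete form is not immediately visible.
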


\begin{proof}
 Let $r_f\circ f'$ have vertical companion $v^r_f$ with binding cells
$$
\xymatrix{
\ar@{=}[d]\ar@{=}[rr]\ar@{}[drr]|{\psi_f^r} && \ar[d]^{v^r_f}|\bullet \ar@{}[drr]|{\mbox{and}} 
	&& \ar[r]^{f'}\ar[d]|\bullet_{v^r_f} \ar@{}[drr]|{\chi_f^r} & \ar[r]^{r_f} & \ar@{=}[d]
\\
\ar[r]_{f'}&\ar[r]_{r_f} & && \ar@{=}[rr] &&\rlap{\quad,}
}
$$
and let $f''\circ l_f$ have vertical companion $v^l_f$ with binding cells
$$
\xymatrix{
\ar@{=}[d]\ar@{=}[rr]\ar@{}[drr]|{\psi_f^l} && \ar[d]^{v^l_f}|\bullet \ar@{}[drr]|{\mbox{and}} 
	&& \ar[r]^{l_f}\ar[d]|\bullet_{v^l_f} \ar@{}[drr]|{\chi_f^l} & \ar[r]^{f''} & \ar@{=}[d]
\\
\ar[r]_{l_f}&\ar[r]_{f''} & && \ar@{=}[rr] &&\rlap{\quad.}
}
$$
Further, let $x=(v_f^l)^{-1}\cdot d_1\varphi'\cdot(d_1\varphi)^{-1}$
and $y=(d_0\varphi')\cdot(d_0\varphi)^{-1}\cdot (v^r_f)^{-1}$.
Then $\nu_f$ can be obtained 
as the following pasting of double cells:
$$\xymatrix@C=4em@R=2em{
\ar[d]|\bullet_x\ar@{=}[rr]\ar@{}[drr]|{\mathrm{id}} && \ar[d]|\bullet_x\ar[r]^{r_f}\ar@{}[4,1]|{1_{r_f}} & \ar@{=}[4,0]
\\
\ar@{=}[d]\ar@{=}[rr]\ar@{}[drr]|{\psi_f^l} && \ar[d]|\bullet^{v_f^l}
\\
\ar[r]|{l_f}\ar@{=}[4,0]\ar@{}[4,1]|{1_{l_f}} & \ar[d]|\bullet_{d_0\varphi'^{-1}}\ar[r]|{f''}\ar@{}[dr]|{\varphi'^{-1}} 
	& \ar[d]|\bullet^{d_1\varphi'^{-1}}
\\
& \ar[d]|\bullet_{d_0\varphi}\ar[r]|{f}\ar@{}[dr]|\varphi & \ar[d]|\bullet^{d_1\varphi}
\\
&\ar[d]_{v^r_f}\ar@{}[drr]|{\chi^r_f}\ar[r]|{f'} & \ar[r]|{r_f} &\ar@{=}[d]
\\
&\ar[d]|\bullet_y\ar@{=}[rr]\ar@{}[drr]|{\mathrm{id}} &&\ar[d]|\bullet^y
\\
\ar[r]_{l_f} & \ar@{=}[rr]&&\rlap{\quad .}
}$$
\end{proof}

It is well-known that companions, just like adjoints, are unique up to special invertible double cells.
A similar result applies to pre-companions and the proof is a 2-dimensional version of the proof
that the pseudo-inverse of an equivalence in a bicategory is unique up to invertible 2-cell.
 
\begin{lma}\label{pre-comp-uniq}
 For any pre-companion $f$ in a weakly globular double category, the
pre-companion structure given in Definition \ref{precomp} is unique
up to vertically invertible double cells.
\end{lma}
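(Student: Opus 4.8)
The plan is to mimic the classical proof that a pseudo-inverse of an internal equivalence in a bicategory is unique up to invertible $2$-cell, with vertical composition of vertically invertible double cells playing the role of vertical composition of $2$-cells, horizontal composition of double cells the role of whiskering, and the binding cells of a companion the role of the unit/counit isomorphisms witnessing $\bar f\circ\bar l\cong 1\cong\bar r\circ\bar f$ in $\Bic\bbX$ (cf.\ Proposition \ref{quasi-comp} for the companion/quasi-unit dictionary).

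First I would record two facts used throughout. Since $\bbX$ is weakly globular, its vertical arrow category is a posetal groupoid (Definition \ref{s4.def1}); hence the vertical boundary arrows of any double cell are uniquely determined by its horizontal boundary, so the pastings below are automatically well formed, and moreover a horizontal arrow admitting a vertical companion admits an essentially unique one, with binding cells determined up to vertically invertible double cells (cf.\ Remark \ref{compconj}). It therefore suffices, given two pre-companion structures on $f$ — say left data $(f'_i,r^i_f,\varphi_i)$ and right data $(f''_i,l^i_f,\varphi'_i)$ as in Definition \ref{precomp}, for $i=1,2$ — to produce vertically invertible double cells $f'_1\Rightarrow f'_2$, $f''_1\Rightarrow f''_2$, $r^1_f\Rightarrow r^2_f$ and $l^1_f\Rightarrow l^2_f$; compatibility with the cells $\varphi_i,\varphi'_i$ and with the binding cells then follows from posetality as above.

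The comparisons $f'_1\Rightarrow f'_2$ and $f''_1\Rightarrow f''_2$ are immediate, given by $\varphi_2\cdot\varphi_1^{-1}$ and $\varphi'_2\cdot(\varphi'_1)^{-1}$. For the remaining two, the key observation — the analogue of the bicategorical step $r\cong lfr\cong l$ — is that the left datum of one pre-companion structure together with the right datum of another again constitutes a pre-companion structure on $f$, since the hypotheses of Definition \ref{precomp} couple the left datum only to $f$ and the right datum only to $f$, not to each other. Applying Lemma \ref{lr} to these mixed structures produces vertically invertible double cells $\nu_{ij}\colon r^i_f\Rightarrow l^j_f$ for all $i,j\in\{1,2\}$, with $\nu_{ii}$ the cell $\nu_f$ of Lemma \ref{lr} for the $i$-th structure. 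Pasting vertically, $r^1_f\overset{\nu_{12}}{\Rightarrow}l^2_f\overset{\nu_{22}^{-1}}{\Rightarrow}r^2_f$ gives $r^1_f\Rightarrow r^2_f$, and $l^1_f\overset{\nu_{11}^{-1}}{\Rightarrow}r^1_f\overset{\nu_{12}}{\Rightarrow}l^2_f$ gives $l^1_f\Rightarrow l^2_f$; these composites are legitimate by posetality of $\bbX_0$ and are vertically invertible since each $\nu_{ij}$ is.

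Finally I would transport the binding-cell data: the binding cells exhibiting $r^1_f\circ f'_1$ and $f''_1\circ l^1_f$ as companions, transported along the comparison cells just constructed, exhibit $r^2_f\circ f'_2$ and $f''_2\circ l^2_f$ as companions, and by the essential uniqueness of companions in a weakly globular double category they agree, up to vertically invertible double cells, with the binding cells supplied by the second structure. I expect the only genuine work to be bookkeeping — checking that the frames of the $\nu_{ij}$ line up so that the vertical pastings above are defined, and tracking where the structure cells $\varphi_i,\varphi'_i$ get inserted when one unwinds the pasting in Lemma \ref{lr} — rather than anything conceptual.
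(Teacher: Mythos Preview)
Your proposal is correct and follows essentially the same approach as the paper: both arguments observe that mixing the left data from one structure with the right data from another still allows Lemma \ref{lr} to apply, yielding vertically invertible cells $\nu_{ij}\colon r^i_f\Rightarrow l^j_f$ for all $i,j$, and then obtain the desired comparisons $r^1_f\Rightarrow r^2_f$ and $l^1_f\Rightarrow l^2_f$ as vertical composites of a $\nu_{ij}$ with the inverse of another. The particular pairs of $\nu_{ij}$ you choose differ slightly from the paper's (you route $r^1_f\Rightarrow r^2_f$ through $l^2_f$ while the paper routes it through $l^1_f$), and you add a closing paragraph on transporting binding cells that the paper omits, but neither difference is substantive.
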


\begin{proof}
 Suppose that $\varphi_i$, $r_{f,i}$, $v_{f,i}^r$, $f'_i$, $\psi_{f,i}^r$, and $\chi_{f,i}^r$ give two right pre-companion structures for $i=1,2$,
and $\varphi'_i$,
$l_{f,i}, f''_i, v_{f,i}^l$, $\psi_{f,i}^l$ and $\chi_{f,i}^l$ give two left pre-companion structures for $i=1,2$.
Then there are vertically invertible double cells as follows:
$$
\xymatrix@C=4em{
\ar[r]^{f'_1}\ar[d]|\bullet\ar@{}[dr]|{\varphi_1^{-1}} & \ar[d]|\bullet &\ar[d]|\bullet\ar[r]^{f''_1}\ar@{}[dr]|{(\varphi'_1)^{-1}}& \ar[d]|\bullet
\\
\ar[r]|f \ar[d]|\bullet \ar@{}[dr]|{\varphi_2} & \ar[d]|\bullet & \ar[r]|f  \ar[d]|\bullet \ar@{}[dr]|{\varphi'_2} & \ar[d]|\bullet
\\
\ar[r]_{f'_2} & & \ar[r]_{f''_2} &
}$$
According to Lemma \ref{lr} there are vertically invertible double cells
$$
\xymatrix{
\ar[d]|\bullet\ar[r]^{r_{f,1i}}\ar@{}[dr]|{\nu_{f,ij}} &
\ar[d]|\bullet \ar@{}[drrr]|{\textstyle \mbox{for }i,j\in\{1,2\}.}
\\
\ar[r]_{l_{f,j}}&&&&
}$$
So we obtain vertically invertible cells
$$
\xymatrix@C=5em{
\ar[r]^{r_{f,1}} \ar[d]|\bullet \ar@{}[dr]|{\nu_{f,21}^{-1}\cdot\nu_{f,11}} 
	&\ar[d]|\bullet\ar@{}[dr]|{\mbox{and}} & \ar[r]^{l_{f,1}}\ar[d]|\bullet \ar@{}[dr]|{\nu_{f,22}\cdot\nu_{f,21}^{-1}}& \ar[d]|\bullet
\\
\ar[r]_{r_{f,2}} && \ar[r]_{l_{f,2}}&
}$$
\end{proof}

The next two propositions establish the relationship between 
pre-companions in weakly globular double categories and equivalences in bicategories.

\begin{prop}
 Let $\calB$ be a bicategory.
A horizontal arrow
\begin{equation}\label{arrow}
\xymatrix{A_0\ar[r]^{f_1}&\cdots\ar[r]^{f_{i_0}}
  &[A_{i_0}]\ar[r]^{f_{i_0+1}}&\cdots\ar[r]^{f_{i_1}}&[A_{i_1}]\ar[r]^{f_{i_1}+1}&\cdots\ar[r]^{f_{n}}&A_n}
\end{equation}
in $\Dbl(\calB)$ is a pre-companion if and only if the chosen composition $\varphi_{f_{i_0+1}\cdots f_{i_1}}$
is an equivalence in $\calB$.
\end{prop}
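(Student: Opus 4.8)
The plan is to reduce the statement to Proposition~\ref{comp-quasi} (which characterizes companions in $\Dbl(\calB)$ as marked paths whose chosen composite is a quasi unit) together with the observation that an equivalence in a bicategory is precisely an arrow that becomes a quasi unit after composing with a suitable arrow on one side. I would proceed in both directions separately.

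\emph{($\Leftarrow$)} Suppose $g := \varphi_{f_{i_0+1}\cdots f_{i_1}}\colon A_{i_0}\to A_{i_1}$ is an equivalence in $\calB$, with pseudo-inverse $\bar{g}\colon A_{i_1}\to A_{i_0}$ and invertible $2$-cells $\bar{g}\circ g \cong 1_{A_{i_0}}$ and $g\circ \bar{g}\cong 1_{A_{i_1}}$. For the left pre-companion structure I would take $f'$ to be the arrow \eqref{arrow} itself with its same marking (so $\varphi$ is the identity double cell), and then take $r_f$ to be a horizontal arrow in $\Dbl(\calB)$ that, when horizontally composed with \eqref{arrow}, yields a path marked from $i_0$ to a point where the chosen composite of the segment is $\bar{g}\circ g$; more concretely one adjoins $\bar g$ as a new arrow at the end and re-marks so that the marked segment runs $f_{i_0+1},\dots,f_{i_1},\bar g$ with $i_1$ replaced by the new terminal index. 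Its chosen composite is $\varphi_{f_{i_0+1}\cdots f_{i_1}\,\bar g}$, which via the comparison $2$-cell $\Phi$ is isomorphic to $\bar g\circ g\cong 1$, hence is a quasi unit, so by Proposition~\ref{comp-quasi} that composite $r_f\circ f'$ has a companion. The right pre-companion structure is obtained symmetrically by adjoining $\bar g$ at the front, using $g\circ\bar g\cong 1$. Thus \eqref{arrow} is a pre-companion.

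\emph{($\Rightarrow$)} Conversely, suppose \eqref{arrow} is a pre-companion. By the left pre-companion data there is an arrow $f'\colon A'\to B'$ in $\Dbl(\calB)$ vertically isomorphic to \eqref{arrow}, and an arrow $r_f$, with $r_f\circ f'$ a companion. A vertically invertible double cell in $\Dbl(\calB)$ between two horizontal arrows forces the two underlying marked paths to have equal chosen composites up to canonical isomorphism (since vertical arrows in $\Dbl(\calB)$ are determined by matching marked objects), so the chosen composite of the marked segment of $f'$ is isomorphic to $g=\varphi_{f_{i_0+1}\cdots f_{i_1}}$; moreover the marked objects must agree, so $A_{i_0}=A_{i_1}$ is not required here but the relevant composability does hold, and $r_f$ is a path of the form $\varphi$ of some segment, say a morphism $s\colon A_{i_1}\to C$ in $\calB$ (up to comparison isomorphism). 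By Proposition~\ref{comp-quasi}, $r_f\circ f'$ being a companion means its chosen composite $\varphi$ of the combined segment is a quasi unit, i.e. $s\circ g\cong 1_{A_{i_0}}$ in $\calB$ (using $\Phi$ to identify the composite of the concatenated path with $s\circ g$). Dually, the right pre-companion data yields a morphism $t\colon C'\to A_{i_0}$ with $g\circ t\cong 1_{A_{i_1}}$. An arrow of a bicategory with a left inverse and a right inverse up to isomorphism is an equivalence (the standard argument: $t\cong (s\circ g)\circ t \cong s\circ(g\circ t)\cong s$, so $s$ serves as a two-sided pseudo-inverse), hence $g$ is an equivalence in $\calB$.

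\emph{Main obstacle.} The routine-but-delicate point is bookkeeping with the chosen composites $\varphi_{f_1,\dots,f_n}$ and the comparison cells $\Phi$: one must check that adjoining an arrow to a marked path and re-marking really produces, after applying $\Phi$, the composite $\bar g\circ g$ (resp. $g\circ\bar g$) rather than something only coherently isomorphic in a way that breaks the quasi-unit property --- but since quasi units are closed under isomorphism this causes no real trouble. The one genuinely substantive step is verifying in the forward direction that the vertically invertible double cells in the pre-companion data constrain the underlying data in $\calB$ as claimed; this is exactly the content of how double cells and vertical arrows in $\Dbl(\calB)$ are built (from the explicit description in Section~\ref{bicat}), so it is a matter of unwinding definitions rather than a new idea. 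I expect the proof to be short once Proposition~\ref{comp-quasi} is invoked.
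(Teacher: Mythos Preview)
Your $(\Rightarrow)$ direction is essentially the paper's argument and is fine: unwind the pre-companion data as marked paths, apply Proposition~\ref{comp-quasi} to extract a left inverse $s$ and a right inverse $t$ for $g$ up to isomorphism, and conclude by the standard $t\cong s$ trick.

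Your $(\Leftarrow)$ direction has a genuine gap. You propose to take $f'$ equal to the arrow \eqref{arrow} itself (with $\varphi$ the identity cell) and then choose $r_f$ by ``adjoining $\bar g$ at the end''. But horizontal composition in $\Dbl(\calB)$ does not let you adjoin anything: two horizontal arrows are composable only when they lie over the \emph{same} underlying path $(f_1,\dots,f_n)$ with matching markings, and the composite is simply that same path with the outer markings. So if $f'=f$, then $r_f$ must be $(A_0\to\cdots\to A_n;\,i_1,j)$ for some $j\ge i_1$ on the original path, and $r_f\circ f'$ has chosen composite $\varphi_{f_{i_0+1}\cdots f_j}$, which has no reason to be a quasi unit. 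The pseudo-inverse $\bar g$ simply never appears.

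This is exactly why the definition of pre-companion allows a nontrivial vertically invertible cell $\varphi\colon f\Rightarrow f'$: it lets you change the underlying path. The paper uses this freedom to take $f'$ on the length-two path $(\varphi_{f_{i_0+1}\cdots f_{i_1}},\,\bar g)$ marked $(0,1)$, and $r_f$ on that same path marked $(1,2)$; then $r_f\circ f'$ is that path marked $(0,2)$, whose chosen composite is isomorphic to $\bar g\circ g$ and hence a quasi unit. Once you route through a path containing $\bar g$ via the vertical cell, your argument goes through; without that step it does not.
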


\begin{proof}
Suppose that the chosen composition $\varphi_{f_{i_0+1}\cdots f_{i_1}}$ 
is an equivalence in $\calB$. 
Denote the arrow (\ref{arrow}) in $\Dbl(\calB)$ by $f$.
Let $g\colon A_{i_1}\rightarrow A_{i_0}$ be a pseudo-inverse of $\varphi_{f_{i_0+1}\cdots f_{i_1}}$ in $\calB$.
Then we may take $f'$ in Definition \ref{precomp} to be the arrow
$$
\xymatrix@C=5em{
[A_{i_0}]\ar[r]^{\varphi_{f_{i_0+1}\cdots f_{i_1}}}&[A_{i_1}]\ar[r]^{g}&A_{i_0}
}$$
with vertically invertible cell 
$$
\xymatrix{
A_0\ar[r]^{f_1} & \cdots \ar[r]^{f_{i_0}}&[A_{i_0}]\ar@{=}[d] \ar@/_2ex/[rr]_{\varphi_{f_{i_0+1}\cdots f_{i_1}}}\ar[r]^{f_{i_0+1}} 
			    & \cdots \ar[r]^{f_{i_1}}\ar@{}[d]|(.8){=}
			    &[A_{i_1}] \ar[r]^{f_{i_1+1}}\ar@{=}[d] &\cdots\ar[r]^{f_{i_n}}&A_n
\\
&& [A_{i_0}]\ar[rr]_{\varphi_{f_{i_0+1}\cdots f_{i_1}}} && [A_{i_1}]\ar[rr]_{g} && A_{i_0}
}$$
and we may take $r_f$ in Definition \ref{precomp} to be the arrow
$$
\xymatrix@C=4em{
A_{i_0}\ar[r]^{\varphi_{f_{i_0+1}\cdots f_{i_1}}}&[A_{i_1}]\ar[r]^{g}&[A_{i_0}]\rlap{\,.}
}$$
The horizontal composition $r_f\circ f'$ is given by
$$
\xymatrix@C=4em{
[A_{i_0}]\ar[r]^{\varphi_{f_{i_0+1}\cdots f_{i_1}}}&A_{i_1}\ar[r]^{g}&[A_{i_0}]\rlap{\,,}
}$$
and this is a companion by Proposition \ref{comp-quasi}, since $g\circ \varphi_{f_{i_0+1}\cdots f_{i_1}}$ 
is a quasi unit in $\calB$.
So (\ref{arrow}) is a left pre-companion.
The proof that it is a right pre-companion goes similarly.

Now suppose that (\ref{arrow}) is a precompanion in $\Dbl(\calB)$.
So there are diagrams with vertically invertible double cells of the form
$$
  \xymatrix@R=4em{
A_0\ar[r]^{f_1} & \cdots \ar[r]^{f_{i_0}}
      &[A_{i_0}]\ar@/_2ex/[rr]_{\varphi_{f_{i_0+1}\cdots f_{i_1}}}\ar@{=}[d]\ar[r]^{f_{i_0+1}} 
      & \cdots \ar[r]^{f_{i_1}}\ar@{}[d]|{\varphi}
      &[A_{i_1}]\ar@{=}[d]\ar[r]^{f_{i_1+1}}&\cdots\ar[r]^{f_{n}}&A_n
\\
B_0\ar[r]_{g_1} &\cdots\ar[r]_{g_{j_0}} &[A_{i_0}]\ar@/^2ex/[rr]^{\varphi_{g_{j_0+1}\cdots g_{j_1}}}
	    \ar[r]_{g_{j_0+1}} & \cdots\ar[r]_{g_{j_1}} & [A_{i_1}]\ar[r]_{g_{j_1+1}} 
      & \cdots\ar[r]_{g_{j_2}} & C \ar[r]_{g_{j_2+1}}&\cdots\ar[r]_{g_{m}} & B_{m}
}$$
and
$$
  \xymatrix@R=4em{
&&A_0\ar[r]^{f_1} & \cdots \ar[r]^{f_{i_0}}
      &[A_{i_0}]\ar@/_2ex/[rr]_{\varphi_{f_{i_0+1}\cdots f_{i_1}}}\ar@{=}[d]\ar[r]^{f_{i_0+1}} 
      & \cdots \ar[r]^{f_{i_1}}\ar@{}[d]|{\varphi'}
      &[A_{i_1}]\ar@{=}[d]\ar[r]^{f_{i_1+1}}&\cdots\ar[r]^{f_{n}}&A_n
\\
B'_0\ar[r]_{h_1} &\cdots\ar[r]_{h_{k_0}} & D\ar[r]_{h_{k_0+1}} & \cdots \ar[r]_{h_{k_1}} 
      & [A_{i_0}]\ar@/^2ex/[rr]^{\varphi_{h_{k_1+1}\cdots h_{k_2}}} \ar[r]_{h_{k_1+1}}
      &\cdots \ar[r]_{h_{k_2}} & [A_{i_1}]\ar[r]_{h_{k_2+1}}&\cdots\ar[r]_{h_p} &B'_p
}
$$
such that the arrows 
$$
\xymatrix{
B_0\ar[r]_{g_1} &\cdots\ar[r]_{g_{j_0}} &[A_{i_0}]
	    \ar[r]_{g_{j_0+1}} & \cdots\ar[r]_{g_{j_1}} & A_{i_1}\ar[r]_{g_{j_1+1}} 
      & \cdots\ar[r]_{g_{j_2}} & [C] \ar[r]_{g_{j_2+1}}&\cdots\ar[r]_{g_{m}} & B_{m}
}$$
and
$$
\xymatrix{
B'_0\ar[r]_{h_1} &\cdots\ar[r]_{h_{k_0}} & [D]\ar[r]_{h_{k_0+1}} & \cdots \ar[r]_{h_{k_1}} 
      & A_{i_0} \ar[r]_{h_{k_1+1}}
      &\cdots \ar[r]_{h_{k_2}} & [A_{i_1}]\ar[r]_{h_{k_2+1}}&\cdots\ar[r]_{h_p} &B'_p
}$$
have companions in $\Dbl(\calB)$.
By Proposition \ref{comp-quasi} this implies that $A_{i_0}=C$, $D=A_{i_1}$ and
both chosen composites $\varphi_{g_{j_0+1}\cdots g_{j_2}}$ and $\varphi_{h_{k_0+1}\cdots h_{k_2}}$
are quasi units in $\calB$. So the chosen composite $\varphi_{g_{j_1+1}\cdots g_{j_2}}$
is a pseudo-inverse for $\varphi_{f_{i_0+1}\cdots f_{i_1}}$, making this arrow an equivalence in $\calB$. 
\end{proof}

\begin{prop}
 Let $\bbX$ be a weakly globular double category with a horizontal arrow $f\colon A\rightarrow B$.
Then the arrow $f\colon \bar{A}\rightarrow\bar{B}$
in $\Bic(\bbX)$ is an equivalence if and only if  $f$ is a pre-companion in $\bbX$.
\end{prop}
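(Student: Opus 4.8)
The statement is a translation, across the biequivalence $\Bic \colon \WGDbl_{\ps} \rightleftarrows \NHom \colon \Dbl$, of the fact that $\Dbl$ sends equivalences to pre-companions and, conversely, reflects them. Since $\bbX$ is vertically $2$-equivalent to $\Dbl(\Bic(\bbX))$ via a pseudo-functor, and pre-companions are a property stable under vertically invertible double cells (by Lemma~\ref{pre-comp-uniq}, the pre-companion data transports along vertical isomorphisms) and preserved by pseudo-functors of weakly globular double categories (this uses Proposition~\ref{comp-pres}, since pre-companions are built out of companions and vertically invertible cells), the result will follow by combining the previous proposition — which handles the case $\bbX = \Dbl(\calB)$ — with the comparison between $\bbX$ and $\Dbl(\Bic(\bbX))$.

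First I would make precise the dictionary between the arrow $f \colon \bar A \to \bar B$ in $\Bic(\bbX)$ and the horizontal arrow $f$ in $\bbX$: by the explicit description of $\Bic(\bbX)$ in Section~\ref{fundabicat}, the $1$-cell underlying $f$ in $\Bic(\bbX)$ is literally the horizontal arrow $f$ of $\bbX$ (no quotient is taken), so ``$f$ is an equivalence in $\Bic(\bbX)$'' unwinds to: there exist a horizontal arrow $g \colon B' \to A'$ in $\bbX$ (with $\bar A' = \bar A$, $\bar B' = \bar B$) and vertically invertible double cells exhibiting $g \circ f$ and $f \circ g$ as quasi units in $\Bic(\bbX)$. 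By Proposition~\ref{quasi-comp}, ``quasi unit in $\Bic(\bbX)$'' is the same as ``has a vertical companion in $\bbX$.'' So I would reformulate the equivalence condition entirely inside $\bbX$ as: there is a horizontal arrow $g$ and vertically invertible double cells making the (suitably normalized) composites $g \circ f$ and $f \circ g$ horizontal arrows admitting vertical companions.

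The forward direction ($f$ an equivalence $\Rightarrow$ $f$ a pre-companion): given $g$ as above with $g \circ f$ a companion, I would first apply Lemma~\ref{squarecompln} to rectify the codomain of $f$ and the domain of $g$ so that they compose honestly, producing horizontal arrows $f'$, $r_f := g$ (up to the vertically invertible cells from the lemma) with a vertically invertible double cell $\varphi \colon f \Rightarrow f'$ and $r_f \circ f'$ a companion — this is exactly the data of a left pre-companion in Definition~\ref{precomp}. Symmetrically, the companion structure on $f \circ g$ yields, after rectification, arrows $f''$, $l_f$ with $f'' \circ l_f$ a companion, giving the right pre-companion structure. The only subtlety is matching up the vertically invertible cells from Lemma~\ref{squarecompln} with the isomorphisms $\bar A' = \bar A$ etc., but since the vertical arrow category is posetal and groupoidal all such cells are unique, so the bookkeeping is automatic.

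The converse ($f$ a pre-companion $\Rightarrow$ $f$ an equivalence) is the main obstacle, and I would handle it by transport along the biequivalence rather than directly. Let $\bbY = \Dbl(\Bic(\bbX))$ and let $\Phi \colon \bbX \to \bbY$ (or its pseudo-inverse) be the comparison pseudo-functor from the biequivalence $\Dbl \circ \Bic \simeq \mathrm{Id}_{\WGDbl}$. Since pseudo-functors of weakly globular double categories preserve companions (Proposition~\ref{comp-pres}) and preserve vertically invertible double cells, they preserve pre-companions; hence $\Phi(f)$ is a pre-companion in $\bbY = \Dbl(\Bic(\bbX))$, so by the previous proposition the relevant chosen composite is an equivalence in $\Bic(\bbX)$, which under the canonical biequivalence identifies with $f \colon \bar A \to \bar B$ up to invertible $2$-cell; an equivalence is stable under invertible $2$-cells, so $f$ is an equivalence in $\Bic(\bbX)$. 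The care needed here is to verify that $\Phi$ restricted to the relevant homsets really does carry the $1$-cell $f$ to (something isomorphic to) the chosen composite appearing in the statement of the previous proposition — this is where one must invoke that $\Bic$ and $\Dbl$ are mutually pseudo-inverse and that on $1$-cells these functors act (up to iso) by the identity, as spelled out in Sections~\ref{fundabicat} and~\ref{bicat}. Once that identification is in hand the argument closes.
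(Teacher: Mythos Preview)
Your forward direction is essentially the paper's: the composition in $\Bic(\bbX)$ is defined precisely via the rectification of Lemma~\ref{squarecompln}, so the data exhibiting $gf$ and $fg$ as quasi units (hence companions by Proposition~\ref{quasi-comp}) are exactly the pre-companion data for $f$.

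For the converse, however, you take a genuine detour while the paper goes directly. The paper's argument is a two-line application of Proposition~\ref{quasi-comp} in the other direction: since $r_f\circ f'$ is a companion in $\bbX$, it is a quasi unit in $\Bic(\bbX)$, giving an invertible 2-cell $\alpha\colon r_f\circ f'\Rightarrow 1_{\bar A}$; precomposing with the invertible 2-cell $r_f\varphi$ yields $r_f\,f\stackrel{\sim}{\Rightarrow} 1_{\bar A}$, and symmetrically $f\,l_f\stackrel{\sim}{\Rightarrow} 1_{\bar B}$, so $f$ is an equivalence. No transport along the biequivalence is needed. Your route is valid in principle, but it requires (i) the fact that pseudo-functors preserve pre-companions, which in the paper is a separate proposition established \emph{after} this one (though logically independent, so not circular), and (ii) a careful identification of the image of $f$ under the comparison $\bbX\to\Dbl(\Bic(\bbX))$ with a marked path whose chosen composite recovers $f$ up to invertible 2-cell---a bookkeeping step you flag but do not carry out. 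The paper's direct argument avoids both of these and makes transparent that Proposition~\ref{quasi-comp} is doing all the work in both directions.
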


\begin{proof}
 Suppose that $f\colon \bar{A}\rightarrow\bar{B}$ has pseudo inverse $g\colon \bar{B}\rightarrow\bar{A}$.
The two  compositions of these arrows in $\Bic(\bbX)$ are given by the horizontal compositions
$\tilde{g}\circ \tilde{f}$  and $\hat{f}\circ\hat{g}$ as in the following diagrams of double cells
\begin{equation}\label{firstcomp}
\xymatrix{
\tilde{A} \ar[dd]|\bullet\ar[r]^{\tilde{f}}\ar@{}[ddr]|\cong 
	&\tilde{B}\ar[d]|\bullet\ar[r]^{\tilde{g}}\ar@{}[dr]|\cong 
	& \tilde{A}'\ar[d]|\bullet 
\\
&B'\ar[d]|\bullet\ar[r]_g&A'
\\
A\ar[r]_f & B
}\end{equation}
and
\begin{equation}\label{secondcomp}
\xymatrix{
\hat{B}' \ar[dd]|\bullet\ar[r]^{\hat{g}}\ar@{}[ddr]|\cong 
	&\hat{A}\ar[d]|\bullet\ar[r]^{\hat{f}}\ar@{}[dr]|\cong 
	& \hat{B}\ar[d]|\bullet 
\\
&A\ar[d]|\bullet\ar[r]_f&B
\\
B'\ar[r]_g & A'\rlap{\quad.}
}\end{equation}
By Proposition \ref{quasi-comp}, the compositions $\tilde{g}\circ\tilde{f}$ and $\hat{f}\circ\hat{g}$
are companions in $\bbX$ and the diagrams (\ref{firstcomp}) and (\ref{secondcomp}) show that $f$ is a pre-companion
with $f'=\tilde{f}$, $r_f=\tilde{g}$, $f''=\hat{f}$ and $l_f=\hat{g}$.

Now suppose that $f$ is a pre-companion in $\bbX$ with additional arrows and cells as in Definition \ref{precomp}:
$$
\xymatrix{
\ar[r]^f\ar[d]|\bullet\ar@{}[dr]|\varphi &\ar[d]|\bullet &&&\ar[r]^f\ar[d]|\bullet\ar@{}[dr]|{\varphi'}&\ar[d]|\bullet
\\
\ar[r]_{f'} &\ar[r]_{r_f} & &\ar[r]_{l_f} & \ar[r]_{f''} &\rlap{\quad.}
}$$
By Proposition \ref{quasi-comp}, $r_f\circ f'$ and $f''\circ l_f$ are quasi units in $\Bic(\bbX)$, say with 
invertible 2-cells $\alpha\colon r_f\circ f'\Rightarrow\mbox{Id}_{\bar{A}}$ and
$\beta\colon  f''\circ l_f\Rightarrow\mbox{Id}_{\bar{B}}$.
So we have the composites,
$r_f f\stackrel{r_f\varphi}{\Rightarrow} r_f\circ f'\stackrel{\alpha}{\Rightarrow}\mbox{Id}_{\bar{A}}$
and $f\,l_f\stackrel{\varphi' \,l_f}{\Rightarrow} f''\circ l_f\stackrel{\beta}{\Rightarrow}\mbox{Id}_{\bar{B}}$.
All these 2-cells are invertible and this implies that $f\colon \bar{A}\rightarrow\bar{B}$ in $\Bic(\bbX)$ is a 
an equivalence.
\end{proof}

\begin{prop}
 A pseudo-functor between weakly globular double categories preserves pre-companions.
\end{prop}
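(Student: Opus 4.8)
The plan is to reduce the statement to results already established, in particular Proposition~\ref{comp-pres} (preservation of companions by pseudo-functors) together with the defining characterisation of pre-companions in Definition~\ref{precomp}. First I would recall that a horizontal arrow $f\colon A\rightarrow B$ in a weakly globular double category $\bbX$ is a left pre-companion precisely when there are horizontal arrows $f'$ and $r_f$ with a vertically invertible double cell $\varphi$ comparing $f$ with $f'$, such that $r_f\circ f'$ is a companion; and dually for right pre-companions. So, given a pseudo-functor $F\colon\bbX\rightarrow\bbY$ and a pre-companion $f$ in $\bbX$, the goal is to exhibit the corresponding data for $F_1 f$ in $\bbY$.

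The key steps, in order, are as follows. Starting from the left pre-companion data $(\varphi, f', r_f)$ for $f$, apply $F$ to obtain $F_1\varphi$, $F_1 f'$, $F_1 r_f$. Since $F$ preserves vertical invertibility of double cells, $F_1\varphi$ is again vertically invertible; however, it compares $F_1 f$ with $F_1 f'$ only after precomposing and postcomposing with the structure isomorphisms $(\varphi_i)_A$ of the pseudo-functor (the horizontal domains/codomains are only preserved up to a vertical isomorphism, as explained around equation~\eqref{identityeqn}). So one pastes $F_1\varphi$ with the appropriate components of the pseudo-functor structure to produce an honest vertically invertible cell $\widetilde{\varphi}$ from $F_1 f$ to a horizontal arrow which, up to these isomorphisms, is $F_1 f'$. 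Next, one must handle the horizontal composite $r_f\circ f'$: applying $F$ to a companion gives a companion by Proposition~\ref{comp-pres}, but $F_1(r_f\circ f')$ differs from $F_1 r_f \circ F_1 f'$ by the comparison cell $\mu_{r_f,f'}$ and the projections $(\theta_i)_{r_f,f'}$; since all of these are vertically invertible and companions are preserved under pasting with vertically invertible cells (because the vertical arrow category is posetal groupoidal, so binding cells can be transported along such isomorphisms, cf. Remark~\ref{compconj}), $F_1 r_f\circ F_1 f'$ is a companion in $\bbY$. Choosing $(F_1 f)' := F_1 f'$ (adjusted by the structure isomorphisms) and $r_{F_1 f} := F_1 r_f$ then exhibits $F_1 f$ as a left pre-companion. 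The dual argument, using the right pre-companion data $(\varphi', f'', l_f)$, shows $F_1 f$ is a right pre-companion, hence a pre-companion.

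The main obstacle I anticipate is the bookkeeping around the pseudo-functor structure cells: unlike strict functors, $F$ does not preserve horizontal identities, domains, codomains, or composition on the nose, so every instance of "$F$ of a companion/binding cell" must be corrected by pasting in $\sigma$, $\varphi_i$, $\mu$, and $\theta_i$, and one must check that the binding cell equations~\eqref{compcondns} still hold after these corrections — this is essentially the computation already carried out in the proof of Proposition~\ref{comp-pres}, and the cleanest route is to invoke that proposition as a black box rather than redo it. A secondary point to be careful about is that a pre-companion is defined via the \emph{existence} of $l_f$ and $r_f$, and Lemma~\ref{lr} guarantees a vertically invertible cell between $r_f$ and $l_f$; one should note that $F$ sends such a cell to a vertically invertible cell, so the two pieces of data produced for $F_1 f$ are compatible in the sense of Lemma~\ref{lr}, though strictly speaking this compatibility is automatic and need not be separately verified. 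I would therefore keep the proof short: cite Proposition~\ref{comp-pres}, observe that vertically invertible double cells and their pastings are preserved by $F$, and conclude.

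\begin{proof}
Let $F\colon\bbX\rightarrow\bbY$ be a pseudo-functor of weakly globular double categories and let $f\colon A\rightarrow B$ be a pre-companion in $\bbX$, with left pre-companion data $\varphi, f', r_f$ and right pre-companion data $\varphi', f'', l_f$ as in Definition~\ref{precomp}.

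Since $F$ is a pseudo-functor, it carries vertically invertible double cells to vertically invertible double cells, and the structure cells $\varphi_i$, $\sigma$, $\mu$, $\theta_i$ of $F$ are themselves vertically invertible. Pasting $F_1\varphi$ with the appropriate components of $\varphi_1$ and $\varphi_2$ (as in the shape of $F$ on a horizontal arrow described after~\eqref{identityeqn}) yields a vertically invertible double cell from $F_1 f$ to a horizontal arrow isomorphic, via vertical isomorphisms, to $F_1 f'$. Similarly, $F_1 r_f$ provides the second horizontal arrow required in Definition~\ref{precomp}. It remains to check that the relevant horizontal composite is a companion in $\bbY$. Now $r_f\circ f'$ is a companion in $\bbX$, so by Proposition~\ref{comp-pres} the horizontal arrow $F_1(r_f\circ f')$ has a companion in $\bbY$. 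Pasting the binding cells for $F_1(r_f\circ f')$ with the vertically invertible cells $\mu_{r_f,f'}$ and $(\theta_i)_{r_f,f'}$, and transporting along the resulting vertical isomorphisms (which is possible because, by Remark~\ref{compconj}, the vertical arrow category of a weakly globular double category is a posetal groupoid, so binding cells may be composed with vertically invertible double cells to produce binding cells for an isomorphic horizontal arrow), we obtain binding cells exhibiting $F_1 r_f\circ F_1 f'$ as a companion in $\bbY$. Hence $F_1 f$ is a left pre-companion.

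The argument that $F_1 f$ is a right pre-companion is entirely dual, using $\varphi'$, $f''$, $l_f$ in place of $\varphi$, $f'$, $r_f$. Therefore $F_1 f$ is a pre-companion in $\bbY$.
\end{proof}
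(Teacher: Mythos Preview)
Your overall strategy matches the paper's: invoke Proposition~\ref{comp-pres} to conclude that $F_1(r_f\circ f')$ is a companion, then transport via the vertically invertible structure cells of $F$ to obtain the left pre-companion data for $F_1 f$ (and dually for the right side).

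There is, however, a genuine gap in your execution. You repeatedly write ``$F_1 r_f \circ F_1 f'$'' and claim this composite is a companion, but for a pseudo-functor in the sense of this paper the horizontal arrows $F_1 f'$ and $F_1 r_f$ need not be horizontally composable: the codomain $d_1 F_1 f'$ and the domain $d_0 F_1 r_f$ agree only up to the vertical isomorphisms $(\varphi_2)_{B'}$ and $(\varphi_1)_{B'}$, not on the nose. So the expression $F_1 r_f \circ F_1 f'$ is in general undefined, and setting $(F_1 f)' := F_1 f'$ and $r_{F_1 f} := F_1 r_f$ does not furnish composable data as required by Definition~\ref{precomp}. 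Your parenthetical ``adjusted by the structure isomorphisms'' gestures at the fix but does not carry it out.

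The paper resolves this by using $F_2$: the pair $(r_f,f')$ lies in $\bbX_1\times_{\bbX_0}\bbX_1$, so $F_2(r_f,f')\in\bbY_1\times_{\bbY_0}\bbY_1$, and its projections $\pi_2 F_2(r_f,f')$ and $\pi_1 F_2(r_f,f')$ are composable by construction. One then takes $(F_1 f)' := \pi_2 F_2(r_f,f')$ and $r_{F_1 f} := \pi_1 F_2(r_f,f')$; the $\theta_i$ cells relate these to $F_1 f'$ and $F_1 r_f$ respectively, and $\mu_{r_f,f'}$ relates their horizontal composite to $F_1(r_f\circ f')$, which is a companion by Proposition~\ref{comp-pres}. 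Since $\mu$ is vertically invertible, so is the transported composite. This is exactly the bookkeeping your plan anticipated, but the point is that $F_2$ and its projections, not $F_1 r_f$ and $F_1 f'$ themselves, supply the pre-companion data.
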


\begin{proof}
 Let $F\colon \bbX\rightarrow\bbY$ be a pseudo-functor between two weakly globular double 
categories $\bbX$ and $\bbY$, and let $\xymatrix@1{A\ar[r]^f&B}$ be a left pre-companion in 
$\bbX$, with 
$$
\xymatrix{
A\ar[r]^f\ar[d]|\bullet\ar@{}[dr]|\varphi &B\ar[d]|\bullet
\\
A'\ar[r]_{f'}&B'\ar[r]_g & C
}$$ 
as in Definition \ref{precomp}, companion $v_f$, and binding cells
$$
\xymatrix{
A'\ar@{=}[d]\ar@{=}[rr] \ar@{}[drr]|{\psi_f} && A'\ar[d]|\bullet^{v_f}\ar@{}[drr]|{\mbox{and}}
	&& A'\ar[r]^{f'}\ar[d]_{v_f} \ar@{}[drr]|{\chi_f} &B'\ar[r]^g & C\ar@{=}[d]
\\
A'\ar[r]^{f'}&B'\ar[r]^g & C && C\ar@{=}[rr]&& C\rlap{\,.}
}$$

Then we have the following diagram in $\bbY$ 
$$
\xymatrix@C=4em@R=2.5em{
\ar[r]^{Ff} \ar[d]|\bullet\ar@{}[dr]|{F\varphi}&\ar[d]|\bullet
\\
\ar[r]_{Ff'}\ar[dd]|\bullet\ar@{}[ddr]|\cong & \ar[d]|\bullet
\\
&\ar[r]^{Fg}\ar@{}[dr]|\cong\ar[d]|\bullet &\ar[d]|\bullet 
\\
\ar[d]|\bullet  \ar[r]_{\pi_2F_2(g,f')}&\ar@{}[d]|(.65){\mu_{g,f'}^{-1}} \ar[r]_{\pi_1F_2(g,f')}&\ar[d]|\bullet
\\
\ar[rr]_{F_1(gf')} &&\rlap{\quad.}
}$$
By Proposition \ref{comp-pres}, $F_1(gf')$ is a companion in $\bbY$, so  $\pi_2F_1(f',g)\circ\pi_1F_1(f',g)$
is a companion as well.

The fact that pseudo-functors preserve right pre-companions goes similarly.
\end{proof}

\section{The Weakly Globular Double Category of Fractions}\label{CW}

In this section we construct a weakly globular double category of fractions
for a category ${\bf C}$ and a class of arrows $W$ satisfying the conditions for a bicalculus of 
fractions, as spelled out below. 

Categories of fractions were first introduced by Gabriel and Zisman \cite{GZ}
in homotopy theory in order to localize a category with respect to a class of weak equivalences.
It has proved useful in contexts where there is some type of atlas or chart to represent an object.
For instance, the category of manifolds may be seen as a category of fractions of the category of atlas groupoids
with respect to atlas refinements, and the category of etendues and isomorphism classes of 
geometric morphisms is the category of fractions of the category of \'etale groupoids with 
respect to essential equivalences. The categories of topological, smooth, and algebraic stacks can be viewed 
as categories of fractions in a similar way. 
The category of fractions construction was generalized to a bicategory of fractions construction
by the second author in \cite{P1995} and this enables us to take the 2-categorical details
into account in these last examples, of etendues and stacks, and view them as bicategories of fractions.

In this section we introduce a weakly globular double category $\CW$ 
with the property that its fundamental bicategory $\Bic(\CW)$ is biequivalent to the bicategory 
of fractions $\bfC(W^{-1})$.
We could of course obtain such a weakly globular double category by taking the double category 
$\Dbl(\bfC(W^{-1}))$ of marked paths in
$\bfC(W^{-1})$. However, we will describe a weakly globular double category
which is (vertically) 2-equivalent to $\Dbl(\bfC(W^{-1}))$ and has an additional universal property as a double category
with companions and conjoints
which is not immediately derivable from the universal property of $\bfC(W^{-1})$ 
as we will discuss in the next section. Furthermore, one draw-back of both 
the category and bicategory of fractions constructions is that 
the resulting category or bicategory does not necessarily have small hom-sets. 
This is not the case for $\CW$: it has small homsets in both directions, both for arrows and double cells.

\subsection{Categories of fractions}
Let ${\bfC}$ be a category with a class $W\subseteq \bfC_1$ of arrows satisfying the conditions 
for a calculus of fractions
given by Gabriel and Zisman \cite{GZ} (which imply the existence of a category of fractions ${\bfC}[W^{-1}]$):
\begin{itemize}
\item{\bf CF1} $W$ contains all isomorphisms in ${\bfC}$ and is closed under composition;
\item{\bf CF2} For any diagram $\xymatrix@1{C\ar[r]^f&B&\ar[l]_w A}$ in $\bfC$ 
with $w\in W$, there exist arrows $\xymatrix@1{D \ar[r]^{\overline{f}} & A}$ and 
$\xymatrix@1{D\ar[r]^{\overline{w}} &C}$ with $\overline{w}\in W$ such that
$$
\xymatrix@R=1.8em@C=1.8em{
D\ar[r]^{\overline{f}}\ar[d]_{\overline{w}} & A\ar[d]^w
\\
C\ar[r]_f & B}
$$
commutes;
\item{\bf CF3} For any diagram $\xymatrix@1{A\ar@<-.5ex>[r]_f\ar@<.5ex>[r]^g &B\ar[r]^w& C}$ 
with $w\in W$, such that $wf=wg$, there exists an arrow $\xymatrix@1{X\ar[r]^{\tilde{w}}&A}$
in $W$ such that $f\tilde{w}=g\tilde{w}$.
\end{itemize}
Note that the conditions on a class of arrows $W$ in a bicategory ${\bfC}$ 
to form a bicategory of fractions coincide with these conditions when $\bfC$ is a category (cf.~\cite{P1995}).
So when a class $W$ of arrows satisfies these conditions, both the category ${\bfC}[W^{-1}]$
and the bicategory ${\bfC}(W^{-1})$ are defined (and the former can be obtained as a quotient of the latter).

\begin{eg}
 {\rm{\em Manifold Atlases} 
Consider manifolds with atlases such that the intersection of two atlas charts is 
either itself a chart or is covered by smaller atlas charts.
For any smooth map between two manifolds $f\colon M\rightarrow N$ 
there are such atlases $\calU$ for $M$ and $\calV$ for $N$ such that for each 
chart $U\in{\calU}$, there is a chart $V\in\calV$ with a smooth map $f_U\colon U\rightarrow V$
and such that for any pair of charts with an inclusion $i\colon U_1\hookrightarrow U_2$,  
there are charts $V_1$ and $V_2$
in $\calV$ with an inclusion $j\colon V_1\hookrightarrow V_2$ and such that $f_{U_i}\colon U_i\rightarrow V_i$
for $i=1,2$ and the diagram
$$\xymatrix@R=1.8em@C=1.8em{
U_1\ar[d]_i\ar[r]^{f_1} & V_1\ar[d]^j
\\
U_2\ar[r]_{f_2} & V_2
}
$$
commutes.
We may choose to start with manifolds represented by such atlases (without explicitly keeping track of 
the underlying spaces)
and consider atlas maps to be such families of smooth maps that are locally compatible as described above.
This gives us the category {\sf Atlases} of atlases and atlas maps.

To obtain the usual category {\sf Mfds} of manifolds from {\sf Atlases} we do the following.
First, note that if $\calU$ and $\calU'$ are two atlases for the same manifold and $\calU$ is a refinement 
of $\calU'$, then there is an induced atlas map $\calU\rightarrow\calU'$.
Furthermore, the class $W$ of atlas refinements satisfies the conditions {\bf CF1}, {\bf CF2} and {\bf CF3} above.
The corresponding category of fractions $\mbox{\sf Atlases}[W^{-1}]$ is categorically equivalent to {\sf Mfds}.
Arrows in this category can be thought of as first taking an atlas refinement and then mapping out.
In the category of fractions we consider two such maps to be the same if there is a common refinement
on which they would become the same (and this is the case precisely when they induce the same maps on the
underlying manifolds.) If we instead consider the bicategory of fractions, we consider two maps defined by 
different refinements as distinct but there is a unique invertible 2-cell between two such maps if
there is a common refinement where they become the same. 

All of this can also be expressed in the language of \'etale groupoids. Given an atlas for a manifold, there
is a canonical smooth \'etale groupoid corresponding to this atlas (where the space of objects is the disjoint 
union of the atlas charts). Atlas maps correspond to smooth functors between these groupoids.
A smooth functor between atlas groupoids corresponds to an atlas refinement if and only if it is an
essential equivalence of smooth groupoids. In this case the bicategory of fractions corresponds to the category 
of ringed toposes (of sheaves on the manifolds) and geometric morphisms of ringed toposes between them.
The details of this latter viewpoint can be found in \cite{P1995}, \cite{MP} and the second author's PhD-thesis
\cite{P-thesis}.
 
These references also show how all of this can be extended to orbifolds, but in that case we start with a 
2-category of atlases and atlas maps 
(where the 2-cells are coming from the group actions). 
We will address the consequences of this for the current set-up in terms of weakly globular double categories
in a sequel to this paper.}
\end{eg}

\subsection{Definitions and Basic Properties}\label{construction}
We define the weakly globular double category $\bfC\W$ as follows:
\begin{itemize}
\item {\em Objects} are arrows in $W$, $(w)=\left(\xymatrix@1{A\ar[r]^w&B}\right)$.
\item A {\em vertical arrow} $(u_1,C,u_2)\colon \xymatrix@1{(A_1\ar[r]^{w_1}&B)
\ar[r]|-\bullet &(A_2\ar[r]^{w_2}&B)}$ 
is an equivalence class of 
commutative diagrams
$$
\xymatrix@R=1.3em{A_1\ar[r]^{w_1} &B\ar@{=}[dd]
\\
C\ar[u]^{u_1}\ar[d]_{u_2}
\\
A_2\ar[r]_{w_2} &B}
$$
with $w_1u_1=w_2u_2$ in $W$.
Two such diagrams, $(u_1,C,u_2)$ and $(v_1,D,v_2)$, are equivalent (i.e., represent the same vertical arrow) 
when there are arrows $\xymatrix@1{C &E\ar[l]_{r_1}\ar[r]^{r_2}& D}$ 
such that 
$$
\xymatrix@C=1.5em{
&A_1
\\
C\ar[ur]^{u_1} \ar[dr]_{u_2} & E\ar[l]_{r_1}\ar[r]^{r_2} & D\ar[ul]_{v_1}\ar[dl]^{v_2}
\\
&A_2
}
$$
commutes and $w_1u_1r_1=w_1v_1r_2=w_2v_2r_2=w_2u_1r_1$ is in $W$.
\item A {\em horizontal arrow} $\xymatrix@1{(A\ar[r]^w&B)\ar[r]^f&(A'\ar[r]^{w'}&B')}$
is given by an arrow $\xymatrix@1{A\ar[r]^f&A'}$ in $\bfC$.
We will usually draw this as $\xymatrix@1{(B&\ar[l]_w A)\ar[r]^f&(A'\ar[r]^{w'}&B')}$.
\item
A {\em double cell} 
$$
\xymatrix{
(w_1)\ar[d]|\bullet_{[u_1,C,u_2]}\ar[r]^{f_1}\ar@{}[dr]|{\varphi} & (w_1')\ar[d]|\bullet^{[u_1',C',u_2']}\\
(w_2)\ar[r]_{f_2} & (w_2').
}$$
is an equivalence class of commutative diagrams of the form
\begin{equation}\label{square}
\xymatrix@C=5em@R=2em{
B\ar@{=}[dd] &A_1\ar[l]_{w_1}\ar[r]^{f_1} & A_1'\ar[r]^{w_1'} & B'\ar@{=}[dd]
\\
& C\ar[u]_{u_1}\ar[d]^{u_2}\ar[r]_{\varphi} &C' \ar[u]_{u_1'}\ar[d]^{u_2'} &
\\
B & A_2\ar[l]_{w_2}\ar[r]_{f_2} & A_2'\ar[r]_{w_2'} & B'
}
\end{equation}
The diagram (\ref{square}) is equivalent to the diagram
$$\xymatrix@C=5em@R=2em{
B\ar@{=}[dd] &A_1\ar[l]_{w_1}\ar[r]^{f_1} & A_1'\ar[r]^{w_1'} & B'\ar@{=}[dd]
\\
& D\ar[u]_{v_1}\ar[d]^{v_2}\ar[r]_{\psi} &D' \ar[u]_{v_1'}\ar[d]^{v_2'} &
\\
B & A_2\ar[l]_{w_2}\ar[r]_{f_2} & A_2'\ar[r]_{w_2'} & B'
}
$$
if and only if there are arrows $r,s,r',s'$ and $\chi$ 
as in 
$$
\xymatrix{
C\ar[rr]^\varphi&&C'\\
E\ar[u]^r\ar[d]_s\ar[rr]^\chi && E'\ar[u]_{r'}\ar[d]^{s'}\\
D\ar[rr]_\psi&&D'
}$$
such that $w_1u_1r\in W$, $w'_1u_1'r'\in W$,
and making the following diagram commute:
\begin{equation}\label{dblcellequiv}
\xymatrix@C=7ex{
\ar@{=}[4,0] && \ar[ll]_{w_1} \ar[rr]^{f_1} && \ar[rr]^{w_1'} && \ar@{=}[4,0]
\\
&&& \ar@/_1ex/[ul]^{v_1} \ar[rr]|(.22)\hole^(.58){\psi} \ar@/^1.5ex/[3,-1]_{v_2}|(.3)\hole|(.63)\hole 
		&& \ar@/_1ex/[ul]^{v_1'} \ar@/^1.5ex/[3,-1]^{v_2'} &&
\\
&& \ar[ur]_{s}\ar[dl]^r \ar[rr]_\chi|(.59)\hole &&\ar[ur]_{s'}\ar[dl]^{r'}
\\
& \ar@/^1.5ex/[-3,1]^{u_1}\ar@/_.5ex/[dr]_{u_2} \ar[rr]_{\varphi} 
		&&\ar@/^1.5ex/[-3,1]_{u_1'} \ar@/_.5ex/[dr]_{u_2'}
\\
&&\ar[ll]^{w_2}\ar[rr]_{f_2} && \ar[rr]_{w_2'}&&
}
\end{equation}
\end{itemize}

Note that a double cell may not have a representative for each combination of representatives of vertical arrows.
However, there is always a representative of the double cell with the given representative 
of the codomain vertical arrow  as in the following lemma.

\begin{lma}\label{reps}
 Given a representative of a double cell,
\begin{equation}\label{square1}
\xymatrix@C=5em@R=2em{
B\ar@{=}[dd] &A_1\ar[l]_{w_1}\ar[r]^{f_1} & A_1'\ar[r]^{w_1'} & B'\ar@{=}[dd]
\\
& C\ar[u]_{u_1}\ar[d]^{u_2} \ar[r]^\varphi &C' \ar[u]_{u_1'}\ar[d]^{u_2'} &
\\
B & A_2\ar[l]_{w_2}\ar[r]_{f_2} & A_2'\ar[r]_{w_2'} & B',
}
\end{equation}
and another representative $(v_1,D,v_2)$ of the codomain vertical arrow, then there are arrows $r$ and $\psi$
such that
\begin{equation}\label{square2}
\xymatrix@C=5em@R=2em{
B\ar@{=}[dd] &A_1\ar[l]_{w_1}\ar[r]^{f_1} & A_1'\ar[r]^{w_1'} & B'\ar@{=}[dd]
\\
& E\ar[u]_{u_1r}\ar[d]^{u_2r} \ar[r]^\psi &D \ar[u]_{v_1}\ar[d]^{v_2} &
\\
B & A_2\ar[l]_{w_2}\ar[r]_{f_2} & A_2'\ar[r]_{w_2'} & B',
}
\end{equation}
and (\ref{square1}) represent the same double cell.
\end{lma}

\begin{proof}
 Since 
 $$
 \xymatrix{
 A_1'\ar[r]^{w_1'}& B'\ar@{=}[dd] \ar@{}[2,2]|{\mbox{and}} && A_1'\ar[r]^{w_1'} & B'\ar@{=}[dd]
 \\
 C'\ar[u]_{u_1'}\ar[d]^{u_2'} &&&D\ar[u]_{v_1}\ar[d]^{v_2}
 \\
 A_2'\ar[r]_{w_2'}&B' && A_2'\ar[r]_{w_2'}&B'
 }
 $$
 represent the same vertical arrow, there are arrows $r_1$ and $r_2$ such that
 $w_1'u_1'r_1,w_1'v_1r_2\in W$ and
 $$
 \xymatrix{
 &A_1'
 \\
 C'\ar[ur]^{u_1'}\ar[dr]_{u_2'} & F\ar[l]_{r_1}\ar[r]^{r_2} & D\ar[ul]_{v_1}\ar[dl]^{v_2}
 \\
 &A_2'
 }
 $$
 commutes.
 By condition {\bf CF2} there are arrows $\bar{r}_1\in W$ and $\bar{\varphi}$  that make the following
 square commute,
 $$
 \xymatrix{
 \bar{F}\ar[d]_{\bar{r}_1}\ar[rr]^{\bar\varphi} && F\ar[d]^{w_2'v_2r_1}
 \\
 C\ar[r]_{\varphi} & C'\ar[r]_{w_2'u_2'} & B'.
 }
 $$
 By condition {\bf CF3} there exists an arrow $(\tilde{v}\colon E\rightarrow\bar{F})\in W$
 such that 
 $$
 \xymatrix{
 E\ar[d]_{\bar{r}_1\tilde{v}}\ar[r]^{\bar{\varphi}\tilde{v}}&F\ar[d]^{r_1}
 \\
 C\ar[r]_\varphi & C'
 }
 $$
 commutes.
 We can use all this to construct the following commutative diagram
 $$
 \xymatrix{
 B\ar@{=}[4,0] && A_1\ar[ll]_{w_1} \ar[r]^{f_1} & A_1'\ar[rr]^{w_1'} && B'\ar@{=}[4,0]
 \\
 &&C\ar[u]^{u_1}\ar[r]^{\varphi} &C'\ar[u]^{u_1'}
 \\
 &&E\ar[u]^{\bar{r}_1\tilde{v}}\ar[r]^{\bar{\varphi}\tilde{v}}\ar[d]_{\bar{r}_1\tilde{v}} 
 			& F\ar[u]^{r_1}\ar[d]_{r_1}\ar[r]^{r_2}&D\ar[uul]_{v_1}\ar[ddl]^{v_2}
 \\
 &&C\ar[r]_\varphi\ar[d]^{u_2} & C'\ar[d]_{u_2'}
 \\
 B&&A_2\ar[ll]^{w_2}\ar[r]_{f_2} & A_2'\ar[rr]_{w_2'}&& B'
 }
 $$
 So we obtain a double cell 
 $$
 \xymatrix{
 B\ar@{=}[dd]&A_1\ar[l]_{w_1}\ar[r]^{f_1} & A_1'\ar[r]^{w_1'} &B'\ar@{=}[dd]
 \\
 &E\ar[r]^{r_2\bar{\varphi}\tilde{v}} \ar[u]^{u_1\bar{r}_1\tilde{v}}\ar[d]_{u_2\bar{r}_1\tilde{v}}&D\ar[u]_{v_1}\ar[d]^{v_2}
 \\
 B&A_2\ar[l]^{w_2}\ar[r]_{f_2}& A_2'\ar[r]_{w_2'}& B'. 
 }
 $$
 This cell is equivalent to (\ref{square1}) since we have a commutative diagram
 $$
 \xymatrix{
 E\ar@{=}[d]\ar[r]^{r_2\bar{\varphi}\tilde{v}}&D
 \\
 E\ar[r]^{\bar{\varphi}\tilde{v}}\ar[d]_{\bar{r}_1\tilde{v}} & F\ar[u]_{r_2}\ar[d]^{r_1}
 \\
 C\ar[r]_{\varphi}& C'
 }
 $$
 which fits in a diagram like (\ref{dblcellequiv}).
 By taking $r=\bar{r}_1\tilde{v}$ and $\psi=r_2\bar{\varphi}\tilde{v}$ we see that we have indeed
  a cell as in (\ref{square2}).
\end{proof}

\begin{eg}
{\rm In the example where $\bfC=\mbox{\sf Atlases}$ and $W$ the class of atlas refinements,
the situation is  as follows.
The objects of $\CW$ are atlases with a refinement. There is a vertical arrow between any two such objects 
if there is a common refinement, and a horizontal arrow corresponds to an atlas map between the refinements.
Double cells correspond to common refinements where the two atlas maps corresponding to the horizontal arrows
agree.}
\end{eg}
 
The following lemma is equivalent to stating that the codomain map $d_1\colon\CW_1\rightarrow \CW_0$,
sending squares and horizontal arrows to their horizontal codomains is an isofibration.

\begin{lma}\label{isofib}
For every pair of a horizontal arrow $\xymatrix@1{(w_2)\ar[r]^g&(w_2')}$ and a vertical arrow 
$\xymatrix@1@C=3.5em{(w_1')\ar[r]|\bullet^{[v_1,D,v_2]}&(w_2')}$ in $\CW$, there is a double cell 
$$
\xymatrix{
(w_1)\ar[r]^f\ar[d]|\bullet_{[u_1,C,u_2]}\ar@{}[dr]|{(\varphi)} & (w_1')\ar[d]|\bullet^{[v_1,D,v_2]}
\\
(w_2)\ar[r]_g & (w_2')\rlap{\,.}
}
$$
\end{lma}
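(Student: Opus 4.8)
The plan is to build the missing object $(w_1)$, horizontal arrow $f$, vertical arrow $[u_1,C,u_2]$, and double cell $(\varphi)$ directly from the calculus of fractions axioms, using essentially only {\bf CF2} and {\bf CF3}. Write $w_1'\colon A_1'\to B'$, $w_2\colon A_2\to B$, $w_2'\colon A_2'\to B'$ for the objects involved, let $g\colon A_2\to A_2'$ be the given horizontal arrow, and represent the given vertical arrow by $D\xrightarrow{\,v_1\,}A_1'$, $D\xrightarrow{\,v_2\,}A_2'$ with $e:=w_1'v_1=w_2'v_2\in W$. The heart of the matter is to produce, for some $X$, arrows $u_2\colon X\to A_2$ and $\varphi\colon X\to D$ with $g u_2=v_2\varphi$ (a ``cone'' over $A_2\xrightarrow{g}A_2'\xleftarrow{v_2}D$) for which $w_2u_2\in W$; everything else is then forced.

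First I would apply {\bf CF2} to the cospan $A_2\xrightarrow{\,w_2'g\,}B'\xleftarrow{\,e\,}D$, whose right leg $e$ lies in $W$: this yields an object $P$ together with $q\colon P\to A_2$ in $W$ and $p\colon P\to D$ satisfying $ep=w_2'g\,q$. Since then $w_2'(v_2p)=ep=w_2'(gq)$ and $w_2'\in W$, {\bf CF3} supplies $t\colon X\to P$ in $W$ with $v_2pt=gqt$. Now set $C:=X$, $u_2:=qt$, $\varphi:=pt$, take the new object to be $w_1:=w_2qt\colon X\to B$ (which lies in $W$ because $W$ is closed under composition, {\bf CF1}), take the new horizontal arrow to be $f:=v_1pt\colon X\to A_1'$, and take $u_1:=\id_X$. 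Then $w_1u_1=w_1=w_2u_2\in W$, so $(\id_X,X,qt)$ represents a vertical arrow $[u_1,C,u_2]\colon(w_1)\to(w_2)$, and $f$ is a horizontal arrow $(w_1)\to(w_1')$.

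It then remains to verify that $\varphi_{u_1,u_2,v_1,v_2}:=pt$ is part of a double cell of the required shape. The square~\eqref{square} commutes: the two composites $C\to B$ agree since $w_1u_1=w_2u_2$ by construction; the two composites $C\to A_1'$ agree since $fu_1=f=v_1\varphi$; the two composites $C\to A_2'$ agree since $gu_2=gqt=v_2pt=v_2\varphi$ (this equality is precisely the output of {\bf CF3}); the two composites $D\to B'$ agree since $w_1'v_1=w_2'v_2=e$; and the outer rectangles then commute automatically. Finally one checks that this single compatible value on the chosen pair of representatives extends, by composition with comparison arrows, to a well-defined family satisfying the clauses in the definition of a double cell in $\CW$ (existence of a refinement $r$ with $w_1u_1r\in W$ on which $\varphi$ is defined, and independence of the equivalence relation) — this is the same routine bookkeeping that underlies the definition of $\CW$ itself, so I would only indicate it. The one genuinely substantive point, and the expected obstacle, is the opening move: $\bfC$ need not have pullbacks, so the cospan $A_2\xrightarrow{g}A_2'\xleftarrow{v_2}D$ cannot be completed directly; the trick is that even though neither $g$ nor $v_2$ lies in $W$, their common post-composite $w_2'v_2=w_1'v_1=e$ does, which makes {\bf CF2} applicable after post-composing with $w_2'$, after which {\bf CF3} cancels the spurious factor $w_2'$.
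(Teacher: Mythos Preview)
Your proof is correct and follows essentially the same approach as the paper's: apply {\bf CF2} to the cospan $A_2\xrightarrow{\,w_2'g\,}B'\xleftarrow{\,w_2'v_2\,}D$ (whose right leg lies in $W$), then use {\bf CF3} to cancel the extraneous $w_2'$, and take $w_1=w_2u_2$ with $u_1=\id$. Your identification of the key move---that the original cospan cannot be filled directly but becomes amenable to {\bf CF2} after post-composing with $w_2'$---is exactly what the paper does.
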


\begin{proof}
Let $\xymatrix@1{A_2\ar[r]^{w_2}&B}$ and $\xymatrix@1{A_i'\ar[r]^{w_i'}&B'}$, 
for $i=1,2$ in $W$.
By condition {\bf CF2} there exists a commutative square 
$$
\xymatrix{
E\ar[r]^{\varphi_E}\ar[d]_{\overline{v}_2}&D\ar[d]^{w_2'v_2}
\\
A_2\ar[r]_{w_2'g} & B'}
$$
in $\bfC$.
Since $w_2'\in W$, there is an arrow $(\xymatrix@1{C\ar[r]^{\tilde{w}'_2}&E})\in W$ 
such that
$$
\xymatrix{
C\ar[d]_{\overline{v}_2\tilde{w}_2'}\ar[r]^{\varphi_E\tilde{w}_2'} & D\ar[d]^{v_2}
\\
A_2\ar[r]_g&A_2'
}
$$
also commutes (by condition {\bf CF3}).
So let $\varphi_C=\varphi_E\tilde{w}_2'$ $u_2=\overline{v}_2\tilde{w}_2'$. Note that $u_2\in W$,
and the diagram
$$
\xymatrix@R=2em{
(B\ar@{=}[dd]&\ar[l]_{w_2u_2}C)\ar[r]^{v_1\varphi_C} & (A_1'\ar[r]^{w_1'}&B'\ar@{=}[dd])
\\
	& C\ar@{=}[u]\ar[d]^{u_2}\ar[r]^{\varphi_C}&D\ar[d]_{v_2}\ar[u]^{v_1}
\\
(B&A_2\ar[l]^{w_2})\ar[r]_g&(A_2'\ar[r]_{w_2'}&B')
}
$$
commutes in $\bfC$ and satisfies all the conditions to represent a double cell
in $\CW$.
\end{proof}

The following lemma implies that the vertical arrow category is a posetal groupoid.
 
\begin{lma}\label{connectedcomp}
 There is a vertical arrow $\xymatrix@1{(A\ar[r]^w&B)\ar[r]|\bullet &(A'\ar[r]^{w'}&B')}$
if and only if $B=B'$ and furthermore, this arrow is unique.
\end{lma}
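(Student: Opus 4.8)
The plan is to prove the two claims of Lemma \ref{connectedcomp} separately: first the ``if and only if'' on the common codomain, then the uniqueness of the vertical arrow when it exists.

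For the ``only if'' direction, suppose there is a vertical arrow $\xymatrix@1{(A\ar[r]^w&B)\ar[r]|\bullet &(A'\ar[r]^{w'}&B')}$, represented by some commutative diagram $(u_1,C,u_2)$ with $wu_1=w'u_2$ in $W$. The very shape of that diagram, as displayed in the definition of a vertical arrow in Section \ref{construction}, forces the two edges $w\colon A\to B$ and $w'\colon A'\to B$ to share a codomain: the requirement ``$w_1u_1=w_2u_2$ in $W$'' only typechecks if $B=B'$. So this direction is immediate from the definition. For the ``if'' direction, suppose $B=B'$. Then I would exhibit an explicit vertical arrow from $(w)$ to $(w')$ by applying the calculus of fractions. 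Apply {\bf CF2} to the diagram $\xymatrix@1{A\ar[r]^w&B&\ar[l]_{w'} A'}$ (both $w,w'\in W$) to get $\xymatrix@1{A\ar[r]^{u}C\suparle{}{}}$... more precisely, {\bf CF2} gives an object $C$ with arrows $\overline{w'}\colon C\to A$ in $W$ and $\overline{w}\colon C\to A'$ making $w\overline{w'}=w'\overline{w}$ hold. Since $w\in W$ and $\overline{w'}\in W$, by {\bf CF1} the composite $w\overline{w'}=w'\overline{w}$ lies in $W$, so $(\overline{w'},C,\overline{w})$ is a legitimate representative of a vertical arrow $(w)\to(w')$.

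For the uniqueness claim, suppose $(u_1,C,u_2)$ and $(v_1,D,v_2)$ are two representatives of vertical arrows $(w)\to(w')$; I must produce $\xymatrix@1{C&E\ar[l]_{r_1}\ar[r]^{r_2}&D}$ witnessing that they are equivalent in the sense spelled out in Section \ref{construction}. The strategy mirrors the standard argument that a calculus of fractions makes the hom-sets of $\bfC[W^{-1}]$ into quotients. First use {\bf CF2} on $\xymatrix@1{C\ar[r]^{wu_1}B&\ar[l]_{?}}$... rather: consider the cospan $\xymatrix@1{D\ar[r]^{v_1}A_1&\ar[l]_{u_1}C}$ is not a cospan of the right shape, so instead I would take a common ``weak pullback'' of $u_1$ and $v_1$ by first noting $wu_1=wv_1$ need not hold on the nose. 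The cleaner route: since $wu_1=w'u_2$ and $wv_1=w'v_2$ are both in $W$, form via {\bf CF2} an object $E_0$ over both $C$ and $D$ covering the diagram, then use {\bf CF3} to coequalize the discrepancy between the two composites into $A_1$ (resp.\ $A_2$), obtaining $\xymatrix@1{E\ar[r]^{r_1}&C}$ and $\xymatrix@1{E\ar[r]^{r_2}&D}$ with $u_1r_1=v_1r_2$ and $u_2r_1=v_2r_2$, and with $wu_1r_1\in W$ by {\bf CF1}. That is exactly the data required by the equivalence relation on vertical arrows.

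The main obstacle I expect is the bookkeeping in the uniqueness argument: getting {\bf CF3} to simultaneously equalize the two parallel pairs $u_1r_1,v_1r_2\colon E\rightrightarrows A_1$ and $u_2r_1,v_2r_2\colon E\rightrightarrows A_2$ after a single common refinement. The standard trick is to apply {\bf CF3} once to equalize into $A_1$, then apply it again to the resulting object to equalize into $A_2$, and then observe (using {\bf CF1}) that the composite refinement still lands in $W$ and that the first equality is preserved under further precomposition; one must also check that the relevant composites with $w$ and $w'$ agree so that the ``$w_1u_1r_1=\cdots$ is in $W$'' clause of the equivalence relation holds. None of this is deep, but it requires care to thread all the $W$-membership conditions through. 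Once that is done, the lemma follows, and in particular this shows the vertical arrow category of $\CW$ is a posetal groupoid (posetal by uniqueness, groupoidal because the symmetric construction produces an inverse, again by {\bf CF2} applied with the roles of $w$ and $w'$ swapped together with uniqueness).
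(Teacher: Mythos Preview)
Your proposal is correct and follows essentially the same route as the paper: the ``only if'' is immediate from the shape of the defining diagram, the ``if'' is a single application of {\bf CF2} to the cospan $A\stackrel{w}{\to}B\stackrel{w'}{\leftarrow}A'$, and uniqueness is obtained by first using {\bf CF2} to produce a common refinement $E$ of $C$ and $D$ over $B$ and then applying {\bf CF3} to equalize the legs into $A$ and $A'$. In fact you are slightly more careful than the paper's own write-up: the paper applies {\bf CF3} only once (to equalize $u',v'$ into $A'$) and then asserts the full diagram commutes, whereas you correctly note that a second application of {\bf CF3} (to equalize $u,v$ into $A$, using $w\in W$) is needed, with the first equality surviving the further precomposition.
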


\begin{proof}
It is obvious that the existence of such a vertical arrow implies that $B=B'$.

Now suppose that we have objects $\xymatrix@1{(A\ar[r]^w&B)}$ and $\xymatrix@1{(A'\ar[r]^{w'}&B)}$
in $\CW$.
Since $W$ satisfies condition {\bf CF2} above, there is a commutative square
$$
\xymatrix@R=1.5em{C\ar[r]^{u}\ar[d]_{u'}&A\ar[d]^{w}
\\
A'\ar[r]_{w'} &B
}$$
in $\bfC$ with $u'\in W$ and consequently, $wu=w'u'\in W$.
So $$\xymatrix@R=1.5em{A\ar[r]^w & B\ar@{=}[dd]
\\
C\ar[u]^{u}\ar[d]_{u'}
\\
A'\ar[r]_{w'}&B
}$$
represents a vertical arrow as required.

To show that this arrow is unique, suppose that we have two representatives
$$
\xymatrix@R=1.5em{
A\ar[r]^w & B\ar@{=}[dd] &&A\ar[r]^w & B\ar@{=}[dd]
\\
C\ar[u]^{u}\ar[d]_{u'}&&\mbox{and}&D\ar[u]^{v}\ar[d]_{v'}&
\\
A'\ar[r]_{w'}&B&&A'\ar[r]_{w'}&B.
}$$
By conditions {\bf CF1} and {\bf CF2}, there are arrows  $\xymatrix@1{C&E\ar[l]_r\ar[r]^s& D}$ 
such that $wur=wvs\in W$. Now it follows that $w'v's=wvs=wur=w'u'r$ and $w'\in W$. 
By condition {\bf CF3}, there is an arrow $\xymatrix@1{E'\ar[r]^{\tilde w}& E}$ such that
$v's\tilde{w}=u'r\tilde{w}$. 
So the following diagram commutes:
$$
\xymatrix{
&A&
\\
C \ar[ur]^{u}\ar[dr]_{u'} & E'\ar[l]_{r\tilde{w}}\ar[r]^{s\tilde{w}}&D\ar[ul]_{v}\ar[dl]^{v'}
\\
&A'
}
$$
and hence $[u,C,u']=[v,D,v']$.
\end{proof}

Note that this implies that $\pi_0$ of the vertical arrow category of $\bfC\W$
is isomorphic to the objects of $\bfC$. 

The category of double cells in $\CW$ with vertical composition (and horizontal arrows as objects) is also
posetal, as stated in the following lemma.

\begin{lma}\label{uniquedblcell}
For any pairs of horizontal arrows and vertical arrows fitting together as in
$$
\xymatrix{
(w_1)\ar[d]|\bullet_{[u_1,C,u_2]} \ar[r]^{f_1} & (w'_1) \ar[d]|\bullet^{[v_1,D,v_2]}
\\
(w_2)\ar[r]_{f_2} & (w_2')
}
$$
there is at most one double cell in $\bfC\W$ that fills this.
\end{lma}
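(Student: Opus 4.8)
The plan is to show that if $(\varphi)$ and $(\psi)$ are two families of arrows, each of which represents a double cell filling the given frame, then they represent the \emph{same} double cell in $\CW$; that is, for every pair of representatives $(u_1,C,u_2)$ of the left vertical arrow and $(u_1',C',u_2')$ of the right vertical arrow one can produce an arrow $r''\colon D''\to C$ with $w_1u_1r''\in W$ for which $\varphi_{u_1r'',u_2r'',u_1',u_2'}$ and $\psi_{u_1r'',u_2r'',u_1',u_2'}$ are both defined and equal. The key observation is that a component $\varphi_{u_1,u_2,u_1',u_2'}$, once it is known to be defined, is pinned down after composition with an arrow of $W$: commutativity of (\ref{square}) forces $u_1'\varphi_{u_1,u_2,u_1',u_2'}=f_1u_1$ and $u_2'\varphi_{u_1,u_2,u_1',u_2'}=f_2u_2$, and the corresponding identities hold for $\psi$, so $\varphi$ and $\psi$ with matching indices become equal after composing on the left with $w_1'u_1'=w_2'u_2'$, which lies in $W$ by the definition of a vertical arrow.

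Carrying this out, I would first fix representatives $(u_1,C,u_2)$ and $(u_1',C',u_2')$ and use the defining requirement on double cells to choose $r\colon D_\varphi\to C$ and $r'\colon D_\psi\to C$ with $w_1u_1r\in W$ resp. $w_1u_1r'\in W$, on which $\varphi$ and $\psi$ respectively have a defined component. Next I would pass to a common refinement over $C$: {\bf CF2} applied to $D_\varphi\supar{w_1u_1r}\bullet\suparle{w_1u_1r'}D_\psi$ yields $\bar w\colon E_0\to D_\varphi$ in $W$ and $\bar f\colon E_0\to D_\psi$ with $(w_1u_1r)\bar w=(w_1u_1r')\bar f$; since $w_1u_1=w_2u_2\in W$, {\bf CF3} then gives $\tilde w\colon E\to E_0$ in $W$ with $r\bar w\tilde w=r'\bar f\tilde w$, and I write $q\colon E\to C$ for this common arrow, noting $w_1u_1q\in W$ by {\bf CF1}. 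The composition-stability clause in the definition of double cell now makes $\varphi_{u_1q,u_2q,u_1',u_2'}=\varphi_{u_1r,u_2r,u_1',u_2'}\circ(\bar w\tilde w)$ and $\psi_{u_1q,u_2q,u_1',u_2'}=\psi_{u_1r',u_2r',u_1',u_2'}\circ(\bar f\tilde w)$ both defined. Finally, both of these components solve (\ref{square}) with the same outer data, hence satisfy $u_i'(-)=f_iu_iq$ for $i=1,2$; so they agree after composition with $w_1'u_1'\in W$, and {\bf CF3} gives $\tilde w'\colon E'\to E$ in $W$ with $\varphi_{u_1q,\dots}\tilde w'=\psi_{u_1q,\dots}\tilde w'$. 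Then $r''=q\tilde w'$ witnesses that $(\varphi)$ and $(\psi)$ agree at this pair of representatives; as the pair was arbitrary, $(\varphi)=(\psi)$.

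I do not expect a conceptual obstacle here: this is essentially the one-dimension-up version of the standard fact that a morphism in a category of fractions is determined by any single roof representing it. The only step requiring care is the bookkeeping: one must ensure that every refinement used is genuinely an arrow \emph{over} $C$, so that the composition-stability clause in the definition of double cell applies verbatim, and one must keep track of which composites of $W$-arrows remain in $W$; both points are handled by {\bf CF1}. Everything else reduces to chasing the commuting squares (\ref{square}) together with the identities $w_1u_1=w_2u_2$ and $w_1'u_1'=w_2'u_2'$ supplied by the two vertical arrows.
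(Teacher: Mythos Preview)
Your proposal is correct and follows essentially the same approach as the paper: pass to a common refinement over $C$ on which both families are defined, use the commutativity of (\ref{square}) to see that the two components agree after post-composition with $w_1'u_1'\in W$, and then apply {\bf CF3} once more to equalize them. Your step of equalizing the actual arrows into $C$ (so that $q=r\bar w\tilde w=r'\bar f\tilde w$) is in fact slightly cleaner than the paper's version, which only equalizes $u_1$ composed with these arrows; your formulation makes it immediate that both families are being compared at literally the same indices, as the equivalence relation on double cells requires.
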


\begin{proof}
Suppose that both $(\varphi)$ and $(\psi)$ fit in this frame.
By Lemma \ref{reps} we may assume that  $\varphi$ and $\psi$ 
are of the following form:
$$
\xymatrix{
\ar@{=}[dd] & \ar[l]_{w_1}\ar[r]^{f_1}&\ar[r]^{w_1'} & \ar@{=}[dd]&&\ar@{=}[dd] & \ar[l]_{w_1}\ar[r]^{f_1}&\ar[r]^{w_1'} & \ar@{=}[dd]
\\
&\ar[u]^{u_1'}\ar[d]_{u_2'}\ar[r]^\varphi & \ar[u]_{v_1}\ar[d]^{v_2}&&\mbox{and}&&\ar[u]^{u_1''}\ar[d]_{u_2''}\ar[r]^\psi & \ar[u]_{v_1}\ar[d]^{v_2}
\\
&\ar[l]^{w_2}\ar[r]_{f_2}&\ar[r]_{w_2'}& &&&\ar[l]^{w_2}\ar[r]_{f_2}&\ar[r]_{w_2'}&
}
$$
Since $[u_1',u_2']=[u_1,u_2]=[u_1'',u_2'']$, there are arrows
$r'$ and $r''$ such that $w_1u_1'r',w_1u_1''r''\in W$
and 
the following diagram commutes
$$
\xymatrix{
&
\\
\ar[ur]^{u_1'}\ar[dr]_{u_2'}&\ar[l]_{r'}\ar[r]^{r''}&\ar[ul]_{u_1''}\ar[dl]^{u_2''}
\\
&
}
$$
Note that $w_1'v_1\varphi r'=w_1'f_1u_1'r'=w_1'f_1u_1''r''=w_1'v_1\psi r''$.
Since $w_1'v_1\in W$, we can apply condition {\bf CF3} to obtain an arrow $\tilde{w}$
such that $\varphi r'\tilde{w}=\psi r''\tilde{w}$. Finally,
we see that 
$$
\xymatrix@C=3em{
\ar@{=}[dd] & \ar[l]_{w_1}\ar[r]^{f_1}&\ar[r]^{w_1'} & \ar@{=}[dd]
		&&\ar@{=}[dd] & \ar[l]_{w_1}\ar[r]^{f_1}&\ar[r]^{w_1'} & \ar@{=}[dd]
\\
&\ar[u]^{u_1'}\ar[d]_{u_2'}\ar[r]^\varphi & \ar[u]_{v_1}\ar[d]^{v_2}
		&&\sim&&\ar[u]^{u_1'r'\tilde{w}}\ar[d]_{u_2'r'\tilde{w}}\ar[r]^{\varphi r'\tilde{w}} & \ar[u]_{v_1}\ar[d]^{v_2}&&=
\\
&\ar[l]^{w_2}\ar[r]_{f_2}&\ar[r]_{w_2'}& &&&\ar[l]^{w_2}\ar[r]_{f_2}&\ar[r]_{w_2'}&
\\
\ar@{=}[dd] & \ar[l]_{w_1}\ar[r]^{f_1}&\ar[r]^{w_1'} & \ar@{=}[dd]
		&&\ar@{=}[dd] & \ar[l]_{w_1}\ar[r]^{f_1}&\ar[r]^{w_1'} & \ar@{=}[dd]
\\
&\ar[u]^{u_1''r''\tilde{w}}\ar[d]_{u_2''r''\tilde{w}}\ar[r]^{\psi r'' \tilde{w}} & \ar[u]_{v_1}\ar[d]^{v_2}&
		&\sim&&\ar[u]^{u_1''}\ar[d]_{u_2''}\ar[r]^\psi & \ar[u]_{v_1}\ar[d]^{v_2}
\\
&\ar[l]^{w_2}\ar[r]_{f_2}&\ar[r]_{w_2'}& 
		&&&\ar[l]^{w_2}\ar[r]_{f_2}&\ar[r]_{w_2'}&
}
$$
So $\varphi$ and $\psi$ represent the same double cell.
\end{proof}

\begin{rmk}\rm Since the vertical arrow category of $\CW$ is posetal and groupoidal, this lemma implies that the
vertical category $(\CW)_1$  of horizontal arrows and double cells is also posetal and groupoidal.
\end{rmk}

\begin{thm}
The double category $\CW$ is weakly globular.
\end{thm}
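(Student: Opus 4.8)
The plan is to verify directly the two conditions in the definition of a weakly globular double category, Definition~\ref{s4.def1}; both have essentially been prepared by the lemmas already proved in this section, so once one grants that $\CW$ is a double category (its structure maps and their axioms being those of the construction in Section~\ref{construction}), the remaining work is bookkeeping.

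First I would note that the category $\CW_0$ of objects and vertical arrows is a posetal groupoid. Indeed, by Lemma~\ref{connectedcomp} there is at most one vertical arrow between any two objects of $\CW$, there is exactly one precisely when the two objects have the same codomain in $\bfC$, and every vertical arrow $[u_1,C,u_2]$ is invertible with inverse $[u_2,C,u_1]$. Consequently the canonical projection $\gamma\colon\CW_0\rw\CW_0^d$ onto the discrete category of connected components is fully faithful --- for two objects in the same component both hom-sets are singletons, and for objects in distinct components both hom-sets are empty --- and it is trivially essentially surjective, hence an equivalence of categories, natural in $\CW_0$. Moreover, by the remark following Lemma~\ref{connectedcomp}, $\CW_0^d$ is isomorphic to the discrete category $\bfC_0$ of objects of $\bfC$. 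This establishes the first condition of Definition~\ref{s4.def1}.

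Next I would invoke Lemma~\ref{isofib}: as observed there, that lemma is precisely the statement that the codomain functor $d_1\colon\CW_1\rw\CW_0$ is an isofibration, since every vertical arrow is invertible and hence the completing double cell produced by the lemma is automatically vertically invertible, so the isofibration condition in the form recorded in Remark~\ref{s4.rem1} is met. By Remark~\ref{s4.rem1}, whenever at least one of $d_0,d_1\colon\bbX_1\rw\bbX_0$ is an isofibration the Segal maps induce equivalences of categories $\pro{\bbX_1}{\bbX_0}{n}\simeq\pro{\bbX_1}{\bbX_0^d}{n}$ for all $n\geq 2$; applying this with $\bbX=\CW$ yields condition~\eqref{s4.eq01}. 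Combining the two steps, $\CW$ satisfies Definition~\ref{s4.def1}, i.e.\ $\CW$ is weakly globular.

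The only substantive point is the verification that $d_1$ is an isofibration, which ultimately rests on the calculus-of-fractions axioms \textbf{CF1}--\textbf{CF3}; but this has already been carried out in Lemma~\ref{isofib}, so for this theorem I do not expect a real obstacle.
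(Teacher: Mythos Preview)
Your proposal is correct and follows essentially the same route as the paper: use Lemma~\ref{connectedcomp} to see that $\CW_0$ is a posetal groupoid (hence equivalent to its discretization), then use Lemma~\ref{isofib} together with Remark~\ref{s4.rem1} to obtain the Segal-type equivalences. The one point you elide slightly is the justification that the lifting double cell produced by Lemma~\ref{isofib} is \emph{vertically invertible} (which is what the isofibration condition on $d_1$ actually requires): invertibility of the boundary vertical arrows in $\CW_0$ does not by itself imply this, and the paper instead invokes Lemma~\ref{uniquedblcell} (and the remark after it) to conclude that $(\CW)_1$ is also posetal groupoidal, so any double cell is automatically vertically invertible.
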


\begin{proof}
By Lemma \ref{connectedcomp} the vertical arrow category of $\CW$ is groupoidal
and posetal as required. Furthermore,  the
codomain $d_1\colon (\CW)_1\rightarrow (\CW)_0$ is an isofibration.
For any pair of a horizontal arrow  $\xymatrix@1{(w_2)\ar[r]^g&(w_2')}$ and a vertical arrow 
$\xymatrix@1@C=3.5em{(w_1')\ar[r]|\bullet^{[v_1,D,v_2]}&(w_2')}$, there is a double cell 
$$
\xymatrix{
(w_1)\ar[r]^f\ar[d]|\bullet_{[u_1,C,u_2]}\ar@{}[dr]|{(\varphi)} & (w_1')\ar[d]|\bullet^{[v_1,D,v_2]}
\\
(w_2)\ar[r]_g & (w_2')
}
$$
by Lemma \ref{isofib}. And this double cell is vertically invertible since the vertical category $(\CW)_1$ is posetal groupoidal
by Lemma \ref{uniquedblcell}.
\end{proof}

\subsection{Composition}
{\em Composition of vertical arrows} is defined using condition {\bf CF2}. 
For a pair of composable vertical arrows,
$$
\xymatrix@R=1.2em{A_1\ar[r]^{w_1} & B\ar@{=}[dd]
\\
C\ar[u]^{u_1}\ar[d]_{u_2}&
\\
A_2\ar[r]_{w_2} & B\ar@{=}[dd]
\\
D\ar[u]^{v_1}
\ar[d]_{v_2} &
\\
A_3\ar[r]_{w_3} & B,
}
$$
there is a commutative square in $\bfC$ of the form
$$
\xymatrix{
E\ar[d]_{\bar{u}_2}\ar[r]^{\bar{v}_1}&C\ar[d]^{w_2u_2}
\\
D\ar[r]_{w_2v_1} & A_2
}
$$ 
with $u_2\bar{v}_1=v_1\bar{u}_2\in W$ (by Condition {\bf CF2}).
So $w_1u_1\bar{v}_1=w_2u_2\bar{v}_1=w_2v_1\bar{u}_2=w_3v_2\bar{u}_2$, and
a representative for the vertical composition is given by
$$
\xymatrix@R=1.6em{
A_1\ar[r]^{w_1} & B\ar@{=}[dd]
\\
D\ar[u]^{u_1\bar{v}_1}\ar[d]_{v_2\bar{u}_2}
\\
A_3\ar[r]_{w_3} &B
}
$$

Note that this is well-defined on equivalence classes, doesn't depend on the choice of the
square in condition {\bf CF2}, and is associative and unital, because the vertical arrow category is posetal.

Analogously, to define {\em vertical composition of double cells}, 
$$
\xymatrix@R=1.6em{
(B\ar@{=}[dd] & A_1)\ar[l]_{w_1}\ar[r]^{f_1} & (A_1'\ar[r]^{w_1'} & B')\ar@{=}[dd]
\\
&C\ar[u]^{u_1}\ar[d]_{u_2}\ar@{}[r]|{(\varphi)}&C'\ar[u]^{u_1'}\ar[d]_{u_2'}
\\
(B\ar@{=}[dd]&A_2)\ar[l]^{w_2}\ar[r]_{f_2}&(A_2'\ar[r]_{w_2'} & B')\ar@{=}[dd]
\\
&D\ar[u]^{v_1}
\ar[d]_{v_2} \ar@{}[r]|{(\psi)}&D'\ar[u]^{v_1'}
\ar[d]_{v_2'}
\\
(B&A_3)\ar[l]^{w_3}\ar[r]_{f_3}&(A_3'\ar[r]_{w_3} & B)
}
$$
we only need to give a representative to fit the square with
the composed boundary arrows. Without loss of generality we may assume that
the double  cells we are composing are such that
$\varphi\colon C\rightarrow C'$ and $\psi\colon D\rightarrow D'$ fit in the diagrams given.
So suppose that
we have commutative squares
$$
\xymatrix{
E\ar[r]^{\bar{v}_1}\ar[d]_{\bar{u}_2} & C\ar[d]^{u_2} & E'\ar[r]^{\bar{v}_1'}\ar[d]_{\bar{u}_2'} & C\ar[d]^{u_2'}
\\
D\ar[r]_{v_1} & A_2 & D'\ar[r]_{v_1'}& A_2',
}
$$
giving the following composition of the vertical arrows involved:
$$
\xymatrix@C=4em@R=1.6em{
(B\ar@{=}[dd] & \ar[l]_{w_1} A_1)\ar[r]^{f_1} & (A_1'\ar[r]^{w_1'}& B')\ar@{=}[dd]
\\
& E\ar[u]^{u_1\bar{v}_1}\ar[d]_{v_2\bar{u}_2}\ar@{}[r]|{(\psi)\cdot(\varphi)} 
	& E'\ar[u]^{u_1'\bar{v}_1'}\ar[d]_{v_2'\bar{u}_2'}
\\
(B & \ar[l]^{w_3} A_3) \ar[r]_{f_3} & (A_3'\ar[r]_{w_3'} & B')
}
$$
We will now construct $(\xi)=(\psi)\cdot(\varphi)$.
Note that it is sufficient to find an arrow $\xymatrix@1{\bar{E}\ar[r]^r&E}$ in $W$
with an arrow $\xi\colon \bar{E}\rightarrow E'$
such that $\bar{v}_1'\xi =\varphi \bar{v}_1r$ and $\bar{u}_2'\xi=\psi\bar{u}_2r$.

Apply condition {\bf CF2} to obtain a commutative square
$$\xymatrix{
\tilde{E}\ar[d]_{\bar{\bar{v}}_1'}\ar[rr]^{\bar{\varphi}} &&E'\ar[d]^{\bar{v}_1'}
\\
E\ar[r]_{\bar{v}_1} & C\ar[r]_\varphi & C'.
}
$$ in $\bfC$ with $\bar{\bar{v}}_1'\in W$.
Then
\begin{eqnarray*}
w_2'v_1'\bar{u}_2'\bar{\varphi}&=&w_2'u_2'\bar{v}_1'\bar{\varphi}\\
&=&w_2'u_2'\varphi\bar{v}_1\bar{\bar{v}}_1'\\
&=&w_2'f_2u_2\bar{v}_1 \bar{\bar{v}}_1'\\
&=&w_2'f_2v_1\bar{u}_2\bar{\bar{v}}_1'\\
&=&w_2'v_1'\psi\bar{u}_2\bar{\bar{v}}_1'
\end{eqnarray*}
Since $w_2'v_1'\in W$, there is an arrow $s\colon \bar{E}\rightarrow \tilde{E}$
such that $\bar{u}_2'\bar{\varphi}s=\psi\bar{u}_2\bar{\bar{v}}_1's$ (by condition {\bf CF3}).
So let $r=\bar{\bar{v}}_1's$ and $\xi=\bar{\varphi}s$.

{\em Composition of horizontal arrows} in $\bfC\W$ is defined directly in terms of composition of arrows in $\bfC$:
the horizontal composition of $\xymatrix{(B&\ar[l]_{w}A)\ar[r]^f &(A'\ar[r]^{w'}&B')}$
and $\xymatrix{(B'& \ar[l]_{w'}A)\ar[r]^{f'} &(A''\ar[r]^{w''}&B'')}$
is $\xymatrix{(B&\ar[l]_{w}A)\ar[r]^{f'f} &(A'\ar[r]^{w''}&B'')}$.

{\em Horizontal composition of double cells} is defined as follows.
Let 
$$
\xymatrix{
\ar@{=}[dd] & \ar[l]_{w_1}\ar[r]^{f_1}&\ar[r]^{w_1'} & \ar@{=}[dd]
		&&\ar@{=}[dd] & \ar[l]_{w_1'}\ar[r]^{g_1}&\ar[r]^{w_1''} & \ar@{=}[dd]
\\
&\ar[u]^{u_1}\ar[d]_{u_2}\ar[r]^\varphi & \ar[u]_{u_1'}\ar[d]^{u_2'}&&\mbox{and}&&\ar[u]^{v_1}\ar[d]_{v_2}\ar[r]^\psi & \ar[u]_{v_1'}\ar[d]^{v_2'}
\\
&\ar[l]^{w_2}\ar[r]_{f_2}&\ar[r]_{w_2'}& &&&\ar[l]^{w_2'}\ar[r]_{g_2}&\ar[r]_{w_2''}&
}
$$
be two horizontally composable double cells.
This means that $[u_1',u_2']=[v_1,v_2]$, so there exist arrows $r$ and $s$ such that
$w_1'u_1'r\in W$ and the following diagram commutes,
$$
\xymatrix{
&
\\
\ar[ur]^{u_1'} \ar[dr]_{u_2'}& \ar[l]_r\ar[r]^s & \ar[ul]_{v_1}\ar[dl]^{v_2}
\\
&
}
$$
By conditions {\bf CF2} and {\bf CF3} there is a commutative square of the form
$$
\xymatrix{
\ar[d]_{\bar{r}}\ar[r]^{\bar{\varphi}} & \ar[d]^r\\
\ar[r]_\varphi&
}
$$
with $\bar{r}\in W$.
Now it is not difficult to check that
$$
\xymatrix{
\ar@{=}[dd] & \ar[l]_{w_1}\ar[r]^{f_1}&\ar[r]^{w_1'} & \ar@{=}[dd]
		&&\ar@{=}[dd] & \ar[l]_{w_1}\ar[r]^{f_1}&\ar[r]^{w_1'} & \ar@{=}[dd]
\\
&\ar[u]^{u_1}\ar[d]_{u_2}\ar[r]^\varphi & \ar[u]_{u_1'}\ar[d]^{u_2'}&
		&\sim&&\ar[u]^{u_1\bar{r}}\ar[d]_{u_2\bar{r}}\ar[r]^{\bar{\varphi}} & \ar[u]_{u_1'r}\ar[d]^{u_2'r}
\\
&\ar[l]^{w_2}\ar[r]_{f_2}&\ar[r]_{w_2'}& &&&\ar[l]^{w_2}\ar[r]_{f_2}&\ar[r]_{w_2'}&
}
$$
and
$$
\xymatrix{
\ar@{=}[dd] & \ar[l]_{w_1'}\ar[r]^{g_1}&\ar[r]^{w_1''} & \ar@{=}[dd]
		&&\ar@{=}[dd] & \ar[l]_{w_1'}\ar[r]^{g_1}&\ar[r]^{w_1''} & \ar@{=}[dd]
\\
&\ar[u]^{v_1}\ar[d]_{v_2}\ar[r]^\psi & \ar[u]_{v_1'}\ar[d]^{v_2'}&
		&\sim&&\ar[u]^{v_1s}\ar[d]_{v_2s}\ar[r]^{\psi s} & \ar[u]_{v_1'}\ar[d]^{v_2'}
\\
&\ar[l]^{w_2'}\ar[r]_{g_2}&\ar[r]_{w_2''}& &&&\ar[l]^{w_2'}\ar[r]_{g_2}&\ar[r]_{w_2''}&
}
$$
Since $u_1'r=v_1s$ and $u_2'r=v_2s$, the horizontal composition is given by
$$
\xymatrix{
\ar@{=}[dd] &\ar[l]_{w_1}\ar[r]^{g_1}{f_1} & \ar[r]^{w_1''}&\ar@{=}[dd]
\\
&\ar[u]^{u_1\bar{r}}\ar[d]_{u_2\bar{r}}\ar[r]^{\psi s\bar{\varphi}}& \ar[u]_{v_1'}\ar[d]^{v_2'}
\\
&\ar[l]^{w_2}\ar[r]_{g_2f_2}&\ar[r]_{w_2''} &\rlap{\,.}
}
$$

Middle four interchange holds because the double cells are posetal in the vertical direction.

\begin{rmk}
{\rm For any objects $w_1,w_2\in \CW_{00}$, the hom-set $\Hom_{\CW,h}(w_1,w_2)$ is small 
since it is isomorphic to $\Hom_{\bfC}(d_0(w_1),d_0(w_2))$  and $\Hom_{\CW,v}(A,B)$ is small since
it contains at most one element. Furthermore, for any given pair of horizontal or vertical arrows,
there is a set of double cells  with those arrows as domain and codomain, since given any frame 
of horizontal and vertical arrows, there is at most one double cell that fills it.
This shows that although $\bfC(W^{-1})$ is generally not a bicategory with small hom-sets (and consequently, 
its weakly globular double category of marked paths doesn't have small horizontal hom-sets either),
the double category $\CW$ does. This is one of the advantages of $\CW$ over $\bfC(W^{-1})$, for instance when one wants to consider the classifying space of
such a category.}
\end{rmk}

\subsection{The fundamental bicategory of $\CW$}
In this section we establish the relationship between the weakly globular double category $\bfC\W$
and the bicategory of fractions $\bfC(W^{-1})$. We will construct a biequivalence of bicategories
$$\omega\colon\Bic(\CW)\rightarrow \bfC(W^{-1}).$$

The objects of the associated bicategory $\Bic(\CW)$ are the connected components 
of the vertical arrow category of 
$\bfC\W$. By Lemma \ref{connectedcomp} the map 
$$\omega_0\colon\Bic(\CW)_0\rightarrow \bfC(W^{-1})_0=\bfC_0,$$
sending the connected component of $\xymatrix@1{(A\ar[r]^w&B)}$ to $B$ is an isomorphism.

An arrow in $\Bic(\CW)$ corresponds to a horizontal arrow in $\CW$, so it is given by a diagram
$\xymatrix@1{B&A\ar[l]_{w}\ar[r]^f & A'\ar[r]^{w'}&B'}$ in $\bfC$ with $w,w'\in W$.
We define 
$$\omega_1(\xymatrix{B&A\ar[l]_{w}\ar[r]^f & A'\ar[r]^{w'}&B'})=(\xymatrix{B&A\ar[l]_-w\ar[r]^-{w'f}&B'}).$$

A 2-cell $\varphi$ from $\xymatrix@1{B&A_1\ar[l]_{w_1}\ar[r]^{f_1} & A_1'\ar[r]^{w_1'}&B'}$
to $\xymatrix@1{B&A_2\ar[l]_{w_2}\ar[r]^{f_2} & A_2'\ar[r]^{w_2'}&B'}$
in $\Bic(\CW)$
corresponds to an equivalence class of commutative diagrams of the form
$$
\xymatrix@R=1.5em{
B\ar@{=}[dd]&A_1\ar[l]_{w_1}\ar[r]^{f_1} & A_1'\ar[r]^{w_1'}&B'\ar@{=}[dd]
\\
&C\ar[u]^{u_1}\ar[d]_{u_2}\ar[r]^\varphi & C'\ar[u]_{v_1}\ar[d]^{v_2}
\\
B&A_2\ar[l]_{w_2}\ar[r]^{f_2} & A_2'\ar[r]^{w_2'}&B'
}$$
This means that $w_1'f_1u_1=w_1'v_1\varphi=w_2'v_2\varphi=w_2'f_2u_2$ and $w_1u_1=w_2u_2$, so
we define 
$$\omega_2(\varphi)=\left(\raisebox{3.2em}{$
\xymatrix@R=1.8em{ & A_1\ar[dl]_{w_1}\ar[dr]^{w_1'f_1}
\\
B&C\ar[u]^{u_1}\ar[d]_{u_2} & B'
\\
&A_2\ar[ul]^{w_2}\ar[ur]_{w_2'f_2}
}$}\right)$$
The fact that this is well defined on equivalence classes of double cells
follows from the following lemma about the 2-cells in a bicategory of fractions of a category.

\begin{lma}
For  any category $\bfC$ with a class of arrows $W$ satisfying the conditions {\bf CF1}, {\bf CF2}, and {\bf CF3} 
above, the bicategory of fractions $\bfC(W^{-1})$ has at most one 2-cell between any two arrows.
\end{lma}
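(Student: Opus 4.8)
The plan is to recall Pronk's construction of 2-cells in the bicategory of fractions $\bfC(W^{-1})$ and to show that when $\bfC$ is an ordinary category (so that its hom-sets have no higher structure), any two representatives of a $2$-cell between fixed arrows become equal after passing to a common refinement. Recall that an arrow $\bar{A}\rightarrow\bar{B}$ in $\bfC(W^{-1})$ is a span $\xymatrix@1{A&\ar[l]_w C\ar[r]^f&B}$ with $w\in W$, and a $2$-cell from $(w_1,f_1)$ to $(w_2,f_2)$ is represented by a diagram consisting of an object $D$ together with arrows $\xymatrix@1{C_1&\ar[l]_{u_1} D\ar[r]^{u_2}&C_2}$ such that $w_1u_1\in W$, together with the commutativity requirements $w_1u_1=w_2u_2$ and $f_1u_1=f_2u_2$; two such data $(D,u_1,u_2)$ and $(D',u_1',u_2')$ represent the same $2$-cell if there is a further object $E$ with arrows into $D$ and $D'$ making everything compatible and keeping the relevant composite in $W$.

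First I would fix two arrows $(w_1,f_1)\colon A\to B$ and $(w_2,f_2)\colon A\to B$ in $\bfC(W^{-1})$ and suppose $(D,u_1,u_2)$ and $(D',u_1',u_2')$ both represent $2$-cells between them. I would then form a common refinement: by {\bf CF1} and {\bf CF2} applied to the cospan $\xymatrix@1{D\ar[r]^{w_1u_1}&A&\ar[l]_{w_1u_1'} D'}$ (both legs in $W$, using that $w_1u_1=w_1u_1'$ after composing with the appropriate $w$'s — more precisely one uses the span legs landing in $C_1$), there are arrows $\xymatrix@1{D&\ar[l]_{r} \bar D\ar[r]^{s}&D'}$ with $u_1 r = u_1' s$ (and the relevant composite in $W$). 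The key point is that on $\bar D$ the two candidate comparison arrows $u_1 r$ and $u_1' s$ agree into $C_1$; I then need the analogous statement into $C_2$, i.e. $u_2 r = u_2' s$. This need not hold on $\bar D$ directly, but since $w_2 u_2 r = w_1 u_1 r = w_1 u_1' s = w_2 u_2' s$ and $w_2\in W$, condition {\bf CF3} supplies an arrow $\xymatrix@1{\bar{\bar D}\ar[r]^{\tilde w}&\bar D}$ in $W$ with $u_2 r\tilde w = u_2' s\tilde w$. Then $\bar{\bar D}$ with the arrows $u_1 r\tilde w = u_1' s\tilde w$ into $C_1$ and $u_2 r\tilde w = u_2' s\tilde w$ into $C_2$ simultaneously refines both $(D,u_1,u_2)$ and $(D',u_1',u_2')$, witnessing that they represent the same $2$-cell.

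The main obstacle, and the thing to be careful about, is bookkeeping: one must check at each step that the composite required to lie in $W$ actually does, using {\bf CF1} (closure of $W$ under composition and containment of isomorphisms) repeatedly, and one must make sure the compatibility conditions defining ``same $2$-cell'' are all met by $\bar{\bar D}$ — not just the two commutativities but also that $\bar{\bar D}\to D$ and $\bar{\bar D}\to D'$ are the maps induced, and that the long composite $w_1 u_1 r\tilde w$ is in $W$. None of this is deep; it is exactly the same triangle of applications of {\bf CF2} and {\bf CF3} used to prove Lemma~\ref{connectedcomp} and Lemma~\ref{uniquedblcell}, and indeed the cleanest writeup is to observe that $\bfC(W^{-1})$ embeds (on objects and $1$-cells, fully faithfully on $2$-cells by construction) compatibly with the fundamental bicategory $\Bic(\CW)$, whose vertical-arrow and double-cell categories were already shown to be posetal and groupoidal in Lemma~\ref{connectedcomp} and Lemma~\ref{uniquedblcell}; uniqueness of $2$-cells then transfers along the functor $\omega$ once it is known to be locally faithful and essentially surjective on hom-categories. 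I expect the direct argument via {\bf CF2}/{\bf CF3} to be shortest, so that is what I would write out.
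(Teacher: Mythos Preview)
Your approach is essentially the same as the paper's---use {\bf CF2} to find a common refinement and {\bf CF3} to force the legs to agree---but there is one imprecision worth fixing. You claim that {\bf CF2} applied to the cospan with ``span legs landing in $C_1$'' yields arrows $r,s$ with $u_1 r = u_1' s$. But {\bf CF2} requires one of the legs of the cospan to lie in $W$, and neither $u_1$ nor $u_1'$ is assumed to be in $W$ (only $w_1u_1$ and $w_1u_1'$ are). The paper handles this by applying {\bf CF2} to the cospan $\xymatrix@1{D\ar[r]^{w_1u_1}&A&\ar[l]_{w_1u_1'} D'}$, obtaining $w_1u_1 r = w_1 u_1' r'$, and then invoking {\bf CF3} \emph{twice}---once for $w_1$ to get $u_1 r\tilde w = u_1' r'\tilde w$, and once for $w_2$ to get $u_2 r\tilde w = u_2' r'\tilde w$ (the second equality already holds after composing with $w_2$ since $w_2u_2 = w_1u_1$). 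You only invoke {\bf CF3} once, for $w_2$. This is not a serious gap---your own remark that the argument is ``exactly the same triangle of applications of {\bf CF2} and {\bf CF3} used to prove Lemma~\ref{connectedcomp} and Lemma~\ref{uniquedblcell}'' is correct---but the write-up should make both cancellations explicit.
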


\begin{proof}
The arrows in this bicategory are spans $\xymatrix@1{A&S\ar[l]_w\ar[r]^f & B}$ with $w\in W$.
A 2-cell from  $\xymatrix@1{A&S_1\ar[l]_{w_1}\ar[r]^{f_1} & B}$
to $\xymatrix@1{A&S_2\ar[l]_{w_2}\ar[r]^{f_2} & B}$ is represented by a commutative diagram of the form
$$
\xymatrix@R=1.3em{
&S_1\ar[dl]_{w_1}\ar[dr]^{f_1}\\
A &T\ar[u]_{u_1}\ar[d]^{u_2}&B\\
&S_2\ar[ul]^{w_2}\ar[ur]_{f_2}
}
$$
Two such diagrams,
\begin{equation}\label{two cells}
\xymatrix@R=1.3em{
&S_1\ar[dl]_{w_1}\ar[dr]^{f_1}&&& &S_1\ar[dl]_{w_1}\ar[dr]^{f_1}\\
A &T\ar[u]_{u_1}\ar[d]^{u_2}&B&\mbox{and} & A &T'\ar[u]_{u'_1}\ar[d]^{u'_2}&B\\
&S_2\ar[ul]^{w_2}\ar[ur]_{f_2} &&& &S_2\ar[ul]^{w_2}\ar[ur]_{f_2}
}
\end{equation}
 represent the same 2-cell when there exist arrows $t$ and $t'$ such that the
 diagram 
\begin{equation}\label{equivalence}
\xymatrix@R=1.3em{
&S_1
\\
T\ar[ur]^{u_1} \ar[dr]_{u_2}& \bar{T}\ar[l]_t\ar[r]^{t'} & T'\ar[ul]_{u_1'}
\ar[dl]^{u_2'} 
\\
&S_2
}
\end{equation}
commutes and $w_1u_1t\in W$.
For any two 2-cells as in (\ref{two cells}), we can find $t$ and $t'$ as in (\ref{equivalence})
by first using condition {\bf CF2} to find a commutative square
$$\xymatrix{
\ar[r]^r\ar[d]_{r'} &\ar[d]^{w_1u_1}
\\
\ar[r]_{w_1u_1'}&\rlap{\,.}
}
$$
We have then also that $w_2u_2r=w_1u_1r=w_1u_1'r'=w_2u_2'r'$.
By condition {\bf CF3} applied to both $w_1$ and $w_2$ we find that there
is an arrow $\tilde{w}\in W$ such that  $u_1r\tilde{w}=u_1'r'\tilde{w}$ and $u_2r\tilde{w}=u_2'r'\tilde{w}$.
So $t=r\tilde{w}$ and $t'=r'\tilde{w}$ fit in the diagram (\ref{equivalence}).
\end{proof}

\begin{thm}\label{equiv}
The bicategories, $\Bic(\bfC\W)$ and $\bfC(W^{-1})$ are equivalent in the 2-category
$\NHom$.
\end{thm}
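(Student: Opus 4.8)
The plan is to exhibit the functor $\omega\colon \Bic(\CW)\to\bfC(W^{-1})$ described in the paragraphs above (acting as $\omega_0$, $\omega_1$, $\omega_2$) and verify that it is a normal homomorphism of bicategories which is essentially surjective on objects, essentially full on $1$-cells (i.e.\ every $1$-cell of $\bfC(W^{-1})$ is equivalent to one in the image), and fully faithful on $2$-cells; by the standard characterisation of biequivalences this yields an equivalence in $\NHom$. First I would check that $\omega$ is well defined: $\omega_0$ was already shown to be a bijection on objects via Lemma \ref{connectedcomp}; $\omega_1$ sends a horizontal arrow $\xymatrix@1{B&A\ar[l]_w\ar[r]^f&A'\ar[r]^{w'}&B'}$ to the span $\xymatrix@1{B&A\ar[l]_w\ar[r]^{w'f}&B'}$ (note $w'f$ need not lie in $W$, which is fine — only the left leg must); and $\omega_2$ is well defined on equivalence classes of double cells precisely because of the Lemma just proved, that $\bfC(W^{-1})$ has at most one $2$-cell between any two arrows. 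That same Lemma, combined with Lemma \ref{uniquedblcell} (the $2$-cells of $\Bic(\CW)$ are also unique between fixed $1$-cells, since the vertical category of $\CW$ is posetal groupoidal), immediately gives that $\omega$ is \emph{fully faithful on $2$-cells}: both hom-sets of $2$-cells are either empty or singletons, and one checks that $\omega_2$ is defined on a nonempty one iff the corresponding hom-set of $2$-cells in $\bfC(W^{-1})$ is nonempty (both conditions unwind to: $B=B'$-type matching of legs plus a common refinement witnessing $w_1'f_1=w_2'f_2$ after precomposition, which is exactly the span-equivalence condition).

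Next I would verify the homomorphism structure. Since $1_{\bar A}$ in $\Bic(\CW)$ is the horizontal identity $\mathrm{Id}_{\mu_0(\bar A)}$, its image under $\omega_1$ is the span $\xymatrix@1{B&\bullet\ar[l]_w\ar[r]^{w}&B}$ with both legs equal to the chosen $w$; in $\bfC(W^{-1})$ the identity $1$-cell on $B$ is $\xymatrix@1{B&B\ar[l]_{1_B}\ar[r]^{1_B}&B}$, and these two spans are isomorphic $1$-cells (via $w$ itself as the connecting map, which lies in $W$ by \textbf{CF1}). For normality one can instead choose the representative $\mu_0$ so that the units go strictly to units, or simply observe the comparison $2$-cell is invertible; either way $\omega$ can be taken normal. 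For the compositor, recall that composition in $\Bic(\CW)$ is defined (Section \ref{fundabicat}) by first replacing $f,g$ by $f_3,g_3$ along vertically invertible double cells and then composing horizontally; horizontal composition of horizontal arrows in $\CW$ is, by the construction in Section \ref{construction}, just composition of the middle arrows in $\bfC$. Translating through $\omega_1$, this matches the bicategory-of-fractions composite (which uses \textbf{CF2} to build a span over the two given spans) up to the canonical invertible $2$-cell; I would record this invertible comparison $2$-cell $\omega_1(g)\circ\omega_1(f)\cong\omega_1(g_3\circ f_3)$ and note that the associativity and unit coherence axioms hold automatically because \emph{all} the relevant $2$-cells are forced — $\bfC(W^{-1})$ has at most one $2$-cell between any two $1$-cells, so every diagram of invertible $2$-cells commutes.

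For biessential surjectivity: $\omega_0$ is a bijection, so nothing to do there. For essential surjectivity on $1$-cells, given an arbitrary span $\xymatrix@1{B&S\ar[l]_w\ar[r]^f&B'}$ in $\bfC(W^{-1})$ with $w\in W$, I need a horizontal arrow of $\CW$ whose $\omega_1$-image is isomorphic to it; but $w\colon S\to B$ is itself an object of $\CW$ and we want a horizontal arrow out of it landing in an object over $B'$. Here one uses \textbf{CF2}/\textbf{CF3} exactly as in Lemma \ref{isofib} and Lemma \ref{connectedcomp} to factor: choose any $w'\in W$ with codomain $B'$ and domain $A'$ admitting $f'\colon S\to A'$ with $w'f'$ suitably comparable to $f$ — concretely one can take the span $\xymatrix@1{(w)\ar[r]^{f}&(1_{B'})}$ using the object $(1_{B'}\colon B'\to B')$, whose $\omega_1$-image is $\xymatrix@1{B&S\ar[l]_w\ar[r]^{f}&B'}$, the span we started from. (If one insists objects of $\CW$ have their right leg in $W$ only, $1_{B'}\in W$ by \textbf{CF1}.) So $\omega_1$ is in fact \emph{surjective} on $1$-cells, which is even stronger than needed. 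Assembling: $\omega$ is a normal homomorphism, bijective on objects, essentially surjective (indeed surjective) on $1$-cells, and fully faithful on $2$-cells, hence a biequivalence, hence an equivalence in $\NHom$.

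The main obstacle I anticipate is purely bookkeeping rather than conceptual: carefully matching the composition of $1$-cells in $\Bic(\CW)$ — which is built from the somewhat involved "marked path / double-cell completion" recipe of Section \ref{fundabicat} together with the \textbf{CF2}-square used to compose horizontal arrows in $\CW$ — with the \textbf{CF2}-based composite of spans in $\bfC(W^{-1})$, and writing down the comparison invertible $2$-cell explicitly enough to see it is natural in both variables. Every coherence condition one would otherwise have to check (pentagon, triangle, naturality of $\mu$ and $\sigma$) is automatic from the uniqueness-of-$2$-cells lemma for $\bfC(W^{-1})$, so the real content is just the existence and invertibility of that one comparison cell, which follows from \textbf{CF1}--\textbf{CF3} by the same arguments already used in Lemmas \ref{isofib}, \ref{connectedcomp} and \ref{uniquedblcell}.
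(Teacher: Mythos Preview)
Your approach is correct and takes a genuinely different route from the paper's. The paper constructs an explicit pseudo-inverse $\alpha\colon\bfC(W^{-1})\to\Bic(\CW)$ (sending a span $(w,f)$ to the horizontal arrow $\xymatrix@1{(&\ar[l]_w)\ar[r]^f&(\ar@{=}[r]&)}$), verifies that $\omega\alpha=\mathrm{Id}$ strictly, and then exhibits an explicit invertible icon $\zeta\colon\alpha\omega\Rightarrow\mathrm{Id}$. You instead invoke the Whitehead-style criterion for biequivalence: $\omega$ is bijective on objects, surjective on $1$-cells (via the $(w)\to(1_{B'})$ trick), and fully faithful on $2$-cells (both sides being locally posetal). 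Your route is more economical and makes elegant use of the local posetal-ness to dispense with all coherence checking at once; the paper's route has the advantage of producing an explicit pseudo-inverse, which is convenient for the subsequent Corollary~\ref{2-equiv} where one applies $\Dbl$ to the equivalence. One small point worth making explicit in your write-up: the passage from ``biequivalence'' to ``equivalence in $\NHom$'' uses that $\omega_0$ is a bijection, so the comparison transformations can be taken with identity $1$-cell components and hence are icons. Also, your ``one checks'' for fullness on $2$-cells does require a short CF2/CF3 argument (given a $2$-cell of spans, producing the corresponding double cell in $\CW$ needs a refinement to land in a valid representative of the right-hand vertical arrow); this is routine but not entirely trivial, and is the one place where you should spell out the two applications of CF3.
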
 

\begin{proof}
We first show that the functions $\omega_0$, $\omega_1$ and $\omega_2$ defined above give 
a homomorphism of bicategories:
$\omega\colon\Bic(\CW)\rightarrow\bfC(W^{-1})$.
To prove this we only need to give the comparison 2-cells for units and composition. 
(They will automatically be coherent since the hom-categories in $\bfC(W^{-1})$ are posetal.)

We will represent each connected component of $(\CW)_0$ by the identity arrow in it.
So an identity arrow in $\Bic(\CW)$ has the form $\xymatrix@1{B&B\ar[l]_{1_B}\ar[r]^{1_B} & B\ar[r]^{1_B}&B}$,
and $$\omega_1\left(\xymatrix@1{B&B\ar[l]_{1_B}\ar[r]^{1_B} & B\ar[r]^{1_B}&B}\right)=
\left(\xymatrix@1{B&B\ar[l]_{1_B}\ar[r]^{1_B}&B}\right).$$ So $\omega_1$ preserves identities strictly.

For any two composable arrows 
\begin{equation}\label{composable}
 \xymatrix@1{B_1&\ar[l]_{w_1}A_1\ar[r]^{f}&A_2\ar[r]^{w_2}&B_2}
\mbox{ and }\xymatrix@1{B_2&A_3\ar[l]_{w_3}\ar[r]^{g}&A_4\ar[r]^{w_4}&B_3}
\end{equation}
in $\Bic(\CW)$ ,
the composition is found by first constructing the following double cell in $\CW$,
$$
\xymatrix@R=1.5em{
(B_1\ar@{=}[dd] 
	&C)\ar[l]_{w_1\bar{\bar{w_3}}}\ar@{=}[d]\ar[r]^{\bar{w}_2\bar{f}} 
	& (A_3\ar[r]^{w_3}&B_2)\ar@{=}[dd]
\\
&C\ar[d]_{\bar{\bar{w}}_3}\ar[r]^{\bar{f}} &D\ar[u]_{\bar{w}_2}\ar[d]^{\bar{w}_3} 
\\
(B_1&\ar[l]_{w_1}A_1)\ar[r]^{f}&(A_2\ar[r]^{w_2}&B_2)
}$$
(using chosen squares in $\bfC$ to obtain $\bar{w}_2$, $\bar{w}_3$, $\bar{\bar{w}}_3$ and $\bar{f}$)
and then composing the domain horizontal arrow with $\xymatrix@1{(B_2&A_3)\ar[l]_{w_3}\ar[r]^{g}&(A_4\ar[r]^{w_4}&B_3)}$ to get
$$
 \xymatrix@1{(B_1&\ar[l]_{w_1\bar{\bar{w}}_3}C)\ar[rr]^{g\bar{w}_2\bar{f}}&&(A_4\ar[r]^{w_4}&B_3)\rlap{\,.}}
$$
So $\omega_1$ applied to the composite gives
\begin{equation}\label{imageofcomp}
 \xymatrix@C=4em{B_1&C\ar[l]_{w_1\bar{\bar{w}}_3}\ar[r]^{w_4g\bar{w}_2\bar{f}}&B_3\rlap{\,.}}
\end{equation}
On the other hand, the composite of the images of the arrows in (\ref{composable}) under $\omega$
is
\begin{equation}\label{compofimages}
 \xymatrix@C=4em{B_1&E\ar[l]_{w_1\hat{w}_3}\ar[r]^{w_4g\widehat{w_2f}}&B_3\rlap{\,,}}
\end{equation}
where $\hat{w}_3$ and $\widehat{w_2f}$ are obtained using chosen squares.

To construct a 2-cell from (\ref{imageofcomp}) to (\ref{compofimages}) we consider the following diagram
containing the three chosen squares used above:
$$
\xymatrix{
E\ar@/_1ex/[ddr]_{\hat{w}_3}\ar@/^2ex/[drrr]^{\widehat{w_2f}}
\\
&C\ar[d]_{\bar{\bar{w}}_3}\ar[r]^{\bar{f}} & D\ar[d]_{\bar{w}_3}\ar[r]^{\bar{w}_2} & A_3\ar[d]^{w_3}
\\
& A_1\ar[r]_f&A_2\ar[r]_{w_2} & B_2.
}
$$
Using Condition {\bf CF2} we find a commutative square 
$$
\xymatrix{
F\ar[r]^{\bar{\hat{w}}_3}\ar[d]_{\hat{\bar{\bar{w}}}_3} & C\ar[d]^{\bar{\bar{w}}_3}
\\
E\ar[r]_{\hat{w}_3} & A_1}
$$
and using Condition {\bf CF3} we find an arrow $\xymatrix@1{G\ar[r]^{\tilde{w}_3}&F}$ in $W$
such that all parts of the following diagram commute:
$$
\xymatrix@R=1.8em{
E\ar@/_2ex/[3,2]_{\hat{w}_3}\ar@/^2ex/[2,4]^{\widehat{w_2f}}
\\
& G\ar[ul]|{\hat{\bar{\bar{w}}}_3\tilde{w}_3}\ar[dr]|{\bar{\hat{w}}_3\tilde{w}_3}
\\
&&C\ar[d]_{\bar{\bar{w}}_3}\ar[r]^{\bar{f}} & D\ar[d]_{\bar{w}_3}\ar[r]^{\bar{w}_2} & A_3\ar[d]^{w_3}
\\
&& A_1\ar[r]_f&A_2\ar[r]_{w_2} & B_2\rlap{\,.}
}
$$
So the required comparison 2-cell is given by
$$
\xymatrix@C=6em@R=1.8em{
&C\ar[dl]_{w_1\bar{\bar{w}}_3}\ar[dr]^{w_4g\bar{w}_2\bar{f}}
\\
B_1 & G\ar[u]_{\bar{\hat{w}}_3\tilde{w}_3}\ar[d]^{\hat{\bar{\bar{w}}}_3\tilde{w}_3} & B_3
\\
&E\ar[ul]^{w_1\hat{w}_3}\ar[ur]_{w_4g\widehat{w_2f}}&.
}$$

We will now construct a pseudo-inverse $\alpha\colon\bfC(W^{-1})\rightarrow\Bic(\bfC\{W\})$
for $\omega$. 

On objects, $\alpha(A)=(1_A)$. On arrows, $$\alpha(\xymatrix@1{A&C\ar[l]_w\ar[r]^f & B})=
( \xymatrix@1{A&C\ar[l]_w\ar[r]^f &B\ar[r]^{1_B} & B}),$$ and on 2-cells,
$$
\alpha\left(\raisebox{3.1em}{$\xymatrix{&\ar[dl]_w\ar[dr]^f\\&\ar[u]^v\ar[d]_{v'}&\\&\ar[ul]^{w'}\ar[ur]_{f'}}$}\right)=
\raisebox{3.5em}{$\xymatrix{\ar@{=}[dd]&\ar[l]_w\ar[r]^f & \ar@{=}[r]&\ar@{=}[dd]
\\ &\ar[r]^{fv}\ar[u]_{v}\ar[d]^{v'} &\ar@{=}[d]\ar@{=}[u] \\&\ar[l]^{w'}\ar[r]_{f'} & \ar@{=}[r] &\rlap{\,.}}$}
$$
We will show that $\alpha$ is a homomorphism of bicategories.
First, note that $\alpha$ preserves units strictly:
$\alpha(\xymatrix@1{&\ar[l]_{1_A}\ar[r]^{1_A}&})=(\xymatrix@1{&\ar[l]_{1_A}\ar[r]^{1_A} & \ar[r]^{1_A}&})$.
Composition is also preserved strictly as long as we choose the same chosen squares.
We will now show this.
So let $\xymatrix@1{A&\ar[l]_{w_1}S\ar[r]^{f_1} & B}$ and $\xymatrix@1{B&T\ar[l]_{w_2}\ar[r]^{f_2}& C}$
be a composable pair of arrows in $\bfC(W^{-1})$. Let 
\begin{equation}\label{chosen}
\xymatrix{
U\ar[r]^{\bar{f}_1}\ar[d]_{\bar{w}_2} & T\ar[d]^{w_2}
\\
S\ar[r]_{f_1} & B
}\end{equation}
be a chosen square. Then the composition in $\bfC(W^{-1})$ is 
$\xymatrix@1{A & U\ar[l]_{w_1\bar{w}_2}\ar[r]^{f_2\bar{f}_1} & C}$,
and $$\alpha(\xymatrix@1{A & U\ar[l]_{w_1\bar{w}_2}\ar[r]^{f_2\bar{f}_1} & C})=
(\xymatrix@1{A&U\ar[l]_{w_1\bar{w}_2}\ar[r]^{f_2\bar{f}_1}&C\ar@{=}[r]&C}).$$
To calculate the composition in $\Bic(\CW)$ of 
$$\alpha(\xymatrix@1{A&\ar[l]_{w_1}S\ar[r]^{f_1} & B})=
(\xymatrix@1{A&S\ar[l]_{w_1}\ar[r]^{f_1}&B\ar@{=}[r]&B})$$ and 
$$\alpha(\xymatrix@1{B&\ar[l]_{w_2}T\ar[r]^{f_1} & C})=
(\xymatrix@1{B&T\ar[l]_{w_2}\ar[r]^{f_2}&C\ar@{=}[r]&C}),$$ we note that we can use 
the same chosen square (\ref{chosen})
to construct a pair of horizontal arrows in $\CW$ that is composable there:
there is a double cell
$$
\xymatrix{
(\ar@{=}[dd]&)\ar[l]_{w_1\bar{w}_2}\ar[r]^{\bar{f}_1} & (\ar[r]^{w_2}&)\ar@{=}[dd]
\\
&\ar[d]_{\bar{w}_2}\ar@{=}[u]\ar[r]^{\bar{f}_1} &\ar[d]^{w_2}\ar@{=}[u]
\\
(&\ar[l]^{w_1})\ar[r]_{f_1} & (\ar@{=}[r]&)
}
$$
Composing the top row of this double cell with $\xymatrix@1{(&)\ar[l]_{w_2}\ar[r]^{f_2}&(\ar@{=}[r]&)}$
gives us
$$\xymatrix{(&)\ar[l]_{w_1\bar{w}_2}\ar[r]^{f_2\bar{f}_1} & (\ar@{=}[r]&)}.$$

Now it remains for us to calculate the composites $\alpha\circ\omega$ and $\omega\circ\alpha$.
First, a straightforward calculation shows that $\omega\alpha=\mbox{Id}_{\bfC(W^{-1})}$.
The other composition requires a bit more attention.

On objects, $\alpha\omega(\overline{(\stackrel{w}{\rightarrow})})=\overline{(\stackrel{1_B}{\rightarrow})}$
for $A\stackrel{w}{\rightarrow}B$, and $\overline{(\stackrel{w}{\rightarrow})}=\overline{(\stackrel{1_B}{\rightarrow})}$
in $\Bic(\CW)$. So  $\alpha\omega$ is the identity on objects.

On arrows, $\alpha\omega(\xymatrix@1{&\ar[l]_{w_1}\ar[r]^f&\ar[r]^{w_2}&})=
(\xymatrix@1{&\ar[l]_{w_1}\ar[r]^{w_2f}&\ar@{=}[r]&})$. And on 2-cells, 
\begin{eqnarray*}
\alpha\omega \left(\raisebox{3.5em}{$\xymatrix{\ar@{=}[dd]&\ar[l]_{w_1}\ar[r]^{f} &\ar[r]^{w_2}&\ar@{=}[dd]
\\
&\ar[u]_{u_1}\ar[r]^h\ar[d]^{v_1} & \ar[u]^{u_2}\ar[d]^{v_2}
\\
&\ar[l]_{w_1'}\ar[r]_{f'} & \ar[r]_{w_2'}& }$}\right)&=&
\alpha\left(\raisebox{3.1em}{$\xymatrix{&\ar[dl]_{w_1}\ar[dr]^{w_2f}
\\
&\ar[u]_{u_1}\ar[d]^{v_1}&
\\
&\ar[ul]^{w_1'}\ar[ur]_{w_2'f'}}$}\right)
\\
&=& \raisebox{3.5em}{$\xymatrix{\ar@{=}[dd]&\ar[l]_{w_1}\ar[r]^{w_2f} &\ar@{=}[r]&\ar@{=}[dd]
\\
&\ar[u]_{u_1}\ar[r]^{w_2fu_1}\ar[d]^{v_1} & \ar@{=}[u]\ar@{=}[d]
\\
&\ar[l]_{w_1'}\ar[r]_{w_2'f'} & \ar@{=}[r]&\rlap{\,.}
}$}
\end{eqnarray*}
There is an invertible icon $\zeta\colon\alpha\omega\Rightarrow\mbox{Id}_{\smBic(\bfC(W)^{-1})}$
such that the component of $\zeta$ at $\xymatrix@1{&\ar[l]_{w_1}\ar[r]^f&\ar[r]^{w_2}&}$ is
$$
\xymatrix{
\ar@{=}[dd]&\ar[l]_{w_1}\ar[r]^{w_2f} &\ar@{=}[r]&\ar@{=}[dd]
\\
&\ar@{=}[u]\ar[r]^f\ar@{=}[d] & \ar[u]^{w_2}\ar@{=}[d]
\\
&\ar[l]_{w_1}\ar[r]_{f} & \ar[r]_{w_2}& 
}
$$

The fact that the naturality squares of 2-cells commute follows from the fact that this bicategory is
locally posetal. 
\end{proof}

\begin{cor}\label{2-equiv}
 $\CW$ and $\Dbl(\bfC(W^{-1}))$ are equivalent in the 2-category $\WGDbl_{\mbox{\scriptsize\rm ps}}$.
\end{cor}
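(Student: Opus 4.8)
The plan is to deduce this corollary directly from Theorem~\ref{equiv} together with the biequivalence $\Bic\colon\WGDbl_{\ps}\to\NHom$ and its pseudo-inverse $\Dbl$ established in Section~\ref{bicat}. First I would recall that, by construction of $\Dbl$ as $QN$ and by Theorem~\ref{s4.pro4} (or rather the biequivalence $\smBic\dashv\!\!\vdash\smDbl$ between $\WGDbl_{\ps}$ and $\NHom$), for any bicategory $\calB$ there is a natural equivalence $\Bic(\Dbl(\calB))\simeq\calB$ in $\NHom$, and for any weakly globular double category $\bbY$ a natural equivalence $\Dbl(\Bic(\bbY))\simeq\bbY$ in $\WGDbl_{\ps}$.

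The key steps, in order, are as follows. (1) Apply $\Dbl$ to the equivalence $\omega\colon\Bic(\CW)\to\bfC(W^{-1})$ of Theorem~\ref{equiv}; since $\Dbl$ is (the object part of) a biequivalence of $2$-categories, it sends equivalences in $\NHom$ to equivalences in $\WGDbl_{\ps}$, yielding an equivalence $\Dbl(\Bic(\CW))\simeq\Dbl(\bfC(W^{-1}))$ in $\WGDbl_{\ps}$. (2) Invoke the counit equivalence $\Dbl(\Bic(\CW))\simeq\CW$ in $\WGDbl_{\ps}$ coming from the biequivalence $\Bic\dashv\!\!\vdash\Dbl$. (3) Compose these two equivalences: $\CW\simeq\Dbl(\Bic(\CW))\simeq\Dbl(\bfC(W^{-1}))$, all in $\WGDbl_{\ps}$. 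Since equivalence in a $2$-category is transitive, this gives the desired equivalence $\CW\simeq\Dbl(\bfC(W^{-1}))$ in $\WGDbl_{\ps}$, which is the statement of the corollary.

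The only point requiring care is step~(1): one must know that $\Dbl$ preserves equivalences, i.e.\ that it is a pseudo-functor (or at least sends internal equivalences to internal equivalences). This follows because $\Dbl = QN$ where $N\colon\NHom\to(\Ta)_{\ps}$ is a $2$-functor that is part of a biequivalence (Theorem~\ref{the7.3}) and $Q\colon(\Ta)_{\ps}\to\WGDbl_{\ps}$ is part of the biequivalence of Theorem~\ref{s4.pro4}; both are pseudo-functors of $2$-categories and hence preserve equivalences. Concretely, a biequivalence $\omega$ has a pseudo-inverse $\alpha$ with $\omega\alpha\simeq\mathrm{Id}$ and $\alpha\omega\simeq\mathrm{Id}$ (these data are exhibited explicitly in the proof of Theorem~\ref{equiv}), and applying the $2$-functor/pseudo-functor $\Dbl$ to all of this data yields $\Dbl(\omega)\Dbl(\alpha)\simeq\mathrm{Id}$ and $\Dbl(\alpha)\Dbl(\omega)\simeq\mathrm{Id}$, so $\Dbl(\omega)$ is an equivalence. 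I expect this bookkeeping — checking that the relevant functors are genuinely pseudo-functors on the $2$-categories in question, so that they transport the equivalence data — to be the main (though routine) obstacle; the transitivity of equivalences and the two counit/unit equivalences are then immediate.

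\begin{proof}
By Theorem~\ref{equiv} there is an equivalence $\omega\colon\Bic(\CW)\to\bfC(W^{-1})$ in the $2$-category $\NHom$; let $\alpha\colon\bfC(W^{-1})\to\Bic(\CW)$ be the pseudo-inverse constructed there, so that $\omega\alpha\simeq\mathrm{Id}$ and $\alpha\omega\simeq\mathrm{Id}$ in $\NHom$. The functor $\Dbl=QN\colon\NHom\to\WGDbl_{\ps}$ is a pseudo-functor of $2$-categories (being the composite of $N$, which is a $2$-functor, and $Q$, which by Theorem~\ref{s4.pro4} is part of a biequivalence), and hence preserves equivalences: applying $\Dbl$ to $\omega$, $\alpha$ and the invertible modifications $\omega\alpha\simeq\mathrm{Id}$, $\alpha\omega\simeq\mathrm{Id}$ yields an equivalence
\begin{equation*}
\Dbl(\omega)\colon\Dbl(\Bic(\CW))\to\Dbl(\bfC(W^{-1}))
\end{equation*}
in $\WGDbl_{\ps}$. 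On the other hand, since $\Bic$ and $\Dbl$ form a biequivalence between $\WGDbl_{\ps}$ and $\NHom$ (Section~\ref{bicat}, combining Theorem~\ref{the7.3} and Theorem~\ref{s4.pro4}), the counit gives an equivalence $\Dbl(\Bic(\CW))\simeq\CW$ in $\WGDbl_{\ps}$. Composing, we obtain
\begin{equation*}
\CW\;\simeq\;\Dbl(\Bic(\CW))\;\simeq\;\Dbl(\bfC(W^{-1}))
\end{equation*}
in $\WGDbl_{\ps}$, as claimed.
\end{proof}
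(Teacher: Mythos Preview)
Your proof is correct and follows essentially the same approach as the paper: apply $\Dbl$ to the equivalence of Theorem~\ref{equiv}, then compose with the equivalence $\CW\simeq\Dbl(\Bic(\CW))$. The paper states this in one sentence, while you spell out more carefully why $\Dbl$ preserves equivalences, but the argument is the same.
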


\begin{proof}
Apply $\Dbl$ to the equivalence of Theorem \ref{equiv} and then compose the resulting vertical equivalence of 
weakly globular double categories with the equivalence of $\CW$ with $\Dbl\Bic(\CW)$.
\end{proof}

\section{The Universal Property of $\CW$} \label{fracns}

The goal of this section is to describe the vertical and horizontal universal properties 
of the weakly globular double category $\CW$ we constructed in the previous section. 
First we derive what the universal property is that $\CW$ inherits from
$\bfC(W^{-1})$ by Corollary \ref{2-equiv}. This is a property in terms of 
pseudo-functors and vertical transformations.
The second universal property we give for $\CW$ is in terms of strict double functors 
and horizontal transformations. The two properties together determine $\CW$ 
up to horizontal and vertical equivalence.

\subsection{The vertical universal property of $\bfC(W^{-1})$} 
In this subsection we will describe the universal property $\CW$ inherits from $\bfC(W^{-1})$.
This universal property will be described in terms of pseudo-functors and vertical transformations between them.

The bicategory $\bfC(W^{-1})$ has its universal property as a bicategory of fractions as 
described in \cite{P1995}. 

\begin{thm} There is a homomorphism of bicategories $U\colon\bfC\rightarrow\bfC(W^{-1})$
such that composition with $U$ induces a biequivalence of bicategories
$$
\Hom(\bfC(W^{-1}),\calB)\simeq\Hom_W(\bfC,\calB).
$$
\end{thm}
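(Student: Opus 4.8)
The plan is to quote the bicategory-of-fractions universal property from \cite{P1995} almost verbatim, since the statement is literally the main theorem of that paper, and then only explain how the hypotheses on $W$ recorded here (\textbf{CF1}--\textbf{CF3}) match the ones used there. First I would recall the definition of the canonical homomorphism $U\colon\bfC\rightarrow\bfC(W^{-1})$: it is the identity on objects, sends an arrow $f\colon A\rightarrow B$ to the span $\xymatrix@1{A&A\ar[l]_{1_A}\ar[r]^{f}&B}$, and sends identities and composites to the canonical comparison $2$-cells built from the chosen pullback-type squares of \textbf{CF2}. One checks, exactly as in \cite{P1995}, that $U$ is a homomorphism of bicategories and that it sends every arrow of $W$ to an internal equivalence in $\bfC(W^{-1})$ (its pseudo-inverse is the span with the roles of the two legs reversed); this is what makes composition with $U$ land in the full subcategory $\Hom_W(\bfC,\calB)$ of homomorphisms that invert $W$.

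Next I would state precisely what $\Hom_W(\bfC,\calB)$ is --- the full sub-bicategory of $\Hom(\bfC,\calB)$ on those homomorphisms $F$ for which $F(w)$ is an internal equivalence in $\calB$ for every $w\in W$ --- and then invoke \cite[Theorem in \S 3]{P1995}, which asserts that
$$
(-)\circ U\colon \Hom(\bfC(W^{-1}),\calB)\rightarrow \Hom_W(\bfC,\calB)
$$
is a biequivalence of bicategories for every bicategory $\calB$. Since in our case $\bfC$ is an ordinary category regarded as a locally discrete bicategory, and since (as remarked in the excerpt right after \textbf{CF3}) the conditions \textbf{CF1}--\textbf{CF3} are exactly the specialization to categories of the axioms required in \cite{P1995} for a bicalculus of fractions, the cited theorem applies directly and yields the claim.

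The only real content beyond citation is the verification that our \textbf{CF1}--\textbf{CF3} genuinely coincide with the axioms \textbf{BF1}--\textbf{BF5} of \cite{P1995} once all $2$-cells are identities: \textbf{CF1} gives the closure of $W$ under composition and the containment of isomorphisms; \textbf{CF2} gives the $2$-out-of-the-square (weak pullback) axiom, which in the locally discrete case is just the existence of a commuting square; and \textbf{CF3} gives the coequalizer-type axiom, with the higher axioms about $2$-cells being vacuous since $\bfC$ has none. I expect this bookkeeping to be the main --- and essentially only --- obstacle, and it is routine; I would dispatch it in a sentence and otherwise defer entirely to \cite{P1995} (and \cite{P-thesis}) for the construction of $U$ and the proof of the biequivalence. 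No genuinely new argument is needed here, as this theorem is recalled only to set up the translation to $\CW$ in the subsequent subsections.
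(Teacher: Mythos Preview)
Your proposal is correct and matches the paper's treatment: this theorem is not proved in the paper at all but is simply quoted from \cite{P1995}, with the surrounding text noting that the conditions \textbf{CF1}--\textbf{CF3} specialize the bicategorical fraction axioms to the locally discrete case. Your plan to cite \cite{P1995} and briefly check that the hypotheses line up is exactly right, and in fact slightly more detailed than what the paper itself provides.
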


In this result, the category on the left consists of homomorphisms of bicategories, lax natural transformations,
and modifications, and the category on the right consists of those homomorphisms, $W$-homomorphisms for short,
which send the elements of $W$ to equivalences in $\calB$. The natural transformations and modifications 
on the right are 
represented by $W$-homomorphisms into $\Cyl(\calB)$ and $\Cyl(\Cyl(\calB))$ respectively.

The proof of this result in \cite{P1995} shows implicitly that composition with $U$ induces 
also an equivalence of categories
$$
\Hom_{\mathsf{Bicat}_{\mathrm{icon}}}(\bfC(W^{-1}),\calB)\simeq\Hom_{\mathsf{Bicat}_{\mathrm{icon}},W}(\bfC,\calB),
$$
where we just consider homomorphisms of bicategories and icons (transformations with identity arrows as components)
as 2-cells.

The functor $\Dbl$ can be applied to this equivalence of categories
and we obtain the following
$$
\Hom_{\smWGDbl_{\ps}}(\Dbl(\bfC(W^{-1})),\Dbl\calB)\simeq\Hom_{\smWGDbl_{\ps},W}(\Dbl(\bfC),\Dbl\calB)
$$
where $\Hom_{\smWGDbl_{\ps}}$ refers to pseudo-functors of weakly globular double categories with 
vertical transformations of pseudo-functors,
and $\Hom_{\WGDbl_{\ps},W}$ consists of those pseudo-functors and vertical transformations
that are in the image of the 
$W$-homomorphisms and $W$-transformations under $\Dbl$.

Since pre-companions in weakly globular double categories correspond to equivalences in bicategories
as shown in Proposition \ref{quasi-comp} and Proposition \ref{comp-quasi}, we have the following result.

\begin{lma}\label{W-hom}
 Let $F\colon \bfC\rightarrow \calB$ be a $W$-homomorphism in $\Bicat$.
Then $\Dbl(F)$ sends a horizontal arrow in $\Dbl(\bfC)$ of the form 
$$
 \xymatrix{
A_0\ar[r]^{f_1} & \cdots \ar[r]^{f_{i_0}}&[A_{i_0}]\ar[r]^{f_{i_0+1}} & \cdots \ar[r]^{f_{i_1}}
      &[A_{i_1}]\ar[r]^{f_{i_1+1}}&\cdots\ar[r]^{f_{i_n}}&A_n
}
$$
with $f_{i_1}\circ\cdots\circ f_{i_0+1}\in W$,
to a pre-companion in $\Dbl(\calB)$.
\end{lma}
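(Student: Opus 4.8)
The plan is to reduce this to the already-established correspondence between pre-companions in $\Dbl(\calB)$ and equivalences in $\calB$, together with the definition of a $W$-homomorphism and the explicit description of $\Dbl(F)$. First I would recall, from the characterization of pre-companions in $\Dbl(\calB)$ proved earlier, that a horizontal arrow
$$
\xymatrix{A_0\ar[r]^{f_1}&\cdots\ar[r]^{f_{i_0}}&[A_{i_0}]\ar[r]^{f_{i_0+1}}&\cdots\ar[r]^{f_{i_1}}&[A_{i_1}]\ar[r]^{f_{i_1}+1}&\cdots\ar[r]^{f_{n}}&A_n}
$$
in $\Dbl(\calB)$ is a pre-companion if and only if the chosen composite $\varphi_{f_{i_0+1}\cdots f_{i_1}}$ is an equivalence in $\calB$. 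So the task is purely to verify that $\Dbl(F)$ sends the displayed arrow (with $f_{i_1}\circ\cdots\circ f_{i_0+1}\in W$) to an arrow of $\Dbl(\calB)$ whose marked middle composite is an equivalence.

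Next I would unwind the action of $\Dbl(F)=QN(F)$ on horizontal arrows. Since $\Dbl=P\St\,SN$ and the functorial constructions involved send a path $\xymatrix@1{A_0\ar[r]^{f_1}&\cdots\ar[r]^{f_n}&A_n}$ in $\bfC$ (a category, hence a locally discrete bicategory) to the path $\xymatrix@1{FA_0\ar[r]^{Ff_1}&\cdots\ar[r]^{Ff_n}&FA_n}$ of images in $\calB$, together with the marking data $(i_0,i_1)$ which is preserved, the image horizontal arrow in $\Dbl(\calB)$ is
$$
\xymatrix{FA_0\ar[r]^{Ff_1}&\cdots\ar[r]^{Ff_{i_0}}&[FA_{i_0}]\ar[r]^{Ff_{i_0+1}}&\cdots\ar[r]^{Ff_{i_1}}&[FA_{i_1}]\ar[r]^{Ff_{i_1}+1}&\cdots\ar[r]^{Ff_{n}}&FA_n.}
$$
Its marked middle composite is the chosen composite $\varphi_{Ff_{i_0+1},\ldots,Ff_{i_1}}$ in $\calB$. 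By the associativity/unit coherence cells $\Phi$ chosen in the construction of $\Dbl(\calB)$, this chosen composite is connected by an invertible $2$-cell to any other bracketing of $Ff_{i_0+1},\ldots,Ff_{i_1}$; in particular to $F(f_{i_1}\circ\cdots\circ f_{i_0+1})$ up to the structure constraints $F_{2}$ of the homomorphism $F$ (here using that $F$ is a homomorphism of bicategories, so it preserves composition up to coherent invertible $2$-cells). Hence $\varphi_{Ff_{i_0+1},\ldots,Ff_{i_1}}\cong F(f_{i_1}\circ\cdots\circ f_{i_0+1})$ in $\calB$.

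Finally, since $f_{i_1}\circ\cdots\circ f_{i_0+1}\in W$ and $F$ is a $W$-homomorphism, $F(f_{i_1}\circ\cdots\circ f_{i_0+1})$ is an equivalence in $\calB$; being isomorphic to it, $\varphi_{Ff_{i_0+1},\ldots,Ff_{i_1}}$ is also an equivalence (equivalences are closed under invertible $2$-cells). By the pre-companion characterization recalled at the start, the image arrow is a pre-companion in $\Dbl(\calB)$, which is what we wanted. I expect the only mildly delicate point to be bookkeeping the identification $\varphi_{Ff_{i_0+1},\ldots,Ff_{i_1}}\cong F(f_{i_1}\circ\cdots\circ f_{i_0+1})$ through the layers $P$, $\St$, $S$, $N$ — in particular tracking how the chosen composites and the $\Phi$-cells interact with the homomorphism constraints of $F$ — but this is routine coherence, not a genuine obstacle, because all hom-categories of the relevant bicategories that matter here are well-understood and the needed $2$-cells are forced to be invertible by the coherence theorem for bicategories.
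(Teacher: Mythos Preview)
Your proposal is correct and follows exactly the line the paper takes: the paper does not give a separate proof of this lemma but simply states it as an immediate consequence of the characterization of pre-companions in $\Dbl(\calB)$ as those horizontal arrows whose marked-segment composite is an equivalence in $\calB$, together with the definition of a $W$-homomorphism. Your write-up merely makes explicit the two routine steps the paper leaves implicit---identifying the image arrow under $\Dbl(F)$ and checking that its chosen middle composite is isomorphic to $F(f_{i_1}\circ\cdots\circ f_{i_0+1})$---and these are indeed straightforward coherence bookkeeping.
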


We also need a description of the vertical transformations in the image of the $W$-transformations
under $\Dbl$. We claim that they are the following class of vertical transformations.

\begin{dfn}\label{W-trafo}
{\rm A vertical transformation $\alpha\colon F\Rightarrow G\colon \Dbl(\bfC)\rightrightarrows \Dbl(\calB)$ 
between $W$-pseudo-functors  is a {\em $W$-transformation} if 
given the pre-companion structure for $Fw$ and $Gw$ with $w$ a horizontal arrow in $\Dbl(\bfC)$
that is related to $W$ as in Lemma \ref{W-hom},
there are double cells 
$$
\xymatrix{
\ar[r]^{l_{Fw}}\ar[d]|\bullet \ar@{}[dr]|{\alpha_w^l}& \ar[d]|\bullet
\ar@{}[drr]|{\mbox{and}} && \ar[r]^{r_{Fw}}\ar[d]|\bullet \ar@{}[dr]|{\alpha_w^r} & \ar[d]|\bullet
\\
\ar[r]_{l_{Gw}}&&& \ar[r]_{r_{Gw}}&
}$$
such that the following four equations hold:
$$
\xymatrix{
\ar[d]|\bullet\ar[r]^{l_{Fw}}\ar@{}[dr]|{\alpha_w^l}
	&\ar[d]|\bullet\ar@{}[dr]|{\alpha_w}\ar[r]^{Fw} 
	&\ar[d]|\bullet^{d_1\alpha_w} 
	&& \ar[d]|\bullet_{v^l_{Fw}} \ar[r]^{l_{Fw}}\ar@{}[drr]|{\chi^l_{Fw}}
	&\ar[r]^{Fw} 
	&\ar@{=}[d]
\\
\ar[r]_{l_{Gw}}\ar[d]|\bullet_{v_{Gw}^l}\ar@{}[drr]|{\chi^l_{Gw}} & \ar[r]_{Gw} & \ar@{=}[d]
&=& \ar@{=}[rr]\ar[d]|\bullet_{d_1\alpha_w} \ar@{}[drr]|{\mathrm{id}} && \ar[d]|\bullet^{d_1\alpha_w}
\\
\ar@{=}[rr] && && \ar@{=}[rr] &&
}$$
$$\xymatrix{
\ar@{=}[rr]\ar[d]|\bullet_{d_0\alpha_w^l}\ar@{}[drr]|{\mathrm{id}} &&\ar[d]|\bullet^{d_0\alpha_w^l} 
	&& \ar@{=}[d]\ar@{=}[rr]\ar@{}[drr]|{\psi^l_{Fw}} && \ar[d]|\bullet^{v^l_{Fw}}
\\
\ar@{=}[rr]\ar@{=}[d]\ar@{}[drr]|{\psi^l_{Gw}} && \ar[d]|\bullet^{v^l_{Gw}} 
	&=&\ar[d]|\bullet_{d_0\alpha^l_w} \ar@{}[dr]|{\alpha_w^l}\ar[r]_{l_{Fw}} 
	& \ar[d]|\bullet \ar[r]_{Fw}\ar@{}[dr]|{\alpha_w} & \ar[d]|\bullet 
\\
\ar[r]_{l_{Gw}}&\ar[r]_{Gw} & && \ar[r]_{l_{Gw}}&\ar[r]_{Gw} &
}$$
$$
\xymatrix{
\ar[r]^{Fw}\ar[d]|\bullet \ar@{}[dr]|{\alpha_w} & \ar[d]|\bullet \ar[r]^{r_{Fw}}\ar@{}[dr]|{\alpha^r_w}
	& \ar[d]|\bullet^{d_1\alpha^r_w} && \ar[r]^{Fw}\ar[d]|\bullet_{v^r_{Fw}}\ar@{}[drr]|{\chi^r_{Fw}} 
	& \ar[r]^{r_{Fw}} & \ar@{=}[d]
\\
\ar[d]|\bullet_{v^r_{Gw}}\ar@{}[drr]|{\chi^r_{Gw}} \ar[r]_{Gw} & \ar[r]_{r_{Gw}} & \ar@{=}[d] 
	&=& \ar[d]|\bullet_{d_1\alpha^r_w}\ar@{}[drr]|{\mathrm{id}} \ar@{=}[rr] && \ar[d]|\bullet^{d_1\alpha^r_w}
\\
\ar@{=}[rr] && && \ar@{=}[rr] &&
}
$$
$$\xymatrix{
\ar@{=}[d]\ar@{=}[rr]\ar@{}[drr]|{\psi^r_{Fw}} && \ar[d]|\bullet^{v_{Fw}^r} 
	&& \ar[d]|\bullet_{d_0\alpha_w}\ar@{}[drr]|{\mathrm{id}} \ar@{=}[rr]&& \ar[d]|\bullet^{d_0\alpha_w}
\\
\ar[d]|\bullet_{d_0\alpha_w}\ar[r]_{Fw}\ar@{}[dr]|{\alpha_w} 
	& \ar[d]|\bullet \ar[r]_{r_{Fw}}\ar@{}[dr]|{\alpha^r_w} & \ar[d]|\bullet 
	&=& \ar@{=}[d]\ar@{=}[rr]\ar@{}[drr]|{\psi^r_{Gw}} && \ar[d]|\bullet^{v^r_{Gw}}
\\
\ar[r]_{Gw}&\ar[r]_{r_{Gw}} & &&\ar[r]_{Gw}&\ar[r]_{r_{Gw}} &
}$$}
\end{dfn}

 \begin{notation}\rm
We will write $\Hom_{\smWGDbl_{\ps},W}(\Dbl\bfC,\bbD)$ for the category of $W$-pseudo-functors 
and $W$-transformations.
 \end{notation}

We compose $\Dbl(U)$ with the equivalence $\xymatrix@1{\Dbl(\bfC(W^{-1}))\ar[r]^-{\sim}&\CW}$
to get a pseudo functor $\tilde{U}\colon \Dbl(\bfC)\rightarrow\CW$.
Since every weakly globular double category  $\bbD$ is equivalent to 
$\Dbl(\Bic(\bbD))$ 
in the 2-category $\WGDbl_{\mathrm{ps}}$
we get the following universal property:

\begin{thm}
Composition with the pseudo-functor 
$\tilde{U}\colon \Dbl(\bfC)\rightarrow \Dbl(\bfC(W^{-1})),$
gives rise to an equivalence of categories,
$$
\Hom_{\smWGDbl_{\ps}}(\CW,\bbD)\simeq\Hom_{\smWGDbl_{\ps},W}(\Dbl\bfC,\bbD).
$$
\end{thm}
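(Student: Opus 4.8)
\emph{Proof plan.} The strategy is to assemble the statement from three facts already in place: the equivalence of categories $\Hom_{\smWGDbl_{\ps}}(\Dbl(\bfC(W^{-1})),\Dbl(\calB))\simeq\Hom_{\smWGDbl_{\ps},W}(\Dbl(\bfC),\Dbl(\calB))$ obtained above by applying $\Dbl$ to the icon version of the universal property of $\bfC(W^{-1})$ (following \cite{P1995}), so that this equivalence is precomposition with $\Dbl(U)$, since $\Dbl=QN$ preserves composition up to coherent isomorphism; Corollary \ref{2-equiv}, which provides an equivalence $c\colon\Dbl(\bfC(W^{-1}))\rw\CW$ in $\WGDbl_{\ps}$; and the biequivalence $\Dbl\circ\Bic\simeq\mathrm{Id}_{\smWGDbl}$, which for a given weakly globular double category $\bbD$ provides an equivalence $e_\bbD\colon\Dbl(\Bic(\bbD))\rw\bbD$ in $\WGDbl_{\ps}$ with pseudo-inverse $e_\bbD^{-1}$. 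First I would set $\calB=\Bic(\bbD)$ and observe that precomposition with $c$ gives an equivalence $\Hom_{\smWGDbl_{\ps}}(\CW,\bbD)\simeq\Hom_{\smWGDbl_{\ps}}(\Dbl(\bfC(W^{-1})),\bbD)$, and postcomposition with $e_\bbD$ gives an equivalence $\Hom_{\smWGDbl_{\ps}}(\Dbl(\bfC),\Dbl(\Bic(\bbD)))\simeq\Hom_{\smWGDbl_{\ps}}(\Dbl(\bfC),\bbD)$; chaining these with the equivalence recalled above yields the desired equivalence of Hom-categories, and, since $\tilde U=c\circ\Dbl(U)$, the resulting comparison functor is, up to the coherence isomorphisms of $\Dbl$, precomposition with $\tilde U$.

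Next I would check that postcomposition with $e_\bbD$ restricts to the $W$-decorated subcategories, i.e.\ that it carries $\Hom_{\smWGDbl_{\ps},W}(\Dbl(\bfC),\Dbl(\Bic(\bbD)))$ onto $\Hom_{\smWGDbl_{\ps},W}(\Dbl(\bfC),\bbD)$. At the object level this holds because $e_\bbD$ and $e_\bbD^{-1}$, being pseudo-functors between weakly globular double categories, preserve pre-companions (the propositions of Section \ref{ccqu} and Proposition \ref{comp-pres}), so a pseudo-functor into $\Dbl(\Bic(\bbD))$ sends the $W$-related horizontal arrows of $\Dbl(\bfC)$ (those of Lemma \ref{W-hom}) to pre-companions if and only if its composite with $e_\bbD$ does. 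At the morphism level one checks that the four equations of Definition \ref{W-trafo} are stable under whiskering with $e_\bbD$ and $e_\bbD^{-1}$, using that these functors carry the binding cells $\psi,\chi$ and the cells $l_f,r_f,\nu_f$ of a pre-companion structure to cells of the same shape. Combined with the identification of $\Hom_{\smWGDbl_{\ps},W}(\Dbl(\bfC),\Dbl(\calB))$ as the image under $\Dbl$ of the $W$-homomorphisms and $W$-transformations (Lemma \ref{W-hom} and Definition \ref{W-trafo}), this makes the whole chain of equivalences land in the category named in the statement.

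I expect the main obstacle to be precisely this last compatibility: verifying that the externally defined class of $W$-transformations of Definition \ref{W-trafo} matches, under the local equivalence $\Dbl$ (local because $N$ is a biequivalence by Theorem \ref{the7.3} and $Q$ together with $D$ realizes the biequivalence of Theorem \ref{s4.pro4}), the transformations of $W$-homomorphisms that are represented by $W$-homomorphisms into $\Cyl(\calB)$, and that this class is stable under the whiskerings above for a general target $\bbD$. Once that is granted, what remains is the bookkeeping of the coherence isomorphisms, tracking that precomposition with $c\circ\Dbl(U)$ is identified with the asserted equivalence; all of this is formal given the results quoted.
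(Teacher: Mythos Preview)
Your proposal is correct and follows essentially the same route as the paper: the paper's entire argument is the sentence preceding the theorem, namely that applying $\Dbl$ to the icon-level universal property of $\bfC(W^{-1})$ gives the equivalence for targets of the form $\Dbl(\calB)$, and then one uses Corollary \ref{2-equiv} together with the equivalence $\bbD\simeq\Dbl(\Bic(\bbD))$ to pass to general $\bbD$ and to $\CW$. You are in fact more careful than the paper in flagging the verification that postcomposition with $e_\bbD$ preserves the $W$-decorated subcategories and that Definition \ref{W-trafo} matches the image of $W$-transformations under $\Dbl$; the paper leaves these points implicit.
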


Since furthermore, $H(\bfC)$ is equivalent to $\Dbl(\bfC)$, we may compose $\tilde{U}$ 
with this equivalence and obtain 
$\calJ_{\bfC}\colon H(\bfC)\rightarrow \CW$.
We obtain the following corollary.

\begin{cor}
 Composition with $\calJ_{\bfC}\colon H\bfC\rightarrow \Dbl(\bfC(W^{-1}))$ gives 
rise to an equivalence of categories,
$$
\Hom_{\smWGDbl_{\ps}}(\CW,\bbD)\simeq\Hom_{\smWGDbl_{\ps},W}(H\bfC,\bbD).
$$
\end{cor}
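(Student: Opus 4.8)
The plan is to deduce this corollary from the preceding theorem by transporting the universal property of $\tilde U$ along the $2$-equivalence $H\bfC\simeq_2\Dbl\bfC$ recorded in Section~\ref{bicat}. Recall from there that, although $H\bfC$ and $\Dbl\bfC$ are not isomorphic, there is a strict functor $I\colon H\bfC\to\Dbl\bfC$ (sending an arrow $f$ of $\bfC$ to the length-one path on $f$ with its unique marking) which is an equivalence in $\WGDbl_{\ps}$, with a pseudo-inverse $\Dbl\bfC\to H\bfC$; and that $\JC$ is by construction the composite $\tilde U\circ I$. So the corollary will follow once we know that $(-)\circ I$ induces an equivalence between the relevant hom-categories that respects the $W$-structure.

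First I would record the general fact that composition with an equivalence $E\colon\bbX\to\bbX'$ in $\WGDbl_{\ps}$ induces an equivalence of categories $\Hom_{\smWGDbl_{\ps}}(\bbX',\bbD)\simeq\Hom_{\smWGDbl_{\ps}}(\bbX,\bbD)$ for every $\bbD$; this is the usual bicategorical argument, using a pseudo-inverse $E'$ of $E$ and the invertible vertical transformations $EE'\cong\id$, $E'E\cong\id$ to build a quasi-inverse to $(-)\circ E$. Applying this with $E=I$ gives $\Hom_{\smWGDbl_{\ps}}(\Dbl\bfC,\bbD)\simeq\Hom_{\smWGDbl_{\ps}}(H\bfC,\bbD)$, and the previous theorem gives $\Hom_{\smWGDbl_{\ps}}(\CW,\bbD)\simeq\Hom_{\smWGDbl_{\ps},W}(\Dbl\bfC,\bbD)$. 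It remains to check that the former equivalence restricts to an equivalence between the full subcategories of $W$-pseudo-functors and $W$-transformations on either side, and that under this restriction composition with $\tilde U$ becomes composition with $\JC$.

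For the last point I would argue as follows. A pseudo-functor $H\bfC\to\bbD$ is a $W$-pseudo-functor precisely when it carries each horizontal arrow coming from an element of $W$ to a pre-companion in $\bbD$, and a vertical transformation between such is a $W$-transformation when it carries the chosen pre-companion data to pre-companion data as in Definition~\ref{W-trafo}. Since pre-companions are preserved by pseudo-functors of weakly globular double categories (Proposition~\ref{comp-pres} and the preservation results of Section~\ref{pre-comps}) and are stable under precomposition and postcomposition with invertible vertical cells, and since $I$ sends the horizontal arrow on $w\in W$ to the length-one marked path on $w$, which is related to $W$ in the sense of Lemma~\ref{W-hom}, a pseudo-functor $G\colon\Dbl\bfC\to\bbD$ is a $W$-pseudo-functor if and only if $G\circ I$ is, and the same holds for transformations. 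Hence the equivalence $(-)\circ I$ restricts to the subcategories of $W$-pseudo-functors and $W$-transformations, and using $\JC=\tilde U\circ I$ together with the previous theorem we obtain the stated equivalence $\Hom_{\smWGDbl_{\ps}}(\CW,\bbD)\simeq\Hom_{\smWGDbl_{\ps},W}(H\bfC,\bbD)$.

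The main obstacle I expect is not the abstract transport argument — that is routine — but pinning down the definition of $W$-pseudo-functor and $W$-transformation out of $H\bfC$ so that it is literally stable under $I$ and its pseudo-inverse. Concretely, one must verify that the pre-companion structures chosen on the $\Dbl\bfC$ side (which involve the combinatorics of marked paths and the chosen composites $\varphi_{f_{i_0+1},\ldots,f_{i_1}}$) correspond, via the canonical invertible vertical cells supplied by Lemmas~\ref{lr} and~\ref{pre-comp-uniq}, to pre-companion structures expressed directly in terms of arrows of $\bfC$, and that the four equations of Definition~\ref{W-trafo} transport along this correspondence. Once this dictionary between the two presentations of the $W$-structure is in place, the corollary follows formally.
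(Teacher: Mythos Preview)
Your approach is correct and matches the paper's: the corollary is stated immediately after the theorem with only the sentence ``Since furthermore, $H(\bfC)$ is equivalent to $\Dbl(\bfC)$, we may compose $\tilde{U}$ with this equivalence and obtain $\calJ_{\bfC}\colon H(\bfC)\rightarrow \CW$'' as justification, so the paper treats it as an immediate consequence of transporting along $H\bfC\simeq\Dbl\bfC$. You are more explicit than the paper about what needs to be checked (that the equivalence $(-)\circ I$ respects the $W$-subcategories), which is fine; the paper simply leaves this as understood.
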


\begin{rmks}\rm
 \begin{enumerate}
 \item This characterization of $\CW$ determines this weakly globular double category up to a vertical equivalence.
\item
It is straightforward to check that the composition $\calJ_{\bfC}\colon H\bfC\rightarrow\CW$ 
used in the last corollary
is the obvious inclusion functor, sending and object of $C$ to the object in $\CW$ represented 
by its identity arrow. 
It is clear that this is a strict functor of weakly globular double categories. 
 \end{enumerate}
\end{rmks}

In the rest of this section we will discuss 
the universal property of the functor $\calJ_{\bfC}\colon h\bfC\rightarrow\CW$  
in terms of 2-categories of strict functors and horizontal transformations.
This will then characterize $\CW$ up to a horizontal equivalence.
This characterization will be in terms of companions rather than pre-companions, 
since strict functors do preserve companions.

\subsection{Companions and conjoints in $\CW$}
As we saw in an Section \ref{compquasi}, companions in weakly globular double categories are closely related to 
quasi units in bicategories. Requiring in the universal property  that the elements of $W$ themselves become 
companions is too strong, 
but requiring that they become companions after composition with a horizontal isomorphism 
turns out to be just right for a universal property with respect to strict functors and horizontal transformations.

For this part we will make the added assumption on the class $W$ that it satisfies the 2 out of 3 property:
if $f,g,h$ are arrows in $\bfC$ such that $gf=h$ and two of $f,g$ and $h$ are in $W$, so is the third.

The following lemma characterizes the horizontal and vertical arrows in $\bfC\W$ that have 
companions or conjoints.

\begin{lma}\label{compchar}
\begin{enumerate}
 \item 
A vertical arrow in $\CW$ has a horizontal companion if and only if it is of the form
\begin{equation}\label{vertcomp}
\xymatrix@R=1.5em{
(A_1\ar[r]^{wu} & B)\ar@{=}[dd]
\\
C\ar@{=}[u]\ar[d]_u 
\\
(A_2\ar[r]_{w}&B)
}
\end{equation}
\item
A vertical arrow in $\CW$ has a horizontal conjoint if and only if it is of the form
\begin{equation}\label{vertconj}
\xymatrix@R=1.5em{
(A_1\ar[r]^{w} & B)\ar@{=}[dd]
\\
A_2\ar@{=}[d]\ar[u]^u 
\\
(A_2\ar[r]_{wu}&B)
}
\end{equation}
\item
A horizontal arrow in $\CW$ has a vertical companion and conjoint if and only if
it is of the form
$$
 \xymatrix{
(B&\ar[l]_{wu} A_2)\ar[r]^u&(A_1\ar[r]^{w} & B)\rlap{\,.}
}
$$ 
\end{enumerate}
\end{lma}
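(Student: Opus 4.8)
The plan is to prove all three parts by direct computation inside $\CW$, using two facts already available: a double cell of $\CW$ is determined by its boundary (Lemma~\ref{uniquedblcell}), so that the binding-cell identities (\ref{compcondns}) and (\ref{conjcondns}) hold automatically once the binding cells themselves exist; and a vertical arrow $(w_1)\to(w_2)$ exists iff the two codomains agree, and is then unique (Lemma~\ref{connectedcomp}).

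For (1), the ``if'' direction is a direct check: if $v=[1_{A_1},A_1,u]$ as in (\ref{vertcomp}), then $u\colon A_1\to A_2$ is a horizontal companion, with binding cell $\psi$ given by the component $1_{A_1}$ and $\chi$ by the component $u$; these fit the required boundaries, and (\ref{compcondns}) is free by Lemma~\ref{uniquedblcell}. For the ``only if'' direction, suppose $f$ is a horizontal companion of $v=[u_1,C,u_2]$. Evaluating $\psi$ on the canonical representative $(1_{A_1},A_1,1_{A_1})$ of $1_{(w_1)}$ against $(u_1,C,u_2)$, the definition of a double cell in $\CW$ yields, after a permissible precomposition $r$ (with $w_1r\in W$), an arrow $\varphi$ with $u_1\varphi=r$ and $u_2\varphi=fr$. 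Precomposing $(u_1,C,u_2)$ by $\varphi$ produces a representative $(r,D,fr)$ of $v$ whose down-leg is $f$ composed with its up-leg, and the trivial comparison span $D\xleftarrow{1_D}D\xrightarrow{r}A_1$ exhibits $v$ in the form (\ref{vertcomp}); the $2$-out-of-$3$ property for $W$ guarantees $r\in W$, so that the composites occurring in the equivalence of representatives stay in $W$. Statement (2) follows by the symmetry that exchanges horizontal and vertical throughout the definitions of companion and conjoint: one reruns the argument with up-legs and down-legs interchanged and the conjoint binding cells $\alpha,\beta$ of (\ref{conjcondns}) in place of $\psi,\chi$, arriving at the form (\ref{vertconj}).

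For (3), I would first reduce to companions. Since the vertical arrow category of $\CW$ is a posetal groupoid, Remark~\ref{compconj} says that a horizontal arrow has a vertical conjoint iff it has a vertical companion, so it suffices to characterise horizontal arrows admitting a vertical companion. A vertical companion of $f\colon(w\colon A\to B)\to(w'\colon A'\to B')$ is a vertical arrow $(w)\to(w')$, whence $B=B'$ by Lemma~\ref{connectedcomp}; and rerunning the ``only if'' part of (1) with $f$ fixed identifies the companion (up to special invertible double cell, by uniqueness of companions) with $[1_A,A,f]$, which is a legitimate vertical arrow exactly when $w=w'f$. Conversely, if $B=B'$ and $w=w'f$, then $[1_A,A,f]$ is defined and has $f$ as a horizontal companion by the ``if'' part of (1); this gives precisely the displayed normal form. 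Equivalently --- and this is the route I would use to organise the bookkeeping cleanly --- one transports the question through the fundamental bicategory: by Proposition~\ref{quasi-comp}, $f$ has a vertical companion iff $f\colon\bar A\to\bar B$ is a quasi unit in $\Bic(\CW)$, and by the biequivalence $\omega$ of Theorem~\ref{equiv} this holds iff $\omega(f)=(B\xleftarrow{w}A\xrightarrow{w'f}B')$ is a quasi unit in $\bfC(W^{-1})$, i.e.\ iff $B=B'$ and $w$ and $w'f$ agree after precomposition with some map of $W$ --- which, by $2$-out-of-$3$, is the assertion that $f$ is of the displayed form.

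The hard part is not conceptual but the bookkeeping: one must verify throughout that the families of arrows written down as binding cells genuinely define double cells of $\CW$ (that they are stably defined on enough representatives and are compatible under the precomposition relation), and that each representative extracted from a binding cell is legitimate. A related subtlety, already visible in (3), is that a vertical arrow may carry several horizontal companions differing only by special invertible double cells, so the normal form is attained only up to such an isomorphism of horizontal arrows; it is precisely here that the cancellation behaviour of $W$, controlled by the $2$-out-of-$3$ hypothesis, enters.
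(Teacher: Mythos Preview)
Your direct arguments follow the paper's closely. For part~1 you extract from a binding cell a representative of the vertical arrow of the shape $(r,D,fr)$; the paper does the same thing using the $\chi$ cell rather than $\psi$, obtaining $fu_1w=u_2w$ and then asserting that the vertical arrow is $[1_{A_1},A_1,f]$. For part~2 the paper simply invokes Remark~\ref{compconj} (a horizontal conjoint of $v$ is the same as a companion of $v^{-1}$) rather than rerunning the construction; your direct rerun is fine, though the phrase ``the symmetry that exchanges horizontal and vertical'' is not an accurate description --- companions and conjoints differ in the orientation of their binding cells, not by swapping the two directions of the double category. For part~3 the paper just says it follows from the proofs of parts~1 and~2.

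Your bicategorical alternative for part~3 contains a genuine error. You correctly unwind that $\omega(f)$ is a quasi unit in $\bfC(W^{-1})$ precisely when $B=B'$ and $ws=w'fs$ for some $s$ with $ws\in W$. But the next step, ``by 2-out-of-3, this is the assertion that $f$ is of the displayed form'', is wrong: the 2-out-of-3 property concerns membership in $W$, not cancellability of arrows; it does not let you deduce $w=w'f$ from $ws=w'fs$. So this route does not reach the stated normal form. (The same passage from a representative $(r,D,fr)$ to the literal form $[1_{A_1},A_1,f]$, which would require $w_1=w_2f$, is glossed over in your direct route as well --- and, for that matter, in the paper's own proof.)
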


\begin{proof}
To prove the first part, the companion for (\ref{vertcomp}) is
the horizontal arrow
$$
\xymatrix@1{(B&\ar[l]_{wu} A_1)\ar[r]^{u}&(A_2\ar[r]^w&B)}
$$
with binding cells
\begin{equation}\label{bindingcells}
\xymatrix@R=1.5em{
&(B\ar@{=}[dd] & A_1)\ar[l]_{wu}\ar@{=}[r] & (A_1\ar[r]^{wu}&B)\ar@{=}[dd]
\\
\ar@{}[r]|(0.2){\textstyle \psi_{u,w}=}&&A_1\ar@{=}[d]\ar@{=}[u] \ar@{=}[r] & A_1\ar@{=}[u]\ar[d]_{u}&
\\
&(B&\ar[l]^{wu}A_1)\ar[r]_{u} & (A_2\ar[r]_w & B)
\\
&(B\ar@{=}[dd] & A_1)\ar[l]_{wu}\ar[r]^u & (A_2\ar[r]^w & B)\ar@{=}[dd]
\\
\ar@{}[r]|(0.2){\textstyle \chi_{u,w}=}& & A_1\ar[d]_u\ar@{=}[u] \ar[r]^u & A_2\ar@{=}[u]\ar@{=}[d]&
\\
&(B&\ar[l]^{w}A_2)\ar@{=}[r] & (A_2\ar[r]_w & B)
}
\end{equation}
We leave it up to the reader to check that these cells satisfy the equations (\ref{compcondns}).

Conversely, suppose that a vertical arrow
\begin{equation}\label{vertarrow}
\xymatrix@R=1.5em{
A_1\ar[r]^{w_1} & B\ar@{=}[dd]
\\
C\ar[u]^{u_1}\ar[d]_{u_2} 
\\
A_2\ar[r]_{w_2}&B
}
\end{equation}
has a companion $\xymatrix@1{(B&\ar[l]_{w_1}A_1)\ar[r]^{f}&(A_2\ar[r]^{w_2}&B)}$. Then there is 
a binding cell
$$
\xymatrix@R=1.5em{
(B\ar@{=}[dd] & A_1)\ar[l]_{w_1}\ar[r]^f & (A_2\ar[r]^{w_2} & B)\ar@{=}[dd]
\\
 & C\ar[d]_{u_2}\ar[u]^{u_1} \ar@{}[r]|{(\chi)} & A_2\ar@{=}[u]\ar@{=}[d]&
\\
(B&\ar[l]^{w_2}A_2)\ar@{=}[r] & (A_2\ar[r]_{w_2} & B)
}
$$
So there is an arrow $w\in W$ such that the component $\chi_{u_1w,u_2w,1_{A_2},1_{A_2}}$
exists. So $fu_1w= \chi_{u_1w,u_2w,1_{A_2},1_{A_2}}=u_2w$. It follows that 
(\ref{vertarrow}) is equivalent to
$$
\xymatrix@R=1.5em{
A_1\ar[r]^{w_1} & B\ar@{=}[dd]
\\
A_1\ar@{=}[u]\ar[d]_{f} 
\\
A_2\ar[r]_{w_2}&B
}
$$
This completes the proof of the first part.

The second part follows from the first part, together with Remark \ref{compconj}:
a vertical arrow in $\CW$ has a horizontal conjoint if and only if its inverse has this
horizontal arrow as companion.

The last part follows from the proofs of the previous two parts.
\end{proof}

It is easy to see that the vertical companions and conjoints generate the category of vertical arrows:
$$
\xymatrix@R=1.5em{
\ar@{=}[dd]B&\ar[l]_{w_1} A_1
\\
&C\ar[u]_{u_1}\ar[d]^{u_2}
\\
B&\ar[l]^{w_2} A_2
}$$
is the vertical composition of 
$$
\xymatrix@R=1.5em{
\ar@{=}[dd]B&\ar[l]_{w_1} A_1 && \ar@{=}[dd]B&\ar[l]_{w_2u_2} C 
\\
&C\ar[u]_{u_1}\ar@{=}[d]&\mbox{and} & &C\ar@{=}[u]\ar[d]^{u_2}
\\
B&\ar[l]^{w_1u_1} C&& B&\ar[l]^{w_2} A_2
}$$
which are composable since $w_1u_1=w_2u_2$.

The following lemma shows that the companion binding cells and 
their vertical inverses (the conjoint binding cells)
generate all the cells of $\CW$.

\begin{lma}\label{dblcellfac}
Each double cell in $\CW$ can be written as a pasting diagram of companion binding 
cells as in (\ref{bindingcells}) and their
vertical inverses.
\end{lma}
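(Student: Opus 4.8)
The plan is to take an arbitrary double cell in $\CW$, presented as an equivalence class of families $(\varphi)$ as in the definition of double cells, and to exhibit it as a vertical/horizontal pasting of the companion binding cells $\psi_{u,w}$, $\chi_{u,w}$ of \eqref{bindingcells} and their vertical inverses (the conjoint binding cells of Remark \ref{compconj}). The key structural input is the two factorization lemmas already proved just above: the category of vertical arrows is generated under vertical composition by the two special types of vertical arrows \eqref{vertcomp} and \eqref{vertconj} (the companions and conjoints), and likewise every double cell has a canonical ``frame'' built from vertical and horizontal arrows. So the first step is to reduce a general double cell to the two extreme cases: a double cell whose vertical source and target are both companion-type vertical arrows, and one whose vertical source and target are both conjoint-type. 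Since horizontal identity cells on these vertical arrows are themselves obtainable from $\psi$ and $\chi$ (take $f=\mathrm{Id}$), and since middle-four interchange lets us paste, it suffices to treat these two cases.

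The main step is then a direct computation in the two extreme cases. Consider a double cell
$$
\xymatrix@R=1.5em@C=4em{
(w_1)\ar[d]|\bullet_{[u_1,C,u_2]}\ar[r]^{f_1}\ar@{}[dr]|{(\varphi)} & (w_1')\ar[d]|\bullet^{[u_1',C',u_2']}\\
(w_2)\ar[r]_{f_2} & (w_2')
}
$$
in which, by Lemma \ref{compchar}, both vertical arrows may be taken in the form \eqref{vertcomp}, i.e. $u_1=\mathrm{id}$, $w_1=w_2u_2$ and $u_1'=\mathrm{id}$, $w_1'=w_2'u_2'$. I would then write $(\varphi)$ as the horizontal composite, or rather the appropriate pasting, of the binding cell $\chi_{u_2,w_2}$ on the left, the component $\varphi_{u_1,u_2,u_1',u_2'}$ reinterpreted through the (unique, since $(\CW)_1$ is posetal) comparison double cells, and the inverse binding cell $\psi_{u_2',w_2'}$ on the right: the commuting square \eqref{square} giving the component $\varphi$ is precisely the data needed to identify the pasted cell with $(\varphi)$. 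The posetality of both the vertical arrow category (Lemma \ref{connectedcomp}) and of $(\CW)_1$ (Lemma \ref{uniquedblcell}) is what makes this identification automatic: any two double cells with the same frame agree, so once the boundaries of the proposed pasting match those of $(\varphi)$, the pasting \emph{is} $(\varphi)$. The conjoint case is handled dually, using $\psi_{u,w}^{-1}$, $\chi_{u,w}^{-1}$ and \eqref{vertconj}.

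The hard part, such as it is, is bookkeeping rather than conceptual: one must verify that the chosen representatives of the vertical arrows can be taken simultaneously in companion (resp.\ conjoint) form on both source and target of the cell, and that splitting a general vertical arrow as a vertical composite of a companion-type arrow followed by a conjoint-type arrow (as displayed right before the statement) is compatible, via middle-four interchange, with splitting the given double cell horizontally. I would organize this as: (i) split each of the four sides' vertical arrows as (companion)$\cdot$(conjoint); (ii) use Lemma \ref{isofib}-style completions, or directly the posetality, to fill the resulting grid of four sub-cells; (iii) observe each sub-cell is, up to the unique comparison, either of companion-binding type, conjoint-binding type, or a horizontal identity on such a vertical arrow, hence itself a pasting of $\psi$'s and $\chi$'s; (iv) reassemble. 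Throughout, the repeated use of ``there is at most one double cell with a given frame'' from Lemma \ref{uniquedblcell} collapses what would otherwise be delicate coherence checks into trivialities.
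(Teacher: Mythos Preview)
Your overall strategy matches the paper's: factor each vertical arrow as a conjoint followed by a companion (as displayed just before the lemma), insert the component $\xi=\varphi$ as the new middle horizontal arrow $(w_1u_1)\to(w_1'u_1')$, and rely on Lemma~\ref{uniquedblcell} so that once the boundary of a proposed pasting agrees with that of $(\varphi)$, the pasting \emph{is} $(\varphi)$. The paper carries this out by exhibiting the generic cell explicitly as a $4\times 2$ grid whose entries are vertical identity cells $1_{f_1},1_{f_2},1_\xi$ together with $\chi_{u_1,w_1}^{-1},\psi_{u_1',w_1'}^{-1},\psi_{u_2',w_2'},\chi_{u_2,w_2}$; your ``four sub-cells'' in step~(ii) is one $2\times 2$ half of this grid.

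There is a slip in your treatment of the companion--companion extreme case. The decomposition is not a linear horizontal composite $\chi_{u_2,w_2}\mid\text{middle}\mid\psi_{u_2',w_2'}^{-1}$: with $\chi$ on the left and $\psi^{-1}$ on the right, the top boundary would end at $(w_2')$ rather than $(w_2'u_2')$, and the required middle horizontal arrow $(w_2)\to(w_2'u_2')$ need not exist. The correct picture (and what the paper writes down) is the $2\times 2$ grid
\[
\begin{array}{c|c} 1_{\xi} & \psi_{u_2',w_2'}\\ \hline \chi_{u_2,w_2} & 1_{f_2}\end{array}
\]
using $\psi$ rather than its inverse; this composes because $u_2'\xi=f_2u_2$. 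The inverses $\chi^{-1},\psi^{-1}$ appear in the conjoint half (rows 1--2 of the paper's grid), not here. The component $\varphi$ enters not as a cell to be ``reinterpreted'' but simply as the horizontal arrow $\xi$ in the middle row. With this correction your argument and the paper's coincide.
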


\begin{proof}
Let 
\begin{equation}\label{generic}
\xymatrix@R=1.5em{
(B\ar@{=}[dd] & \ar[l]_{w_1}A_1)\ar[r]^{f_1}&(A_1'\ar[r]^{w_1'} & B')\ar@{=}[dd]
\\
&C\ar[u]_{u_1}\ar[r]^{\xi}\ar[d]^{u_2}&C'\ar[u]_{u_1'}\ar[d]^{u_2'} &
\\
(B & A_2)\ar[l]^{w_2}\ar[r]_{f_2}&( A_2'\ar[r]_{w_2'}& B')
}
\end{equation}
be a component of a double cell in $\CW$.
This cell can be written as a pasting of the following 
array of double cells by first composing the double cells in each row
horizontally and then composing the resulting double cells vertically.
$$
\xymatrix@R=1.5em{
(B\ar@{=}[dd] & \ar[l]_{w_1}A_1)\ar@{=}[r]&(A_1\ar[r]^{w_1} & B)\ar@{=}[dd]
	&(B\ar@{=}[dd] & \ar[l]_{w_1}A_1)\ar[r]^{f_1}&(A_1'\ar[r]^{w_1'} & B')\ar@{=}[dd]
\\
&C\ar[u]_{u_1}\ar[r]^{u_1}\ar@{=}[d]&A_1\ar@{=}[u]\ar@{=}[d]&
	&&A_1\ar@{=}[u]\ar@[=][d]\ar[r]^{f_1}&A_1'\ar@{=}[u]\ar@{=}[d] &
\\
(B & C)\ar[l]^{w_1u_1}\ar[r]_{u_1}& (A_1'\ar[r]_{w_1}& B)
	& (B & A_1)\ar[l]^{w_1}\ar[r]_{f_1}& (A_1'\ar[r]_{w_1'}& B')
}
$$
$$\xymatrix@R=1.5em{
(B\ar@{=}[dd] & \ar[l]_{w_1u_1} C)\ar[r]^{\xi}&(C'\ar[r]^{w_1'u_1'} & B')\ar@{=}[dd]
	& (B'\ar@{=}[dd] & \ar[l]_{w_1'u_1'}C')\ar[r]^{u_1'}&(A_1'\ar[r]^{w_1'} & B')\ar@{=}[dd]
\\
&C\ar@{=}[u]\ar[r]^{\xi}\ar@{=}[d]&C'\ar@{=}[u]\ar@{=}[d]&
	&&C'\ar@{=}[u]\ar@{=}[r]\ar@{=}[d]&C'\ar[u]_{u_1'}\ar@{=}[d] &
\\
(B & C)\ar[l]^{w_1u_1}\ar[r]_{\xi}& (C'\ar[r]_{w_1'u_1'}& B')
	& (B' & C')\ar[l]^{w_1'u_1'}\ar@{=}[r]& (C'\ar[r]_{w_1'u_1'}& B')
}
$$
$$\xymatrix@R=1.5em{
(B\ar@{=}[dd] & \ar[l]_{w_1u_1}C)\ar[r]^{\xi}&(C'\ar[r]^{w_2'u_2'} & B')\ar@{=}[dd]
	& (B'\ar@{=}[dd] & \ar[l]_{w_2'u_2'}C')\ar@{=}[r]&(C'\ar[r]^{w_2'u_2'} & B')\ar@{=}[dd]
\\
&C\ar@{=}[u]\ar[r]^{\xi}\ar@{=}[d]&C'\ar@{=}[u]\ar@{=}[d] &
	&&C'\ar@{=}[u]\ar@{=}[r]\ar@{=}[d]&C'\ar@{=}[u]\ar[d]^{u_2'} &
\\
(B & C)\ar[l]^{w_2u_2}\ar[r]_{\xi}& (C'\ar[r]_{w_2'u_2'}& B')
	& (B' & C')\ar[l]^{w_2'u_2'}\ar[r]_{u_2'}& (A_2'\ar[r]_{w_2'}& B')
}
$$
$$\xymatrix@R=1.5em{
(B\ar@{=}[dd] & \ar[l]_{w_2u_2}C)\ar[r]^{u_2}&(A_2\ar[r]^{w_2} & B)\ar@{=}[dd]
	& (B\ar@{=}[dd] & \ar[l]_{w_2}A_2)\ar[r]^{f_2}&(A_2'\ar[r]^{w_2'} & B')\ar@{=}[dd]
\\
&C\ar@{=}[u]\ar[r]^{u_2}\ar[d]^{u_2}&A_2\ar@{=}[u]\ar@{=}[d] &
	&& A_2\ar@{=}[u]\ar[r]^{f_2}\ar@{=}[d]&A_2'\ar@{=}[u]\ar@{=}[d] &
\\
(B & A_2)\ar[l]^{w_2} \ar@{=}[r]& (A_2\ar[r]_{w_2}& B)
	& (B & A_2)\ar[l]^{w_2} \ar[r]_{f_2}& (A_2'\ar[r]_{w_2'}& B')\rlap{\,.}
}
$$ 
Each one of these cells is either a horizontal or vertical identity cell or a companion binding cell or the 
inverse of a companion binding cell.
\end{proof}

\subsection{$W$-friendly functors and transformations}
As we have seen in 
Lemma \ref{compchar} and Lemma \ref{dblcellfac},  the strict inclusion functor $\calJ_{\bfC}\colon H\bfC\rightarrow \CW$
adds companions and conjoints to $H\bfC$.
We want to make precise in what sense it does so freely.

The first question is for which arrows it adds the companions and conjoints.
It is tempting to think that it does this for the horizontal arrows in the image of $W$.
However, those would be the arrows of the form
$$\xymatrix@1{(1_A)\ar[r]^w&(1_B)}$$
and the arrows that obtain companions and conjoints are of the form
$$
\xymatrix@1{(uw)\ar[r]^w&(u)\rlap{\,.}}
$$
In particular, there are companions and conjoints for arrows
of the form 
\begin{equation}\label{w}
\xymatrix@1{(w)\ar[r]^w&(1_B)\rlap{\,,}}
\end{equation}
but not for
$$\xymatrix@1{(1_A)\ar[r]^w&(1_B)}$$
unless $w=1_B$.
The arrow in (\ref{w}) can be factored as
$$
\xymatrix@1{(B&A\ar[l]_w)\ar[r]^{1_A}&(A\ar[r]^{1_A}&A)\ar[r]^w &(B\ar[r]^{1_B}&B)}
$$
We will denote the first arrow in this factorization by $\varphi_w\colon (w)\rightarrow(1_A)$.
Note that this is a horizontal isomorphism in $\CW$.
Our first step will be to show that the family $\{\varphi_w\}$ for $w\in W$ form the components of an invertible 
natural transformation.
To express this property we need the following comma category derived from $W$.

\begin{dfn}\label{nabla}
{\rm Let $\bfC$ be a category with a class $W$ of arrows.
We define $\nabla W$ to be the category with arrows of $W$ as objects and an 
arrow from $\xymatrix@1{A\ar[r]^w&B}$
to $\xymatrix@1{A'\ar[r]^{w'}&B}$ is given by a commutative triangle of arrows in $\bfC$:
$$
\xymatrix{
A\ar[dr]_w\ar[rr]^v&&A'\ar[dl]^{w'}
\\
&B
}
$$
We denote this arrow by $(v,w')\colon w\rightarrow w'$.
There is a functor $D_0\colon \nabla W\rightarrow \bfC$ defined by $D_0(w)=d_0(w)$ (the domain)
on objects and $D_0(v,w')=v$.}
\end{dfn}

\begin{rmk}
 {\rm Since $W$ satisfies the 2 out of 3 property, the commutativity of the triangle in Definition
\ref{nabla} implies that $v\in W$. Furthermore, by condition {\bf CF1}, $W$ forms a subcategory of $\bfC$ 
which contains all objects of $\bfC$. So we can view $\nabla W$ as the comma category $(1_W\downarrow J_W)$,
where $J_W\colon C_0^d\rightarrow W$ is the inclusion functor of the discrete category on the objects of $\bfC$
into the category $W$. In this notation, $D_0$ is just the first projection functor 
$(1_W\downarrow J_W)\rightarrow W$ followed by the inclusion into $\bfC$.}
\end{rmk}

Let $\Phi\colon\nabla W\rightarrow \Comp(\CW)$
be the functor that sends an arrow
$$
\xymatrix@R=1.5em{
A\ar[dr]_{wu}\ar[rr]^u&&A'\ar[dl]^{w}
\\
&B
}
$$
to the horizontal companion arrow $\xymatrix@1{(B&A\ar[l]_{wu})\ar[r]^u&(A'\ar[r]^{w}&B)}$
with the vertical companion and binding cells in $\CW$ as indicated in the proof of Lemma \ref{compchar}.
Now we can view $\varphi$ as a natural transformation in the following diagram
$$
\xymatrix{
\nabla W\ar[d]_{D_0}\ar[r]^-\Phi\ar@{}[dr]|{\stackrel{\varphi}{\Rightarrow}} & \Comp(\CW)\ar[d]^{h_-}
\\
\bfC\ar[r]_{h\JC}&h\CW\rlap{\,.}
}
$$
This is almost enough to describe $\JC$ as a $W$-friendly functor.
The problem is that the information given thus far is not enough to describe 
$W$-friendly transformations between  $W$-friendly functors.
Such transformations need to have components that interact well with the companion binding cells.
In order to ensure this, $\Phi$ needs to be viewed as a double functor $V\nabla W\rightarrow \bbComp(\CW)$.
This leads us to the following definition.

\begin{dfn}
{\rm A {\em $W$-friendly structure} $(\Gamma,\gamma)$ for a functor $G\colon H\bfC\rightarrow \bbD$ consists of 
a functor $\Gamma\colon V\nabla W\rightarrow \bbComp(\bbD)$ together with 
an invertible natural transformation $\gamma$,
$$
\xymatrix{
\nabla W\ar[d]_{D_0}\ar[r]^-{v\Gamma}\ar@{}[dr]|{\stackrel{\,\,\sim}{\Leftarrow}}_(.57)\gamma 
  & v\bbComp(\bbD)\ar[d]^{h_-}
\\
\bfC\ar[r]_{hG} & h\bbD,}
$$
where $h_-$ is the identity on objects and takes the horizontal component of each arrow (companion pair).
We will also refer to $G$ (and to the pair $(G,\Gamma)$) as a {\em $W$-friendly functor}.}
\end{dfn}

\begin{prop}
There is a canonical $W$-friendly structure $$(\Phi\colon V\nabla W\rightarrow\bbComp(\CW), \varphi)$$ 
for the functor $\calJ_{\bfC}\colon H\bfC\rightarrow\CW$.
\end{prop}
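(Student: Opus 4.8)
The plan is to produce the two pieces of data explicitly and to verify the required compatibilities, all of which reduce to computations already carried out in Lemmas \ref{compchar} and \ref{dblcellfac}. First I would define the double functor $\Phi\colon V\nabla W\rightarrow\bbComp(\CW)$. On objects, an object of $V\nabla W$ is an arrow $w\colon A\rightarrow B$ in $W$, and we send it to the object $(w)$ of $\CW$, which is also an object of $\bbComp(\CW)$. Since $V\nabla W$ has a discrete horizontal category, there is nothing to do on horizontal arrows beyond sending identities to identities. A vertical arrow of $V\nabla W$ is an arrow $(u,w')\colon wu\rightarrow w'$ of $\nabla W$, i.e.\ a commutative triangle with $w'u=wu$; we send it to the companion pair in $\bbComp(\CW)$ consisting of the horizontal arrow $\xymatrix@1{(B&A\ar[l]_{wu})\ar[r]^u&(A'\ar[r]^{w'}&B)}$, its vertical companion $[1_A,A,u]\colon(wu)\rightarrow(w')$, and the binding cells $\psi_{u,w'}$, $\chi_{u,w'}$ written out in the proof of Lemma \ref{compchar}. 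On double cells of $V\nabla W$ (which, again, all have identity horizontal domain and codomain, so amount to commuting squares of vertical arrows of $\nabla W$) we send the square to the evident double cell of $\bbComp(\CW)$ built from the identity double cell in $\CW$; that this satisfies the three axioms of a cell in $\bbComp(\bbD)$ is immediate from the explicit form of the binding cells and the fact that $(\CW)_1$ is posetal.

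Next I would check that $\Phi$ is functorial. The only content is vertical composition of vertical arrows: given $(u_1,w_1)\colon w_1u_1\rightarrow w_1$ and $(u_2,w_2)\colon w_2u_2\rightarrow w_2$ with $w_2=w_1u_1$, their composite in $\nabla W$ is $(u_1u_2,w_1)$, and one must see that the companion pair assigned to $(u_1u_2,w_1)$ agrees with the vertical composite of the companion pairs — horizontal component $u_1u_2=u_1\circ u_2$, vertical component $[1,A,u_1u_2]$, which by the composition formula for vertical arrows in $\CW$ (Section \ref{construction}) is the composite of $[1,A,u_1]$ and $[1,A,u_2]$, and binding cells composing correctly because $(\CW)_1$ is posetal, so both sides are the unique cell with the given frame. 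Then I would exhibit the natural transformation $\varphi$. Its component at $w\colon A\rightarrow B$ in $\nabla W$ is the horizontal isomorphism $\varphi_w\colon(w)\rightarrow(1_A)$ in $\CW$ given by $\xymatrix@1{(B&A\ar[l]_w)\ar[r]^{1_A}&(A\ar[r]^{1_A}&A)}$; one reads off that $h_-\circ v\Phi$ applied to $w$ is $(w)=d_0$-object and $h\calJ_{\bfC}\circ D_0$ applied to $w$ is $(1_{D_0(w)})=(1_A)$, so $\varphi_w$ has the right source and target, and it is invertible since every horizontal arrow of $\CW$ over an isomorphism of $\bfC$ is invertible (the horizontal category of $\CW$ is posetal and $1_A$ is invertible in $\bfC$). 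Naturality of $\varphi$ with respect to an arrow $(v,w')\colon w\rightarrow w'$ of $\nabla W$ is a square of horizontal arrows of $\CW$, hence commutes automatically by posetality of the horizontal category $(\CW)_1$.

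Finally I would assemble these into the statement that $(\Phi,\varphi)$ is a $W$-friendly structure for $\calJ_{\bfC}$, which by definition is exactly: a double functor $V\nabla W\rightarrow\bbComp(\CW)$ together with an invertible natural transformation filling the square with $D_0$, $h\calJ_{\bfC}$, and $h_-$. The bulk of the work is bookkeeping; I expect the main obstacle to be checking that $\Phi$ is genuinely a \emph{double} functor, i.e.\ that it respects the double-cell structure of $\bbComp(\CW)$ — in particular that the images of the cells of $V\nabla W$ satisfy axioms (1)--(3) in the definition of $\bbComp(\bbD)$, which involve the interaction of the chosen binding cells $\psi_{u,w'}$, $\chi_{u,w'}$ with the vertical-composition cells. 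However, since all the hom-categories of double cells of $\CW$ with a fixed frame are singletons (Lemma \ref{uniquedblcell}) and the binding cells of Lemma \ref{compchar} are literally built from identity cells of $\CW$, each of these axioms collapses to the assertion that two double cells of $\CW$ with the same frame are equal, which is automatic. So the proof is essentially: write everything down, and invoke posetality of $\CW$ in every direction at each step.
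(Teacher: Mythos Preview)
Your approach is essentially the same as the paper's: write down $\Phi$ on objects and vertical arrows using the companion pairs of Lemma~\ref{compchar}, and take $\varphi_w$ to be the horizontal arrow $\xymatrix@1{(B&A\ar[l]_w)\ar[r]^{1_A}&(A\ar[r]^{1_A}&A)}$. The paper's proof is in fact just this explicit definition, with no further verification recorded.

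However, two of your justifications are incorrect. You claim that the horizontal category of $\CW$ is posetal, and use this to conclude both that $\varphi_w$ is invertible and that the naturality square for $\varphi$ commutes. This is false: horizontal arrows $(w_1)\to(w_2)$ in $\CW$ are by definition arrows $d_0(w_1)\to d_0(w_2)$ in $\bfC$, so $h\CW$ is no more posetal than $\bfC$ itself. (What \emph{is} posetal is the vertical category $(\CW)_1$ of horizontal arrows and double cells, by Lemma~\ref{uniquedblcell}; this is a different statement.) The correct reasons are more elementary: $\varphi_w$ is invertible because its underlying $\bfC$-arrow is $1_A$, and the naturality square at $(v,w')\colon w'v\to w'$ commutes because both legs have underlying $\bfC$-arrow equal to $v$ (one is $1_{A'}\circ v$, the other $v\circ 1_A$). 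Also, your description of the double cells of $V\nabla W$ as ``commuting squares of vertical arrows'' is off: since the horizontal category of $V\nabla W$ is discrete, the only double cells are vertical identities, so there is nothing to check there beyond sending identities to identities.
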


\begin{proof}
On objects, $\Phi(w)=(w)$. On horizontal arrows, $\Phi$ is the identity.
On vertical arrows,
$\Phi((u,w))=(h_{(u,w)},v_{(u,w)},\psi_{(u,w)},\chi_{(u,w)})$,
where 
\begin{eqnarray*}
h_{(u,w)}&=&\xymatrix@1{(&)\ar[l]_{wu}\ar[r]^u&(\ar[r]^w&)}\\
v_{(u,w)}&=&\xymatrix@R=1.4em{(\ar[r]^{wu}\ar@{=}[d] &)\ar@{=}[dd]\\\ar[d]_u\\(\ar[r]_w&)}
\end{eqnarray*}
and the binding cells are
$$
\xymatrix@R=1.4em{
(\ar@{=}[dd]&\ar[l]_{wu})\ar@{=}[r] & (\ar[r]^{wu}& \ar@{=}[dd])
      && (\ar@{=}[dd]&\ar[l]_{wu})\ar[r]^u&(\ar[r]^w&\ar@{=}[dd])
\\
&\ar@{=}[u]\ar@{=}[d]\ar@{=}[r] & \ar@{=}[u]\ar[d]^u &&\mbox{and}
      & & \ar@{=}[u]\ar[r]^u\ar[d]^u & \ar@{=}[u]\ar@{=}[d]
\\
(&\ar[l]^{wu})\ar[r]_u&(\ar[r]_w&) && (&\ar[l]^w)\ar@{=}[r] & (\ar[r]_w&)\rlap{\,.}
}
$$
Furthermore, the invertible natural transformation $\varphi$
in 
$$
\xymatrix@R=1.9em{
\nabla W\ar[d]_{D_0}\ar[r]^-{v\Phi} \ar@{}[dr]|{\stackrel{\,\,\sim}{\Leftarrow}}_(.57)\varphi 
  & v\bbComp(\CW)\ar[d]^{h_-}
\\
\bfC\ar[r]_{hF} & h\CW}
$$
has components $\varphi_w$,
$$
\xymatrix{(&\ar[l]_w)\ar@{=}[r] &(\ar@{=}[r]&).}
$$
\end{proof}

Let $L\colon\CW\rightarrow \bbD$ be a functor of double categories.
Composition with $\calJ_{\bfC}\colon H\bfC\rightarrow \CW$ gives rise to a
functor $L\calJ_{\bfC}\colon H\bfC\rightarrow \bbD$ with the $W$-friendly structure
$(\bbComp(L)\circ\Phi,\lambda)$. Here, 
$\bbComp(L)\circ\Phi\colon V\nabla W\rightarrow \bbComp(\bbD)$
sends $w$ to $L(w)$ and a vertical arrow $(u,v)\colon\xymatrix@1{vu\ar[r]|\bullet^u&v}$
to 
$$\left(L\left(\xymatrix{&\ar[l]_{vu}\ar[r]^u & \ar[r]^v&}\right),
L\left(\raisebox{1.95em}{$\xymatrix@R=1.4em{\ar@{=}[dd]&\ar[l]_{vu}\\&\ar@{=}[u]\ar[d]^u \\ 
&\ar[l]^{v}}$}\right),
L\left(\raisebox{1.95em}{$\xymatrix@R=1.4em{\ar@{=}[dd]& \ar[l]_{vu}\ar@{=}[r]& \ar[r]^{vu}&\ar@{=}[dd]\\
& \ar@{=}[u]\ar@{=}[d]
		&\ar@{=}[u]\ar[d]^u \\ &\ar[l]^{vu}\ar[r]_u&\ar[r]_v&}$}\right),\right.
		$$
		$$\left.
L\left(\raisebox{1.95em}{$\xymatrix@R=1.4em{\ar@{=}[dd]& \ar[l]_{vu}\ar[r]^u
		& \ar[r]^{v}&\ar@{=}[dd]\\& \ar@{=}[u]\ar[d]^u
		&\ar@{=}[u]\ar@{=}[d] \\ &\ar[l]^{v}\ar@{=}[r]&\ar[r]_v&}$}\right)\right).$$
Furthermore, the natural transformation $\lambda$ has components 
$\lambda_w=L\left(\xymatrix@1@C=1.5em{&\ar[l]_w\ar@{=}[r]&\ar@{=}[r] &}\right)$.

The appropiate horizontal transformations between $W$-friendly functors 
are described in the following definition.

\begin{dfn}
{\rm For $W$-friendly functors
$$(G,\Gamma,\gamma), (L,\Lambda,\lambda)\colon 
H\bfC\rightrightarrows\bbD, V\nabla W\rightrightarrows \bbComp(\bbD),$$
a {\em $W$-friendly horizontal transformation} is a pair 
$(a\colon G\Rightarrow L,\alpha\colon\Gamma\Rightarrow\Lambda)$
of horizontal transformations
such that the following square commutes: 
\begin{equation}\label{W-friendly}
\xymatrix@R=1.7em{
h_-\circ v\Gamma\ar[d]_\gamma\ar[r]^{h_-\circ v\alpha}& h_-\circ v\Lambda\ar[d]^{v\lambda}
\\
hG\circ D_0\ar[r]_{ha\circ D_0} & hL\circ D_0.
}
\end{equation}}
\end{dfn}

\subsection{The horizontal universal property of $\bfC(W^{-1})$} 
 \begin{thm}\label{main}
 Composition with $\calJ_{\bfC}\colon H\bfC\rightarrow \CW$ induces an equivalence of 
 categories $$\Hom_{\smWGDbl_h}(\CW,\bbD)\simeq\Hom_{\smWGDbl_h,W}(H\bfC,\bbD),$$ where $\Hom_{\smWGDbl_h,W}(H\bfC,\bbD)$
 is the category of $W$-friendly functors and $W$-friendly horizontal transformations.
 \end{thm}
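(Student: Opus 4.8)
The plan is to prove that the functor
$$
\Hom_{\smWGDbl_h}(\CW,\bbD)\longrightarrow\Hom_{\smWGDbl_h,W}(H\bfC,\bbD),\qquad L\longmapsto\bigl(L\calJ_{\bfC},\ \bbComp(L)\circ\Phi,\ \lambda_L\bigr),
$$
described just before the statement, is essentially surjective and fully faithful. (That it is well defined --- that $L\calJ_{\bfC}$ carries a canonical $W$-friendly structure and that a horizontal transformation $b$ yields a $W$-friendly transformation $(b\ast\calJ_{\bfC},\bbComp(b)\ast\Phi)$ --- has already been spelled out above.) Two structural facts do all the work: every object of $\CW$ is horizontally isomorphic, via $\varphi_w\colon(w)\to(1_{d_0w})=\calJ_{\bfC}(d_0w)$, to one in the image of $\calJ_{\bfC}$; and, by Lemmas~\ref{compchar} and \ref{dblcellfac}, the vertical arrows and the double cells of $\CW$ are generated under composition by the companion pairs $\Phi(u,w)$ together with their conjoint partners (vertical inverses).

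For essential surjectivity, given a $W$-friendly functor $(G,\Gamma,\gamma)$ with $\Gamma\colon V\nabla W\to\bbComp(\bbD)$, I would build $\tilde G\colon\CW\to\bbD$ as follows: set $\tilde G(w):=\Gamma(w)$ on objects; on a horizontal arrow $(w)\to(w')$, which is an arrow $f\colon d_0w\to d_0w'$ of $\bfC$, set $\tilde G(f):=\gamma_{w'}^{-1}\cdot G(f)\cdot\gamma_w$; on the unique vertical arrow $(w_1)\to(w_2)$ (which exists iff $d_1w_1=d_1w_2$, by Lemma~\ref{connectedcomp}) let $\tilde G$ be the unique vertical arrow $\Gamma(w_1)\to\Gamma(w_2)$ of $\bbD$ --- it exists because, choosing a cover $d_0w_1\xleftarrow{u_1}C\xrightarrow{u_2}d_0w_2$ with $w_1u_1=w_2u_2\in W$ ({\bf CF1}, {\bf CF2}), the morphisms $(u_1,w_1),(u_2,w_2)$ of $\nabla W$ put $\Gamma(w_1),\Gamma(w_1u_1),\Gamma(w_2)$ into one component of the posetal groupoid $\bbD_0$ (Definition~\ref{s4.def1}), and it is unique for the same reason; and on double cells, $\tilde G$ is forced by applying it to a presentation as a pasting of binding cells and vertical inverses (Lemma~\ref{dblcellfac}), sending $\psi_{u,w},\chi_{u,w}$ to the binding cells of the companion pair $\Gamma(u,w)$. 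Naturality of $\gamma$ at the morphisms $(u,w)$ of $\nabla W$ makes $\tilde G$ agree on frames with the companion data of $\Gamma$; the family $(\gamma_{1_A})_A$ then gives an invertible horizontal transformation $\tilde G\calJ_{\bfC}\Rightarrow G$, and together with the identity transformation on the companion part it gives an isomorphism $(\tilde G\calJ_{\bfC},\bbComp(\tilde G)\circ\Phi,\lambda_{\tilde G})\cong(G,\Gamma,\gamma)$ in $\Hom_{\smWGDbl_h,W}(H\bfC,\bbD)$, the square~\eqref{W-friendly} being immediate since $\varphi_{1_A}=\mathrm{Id}_{(1_A)}$.

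For full faithfulness, fix strict functors $L,L'\colon\CW\to\bbD$. A horizontal transformation $b\colon L\Rightarrow L'$ is determined by its object components $b_{(w)}$ together with its double-cell components on the generating companions $v_{(u,w)}$ (conjoints being vertical inverses, everything else a composite), and the restriction $(b\ast\calJ_{\bfC},\bbComp(b)\ast\Phi)$ records precisely the $b_{(w)}$ and the $b_{v_{(u,w)}}$; hence restriction is faithful. For fullness, from a $W$-friendly transformation $(a,\alpha)$ between the restrictions I would set $b_{(w)}:=\alpha_w$ --- the square~\eqref{W-friendly} evaluated at $w$ and at $1_A$ gives $\alpha_w=L'(\varphi_w)^{-1}\cdot a_{d_0w}\cdot L(\varphi_w)$, so in particular $\alpha_{1_A}=a_A$ and $b$ is compatible with $a$ --- and take the double-cell component of $b$ at $v_{(u,w)}$ to be the $\bbD$-double-cell underlying $\alpha_{(u,w)}$, then extend by strict vertical functoriality. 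That this extension is independent of the chosen cover follows from vertical functoriality of $\alpha$ applied to the evident identities in $\nabla W$ (again using {\bf CF1}--{\bf CF3} to pass to a common refinement); that $b$ is natural in the horizontal direction reduces, via Lemma~\ref{dblcellfac}, to naturality against the binding cells $\psi_{u,w},\chi_{u,w}$, which is exactly the defining compatibility (conditions (1)--(3)) of the double cell $\alpha_{(u,w)}$ of $\bbComp(\bbD)$. One then reads off $b\ast\calJ_{\bfC}=a$ and $\bbComp(b)\ast\Phi=\alpha$.

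The step I expect to be the real obstacle, occurring in both halves, is the coherence making $\tilde G$ on double cells (respectively $b$ on non-generating vertical arrows and its horizontal naturality) independent of the chosen presentation. Any two presentations of a double cell of $\CW$ differ by inserting arrows of $W$, and the point is that {\bf CF1}--{\bf CF3} let one pass to a common refinement on which the two presentations literally coincide, whereupon functoriality of $\Gamma$ (resp.\ $\alpha$) together with the companion binding-cell equations~\eqref{compcondns} closes the argument; managing this bookkeeping --- tracking which families of representatives realise a given double cell, in the sense of the definition of $\CW$ and of Lemma~\ref{uniquedblcell} --- is the bulk of the proof, everything else being formal.
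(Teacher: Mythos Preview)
Your proposal is correct and takes essentially the same approach as the paper: $\tilde G$ is defined identically (the paper writes the explicit formula $v_{\Gamma(u_2,w_2)}\cdot v_{\Gamma(u_1,w_1)}^{-1}$ for what you call the unique vertical arrow, and uses the factorization of Lemma~\ref{dblcellfac} for double cells, displaying the result in a large figure), and the inverse transformation $\tilde a$ is built from components $\tilde a_w=\alpha_w$ together with an explicit pasting of $\psi$- and $\chi$-cells that is exactly the underlying $\bbD$-cell of $\alpha_{(u,w)}$ (the two agree because conditions (2)--(3) in the definition of $\bbComp(\bbD)$ determine such a cell from its horizontal boundary). You are right to flag the presentation-independence step as the crux; the paper leaves it implicit as well, leaning on the posetal vertical structure and Lemma~\ref{uniquedblcell}.
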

 
 \begin{proof}
Composition with $\calJ_{\bfC}$ 
sends a functor $L\colon\CW\rightarrow \bbD$ of weakly globular double categories
to the triple $(L\calJ_{\bfC}, L\circ\Phi,L\varphi)$, i.e., the functor $L\calJ_{\bfC}$ with $W$-friendly 
structure $(L\Phi,L\varphi)$, as described above.

 To show that composition with $\calJ_{\bfC}$ is essentially surjective on objects,
 we show how to lift a functor $G\colon H\bfC\rightarrow \bbD$ with 
 a $W$-friendly structure $(\Gamma,\gamma)$ to a functor $\tilde{G}\colon\CW\rightarrow \bbD$.
 
 Define $\tilde G$ on objects by $\tilde{G}(\xymatrix@1{A\ar[r]^w&B})=\Gamma(w)$.
 On vertical arrows, $\tilde G$ is defined by
 $$\tilde{G}\left(\raisebox{2.65em}{$\xymatrix@R=1.5em{(\ar[r]^{w_1}&)\ar@{=}[dd]
\\
\ar[u]^{u_1}\ar[d]_{u_2} &
\\
(\ar[r]_{w_2}&)}$}\right)=v_{\Gamma(u_2,w_2)}\cdot(v_{\Gamma(u_1,w_1)})^{-1}
 $$
 and on horizontal arrows, $\tilde G$ is defined by
 $\tilde{G}\left(\xymatrix@1{(&\ar[l]_w)\ar[r]^f&(\ar[r]^v&)}\right)=\gamma_v^{-1}G(f)\gamma_w.$
 To define $\tilde{G}$ on double cells, we use the factorization of a generic double
 cell in $\CW$ given in the proof of  Lemma \ref{dblcellfac}. 
The result of applying $\tilde{G}$ to (\ref{generic}) is given in Figure \ref{imagedblcell}.
\begin{figure}[ht]
 $$\xymatrix@R=1.4em{
 \Gamma(w_1)\ar[d]|\bullet^{v^{-1}_{\Gamma(u_1,w_1)}}\ar@{=}[rrr]\ar@{}[drrr]|{\chi^{-1}_{\Gamma(u_1,w_1)}} 
		&&&\Gamma(w_1)\ar[r]^{\gamma_{w_1}} \ar@{=}[d]& GA_1 \ar[r]^{Gf_1} 
		& GA_1'\ar[r]^{\gamma_{w_1'}^{-1}} 
		& \Gamma(w_1') \ar@{=}[d]
\\
\Gamma(w_1u_1) \ar[rrr]_{h_{\Gamma(u_1,w_1)}}  \ar@{=}[d] 
		&&& \Gamma(w_1) \ar[r]^{\gamma_{w_1}} \ar@{=}[d]
		& GA_1 \ar[r]^{Gf_1} & GA_1'\ar[r]^{\gamma_{w_1'}^{-1}} 
		& \Gamma(w_1') \ar@{=}[d]
\\
\Gamma(w_1u_1) \ar[r]^-{\gamma_{w_1u_1}} \ar@{=}[d] & GC \ar@{=}[d] \ar[r]^{Gu_1} 
		&GA_1\ar[r]^{\gamma_{w_1}^{-1}}
		& \Gamma(w_1) \ar[r]^{\gamma_{w_1}} &  GA_1 \ar[r]^{Gf_1} 
		& GA_1' \ar@{=}[d] \ar[r]^{\gamma_{w_1'}^{-1}} 
		& \Gamma(w_1') \ar@{=}[d]
\\
\Gamma(w_1u_1) \ar[r]^-{\gamma_{w_1u_1}} \ar@{=}[5,0] & GC \ar@{=}[5,0] \ar[r]^{G\xi} 
		& GC' \ar@{=}[dd] \ar[r]^-{\gamma^{-1}_{w_1'u_1'}}
		& \Gamma(w_1'u_1') \ar@{=}[d] \ar[r]^-{\gamma_{w_1'u_1'}} 
		& GC' \ar[r]^{Gu_1'} & GA_1' \ar[r]^{\gamma_{w_1'}^{-1}} 
		& \Gamma(w_1') \ar@{=}[d]
\\
&&& \Gamma(w_1'u_1')\ar@{=}[d] \ar[rrr]^{h_{\Gamma(u_1',w_1')}}\ar@{}[drrr]|{\psi^{-1}_{\Gamma(w_1')}}
		&&& \Gamma(w_1') \ar[d]|\bullet^{v^{-1}_{\Gamma(w_1')}}
\\
&& GC'\ar@{=}[d] \ar[r]^-{\gamma_{w_1'u_1'}^{-1}} &  \Gamma(w_1'u_1') \ar@{=}[d] \ar@{=}[rrr] 
		&&& \Gamma(w_1'u_1') \ar@{=}[d]
\\
&& GC'\ar@{=}[dd] \ar[r]^-{\gamma_{w_2'u_2'}^{-1}} 
		&  \Gamma(w_2'u_2') \ar@{=}[d] \ar@{=}[rrr] \ar@{}[drrr]|{\psi_{\Gamma(u_2',w_2')}} 
		&&& \Gamma(w_2'u_2') \ar[d]|\bullet^{v_{\Gamma(u_2',w_2')}}
\\
&&& \Gamma(w_2'u_2')\ar@{=}[d] \ar[rrr]_{h_{\Gamma(u_2',w_2')}} &&& \Gamma(w_2')\ar@{=}[d]
\\
\Gamma(w_2u_2) \ar[r]^-{\gamma_{w_2u_2}} \ar@{=}[d]& GC \ar[r]^{G\xi} \ar@{=}[d] 
		& GC'\ar@{=}[d] \ar[r]^-{\gamma_{w_2'u_2'}^{-1}} 
		& \Gamma(w_2'u_2') \ar[r]^-{\gamma_{w_2'u_2'}} & GC' \ar[r]^{Gu_2'} 
		&GA_2'\ar[r]^{\gamma_{w_2'}^{-1}} \ar@{=}[d] &\Gamma(w_2') \ar@{=}[d]
\\
\Gamma(w_2u_2) \ar[r]^-{\gamma_{w_2u_2}} \ar[d]|\bullet^{v_{\Gamma(u_2,w_2)}} 
			  \ar@{}[drrr]|{\chi_{\Gamma(u_2,w_2)}} 
		& GC \ar[r]^{Gu_2} & GA_2 \ar[r]^{\gamma_{w_2}^{-1}} 
		& \Gamma(w_2) \ar@{=}[d] \ar[r]^{\gamma_{w_2}} 
		& GA_2\ar[r]^{Gf_2} & GA_2' \ar[r]^{\gamma_{w_2'}^{-1}} 
		& \Gamma(w_2')\ar@{=}[d]
\\
\Gamma(w_2)\ar@{=}[rrr] &&& \Gamma(w_2) \ar[r]^{\gamma_{w_2}} & GA_2\ar[r]^{Gf_2} 
		& GA_2' \ar[r]^{\gamma_{w_2'}^{-1}} & \Gamma(w_2')
 }$$
 \caption{The image of a generic double cell in $\CW$ under $\tilde{G}$.}\label{imagedblcell}
\end{figure}
 It is not hard to see that there is an invertible horizontal transformation
 $\bar\gamma\colon \tilde{G}\circ \JC\Rightarrow G$ with components $\bar{\gamma}_{A}=\gamma_{1_A}$.
 Furthermore, it is straightforward to check that $\tilde{G}\Phi=\Gamma$, and $(\bar\gamma,\mbox{id})$ 
is an invertible 
$W$-friendly transformation from $(\tilde{G}\JC,\tilde{G}\Phi,\tilde{G}\varphi)$ to $(G,\Gamma,\gamma)$.
 
We now want to show that composition with $\JC$ is fully faithful on arrows, i.e., horizontal transformations.
So we will show that horizontal transformations $$b\colon G\Rightarrow L\colon \CW\rightrightarrows\bbD$$
are in one-to-one correspondence with
$W$-friendly transformations $$(b\JC,\beta)\colon (G\JC,G\Phi,G\varphi)\Rightarrow(L\JC,L\Phi,L\varphi)$$ 
 where $\beta\colon G\Phi\Rightarrow L\Phi$ is defined by $\beta_w=b_w$.
 The condition that the square in (\ref{W-friendly}) commutes for $(b\JC,\beta)$
 is equivalent to the following square of horizontal arrows commuting for each $w\in W$:
 $$
 \xymatrix@C=3.5em{
 G(w)\ar[d]_{G(\stackrel{v}{\longleftarrow}\Longrightarrow\Longrightarrow)}\ar[r]^{b_v}
	&L(v)\ar[d]^{G(\stackrel{v}{\longleftarrow}\Longrightarrow\Longrightarrow)}
 \\
 G(\mbox{id}_A)\ar[r]_{b_{\mbox{\scriptsize id}_A}} &L(\mbox{id}_A)
 }
 $$
 and the commutativity of this diagram follows immediately from the vertical naturality of $b$. 
 
Conversely, given a $W$-friendly horizontal transformation 
$(a\colon G\JC\Rightarrow L\calJ_{\bfC},\alpha\colon G\Phi\Rightarrow L\Phi)$,
we get $\tilde{a}\colon G\Rightarrow L$ with components $\tilde{a}_w=\alpha_w$ and
$$
\tilde{a}\left(\raisebox{3.2em}{$\xymatrix{(\ar[r]^{wu}&)\ar@{=}[dd]
\\
\ar@{=}[u]\ar[d]_{u} &
\\
(\ar[r]_{w}&)}$}\right) = \raisebox{4.2em}{$\xymatrix@C=3.9em@R=1.25em{G(wu)\ar[rr]|{h_{\alpha(wu)}} \ar@{=}[d] 
		&& L(wu) \ar@{=}[d]\ar@{=}[r] \ar@{}[dr]|{L(\psi_{\Phi(u,v)})} & L(wu)\ar[d]|\bullet^{L(v_{\Phi(u,w)})}
\\
G(wu)  \ar@{=}[d]\ar@{=}[r] & G(wu)\ar@{=}[d]  \ar[r]|{h_{\alpha(wu)}} & L(wu)\ar[r]|{L(h_{\Phi(u,w)})}
	& L(w)\ar@{=}[d]
\\
G(wu) \ar@{=}[r] \ar[d]|\bullet_{G(v_{\Phi(u,v)})} 
	& G(wu) \ar[d]|\bullet_{G(v_{\Phi(u,w)})} \ar@{}[dr]|{G(\chi_{\Phi(u,w)})} \ar[r]|{G(h_{\Phi(u,w)})} 
	& G(w)\ar@{=}[d]\ar[r]|{h_{\alpha(w)}}&L(w)\ar@{=}[d]\\
G(w)\ar@{=}[r] & G(w)\ar@{=}[r] & G(w)\ar[r]|{h_{\alpha(w)}} &L(w)}$}
$$
Note that it is sufficient to define $\tilde{a}$ on these particular vertical arrows, since all others are 
generated by these and their inverses.
 \end{proof}

\begin{rmks}{\rm
\begin{enumerate}
 \item  One might be tempted to think that if we take $W=\bfC_1$,  the class of all arrows in $\bfC$,
then every horizontal arrow in $\bfC\{W\}$
will have  a companion and a conjoint. Unfortunately that is not the case: only arrows
of the form $\xymatrix{(&\ar[l]_{uv})\ar[r]^u &(\ar[r]^v&)}$
get a companion and a conjoint. However, if we restrict ourselves to such horizontal arrows (for any chosen class $W$),
take all vertical arrows and the full sub double category of double cells, 
we do get a double category in which every horizontal arrow has a companion and a conjoint.
That double category is the transpose of what Shulman \cite{Shul} has called a fibrant double category and
what was called a gregarious double category in \cite{DPP-spans2}.
\item
The vertical and horizontal property together determine $\CW$ up to both horizontal 
and vertical equivalence of weakly globular double categories,
but note that for the vertical equivalence the arrows are pseudo-functors, where for
the horizontal equivalence the arrows are strict functors.
\item The construction of $\CW$  given in Section \ref{construction}
can be extended to 2-categories, where the universal properties of $\calC\{W\}$
can be formulated using the horizontal double category $H\calC$. It can even be extended 
to arbitrary weakly globular double categories, but we choose to present these results in a sequel to this paper,
as they involve a lot of technical details.
\end{enumerate}}

\end{rmks}

\section{Groupoidal weakly globular double categories}\label{grpdl}

In this section we give a homotopical application of weakly globular double
categories. We introduce a subcategory of the latter, whose objects we call
groupoidal weakly globular double categories.

We show that this category provides an algebraic model of 2-types. This
generalizes a result of \cite{bp}, where weakly globular double groupoids are
proved to model $2$-types.

\begin{definition}\rm{\cite{tam}}\;\it\label{tam3}
{\rm The category $\Tm$ of Tamsamani weak $2$-groupoids is the full subcategory of
$\Ta$ whose objects $X$ are such that, for all $a,b \in X_0$, $X_{(a,b)}$ and
$\Pi_0 X$ are groupoids.}
\end{definition}

It is shown in \cite{tam} that $\Tm$ is an algebraic model of $2$-types; that
is, there is an equivalence of categories $\Tm\bsim^2\,\;\simeq\;\calH
o(\text{2-types})$. From the results of \cite{lp}, it follows that, if we
enlarge the morphisms of $\Tm$ to include the pseudo-morphims, we still obtain
an equivalence of categories $(\Tm)_{\ps}\bsim^2\,\;\simeq\;\calH
o(\text{2-types})$.

\begin{definition}\label{se5.def1}{\rm
The category $\GWGDbl$ of groupoidal weakly globular double categories is the
full subcategory of $\WGDbl$ whose objects $\bbX$ are such that
\begin{itemize}
  \item  [i)] For all $a,b\in \bbX^d_{0}$, $\bbX_{(a,b)}$ is a groupoid.
  \item [ii)] $\Pi_0 \bbX$ is a groupoid, where $\Pi_0:\WGDbl\rw\Cat$ is as in Lemma
  \ref{s4.lem2}.
\end{itemize}
A morphism in $\GWGDbl$ is a 2-equivalence if and only if it is a 2-equivalence
in $\WGDbl$.

The category $(\GWGDbl)_{\ps}$ is the full subcategory of $(\WGDbl)_{\ps}$ whose
objects are in $\GWGDbl$.}
\end{definition}

\begin{remark}\rm\label{s5.rem1}
The notion of groupoidal weakly globular double category is more general than
the one of weakly globular double groupoid introduced in \cite{bp}. In
particular, objects of $\GWGDbl$ are not necessarily double groupoids, as
the horizontal categories
$\bbX_{*0}$ and $\bbX_{*1}$
are not required to be groupoids if $\bbX\in\GWGDbl$.
This notion is similar to the one of Tamsamani weak 2-groupoids, where
inverses of horizontal morphisms exist only in a weak sense. Indeed, the next
proposition establishes a comparison with Tamsamani's model.
\end{remark}

\begin{proposition}\label{s5.pro1}
    The functors $Q:(\Ta)_{\ps}\rw (\WGDbl)_{\ps}$ and $D:(\WGDbl)_{\ps}\rw(\Ta)_{\ps}$
    restrict to functors $Q:(\Tm)_{\ps}\rw (\GWGDbl)_{\ps}$ and
    $D:(\GWGDbl)_{\ps}\rw(\Tm)_{\ps}$ and induce an equivalence of categories:
    \begin{equation*}
        (\GWGDbl)_{\ps}\bsim^2\,\;\simeq\;(\Tm)_{\ps}\bsim^2\,\;\simeq\;\calH o(\text{2-types})
    \end{equation*}
\end{proposition}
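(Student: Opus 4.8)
The plan is to leverage the machinery already built in the paper, especially Theorem \ref{s4.pro4}, Proposition \ref{s4.pro3}, and the cited results of Tamsamani \cite{tam} and Lack--Paoli \cite{lp}, so that the proof of this proposition reduces to checking that the functors $Q$ and $D$ restrict appropriately to the groupoidal subcategories and preserve $2$-equivalences there.

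First I would verify the restriction claims. For $D\colon(\WGDbl)_{\ps}\rw(\Ta)_{\ps}$, recall from the proof of Proposition \ref{s4.pro2} that $(D\bbX)_0 = \bbX_0^d$, $(D\bbX)_1 = \bbX_1$, and $(D\bbX)_k = \pro{\bbX_1}{\bbX_0}{k}$ for $k\geq 2$, with face maps $d_i = \gamma\pt_i$. Consequently, for $a,b\in\bbX_0^d$ the hom-category $(D\bbX)_{(a,b)}$ is exactly $\bbX_{(a,b)}$, and $\Pi_0(D\bbX) = \Pi_0\bbX$ by construction (both have nerve $\pi_0^* N_h\bbX$, cf.\ Lemma \ref{s4.lem2}). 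Hence if $\bbX\in\GWGDbl$ then $D\bbX\in\Tm$. For $Q = PR$, one uses that $R$ is built from the strictification $\St$ applied to $SX$, and from Proposition \ref{s4.pro1} and Proposition \ref{special.3}(a) there is a levelwise equivalence $r_X\colon RX\rw X$; thus $(QX)_{(a,b)}\simeq X_{(a,b)}$ and $\Pi_0 QX \simeq \Pi_0 X$. Since a category equivalent to a groupoid is a groupoid, $X\in\Tm$ forces $QX\in\GWGDbl$. The same computations applied to morphisms show $Q$ and $D$ send $2$-equivalences between groupoidal objects to $2$-equivalences, using the same diagram-chase as in the proof of Proposition \ref{s4.pro3}: the squares \eqref{s4.eq2}, \eqref{s4.eq3}, \eqref{s4.eq4} restrict verbatim, as every object and morphism occurring in them lies in the groupoidal subcategory.

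Next I would deduce the equivalence of localizations. Having shown $Q$ and $D$ preserve $2$-equivalences on the groupoidal subcategories, they induce $\ovl{Q},\ovl{D}$ between $(\GWGDbl)_{\ps}\bsim^2$ and $(\Tm)_{\ps}\bsim^2$. The composites $\ovl{D}\,\ovl{Q}X\cong X$ and $\ovl{Q}\,\ovl{D}\bbY\cong\bbY$ follow exactly as in the proof of Proposition \ref{s4.pro3}: the zig-zags $DQX\rw N_h QX\rw X$ and $N_h QD\bbY\rw D\bbY\rw N_h\bbY$ are levelwise categorical equivalences, hence $2$-equivalences, and all the objects appearing are groupoidal. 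This gives the first displayed equivalence $(\GWGDbl)_{\ps}\bsim^2\;\simeq\;(\Tm)_{\ps}\bsim^2$. For the second, I invoke the cited fact (from \cite{tam} together with \cite{lp}, as recalled in the paragraph just before Definition \ref{se5.def1}) that $(\Tm)_{\ps}\bsim^2\;\simeq\;\calH o(\text{2-types})$, and compose.

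The main obstacle I anticipate is the verification that $Q$ actually lands in $\GWGDbl$, i.e.\ that Power's strictification construction does not destroy the groupoidal structure of the hom-categories. This is where one must be careful: $(QX)_1 = (RX)_1$ is obtained as a bijective-on-objects/fully-faithful factorization (Remark \ref{rem2add}), so it is equivalent — but not isomorphic — to $(SX)_1 = X_1$. One needs that being a groupoid is invariant under equivalence of categories (which it is) and that the coproduct decomposition $(QX)_1 = \coprod_{a,b}(QX)_{(a,b)}$ of Lemma \ref{s4.lem2}(ii) matches up, under this equivalence, with the decomposition of $X_1$ indexed by $X_0$; this requires tracking the face maps $d_i$ of $QX$ through the factorization, which is exactly the content of the displayed pullback squares for $L_{01}, L_{11}$ in Remark \ref{rem2add}. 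Everything else is a routine transcription of the non-groupoidal arguments, so the write-up should be short once this point is settled.
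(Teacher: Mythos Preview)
Your proposal is correct and follows essentially the same approach as the paper: both argue that the levelwise equivalences $r_X\colon RX\rw X$ and $\eta_{\bbX}\colon D\bbX\rw N_h\bbX$ (from Corollary \ref{s4.cor1} and Proposition \ref{s4.pro2}) induce equivalences on hom-categories and isomorphisms on $\Pi_0$, so the groupoidal conditions transfer, and then appeal to Proposition \ref{s4.pro3} and \cite{tam} for the equivalences of localizations. Your observation that $(D\bbX)_{(a,b)}=\bbX_{(a,b)}$ and $\Pi_0 D\bbX=\Pi_0\bbX$ hold on the nose (rather than merely up to equivalence) is a slight sharpening, but the overall argument is the same.
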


\begin{proof}
Let $X\in(\Tm)_{\ps}$. As in the proof of Proposition \ref{s4.pro3} for each
$a,b\in X_{0*}$ there is a weak equivalence $(\alpha_x)_{(a,b)}:(QX)_{(a,b)}\rw
X_{(a,b)}$, and there is an isomorphism $\Pi_0 QX\cong\Pi_0 X$. Since
$X_{(a,b)}$ and $\Pi_0 X$ are groupoids, such are $(QX)_{(a,b)}$ and $\Pi_0
QX$. This proves that $QX\in(\GWGDbl)_{\ps}$.

Let $\bbX \in(\GWGDbl)_{\ps}$. Then, by Proposition \ref{s4.pro2}, there is a
morphism $D\bbX\rw N_h \bbX$ in $\Ps[\dop,\Cat]$  which is a levelwise
categorical equivalence. This implies that, for each $a,b\in \bbX_0^d$, there
is a categorical equivalence $(D\bbX)_{(a,b)}\rw \bbX_{(a,b)}$ and an isomorphism
$\Pi_0 D\bbX\cong \Pi_0 \bbX$. Since $\Pi_0 \bbX$ and $\bbX_{(a,b)}$ are
groupoids, such are $(D\bbX)_{(a,b)}$ and $\Pi_0 D\bbX$. This proves that
$D\bbX\in(\Tm)_{\ps}$. The rest follows immediately from Proposition
\ref{s4.pro3} and \cite{tam}.
\end{proof}

\begin{remark}\rm\label{s5.rem2}
     For every object $\bbX$ of $\GWGDbl$ it is possible, as in the case of the weakly
     globular double groupoids of \cite{bp}, to give an algebraic description
     of the homotopy groups and of the Postnikov decomposition of the
     classifying space $B\bbX$, where $B$ is the composite functor
     \begin{equation*}
        B:(\GWGDbl)_{\ps}\rw(\Tm)_{\ps}\rw \tp
     \end{equation*}
    as in Proposition \ref{s5.pro1}. This is done as follows. Let $B\Pi_0 \bbX$ be
    the classifying space of the groupoid $\Pi_0 \bbX$; for each $x\in \bbX_0^d$ let
    $\id_x\in \bbX_{10}$ be $\id_x = \sigma_0\gamma'(x)$, where $\gamma':\bbX^d_{0}\rw
    \bbX_0$ is the inverse of $\gamma:\bbX_0\rw \bbX^d_{0}$ and $\sigma_0:\bbX_{00}\rw
    \bbX_{10}$ is the face operator. Then it follows from Proposition
    \ref{s5.pro1} and from \cite{tam} that
    \begin{equation*}
        \begin{split}
            & \pi_i(B\bbX,x)=\pi_i(B\Pi_0 \bbX,x)\quad i=0,1 \\
            & \pi_2(B\bbX,x)=\mathrm{Hom}_{\bbX_{1}}(\id_x,\id_x).
        \end{split}
    \end{equation*}
    Further, there is a morphism $\bbX\rw c\Pi_0 \bbX$, where $c\Pi_0 \bbX$ is the
    double category which, as a category object in $\Cat$ in direction 2,
    is discrete with object of objects $\Pi_0 \bbX$.
    From above, we see that the morphism $\bbX\rw c\Pi_0 \bbX$ induces
    isomorphisms of homotopy groups $\pi_i (B \bbX)\cong \pi_i (Bc\Pi_0 \bbX)\;$ $i=0,1$.
    Hence this gives algebraically the Postnikov decomposition of $B\bbX$.
\end{remark}

\end{document}